\numberwithin{equation}{section}
\theoremstyle{plain}
    \newtheorem{theorem}{Theorem}
    \newtheorem{proposition}{Proposition}[section]
    \newtheorem{corollary}[proposition]{Corollary}
    \newtheorem{lemma}[proposition]{Lemma}
    \newtheorem{conjecture}{Conjecture}
\theoremstyle{definition}
    \newtheorem{definition}[proposition]{Definition}
    \newtheorem{algorithm}[proposition]{Algorithm}
    \newtheorem{condition}[proposition]{Condition}
    \newtheorem{example}[proposition]{Example}
\theoremstyle{remark}
	\newtheorem{remark}[proposition]{Remark}%
\newtheoremstyle{repeat-theorem}
    {\topsep}{\topsep}              %%% space between body and thm
    {\itshape}                      %%% Thm body font
    {}                              %%% Indent amount (empty = no indent)
    {\bfseries}                     %%% Thm head font
    {.}                             %%% Punctuation after thm head
    { }                             %%% Space after thm head
    {\thmname{#1}\thmnote{ \bfseries #3}}%%% Thm head spec
\theoremstyle{repeat-theorem}
    \newtheorem{repeat-theorem}{Theorem}
    \newtheorem{repeat-proposition}{Proposition}
    \newtheorem{repeat-corollary}{Corollary}
\newcommand{\ZZ}{\mathbb{Z}}
\newcommand{\QQ}{\mathbb{Q}}
\newcommand{\RR}{\mathbb{R}}
\newcommand{\FF}{\mathbb{F}}
\renewcommand{\SS}{\mathbb{S}}
\newcommand{\id}{\mathit{id}}
\newcommand{\isom}{\cong}
\newcommand{\htpy}{\simeq}
\newcommand{\homeo}{\approx}
\newcommand{\union}{\cup}
\renewcommand{\smash}{\wedge}
\renewcommand{\epsilon}{\varepsilon}
\newcommand\restr[2]{\ensuremath{\left.#1\right|_{#2}}}
\DeclareMathOperator{\Ob}{Ob}
\DeclareMathOperator{\Hom}{Hom}
\DeclareMathOperator{\Mor}{Mor}
\DeclareMathOperator{\Ima}{Im}
\DeclareMathOperator{\Sq}{Sq}
\DeclareMathOperator{\fchar}{char}
\DeclareMathOperator{\ind}{ind}
\DeclareMathOperator{\gr}{gr}
\newcommand{\rightarrowdbl}{\rightarrow\mathrel{\mkern-14mu}\rightarrow}
\newcommand{\xrightarrowdbl}[2][]{%
  \xrightarrow[#1]{#2}\mathrel{\mkern-14mu}\rightarrow
}
\newcommand*\fatcdot{\mathpalette\fatcdot@{.5}}
\newcommand*\fatcdot@[2]{\mathbin{\vcenter{\hbox{\scalebox{#2}{$\m@th#1\bullet$}}}}}
\newcommand{\abrac}[1]{\ensuremath{\langle{#1}\rangle}}
\DeclarePairedDelimiterX\set[1]\lbrace\rbrace{\,\setaux#1\,}
 \def\setaux#1|{#1\;\delimsize\vert\;}
\newcommand{\C}{\mathcal{C}}
\newcommand{\M}{\mathcal{M}}
\newcommand{\F}{\mathcal{F}}
\newcommand{\X}{\mathcal{X}}
\newcommand{\pt}{\mathit{pt}}
\newcommand{\Cube}[1]{{\C_\mathit{cube}(#1)}}
\newcommand{\Kh}{\mathit{Kh}}
\newcommand{\BN}{\mathit{BN}}
\renewcommand{\a}{\mathbf{a}}
\renewcommand{\b}{\mathbf{b}}
\newcommand{\x}{\mathbf{x}}
\newcommand{\y}{\mathbf{y}}
\newcommand{\z}{\mathbf{z}}
\newcommand{\w}{\mathbf{w}}
\newcommand{\ca}{\alpha}
\newcommand{\cb}{\beta}
\title{
    A Bar-Natan homotopy type
}
\author{Taketo Sano}
\begin{document}

    \maketitle
    \begin{abstract}
    A spatial refinement of Bar-Natan homology is given, that is, for any link diagram $D$ we construct a CW-spectrum $\mathcal{X}_{\mathit{BN}}(D)$ whose reduced cellular cochain complex gives the Bar-Natan complex of $D$. The stable homotopy type of $\mathcal{X}_{\mathit{BN}}(D)$ is a link invariant and is described as the wedge sum of the canonical cells. We conjecture that the quantum filtration of Bar-Natan homology also lifts to the spatial level, and that it leads us to a cohomotopical refinement of the $s$-invariant. 
\end{abstract}
    
    \setcounter{tocdepth}{2}
    \tableofcontents
    
    \section{Introduction}
\label{sec:intro}

Khovanov \cite{Khovanov:2000} introduced a categorification of the Jones polynomial, which is now known as \textit{Khovanov homology}. Lipshitz and Sarkar \cite{LS:2014} went further and constructed a spatial refinement of Khovanov homology, called \textit{Khovanov homotopy type}, a CW-spectrum whose reduced cohomology recovers Khovanov homology. The construction is based on the procedure proposed by Cohen-Jones-Segal \cite{CJS:1995}, which aims at realizing Floer homology as the ordinary homology of some naturally associated space. 

There are variants of Khovanov homology, obtained by deformations of the defining Frobenius algebra. \textit{Lee homology} \cite{Lee:2005} and \textit{Bar-Natan homology} \cite{Bar-Natan:2005} are the well-known ones. These variants are important in that they give rise to Rasmussen's \textit{$s$-invariant} \cite{Rasmussen:2010}, an integer valued knot invariant that has the strength of reproving the Milnor conjecture \cite{Milnor:1968} in a purely combinatorial manner. Recently Piccirillo \cite{Piccirillo:2020} used $s$ to disprove the sliceness of the Conway knot, a problem that remained unsolved for half a century. 

Upon this background, a question arises naturally: \textit{are there spatial refinements for the variants?} This question have been open since the construction of Khovanov homotopy type (\cite[Question 9]{LS:2018-note}). The difficulty of applying the construction to the variants is due to the increase of non-zero coefficients in the multiplication and the comultiplication of the defining Frobenius algebra. 

\vspace{0.5em}
\begin{figure}[h]
    \centering
    \resizebox{0.85\textwidth}{!}{
        \tikzset{every picture/.style={line width=0.75pt}} %set default line width to 0.75pt        

\begin{tikzpicture}[x=0.75pt,y=0.75pt,yscale=-1,xscale=1]
%uncomment if require: \path (0,153); %set diagram left start at 0, and has height of 153

%Shape: Rectangle [id:dp23007048811269126] 
\draw   (134,102) -- (266.49,102) -- (266.49,137.24) -- (134,137.24) -- cycle ;

%Shape: Rectangle [id:dp5856063950349845] 
\draw   (388,102) -- (520.49,102) -- (520.49,137.24) -- (388,137.24) -- cycle ;

%Straight Lines [id:da8433921269480844] 
\draw    (274,119.99) -- (379.49,119.25) ;
\draw [shift={(381.49,119.24)}, rotate = 179.6] [color={rgb, 255:red, 0; green, 0; blue, 0 }  ][line width=0.75]    (10.93,-3.29) .. controls (6.95,-1.4) and (3.31,-0.3) .. (0,0) .. controls (3.31,0.3) and (6.95,1.4) .. (10.93,3.29)   ;
%Shape: Rectangle [id:dp2780583397426001] 
\draw   (137,15) -- (269.49,15) -- (269.49,50.24) -- (137,50.24) -- cycle ;

%Shape: Rectangle [id:dp8952981102566417] 
\draw   (389,15) -- (521.49,15) -- (521.49,50.24) -- (389,50.24) -- cycle ;

%Straight Lines [id:da7558140215385006] 
\draw    (275,32.99) -- (380.49,32.24) ;
\draw [shift={(273,33)}, rotate = 359.59] [color={rgb, 255:red, 0; green, 0; blue, 0 }  ][line width=0.75]    (10.93,-3.29) .. controls (6.95,-1.4) and (3.31,-0.3) .. (0,0) .. controls (3.31,0.3) and (6.95,1.4) .. (10.93,3.29)   ;
%Straight Lines [id:da37760239046629407] 
\draw  [dash pattern={on 4.5pt off 4.5pt}]  (197,98.24) -- (197,56.24) ;
\draw [shift={(197,54.24)}, rotate = 90] [color={rgb, 255:red, 0; green, 0; blue, 0 }  ][line width=0.75]    (10.93,-3.29) .. controls (6.95,-1.4) and (3.31,-0.3) .. (0,0) .. controls (3.31,0.3) and (6.95,1.4) .. (10.93,3.29)   ;
%Straight Lines [id:da7913856747737442] 
\draw    (456,98.24) -- (456,56.24) ;
\draw [shift={(456,54.24)}, rotate = 90] [color={rgb, 255:red, 0; green, 0; blue, 0 }  ][line width=0.75]    (10.93,-3.29) .. controls (6.95,-1.4) and (3.31,-0.3) .. (0,0) .. controls (3.31,0.3) and (6.95,1.4) .. (10.93,3.29)   ;

% Text Node
\draw (152,112) node [anchor=north west][inner sep=0.75pt]    {$C_{BN} \ \text{($1X$-based})$};
% Text Node
\draw (400,112) node [anchor=north west][inner sep=0.75pt]    {$C_{BN} \ \text{($XY$-based})$};
% Text Node
\draw (290,97) node [anchor=north west][inner sep=0.75pt]  [font=\normalsize] [align=left] {basis change};
% Text Node
\draw (290,11) node [anchor=north west][inner sep=0.75pt]  [font=\normalsize] [align=left] {handle slide};
% Text Node
\draw (400,24) node [anchor=north west][inner sep=0.75pt]    {$\mathcal{X}_{BN} \ \text{($XY$-based})$};
% Text Node
\draw (152,24) node [anchor=north west][inner sep=0.75pt]    {$\mathcal{X}_{BN} \ \text{($1X$-based})$};
% Text Node
\draw (20,112) node [anchor=north west][inner sep=0.75pt]   [align=left] {\underline{Chain complex}};
% Text Node
\draw (20,26) node [anchor=north west][inner sep=0.75pt]   [align=left] {\underline{CW-spectrum}};
% Text Node
\draw (463,67.43) node [anchor=north west][inner sep=0.75pt]  [font=\normalsize] [align=left] {spatial refinement};

\end{tikzpicture}
    }
    \vspace{0.5em}
    \caption{Strategy of construction}
    \label{fig:intro-idea}
\end{figure}
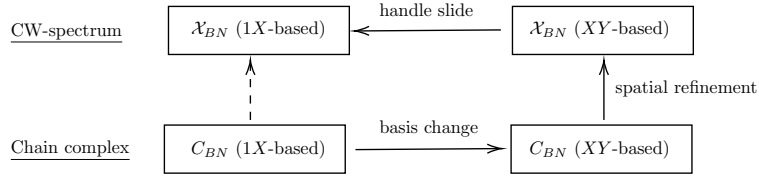

In this paper we give an affirmative answer especially for Bar-Natan's theory. The strategy of the construction is described in \Cref{fig:intro-idea}. Instead of trying to spatially refine the Bar-Natan complex from the standard generators ($1X$-based generators), we start from the diagonalized generators ($XY$-based generators). With the $XY$-based generators the structure of the complex becomes extremely simple and the construction of \cite{LS:2014} becomes applicable. Having obtained a spatial refinement for the $XY$-based complex, we recover the $1X$-based generators in the spatial level by performing \textit{handle slides}, one of the \textit{flow category moves} introduced by Lobb et.\ al.\ in \cite{JLS:2017,LOS:2018}.

\begin{theorem} \label{thm:1}
    For any link diagram $D$, there is a CW-spectrum $\X_\BN(D)$ whose reduced cellular cochain complex gives the Bar-Natan complex $C_{\BN}(D)$. The cells of $\X_\BN(D)$ correspond one-to-one to the standard generators of $C_{\BN}(D)$.
\end{theorem}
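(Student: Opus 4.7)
The plan follows the strategy summarized in \Cref{fig:intro-idea}. First, I would rewrite the Bar-Natan complex $C_\BN(D)$ in the $XY$-basis: on each circle of a resolution, replace the standard basis $\{1, X\}$ by $\{X, Y\}$ where $Y = h - X$. A direct computation in the Bar-Natan Frobenius algebra gives merge maps $m(X \otimes X) = hX$, $m(Y \otimes Y) = hY$, $m(X \otimes Y) = m(Y \otimes X) = 0$ and split maps $\Delta(X) = X \otimes X$, $\Delta(Y) = -Y \otimes Y$. Hence, in the tensor $XY$-basis, each edge of the cube of resolutions sends a basis element either to zero or to an $\{\pm 1, \pm h\}$-scalar multiple of another basis element. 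This is precisely the ``single-arrow'' structure on which the Lipshitz--Sarkar framework operates; in particular the problematic $-h \cdot 1 \otimes 1$ summand in the standard Bar-Natan coproduct, which obstructs a direct spatial refinement in the $1X$-basis, simply does not appear here.

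Next I would apply the framed flow category construction of \cite{LS:2014} to the $XY$-based cube. Since every nonzero edge now looks like a single merge or split (up to a scalar), the sign assignment, the ladybug matching on two-dimensional faces, and the construction of moduli spaces transfer from \cite{LS:2014} with only bookkeeping modifications to carry the scalar $h$. Taking the Cohen--Jones--Segal realization produces a CW-spectrum $\X_\BN^{XY}(D)$ with one cell for every $XY$-basis generator, whose reduced cellular cochain complex is $C_\BN(D)$ expressed in the $XY$-basis.

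Finally, to land in the $1X$-basis demanded by the statement, I would perform a sequence of handle slides on the framed flow category underlying $\X_\BN^{XY}(D)$, in the sense of \cite{JLS:2017,LOS:2018}. On each resolution the change-of-basis matrix between the $XY$- and $1X$-bases is triangular over $\ZZ[h]$, so it decomposes as a composition of elementary triangular basis changes, each of which should be realized as a single handle slide of the flow category. Handle slides preserve the stable homotopy type while acting on the cellular cochain complex by the prescribed basis change, so after performing all of them the resulting CW-spectrum $\X_\BN(D)$ has cells in bijection with the standard $1X$-generators and cochain complex equal to $C_\BN(D)$.

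I expect the main obstacle to be this last step: one must verify that the specific sequence of elementary $XY \to 1X$ changes of basis is actually supported by handle slides on the flow category produced in the previous step, that each intermediate flow category remains Morse--Smale and framed, and that the $h$-weighted edges do not obstruct the moduli-space modifications that a handle slide performs. By contrast, Step one is a finite algebraic verification and Step two reduces essentially to \cite{LS:2014}; the novel technical content of the theorem sits in the spatial basis change.
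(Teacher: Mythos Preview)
Your three-step outline matches the paper's strategy, but Step~2 misses the structural point that makes the construction go through, and your treatment of coefficients is off.

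In the $XY$-basis with $h=1$ (the paper fixes $h=1$, $t=0$ throughout, so there are no ``$h$-weighted edges'' to carry), the key observation is that an $XY$-labeled resolution configuration is admissible if and only if the label is \emph{constant on each connected component} of the underlying graph. Consequently the entire poset $P_{XY}(D)$ decomposes as a disjoint union of cube posets, and the flow category $\C_{XY}(D)$ is literally a disjoint union of cube flow categories $\Cube{k_i}$. There is no inductive construction of moduli spaces, and in particular no ladybug matching is ever invoked: each moduli space is just a permutohedron coming from the relevant subcube. This is much simpler than the Khovanov construction you are alluding to, and it is what makes Step~2 work cleanly. Your phrasing (``the ladybug matching \ldots\ transfer from \cite{LS:2014} with only bookkeeping modifications'') suggests you would try to rerun the Khovanov-style face-by-face analysis, which is unnecessary and obscures the point.

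The single $-1$ appearing in the diagonalized operations (in the paper's convention, $m(Y\otimes Y)=-Y$) is not handled by ``carrying a scalar'' but by a \emph{sign adjustment}: a $1$-cocycle $t$ on each subcube that records which edges are $YY$-merges, added to the standard sign assignment before choosing the framing. That $t$ is a cocycle is a two-line check, and it is what makes the framed flow category refine $C_\BN(D)$ on the nose.

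Your Step~3 is essentially what the paper does (the ``cubic handle slides'' of \S4.1), and the worry you flag is not where the difficulty lies: the handle-slide theorem already guarantees that each intermediate category is a framed flow category with the same stable homotopy type. The actual work is the combinatorial bookkeeping of \Cref{lem:0dim-chain}--\Cref{lem:index1-label-change}, verifying that after all slides the $0$-dimensional moduli spaces reproduce exactly the $1X$-based Bar-Natan differential (up to cancelling pairs).
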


Recall that the graded module structure of Bar-Natan homology is completely known: it is freely generated by the \textit{canonical classes}. This also lifts to the spatial level.

\begin{theorem} \label{thm:2}
    $\X_\BN(D)$ is stably homotopy equivalent to the wedge sum the canonical cells. In particular, the stable homotopy type of $\X_\BN(D)$ is a link invariant.
\end{theorem}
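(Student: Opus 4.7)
The plan is to reduce to the $XY$-based intermediate spectrum from the proof of \Cref{thm:1} and then use flow category moves to strip away its contractible summands. Since handle slides are flow category moves and hence preserve stable homotopy type, $\X_\BN(D)$ is stably equivalent to the $XY$-based spectrum $\X_\BN^{XY}(D)$ from which it was constructed. It therefore suffices to exhibit a wedge decomposition of $\X_\BN^{XY}(D)$ whose summands are the canonical cells.

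At the chain level, $C_\BN^{XY}(D)$ decomposes as a direct sum of subcomplexes of two types: one-dimensional subcomplexes generated by a canonical class (which is a cycle), and two-term acyclic subcomplexes of the form $\FF \xrightarrow{\cong} \FF$. This is the classical reason why Bar-Natan homology is freely generated by the canonical classes, and it reflects the simplicity of the multiplication and comultiplication in the diagonalized Frobenius algebra.

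To lift this algebraic decomposition to the spatial level, observe that the cells of $\X_\BN^{XY}(D)$ are indexed by $XY$-based generators, so the pairing is visible cellularly. For each acyclic pair, I would analyze the moduli spaces of flow lines between the two corresponding cells in the underlying flow category and verify that the moduli space consists of a single point, so that the attaching map of the higher cell onto the lower one is a stable equivalence; the resulting two-cell subspectrum is then stably contractible. Applying the \emph{handle cancellation} flow category move from \cite{JLS:2017, LOS:2018} removes each such pair without changing the stable homotopy type. After exhausting all cancellations, what remains is a CW-spectrum whose cells are precisely the canonical cells with no nontrivial attaching maps, hence stably equivalent to the wedge sum of the canonical cells. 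For the invariance statement, the bigraded count of canonical classes depends only on the link (it equals the rank of Bar-Natan homology, decomposed by orientation), and since the stable homotopy type of a wedge of cells is determined by the multiset of cell dimensions, the stable homotopy type of $\X_\BN(D)$ is a link invariant.

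The main obstacle is the spatial lifting step. Although the chain level splitting is essentially automatic in the $XY$-basis, verifying that the flow category underlying $\X_\BN^{XY}(D)$ actually exhibits the required one-point moduli spaces for each acyclic pair, so that handle cancellation is applicable, requires unpacking the Cohen--Jones--Segal construction in the diagonalized basis. This is feasible precisely because the $XY$-based Frobenius algebra has far fewer nonzero structure constants than the $1X$-based one, which is why the construction of \Cref{thm:1} was routed through the $XY$-basis in the first place.
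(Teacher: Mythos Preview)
Your approach is essentially the paper's: pass to the $XY$-based spectrum via the handle slides (which are move equivalences), then cancel the acyclic summands by handle cancellation. One clarification: in the $XY$ basis the flow category $\C_{XY}(D)$ does not decompose into canonical objects plus two-term pairs, but into canonical (isolated) objects plus cube flow categories $\Cube{k_i}$ of various dimensions $k_i \geq 1$ (\Cref{prop:P(D)-decomp}); the ``two-term'' picture only emerges after you choose a pairing inside each cube. The paper's \Cref{lem:cube-contract} carries out exactly this: pair the objects of each $\Cube{k}$ along the last coordinate, check that each $\M(u,v)$ is a single point, and verify that cancelling one pair leaves the moduli spaces of the remaining pairs unchanged --- which is precisely the obstacle you identify. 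Link invariance then follows because the gradings of the canonical cells are given by linking numbers (\Cref{prop:alpha-homol-gr}).
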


\Cref{thm:2} tells us that $\X_\BN$ itself gives no interesting information about the corresponding link, as is true for Bar-Natan homology. What makes Bar-Natan homology an interesting object is the \textit{quantum filtration} induced from the \textit{quantum grading} defined on the standard generators. It is natural to expect that the quantum filtration also lifts to the spatial level. Although the quantum grading can be assigned to the cells, there are many attaching maps that obstruct the filtration. This is caused by the handle slides.

We conjecture that the obstructive attachings can be eliminated in a certain sense, so that the quantum filtration lifts to the spatial level. Moreover we conjecture that the correspondence between the Khovanov complex and the Bar-Natan complex also lifts to the spatial level. 

\begin{conjecture} \label{conj:1}
    There is an ascending filtration on the spectrum $\X = \X_\BN$:
    \[
        \{\pt\} = F_m \X \subset \cdots \subset F_j \X \subset F_{j + 2} \X  \subset \cdots \subset F_M \X = \X
    \]
    that refines the quantum filtration on the Bar-Natan complex $C = C_\BN$:
    \[
        C = F^m C \supset \cdots \supset F^j C \supset F^{j + 2} C \supset \cdots \supset F^M C = \{ 0 \}
    \]
    in the sense that $\tilde{C}^*(\X / F_{j - 2} \X) \isom F^j C$.
\end{conjecture}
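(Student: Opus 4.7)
The plan is to build the filtration cell by cell, indexed by the quantum grading of the standard ($1X$-based) generators that Theorem~\ref{thm:1} puts in bijection with the cells of $\X_\BN(D)$. Each cell $e_\alpha$ carries a well-defined quantum grading $q(\alpha) \in \ZZ$, and I would define $F_j \X$ to be the cellular subspectrum generated by the cells with $q(\alpha) \le j$. Since the $1X$-based Bar-Natan differential is filtration preserving (it never strictly lowers the quantum grading), the identification $\tilde{C}^*(\X / F_{j-2}\X) \isom F^j C$ on graded modules is automatic as soon as $F_j \X$ is actually a well-defined subspectrum; this reduces the conjecture to checking that the attaching maps of $\X_\BN(D)$ respect the candidate filtration.

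The difficulty is that $F_j \X$ can fail to be an honest subspectrum: the attaching maps inherited from the $XY$-based construction, followed by handle slides, may contain components that attach a cell of high quantum grading to one of strictly lower quantum grading. These are the obstructive attachings mentioned in the paragraph preceding the conjecture. I would analyze them at the flow category level as follows. In the $XY$-basis the complex splits into two summands (each isomorphic to the Lee-like complex generated by one family of canonical classes), and the framed flow category used for the spatial refinement can be arranged so that its moduli spaces are compatible with the $XY$-grading. A handle slide replaces an $XY$-generator by a $\ZZ$-combination of $1X$-generators, and in doing so introduces new $0$- and $1$-dimensional framed moduli spaces between cells whose $1X$-quantum gradings differ; these are precisely the obstructive moduli spaces.

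To eliminate them, I would perform further flow category moves (cancellations and additional handle slides in the sense of \cite{JLS:2017, LOS:2018}) chosen so that the resulting moduli spaces between cells of different quantum grading levels are empty. On the chain level this is exactly the classical statement that a filtration-preserving change of basis between the $XY$- and $1X$-descriptions of $C_\BN$ exists, since the Bar-Natan differential is filtered in the $1X$-basis; the task is to lift this change of basis to the flow category. Working inductively from the top of the quantum filtration downward, I would at each stage absorb the obstructive components into $F_{j-2}\X$ by a finite sequence of such moves, taking care that earlier filtration levels are not disturbed. This makes $F_j\X$ into a cellular subspectrum whose subquotients by construction realize the successive quotients $F^j C / F^{j+2} C$.

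The hardest step will be precisely this inductive cleanup of the flow category: verifying that every handle-slide-induced moduli space connecting cells across quantum grading levels can be cancelled by an explicit sequence of moves without undoing earlier cancellations and without altering the stable homotopy type guaranteed by Theorem~\ref{thm:2}. The obstruction is genuinely framed-cobordism-theoretic rather than purely algebraic, and a careful bookkeeping of framings and orientations on the $1$- and $2$-dimensional moduli spaces is required. Once this simplification is in place, the identification $\tilde{C}^*(\X / F_{j-2}\X) \isom F^j C$ is a direct cellular cochain calculation, since the cells of $\X / F_{j-2}\X$ are by construction in bijection with the $1X$-generators of quantum grading $\ge j$ and the differential is the restriction of the Bar-Natan differential to $F^j C$.
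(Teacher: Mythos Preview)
The statement you are attempting to prove is \emph{Conjecture}~\ref{conj:1}: the paper does not prove it, and explicitly leaves it open. Your proposal correctly identifies the natural strategy---define $F_j\X$ by the cells of quantum grading $\le j$ and then try to arrange that this is actually a subspectrum by performing flow category moves---but what you have written is a restatement of the difficulty rather than a resolution of it. In particular, the sentence ``I would perform further flow category moves \ldots\ chosen so that the resulting moduli spaces between cells of different quantum grading levels are empty'' is precisely the content of the conjecture, and you give no mechanism for making such a choice.

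There is a concrete obstruction that your outline does not touch. The paper shows (\Cref{prop:eliminate-0dim-moduli-spaces}) that all $0$-dimensional quantum-grading-increasing moduli spaces can be removed by ordinary Whitney tricks, but then gives an explicit example (the Hopf link configuration, \Cref{eg:2}) in which, after those Whitney tricks, a \emph{closed} $1$-dimensional moduli space---a circle---survives between objects $\x,\y$ with $\gr_q(\x) < \gr_q(\y)$. Handle cancellations and handle slides do not remove such a component: cancellations delete objects, and slides change generators while typically creating new moduli spaces. The only move that can kill a closed component of a higher-dimensional moduli space is the \emph{extended} Whitney trick of \cite{LOS:2018}, and that requires the component to be framed null-cobordant rel boundary. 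Proving this null-cobordance for every such component, in every dimension, coherently and without disturbing lower strata, is exactly the open problem (see the discussion in \Cref{subsec:quantum-filt} and \Cref{prop:q-dim-increase-closed}). Your inductive scheme ``from the top of the filtration downward'' does not address why the relevant framed cobordism classes vanish, nor why an extended Whitney trick at one level does not reintroduce obstructions at another. Until that framed-cobordism analysis is carried out, the argument remains a plan, not a proof.
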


\begin{conjecture} \label{conj:2}
    For each $j$, the quotient spectrum $F_j \X / F_{j - 2} \X$ is stably homotopy equivalent to the $j$-th wedge summand $\X^j_\Kh$ of the Khovanov spectrum $\X_\Kh$. 
\end{conjecture}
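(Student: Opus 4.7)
The key observation is that the Bar-Natan differential is a deformation of the Khovanov one: writing $d_\BN = d_\Kh + d'$ with $d'$ strictly raising the quantum filtration, the associated graded of $C_\BN$ is the Khovanov complex, and the subquotient $F^j C / F^{j+2} C$ is exactly the quantum degree $j$ summand of $C_\Kh$. Granting the filtration of \Cref{conj:1}, the cellular cochain complex of $F_j \X / F_{j-2} \X$ is therefore canonically isomorphic to $\tilde{C}^*(\X^j_\Kh)$, and the content of \Cref{conj:2} reduces to lifting this chain-level isomorphism to a stable equivalence of spectra.

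The plan is to work at the level of framed flow categories à la Cohen--Jones--Segal and Lipshitz--Sarkar. I would realize $F_j \X / F_{j-2} \X$ as the geometric realization of a sub-quotient flow category $\F^j_\BN$ cut out from the Bar-Natan flow category underlying $\X_\BN$, and then exhibit a framed equivalence $\F^j_\BN \htpy \F^j_\Kh$ with the Khovanov flow subcategory generating $\X^j_\Kh$. After quotienting by $F_{j-2}$, any attaching data produced by $d'$ spans two filtration levels and dies; what remains should be precisely the moduli realizing the Khovanov differential in quantum grading $j$. The bijection between cells on the two sides is inherited from \Cref{thm:1} together with the identification of the standard generators of quantum grading exactly $j$ with the generators of $C_\Kh^j$.

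The main obstacle will be the detailed analysis of the handle slides used to pass from the $XY$-based to the $1X$-based spectrum. Each such slide modifies the framed flow category by introducing new moduli whose quantum degrees must be tracked carefully, and the critical point to verify is that every handle-slide contribution which would spoil the comparison with $\X^j_\Kh$ has its source and target straddling two filtration levels, so that it is collapsed in $F_j / F_{j-2}$. I expect this to require an explicit description of where the canonical generators sit inside the $XY$-based flow category, together with a quantitative bookkeeping of quantum degree shifts under each of the flow category moves of \cite{JLS:2017,LOS:2018}. Once this is in hand, the Lipshitz--Sarkar invariance theorem for framed flow categories should yield the desired stable equivalence.
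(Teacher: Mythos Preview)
The statement you are attempting to prove is a \emph{conjecture} in the paper, not a theorem: the paper gives no proof of it and explicitly leaves it open (see the discussion following \Cref{prop:eliminate-0dim-moduli-spaces} and \Cref{subsec:quantum-filt}). So there is no ``paper's own proof'' to compare against; your proposal is a strategy for an open problem, and should be assessed as such.

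As a strategy, it correctly identifies the chain-level content: the associated graded of $C_\BN$ is $C_\Kh$, so granting \Cref{conj:1} the reduced cellular cochain complex of $F_j\X/F_{j-2}\X$ agrees with that of $\X^j_\Kh$. But the step ``lift this chain-level isomorphism to a stable equivalence of spectra'' is where the real difficulty lies, and your proposal does not supply a mechanism for it. Two framed flow categories with the same objects and the same $0$-dimensional moduli spaces (hence the same associated chain complex) need not have homotopy equivalent realizations: the higher-dimensional moduli spaces, and their framings, carry genuine homotopical information. In the Khovanov flow category these higher moduli are built by hand via the ladybug matching and the inductive covering of cube moduli; in the subquotient $\F^j_\BN$ they are whatever survives from the handle slides and the (conjectural) extended Whitney tricks used to produce the filtration. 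Your argument that ``any attaching data produced by $d'$ spans two filtration levels and dies'' addresses only the $0$-dimensional part; it says nothing about whether the remaining $k$-dimensional moduli ($k\geq 1$) inside a single quantum grade agree, even up to framed cobordism, with Lipshitz--Sarkar's.

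Concretely: after the slides and Whitney tricks, the $1$-dimensional moduli space between two grade-$j$ objects of relative index $2$ could be a disjoint union of intervals whose combinatorics differs from the Khovanov ladybug choice, or could carry a different framing. The paper's own \Cref{eg:2} already exhibits nontrivial closed $1$-manifolds appearing after the tricks; controlling such phenomena inside a fixed grade is exactly what is missing. Until you can match (or frame-cobord) the higher moduli, the ``Lipshitz--Sarkar invariance theorem'' you invoke at the end does not apply, since that theorem compares two realizations of the \emph{same} framed flow category, not two flow categories that merely share a chain complex.
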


Currently, we have the following result that supports \Cref{conj:1}.

\begin{proposition}[\Cref{prop:eliminate-0dim-moduli-spaces}]
    $\X_\BN(D)$ can be modified by a stable homotopy equivalence so that for any two cells $\sigma, \tau$ of $\X_\BN(D)$ with $\dim \sigma = \dim \tau + 1$, $\sigma$ attaches to $\tau$ non-trivially if and only if the quantum grading of $\sigma$ is greater or equal to that of $\tau$. 
\end{proposition}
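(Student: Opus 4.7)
The plan is to exploit the chain-level fact that the Bar-Natan differential preserves the quantum filtration on $C_\BN(D)$: for any pair of standard ($1X$-based) generators $\tau$ and $\sigma$, the coefficient $\langle d\tau, \sigma\rangle$ vanishes whenever $\qdeg(\sigma) < \qdeg(\tau)$. Through the cell-to-generator dictionary provided by \Cref{thm:1}, this forces the signed count of the zero-dimensional framed moduli space $\M(\sigma, \tau)$ in the flow category underlying $\X_\BN(D)$ to be zero whenever $\dim\sigma = \dim\tau + 1$ and $\qdeg(\sigma) < \qdeg(\tau)$. The content of the proposition is then to upgrade this algebraic vanishing to the geometric vanishing $\M(\sigma, \tau) = \emptyset$, since it is only the geometric information that determines whether the attaching is topologically non-trivial.

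To accomplish the upgrade I would invoke the cancellation / Whitney-trick move for framed flow categories from \cite{JLS:2017,LOS:2018}, which is a counterpart to the handle slide already used in the proof of \Cref{thm:1}. Each such move removes a pair of oppositely-signed points from a zero-dimensional moduli space, at the expense of modifying higher-dimensional strata, and realizes a flow category whose associated spectrum is stably homotopy equivalent to the original. Because each offending $\M(\sigma, \tau)$ has vanishing signed count, its points partition into sign-cancelling pairs, and iterated application of the move empties the moduli space entirely, making the corresponding attaching of $\sigma$ onto $\tau$ nullhomotopic.

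The main obstacle I anticipate is bookkeeping: each cancellation perturbs neighbouring strata of the flow category, and in particular can alter the zero-dimensional moduli space of another pair sharing a cell with the pair being cancelled. I would control this by stratifying the cancellations according to $\qdeg(\tau)$ and proceeding upward from the smallest value; then each move acting on a pair $(\sigma, \tau)$ only interacts with pairs whose target has quantum grading at least $\qdeg(\tau)$, so moduli spaces emptied at earlier stages remain empty. A subsidiary, essentially routine, check is that the cancellation move of \cite{JLS:2017,LOS:2018} applies to any pair of sign-cancelling points in an isolated zero-dimensional moduli space. Since every flow-category move is a stable homotopy equivalence, the composite modification produces a spectrum stably equivalent to $\X_\BN(D)$, and by construction no non-trivial one-step attaching with $\qdeg(\sigma) < \qdeg(\tau)$ survives, which is the \emph{only-if} content of the proposition; the converse direction would follow from checking that the handle-slide construction of \Cref{thm:1} produces non-trivial attachings in every instance where the quantum filtration permits them, which I would verify by direct inspection of the change-of-basis formulas between the $XY$- and $1X$-bases.
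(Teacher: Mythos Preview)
Your approach is essentially the paper's: both arguments apply the Whitney trick (\Cref{prop:whitney-trick}) to cancel oppositely-signed pairs in the offending $0$-dimensional moduli spaces, the paper identifying those pairs explicitly via \Cref{lem:index1-label-change} while you deduce their existence from the vanishing of the signed count.

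Two corrections are worth making. First, your bookkeeping concern is unnecessary: the Whitney trick between $x$ and $y$ with $|x|=|y|+1$ modifies only $\M(x,y)$ itself and moduli spaces of the form $\M(a,y)$, $\M(x,b)$, $\M(a,b)$ with $|a|>|x|$ or $|b|<|y|$, all of which have dimension at least $1$; a grading count shows no other $0$-dimensional moduli space is touched, so no stratification by $\gr_q$ is needed. Second, the ``if'' direction you propose to verify is not actually part of the content: the statement is phrased loosely, and the paper's proof establishes only that after the Whitney tricks every surviving $0$-dimensional moduli space satisfies $\gr_q(\x)\geq\gr_q(\y)$. The converse---that every grading-compatible pair has nonempty moduli space---is false in general and your direct inspection would not establish it.
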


The problem remains for the attachings between cells of relative dimension greater than $1$. If \Cref{conj:1} is true, then it is natural to expect that we also get a  \textit{(co)homotopical refinement} of the $s$-invariant. The latter half of the paper is dedicated to build the ground that would be necessary to pursue this objective. So far we have the following:

\begin{proposition}[\Cref{def:s-bar,cor:bar-s-bound}]
    If \Cref{conj:1} is true, then there is an integer valued knot invariant $\bar{s}$ defined for each knot $K$ by
    \[
        \bar{s}(K) = \gr_q[p_\ca(K)] + 1
    \]
    where 
    \[
        [p_\ca(K)] \in \pi^0(\X_\BN(K))
    \]
    is the canonical (stable) cohomotopy class of $K$, and $\gr_q$ is the quantum grading function on $\pi^0(\X_\BN(K))$ induced from the quantum filtration of $\X_\BN(K)$. The invariant $\bar{s}$ satisfies the following:
    \begin{enumerate}
        \item $\bar{s}$ is a knot concordance invariant.
        \item $|\bar{s}(K)| \leq 2g_4(K)$.
        \item If $K$ is positive, then $\bar{s}(K) = 2g(K) = 2g_4(K)$.
    \end{enumerate}
    Here $g(K)$ denotes the genus, $g_4(K)$ denotes the smooth slice genus of $K$ respectively. In particular, property 3.\ implies the Milnor conjecture. 
\end{proposition}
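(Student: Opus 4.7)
The plan is to transpose Rasmussen's original argument for the $s$-invariant from the chain level to the cohomotopy level, invoking \Cref{conj:1} to supply the quantum filtration on $\X_\BN$. The key input needed is cobordism functoriality for the filtered spectrum: every link cobordism $\Sigma \colon K_0 \to K_1$, presented by a movie of elementary diagram moves, should induce a spectrum map
\[
    \X_\BN(\Sigma) \colon \X_\BN(D_1) \to \X_\BN(D_0)
\]
that (a) is filtered with quantum shift $\chi(\Sigma)$, and (b) pulls back the canonical cohomotopy class $[p_\ca(K_1)]$ to $\pm[p_\ca(K_0)]$ whenever $\Sigma$ is connected. Both statements are classical on the chain level for Bar-Natan theory, so the real work is to lift each elementary piece (birth, death, saddle, Reidemeister moves) to a \emph{filtered} map of spectra and to verify movie-move invariance up to stable homotopy. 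Arranging this compatibly with \Cref{conj:1} is where I expect the main technical obstacle; note in particular that well-definedness of $\bar{s}$ as a knot invariant also follows from this functoriality, via the trivial concordance induced by an ambient isotopy.

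Granted (a) and (b), the three properties follow the classical template. For \emph{concordance invariance}, a concordance $\Sigma \colon K_0 \to K_1$ has $\chi(\Sigma) = 0$, so the induced map preserves quantum grading and sends canonical class to canonical class; combining with the reversed concordance $\bar{\Sigma}$ sandwiches $\gr_q[p_\ca(K_0)] = \gr_q[p_\ca(K_1)]$, hence $\bar{s}(K_0) = \bar{s}(K_1)$. For the \emph{slice-genus bound}, realize $g_4(K)$ by a connected smooth surface $F \subset B^4$ with $\partial F = K$, and regard $F$ as a connected genus-$g_4$ cobordism $\Sigma_+ \colon U \to K$ of Euler characteristic $-2g_4(K)$, together with its reverse $\Sigma_-$. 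Applying (a)--(b) to $\Sigma_\pm$ and using the normalization $\bar{s}(U) = 0$ fixed by the definition produces the two inequalities $-2g_4(K) \leq \bar{s}(K) \leq 2g_4(K)$.

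For \emph{positive knots}, fix a positive diagram $D$ of $K$. Following Rasmussen's chain-level computation, the oriented resolution of $D$ carries the canonical cohomotopy class $[p_\ca(K)]$, and reading off its quantum grading from the cellular construction of $\X_\BN$ reproduces the classical identity $\bar{s}(K) = 2g(K)$, where $g(K)$ equals the Seifert genus read off from $D$ (this is tight for positive diagrams by Bennequin's formula). Combined with the slice-genus bound and the elementary inequality $g_4(K) \leq g(K)$, this yields
\[
    2g(K) = \bar{s}(K) \leq 2g_4(K) \leq 2g(K),
\]
so $\bar{s}(K) = 2g(K) = 2g_4(K)$. Specialized to positive torus knots, this recovers the Milnor conjecture.
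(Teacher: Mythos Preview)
Your outline is essentially the paper's argument: filtered cobordism maps (constructed in Section~5, with the filtered property recorded in Remark~5.4 conditional on \Cref{conj:1}) together with preservation of the canonical cohomotopy class under connected cobordisms (\Cref{prop:canon-coh-cobordism}) yield $|\bar{s}(K)-\bar{s}(K')|\le 2g(S)$, and items 1--3 follow just as you describe.

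Two small corrections. First, for positive $K$, ``reading off the quantum grading from the cellular construction'' only gives the inequality $\bar{s}(K)\ge 2g(S)$: the minimum quantum grading in the complex is $2g(S)-1$, so $p_\ca$ trivially factors through $F^{2g(S)-1}\X=\X$, but nothing in the cellular picture alone rules out factoring through a higher level of the cofiltration. The paper closes this via the co-Hurewicz comparison $\bar{s}\le s$ (\Cref{lem:bar-s-ineq}), but your sandwich $2g(S)\le\bar{s}(K)\le 2g_4(K)\le 2g(K)\le 2g(S)$ works just as well once you state the first step as $\ge$ rather than $=$. The same applies to $\bar{s}(U)=0$: it is not literally ``fixed by the definition'' but follows either from a direct two-cell computation or from the comparison with $s$. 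Second, movie-move invariance is not needed anywhere in the argument; it suffices that each elementary move yields a filtered map carrying $[p_\ca]$ to $\pm[p_\ca]$, and this already forces $\bar{s}$ to be diagram-independent.
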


The definition of $\bar{s}$ suggests that with any general cohomology theory $h^*$ we get a similar invariant $s(K; h^*)$. In particular with $h^* = \tilde{H}(-; \QQ)$ we get the original $s$. With a good choice of $h^*$, there is a hope that the invariant $s(K; h^*)$ is more tractable than $\bar{s}$, and gives a better lower bound for the slice genus, or detects the non-sliceness of some knot that $s$ fails to detect. 

\subsection*{Organization}

This paper is organized as follows. In Section 2, we review the basics of Khovanov homology and flow categories. In Section 3, we give the construction of the $XY$-based Bar-Natan flow category $\C_{XY}$ and the associated spectrum $\X_{XY}$. We also determine the stable homotopy type of $\X_{XY}$ as in \Cref{thm:2}. In Section 4, we perform handle slides on $\C_{XY}$ to obtain the $1X$-based Bar-Natan flow category $\C_{1X}$ and the associated spectrum $\X_{1X}$. This gives the desired refinement stated in \Cref{thm:1}. After stating the two conjectures, we proceed towards cohomotopically refining the $s$-invariant. Along the way, space level analogues of the standard topics of Bar-Natan homology are given, that are cobordism maps (Section 5), duality (Section 6) and canonical classes (Section 7). In the final section, we discuss future prospects regarding the two conjectures, in particular a possible definition of a cohomotopically refined $s$-invariant.

\setcounter{theorem}{0}
\setcounter{conjecture}{0}
    \subsection*{Acknowledgements}

The author is grateful to his supervisor Mikio Furuta for the support. He thanks Tomohiro Asano and Kouki Sato for helpful suggestions. He thanks members of his \textit{academist fanclub}\footnote{\url{https://taketo1024.jp/supporters}}. This work was supported by JSPS KAKENHI Grant Number 20J15094. 
    
    \section{Preliminaries}

Throughout this paper we work in the smooth category, and assume that links and their diagrams are oriented. 

\subsection{Khovanov homology theory} \label{subsec:khovanov}

\begin{definition}
    Let $R$ be a commutative ring with unity. A \textit{Frobenius algebra} over $R$ is a quintuple $(A, m, \iota, \Delta, \epsilon)$ such that: 
    \begin{enumerate}
        \item $(A, m, \iota)$ is an associative $R$-algebra with multiplication $m$ and unit $\iota$,
        \item $(A, \Delta, \epsilon)$ is a coassociative $R$-coalgebra with comultiplication $\Delta$ and counit $\epsilon$,
        \item the Frobenius relation holds: 
        \[
            \Delta \circ m = (\id \otimes m) \circ (\Delta \otimes \id) = (m \otimes \id) \circ (\id \otimes \Delta).
        \]
    \end{enumerate}
\end{definition}

\begin{definition}
    Let $R$ be a commutative ring with unity, and $h, t$ be elements in $R$. Define $A_{h, t} = R[X]/(X^2 - hX - t)$, and endow it a Frobenius algebra structure as follows: the $R$-algebra structure is inherited from $R[X]$. Regarding $A_{h, t}$ as a free $R$-module with basis $\{1, X\}$, the counit $\epsilon: A_{h, t} \rightarrow R$ is defined by
    \[
        \epsilon(1) = 0,\quad
        \epsilon(X) = 1.
    \]
    Then the comultiplication $\Delta$ is uniquely determined so that $(A_{h, t}, m, \iota, \Delta, \epsilon)$ becomes a Frobenius algebra. Explicitly, the operations $m$ and $\Delta$ are given by 
    \begin{equation}
    \label{eq:1X-operations}
        \begin{gathered}
            m(1 \otimes 1) = 1, \quad 
            m(X \otimes 1) = m(1 \otimes X) = X, \quad
        	m(X \otimes X) = hX + t, \\
            \Delta(1) = X \otimes 1 + 1 \otimes X - h (1 \otimes 1), \quad 
        	\Delta(X) = X \otimes X + t (1 \otimes 1).
        \end{gathered}
    \end{equation}
\end{definition}

\begin{definition}
    Suppose $(R, h, t)$ are taken as above. Given a link diagram $D$ with $n$ crossings, the complex $C_{h, t}(D; R)$ is defined by following the construction given in \cite{Khovanov:2000}, except that the defining Frobenius algebra $A = R[X]/(X^2)$ is replaced by $A_{h, t} = R[X]/(X^2 - hX - t)$. $C_{h, t}(D; R)$ is called the \textit{(generalized) Khovanov complex}, and its homology $H_{h, t}(D; R)$ the \textit{(generalized) Khovanov homology} of $D$ with respect to $(R, h, t)$. 
\end{definition}

Recall that Khovanov's original theory \cite{Khovanov:2000} is given by $(h, t) = (0, 0)$, Lee's theory \cite{Lee:2005} by $(h, t) = (0, 1)$, and (the filtered version of) Bar-Natan's theory \cite{BarNatan:2004} by $(h, t) = (1, 0)$. 
% Khovanov's \textit{universal theory} \cite{Khovanov:2004} is given by $(R, h, t) = (\ZZ[h, t], h, t)$, which any rank 2 Frobenius algebra based link homology theory can be obtained. 
It is proved in \cite{Bar-Natan:2005, Khovanov:2004} that the isomorphism class of $H_{h, t}(D)$ is invariant under the Reidemeister moves. Thus it is justified to refer to the isomorphism class of $H_{h, t}(D)$ as the \textit{(generalized) Khovanov homology} of the corresponding link.

\begin{definition}
    Let $D$ be a link diagram. A \textit{state} $u$ of $D$ is an assignment of $0$ or $1$ to each crossing of $D$. When a total ordering of the crossings of $D$ is given, then $u$ is identified with an element $u \in \{0, 1\}^n$. A state $u$ yields a crossingless diagram $D(u)$ by resolving all crossings accordingly. 
\end{definition}

\begin{definition}
    Let $S$ be an arbitrary set. An \textit{$S$-enhanced state} of a link diagram $D$ is a pair $x = (u, a)$ such that $u$ is a state of $D$ and $a$ is an assignment of an element in $S$ to each circle of $D(u)$. 
    
    When $S$ is a subset of $A_{h, t}$, then an $S$-enhanced state is identified with an element of $C_{h, t}(D)$ by the corresponding tensor product of the elements of $S$. Moreover when $S$ is a basis of $A_{h, t}$, then the set of all $S$-enhanced states form a basis of $C_{h, t}(D)$. In particular for $S = \{1, X\} \subset A_{h, t}$, we call the set of all $1X$-enhanced states of $D$ the \textit{standard generators} of $C_{h, t}(D)$.
\end{definition}

\begin{definition}
\label{def:homol-quantum-grading}
    Let $D$ be a link diagram, and $n^\pm$ be the number of positive / negative crossings of $D$. For any $1X$-enhanced state $x = (u, a)$ of $D$, the \textit{homological grading} of $x$ is given by 
    \[
        \gr_h(x) = |u| - n^-,
    \]
    where $|u|$ is the Manhattan norm of $u$. The \textit{quantum grading}%
    \footnote{ 
        This definition follows \cite{Khovanov:2000}. Regarding $R[X]$ as a graded algebra, it seems more natural to declare $\deg(1) = 0$, $\deg(X) = -2$ and define $\gr_q(x) = \gr_h(x) + \deg(a) + |D(u)| + n^+ - 2n^-$.
    }
    of $x$ is defined as follows: declare $\deg(1) = 1,\ \deg(X) = -1$ and let $\deg(a)$ be the sum of the degrees of the labels on the circles of $D(u)$. Define
    \[
        \gr_q(x) = \gr_h(x) + \deg(a) + n^+ - 2n^-.
    \]
    The complex $C_{h, t}(D; R)$ is given the homological grading and the quantum grading from the standard generators. It can be easily seen that $d$ increases the homological grading by $1$. 
\end{definition}

\begin{remark}
    Depending on the degrees of $h, t \in R$, the differential $d$ of $C_{h, t}(D)$ also respects the quantum grading. When $h = t = 0$ or when $R$ is graded and $\deg(h) = -2, \deg(t) = -4$, the differential $d$ preserves the quantum grading. In this case the homology becomes bigraded. When $R$ is trivially graded and $(h, t) \neq (0, 0)$, then $d$ is quantum grading non-decreasing. In this case the homology admits the \textit{quantum filtration}. It is known that the isomorphisms corresponding to the Reidemeister moves respect the quantum grading, hence the (generalized) Khovanov homology becomes an invariant as a bigraded or a filtered module, accordingly. 
\end{remark}

\begin{remark}
    Occasionally we consider the \textit{unnormalized gradings}
    \[
        \overline{\gr}_h(x) = |u|,
        \quad
        \overline{\gr}_q(x) = \overline{\gr}_h(x) + \deg(a).
    \]
    The complex equipped with the unnormalized gradings is called the \textit{unnormalized Khovanov complex} and is denoted $\bar{C}_{h, t}(D; R)$.
\end{remark}

Lee proved that for any link diagram $D$, its $\QQ$-Lee homology has dimension $2^{|D|}$ with explicit generators called the \textit{canonical generators} (here $|D|$ denotes the number of components of the corresponding link). In particular for a knot diagram $D$, its $\QQ$-Lee homology is always $2$ dimensional. This is contrasting to Khovanov's theory, where the complexity of the module increases with respect to the complexity of the link. As discussed in \cite{MTV:2007}, the construction of the canonical generators can be generalized for any $(R, h, t)$ that satisfies the following condition.

\begin{condition} \label{cond:diagonalizable}
	The quadratic polynomial $X^2 - hX - t \in R[X]$ factors as $(X - u)(X - v) \in R[X]$ for some $u, v \in R$. 
\end{condition}

With the assumption that \Cref{cond:diagonalizable} holds, put $c = v - u$. Note that $c$ is a square root of $h^2 + 4t$. We have $c = 0$ for Khovanov's theory, $c = 2$ for Lee's theory, and $c = 1$ for Bar-Natan's theory. Define two elements in $A_{h, t}$ by
\[
    \a = X - u,\quad 
    \b = X - v
\]
If we restrict the operations $m, \Delta$ of $A_{h, t}$ to the submodule spanned by $\a$ and $\b$, then they are diagonalized as
\begin{equation}
\label{eq:ab-operations}
    \begin{gathered}
    	m(\a \otimes \a) = c\a,\ m(\a \otimes \b) = m(\b \otimes \a) = 0,\ m(\b \otimes \b) = -c\b, \\
    	\Delta(\a) = \a \otimes \a,\ \Delta(\b) = \b \otimes \b.
    \end{gathered}
\end{equation}

\begin{algorithm} \label{algo:ab-coloring}
    Given a link diagram $D$, label each of its Seifert circles by $\a$ or $\b$ according to the following algorithm: separate $\mathbb{R}^2$ into regions by the Seifert circles of $D$, and color the regions in the checkerboard fashion, with the unbounded region colored white. For each Seifert circle, let it inherit the orientation from $D$, and label it $\a$ if it sees a black region to the left, otherwise $\b$ (see \Cref{fig:ab}).
\end{algorithm}

\begin{figure}[t]
	\centering
    \includegraphics[scale=0.35]{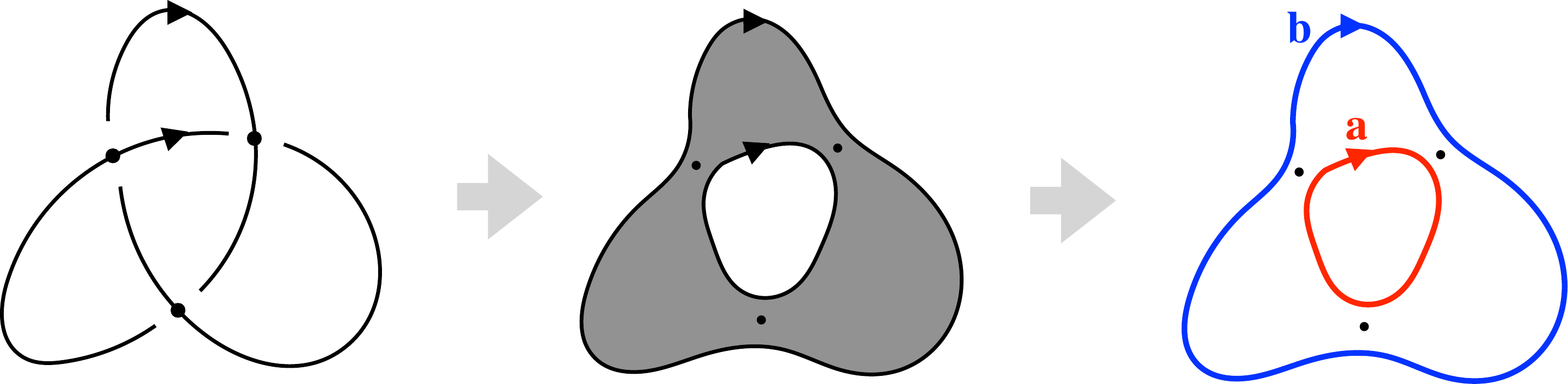}
	\caption{Coloring the Seifert circles by $\a$, $\b$.}
    \label{fig:ab}
\end{figure}

Since there is a unique state $u$ such that $D(u)$ gives the Seifert circles of $D$, the $\a\b$-labeling of \Cref{algo:ab-coloring} determines a unique $\a\b$-enhanced state $\ca(D) \in C_{h, t}(D)$. Furthermore, on the underlying unoriented diagram of $D$ there are $2^{|D|}$ possible orientations, and for each such orientation $o$, we can apply the same algorithm to obtain an $\a\b$-enhanced state $\ca(D, o) \in C_{h, t}(D)$. From \eqref{eq:ab-operations} it is easily seen that these elements are cycles.

\begin{definition}
    The cycles $\{\ca(D, o) \}_o$ are called the \textit{canonical cycles} of $D$, and those homology classes the \textit{canonical classes} of $D$. 
\end{definition}

% In particular, we name the following two cycles for later use. 

% \begin{definition}[$\ca$-, $\cb$-cycles]
%     Let $o$ be the given orientation of $D$. We define
%     %
%     \begin{align*}
%         \ca(D) &:= \ca(D, o)\\
%         \cb(D) &:= \ca(D, -o)
%     \end{align*}
%     %
%     and call them the \textit{$\ca$-cycle, $\cb$-cycle} of $D$. 
% \end{definition}

The following is a generalization of \cite[Theorem 4.2]{Lee:2005}, which is proved in \cite[Theorem 4.2]{Turner:2020} and in \cite[Proposition 2.9]{Sano:2020}. 

\begin{proposition} \label{prop:ab-gen}
	If $c = \sqrt{h^2 + 4t}$ is invertible in $R$, then $H_{h, t}(D; R)$ is freely generated over $R$ by the canonical classes. In particular $H_{h, t}(D; R)$ has rank $2^{|D|}$.
\end{proposition}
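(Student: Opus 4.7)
The plan is to work in the basis $\{\a, \b\}$ of $A_{h,t}$, which is a valid basis since the change-of-basis matrix from $\{1, X\}$ has determinant $c$, a unit by hypothesis. Consequently, the $\a\b$-enhanced states of $D$ form an $R$-basis of $C_{h,t}(D; R)$, and the cube differential acts through the diagonalized operations \eqref{eq:ab-operations}: a split preserves the label on a circle, a matched-label merge scales by $\pm c$, and a merge of mismatched labels vanishes.

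The vanishing of mismatched merges drives the argument. I would use it to show that labels propagate consistently across saddles, so that any $\a\b$-enhanced state with nonzero differential image is determined by a single labeling $\ell$ of the link components of $D$ by $\a$ or $\b$ (each circle inheriting the label of the component on which it lies, traced through the saddle-connected structure). This produces a direct-sum decomposition of chain complexes
\[
    C_{h,t}(D; R) = \bigoplus_\ell C_\ell,
\]
indexed by the $2^{|D|}$ labelings $\ell$, where $C_\ell$ is spanned by those $\a\b$-enhanced states whose circle labels are induced by $\ell$.

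The core of the proof is to show that $H^*(C_\ell) \isom R$ is concentrated in a single homological degree, generated by the canonical class $[\ca(D, o_\ell)]$, where $o_\ell$ is the orientation of $D$ whose checkerboard labeling (\Cref{algo:ab-coloring}) yields $\ell$. Because every cube edge of $C_\ell$ carries a coefficient $\pm 1$ (for a split) or $\pm c$ (for a matched merge), and each of these is a unit in $R$, one can reduce $C_\ell$ by iterated Gaussian elimination along invertible edges, collapsing the cube one direction at a time. After $n$ cancellations a single generator survives in a definite homological degree, and inspection against \eqref{eq:ab-operations} identifies it with the canonical cycle $\ca(D, o_\ell)$. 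Summing over the $2^{|D|}$ summands yields the claim.

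The main obstacle will be making the iterated cancellation rigorous: whether a given crossing's cube edge is a split or a merge depends on the ambient state, so $C_\ell$ does not factor straightforwardly as a tensor product of two-term complexes. A workable route is induction on the number of crossings, at each stage cancelling one invertible edge and identifying the surviving subcomplex with the $\a\b$-complex of a diagram with one fewer crossing, essentially Bar-Natan's delooping argument adapted to the $\a\b$-basis. An alternative, more classical route is to prove Reidemeister invariance of the generating set $\{[\ca(D,o)]\}_o$ and reduce to a crossing-free diagram of $|D|$ unknots, where $C_{h,t}$ is manifestly $A_{h,t}^{\otimes |D|}$ of rank $2^{|D|}$, visibly generated by the $\a\b$-labelings; this is Lee's original approach adapted to the generalized $(h,t)$ setting.
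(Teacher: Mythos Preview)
The paper does not give its own proof of this proposition; it is stated with references to Turner and to the author's earlier work. The closest in-paper argument is in Section~\ref{subsec:C_BN-stable-htpy-type}, where for the Bar-Natan case $(h,t)=(1,0)$ the flow-category analogue (\Cref{cor:X_BN-structure}) is proved by showing that the $XY$-poset of $D$ decomposes into a disjoint union of cubes (\Cref{prop:P(D)-decomp}) and that every positive-dimensional cube cancels (\Cref{lem:cube-contract}); applying $\tilde{H}^*$ then recovers the homology statement.

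Your overall plan---work in the $\a\b$-basis and exploit the diagonalized operations \eqref{eq:ab-operations} to split the complex into acyclic pieces plus isolated generators---is correct in spirit and matches that argument. But the specific decomposition you propose, indexed by the $2^{|D|}$ labelings $\ell$ of the \emph{link components}, is not well defined. A circle in a resolution $D(u)$ typically contains arcs from several different link components, so there is no way for it to ``inherit the label of the component on which it lies.'' Already for the Hopf link (cf.\ \Cref{fig:poset-hopf-link}): at state $(1,0)$ the single circle traces through both components, and at $(1,1)$ each of the two circles does as well. Consequently there are $\a\b$-enhanced states lying in no $C_\ell$, and in this example two of the four canonical cycles (those sitting over the state $(1,1)$) are among them. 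So $\bigoplus_\ell C_\ell$ is not all of $C_{h,t}(D;R)$.

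The correct decomposition---the one the paper uses, and the one your Gaussian-elimination instinct is really pointing toward---is into the maximal sub-cubes of the poset of $\a\b$-enhanced states (compare \Cref{prop:P(D)-decomp}). There are in general many more than $2^{|D|}$ pieces; exactly $2^{|D|}$ of them are isolated vertices (the canonical states, one per orientation), and the remaining pieces are positive-dimensional cubes whose edge maps are all units $\pm 1$ or $\pm c$, hence acyclic. Your two fallback routes in the final paragraph (induction on crossings via an invertible-edge cancellation, or Reidemeister invariance reducing to a crossingless diagram) are both valid and are essentially the arguments in the cited references.
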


The following proposition is proved in \cite[Proposition 4.3]{Lee:2005}, which holds verbatim in the general setting.

\begin{proposition} \label{prop:alpha-homol-gr}
    Let $D$ be an $l$-component link diagram and $D_1, \cdots, D_l$ be the component diagrams. For any orientation $o$ on the underlying unoriented diagram of $D$, let $I \subset \{1, \ldots, l\}$ be the set of indices $i$ such that $o$ is opposite to the given orientation on $D_i$. The homological grading of $\ca(D, o)$ is given by
	\[
    	2\sum_{i \in I, j \notin I} \mathit{lk}(D_i, D_j).
    \]
    where $\mathit{lk}$ denotes the linking number. In particular, when $o$ is the given orientation of $L$, then $\gr(\ca(D, o)) = 0$.
\end{proposition}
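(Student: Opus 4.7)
The plan is to observe that the homological grading of $\ca(D, o)$ depends only on the underlying state $u_o$ of the enhanced state, not on the $\a\b$-labels nor on the deformation parameters $(h, t)$. Hence by \Cref{def:homol-quantum-grading} the computation reduces to counting $|u_o|$, and Lee's original argument applies unchanged. Explicitly, $\gr_h(\ca(D, o)) = |u_o| - n^-(D)$, where $n^-(D)$ is computed from the given orientation of $D$, not from $o$.

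First I would identify $|u_o|$ with $n^-(D, o)$, the number of crossings that are negative with respect to the orientation $o$. This follows from the standard convention that the Seifert (oriented) resolution is the $0$-resolution at a positive crossing and the $1$-resolution at a negative crossing; applied to $(D, o)$, this says that $u_o$ assigns $1$ to precisely those crossings that are negative with respect to $o$. In particular, when $o$ is the given orientation we recover $|u_o| = n^-(D)$.

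Next I would analyze how passing from the given orientation to $o$ affects the sign of each crossing. A self-crossing of some component $D_i$ has both strands reversed simultaneously (or neither), so its sign is preserved. A mixed crossing $c$ between distinct components $D_i$ and $D_j$ has exactly one strand reversed if and only if exactly one of $i, j$ lies in $I$, in which case $\epsilon(c)$ is negated; otherwise it is preserved. Therefore each such flipping crossing contributes $+\epsilon(c)$ to $n^-(D, o) - n^-(D)$, and summing over the crossings between a fixed pair $D_i, D_j$ gives $\sum_{c} \epsilon(c) = 2\,\mathit{lk}(D_i, D_j)$.

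Finally I would sum over all pairs $(i, j) \in I \times I^c$, noting that each unordered pair of components with one index in $I$ and one in $I^c$ appears exactly once in this indexing by symmetry of the linking number, to obtain $\gr_h(\ca(D, o)) = n^-(D, o) - n^-(D) = 2\sum_{i \in I,\, j \notin I} \mathit{lk}(D_i, D_j)$. Specializing to $o$ equal to the given orientation, i.e.\ $I = \emptyset$, recovers $\gr_h(\ca(D)) = 0$. The only delicate point is keeping the sign conventions for positive/negative crossings and $0/1$-resolutions consistent throughout; once these are fixed, the remainder is elementary bookkeeping and presents no real obstacle.
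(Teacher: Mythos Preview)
Your proposal is correct and is precisely Lee's original argument; the paper does not give its own proof but simply cites \cite[Proposition~4.3]{Lee:2005} and remarks that it ``holds verbatim in the general setting,'' which is exactly the observation you make in your first paragraph. Your write-up fills in the details of that reference faithfully.
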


Thus when \Cref{cond:diagonalizable} is satisfied and $c = \sqrt{h^2 + 4t}$ is invertible, then from \Cref{prop:ab-gen,prop:alpha-homol-gr}, the graded module structure of $H_{h, t}(D)$ is completely determined. This is the case for $\QQ$-Lee homology, and also for $\ZZ$-Bar-Natan homology.

\subsection{Resolution configurations}

Resolution configurations were introduced in \cite{LS:2014} for the construction of the Khovanov flow category. Here, for later use, we mildly generalize some of the concepts.

\begin{definition}
    A \textit{resolution configuration} is a pair $D = (Z(D), A(D))$, where $Z(D)$ is a set of pairwise-disjoint embedded circles in $S^2$, and $A(D)$ is a totally ordered collection of disjoint arcs embedded in $S^2$ with $\bigcup Z(D) \cap \bigcup A(D) = \partial A(D)$. The cardinality of $A(D)$ is called the \textit{index} of $D$ and is denoted $ind(D)$. A resolution configuration $D$ is \textit{basic} if every circle of $Z(D)$ intersects an arc in $A(D)$, and $D$ is \textit{connected} if the underlying planar graph of $D$ is connected.
\end{definition}

\begin{definition}
    For resolution configurations $D, E$, let $D \setminus E$ be the resolution configuration with circles $Z(D) \setminus Z(E)$ and arcs $\set{a \in A(D) | a \cap \bigcup Z(E) = \varnothing }$.
\end{definition}

\begin{definition}
    Let $D$ be an index $n$ resolution configuration. For any subset $B \subset A(D)$, the \textit{surgery of $D$ along $B$}, denoted $s_B(D)$, is the resolution configuration obtained by performing embedded surgeries along all arcs $a \in B$. In particular the maximal surgery on $D$ is denoted $s(D) = s_{A(D)}(D)$. When $E = s_B(D)$ with $|B| = k$, we write $E \succeq_k D$. With the ordering of the arcs $A(D) = \{a_1, \ldots, a_n\}$, a subset $B \subset A(D)$ can be identified with a vertex $u \in \{0, 1\}^n$ under the correspondence
    \[
        u_i = 1 \Leftrightarrow a_i \in B.
    \]
    Thus we also write $D(u)$ for $s_B(D)$. 
\end{definition}

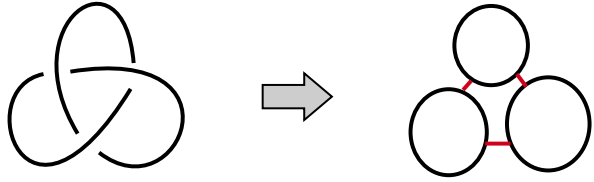
\begin{figure}[t]
    \centering
    \tikzset{every picture/.style={line width=0.75pt}} %set default line width to 0.75pt        

\begin{tikzpicture}[x=0.75pt,y=0.75pt,yscale=-1,xscale=1]
%uncomment if require: \path (0,109); %set diagram left start at 0, and has height of 109

\begin{scope}
    \clip(0,0) rectangle (100, 90);
    %Curve Lines [id:da03938455656638351] 
    \draw [line width=1.5]    (67.58,34.39) .. controls (62.8,-25.4) and (5.36,13.84) .. (39.36,70.01) ;
    %Curve Lines [id:da7287581117276407] 
    \draw [line width=1.5]    (50.15,79.58) .. controls (90.85,111.24) and (126.02,24.21) .. (35.66,38.73) ;
    % %Curve Lines [id:da6108563155788949] 
    \draw [line width=1.5]    (22.13,39.95) .. controls (-15.3,47.35) and (12.68,134.29) .. (66.06,47.44) ;
\end{scope}

%Right Arrow [id:dp5792607900885696] 
\draw  [fill={rgb, 255:red, 155; green, 155; blue, 155 }  ,fill opacity=0.5 ] (132.65,45.44) -- (153.61,45.44) -- (153.61,39.54) -- (167.58,51.34) -- (153.61,63.14) -- (153.61,57.24) -- (132.65,57.24) -- cycle ;
%Shape: Ellipse [id:dp14907455078390086] 
\draw  [line width=1.5]  (229.35,25.56) .. controls (229.35,14.52) and (237.65,5.57) .. (247.89,5.57) .. controls (258.12,5.57) and (266.42,14.52) .. (266.42,25.56) .. controls (266.42,36.6) and (258.12,45.56) .. (247.89,45.56) .. controls (237.65,45.56) and (229.35,36.6) .. (229.35,25.56) -- cycle ;
%Shape: Ellipse [id:dp20235650224576274] 
\draw  [line width=1.5]  (255.87,65.11) .. controls (255.87,52.13) and (265.19,41.6) .. (276.69,41.6) .. controls (288.19,41.6) and (297.52,52.13) .. (297.52,65.11) .. controls (297.52,78.09) and (288.19,88.61) .. (276.69,88.61) .. controls (265.19,88.61) and (255.87,78.09) .. (255.87,65.11) -- cycle ;
%Shape: Ellipse [id:dp6267106186497589] 
\draw  [line width=1.5]  (207.23,69.3) .. controls (207.23,57.31) and (215.84,47.6) .. (226.46,47.6) .. controls (237.08,47.6) and (245.69,57.31) .. (245.69,69.3) .. controls (245.69,81.28) and (237.08,91) .. (226.46,91) .. controls (215.84,91) and (207.23,81.28) .. (207.23,69.3) -- cycle ;
%Straight Lines [id:da9487621045552919] 
\draw [color={rgb, 255:red, 208; green, 2; blue, 27 }  ,draw opacity=1 ][line width=1.5]    (237.82,43.33) -- (233.74,47.95) ;
%Straight Lines [id:da5398747737765135] 
\draw [color={rgb, 255:red, 208; green, 2; blue, 27 }  ,draw opacity=1 ][line width=1.5]    (257.65,75.05) -- (244.9,75.05) ;
%Straight Lines [id:da7768252780825851] 
\draw [color={rgb, 255:red, 208; green, 2; blue, 27 }  ,draw opacity=1 ][line width=1.5]    (265.63,46.35) -- (260.84,39.97) ;

\end{tikzpicture}
    \caption{A resolution configuration associated to a link diagram}
    \label{fig:res-conf}
\end{figure}

\begin{definition}
    Given a link diagram $D$, we get a resolution configuration $D_0$ by performing $0$-resolutions on all crossings of $D$, and inserting arcs one for each crossing so that it is perpendicular to the resolution (see \Cref{fig:res-conf}). We call $D_0$ the \textit{resolution configuration associated} to $D$. When there is no confusion, we simply write $D$ for $D_0$. 
\end{definition}

\begin{definition}
\label{def:labeled-resolution-configuration}
    Let $S$ be an arbitrary set. An \textit{$S$-labeling} of a resolution configuration $D$ is a map
    \[
        x: Z(D) \rightarrow S.
    \]
    Such pair $(D, x)$ is called an \textit{$S$-labeled resolution configuration}.
    % When $Z(D)$ is totally ordered as $\{Z_i\}_{i = 1}^r$, we formally express $x$ as
    % \[
    %     x = x_1 \otimes \cdots \otimes x_r,\quad x_i = x(Z_i).
    % \]
\end{definition}

\begin{definition}\label{def:decorated-resolution-configuration}
    An \textit{$S$-decorated resolution configuration} is a triple $(D; x, y)$ such that $D$ is a resolution configuration, $y$ is an $S$-labeling of $D$, and $x$ is an $S$-labeling of $s(D)$. An $S$-decorated resolution configuration is \textit{admissible} when $(D, y) \preceq (s(D), x)$ holds.  
\end{definition}

\begin{remark}
    In \cite[Definition 2.11]{LS:2014}, admissibility is imposed in the definition of decorated resolution configurations.
\end{remark}

\begin{definition} \label{def:basic-relation}
    Let $S$ be a label set and $\mathcal{D}_S$ be the set of all (isotopy classes of) $S$-labeled resolution configurations. A \textit{basic relation} on $\mathcal{D}_S$ is a binary relation $\prec_1$ on $\mathcal{D}_S$ such that $(E, y) \prec_1 (D, x)$ holds only if (i) $E$ is basic, (ii) $\ind(E) = 1$ and (iii) $D = s(E)$ (see \Cref{fig:khovanov-relation} for an example). A basic relation $\prec_1$ extends to a binary relation between resolution configurations of relative index $1$, and its reflexive transitive closure gives a partial order $\preceq$ on $\mathcal{D}_S$, which we call the \textit{partial order generated by} $\prec_1$. 
\end{definition}

\begin{definition}
\label{def:res-conf-poset}
    Suppose $\mathcal{D}_S$ is given a partial order $\preceq$. For any resolution configuration $D$, define a subposet $P_{S}(D)$ of $\mathcal{D}_S$ by
    \[
        P_{S}(D) = \set{ (E, x) \in \mathcal{D}_S | E = s_B(D),\ B \subset A(D) }.
    \]
    For an $S$-decorated resolution configuration $(D; x, y)$, define a (possibly empty) subposet $P_{S}(D; x, y)$ of $P_{S}(D)$ by 
    \[
        P_{S}(D; x, y) 
        = \set{ (E, z) \in P_{S}(D) | (D, y) \preceq (E, z) \preceq (s(D), x) }.
    \]
    Obviously $P_{S}(D; x, y)$ is non-empty if and only if $(D; x, y)$ is admissible. 
\end{definition}

\afterpage{
\begin{figure}[t]
    \centering
    \input{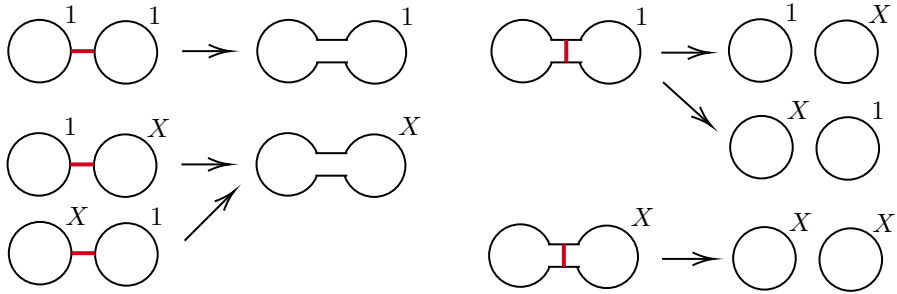}
    \caption{The basic relation for $\C_\Kh$}
    \label{fig:khovanov-relation}
\end{figure}
}

With the above setup, we may reinterpret the Khovanov chain complex using resolution configurations. Take the label set $S = \{1, X\}$ and define the basic relation $\prec_1$ as in \Cref{fig:khovanov-relation}. Given a link diagram $D$, the Khovanov chain complex $C_\Kh(D; \ZZ)$ is generated by $1X$-labeled resolution configurations $\x = (D(u), x)$ with differential $d$ defined as
\[
    d \y = \sum_{\y \prec_1 \x} (-1)^{s(e_{u, v})} \x.
\]
Here, $s$ is a sign assignment (see \Cref{def:sign-assignment}), and $u, v \in \{0, 1\}^n$ are the states of $D$ corresponding to $\x, \y$ respectively.

\subsection{Flow categories}

\textit{Flow categories} were introduced by Cohen-Jones-Segal \cite{CJS:1995} as an object that encapsulates the data of critical points and the moduli spaces of a given Floer functional. They proposed a way to build a CW-spectrum from a framed flow category, that refines the Floer chain complex. Here we review the formalization given by Lipshitz and Sarkar \cite{LS:2014}.

\begin{definition}
    A \textit{flow category} $\C$ is a topological category equipped with a grading function
    \[
        |\cdot|: \Ob(\C) \rightarrow \ZZ
    \]
    that satisfies the following conditions:
    \begin{enumerate}
        \item For any $x \in \C$, $\Hom_\C(x, x) = \{\id_x\}$. For any distinct $x, y \in \C$, $\Hom_\C(x, y)$ is a (possibly empty) compact $(|x| - |y| - 1)$-dimensional $\abrac{|x| - |y| - 1}$-manifold\footnote{An \abrac{n}-manifold $M$ is a manifold with corners, together with \textit{faces} $(\partial_1 M, \ldots, \partial_n M)$ that give a well-behaved combinatorial structure on the boundary $\partial M = \bigcup_i \partial_i M$ and the corners. See \cite{LS:2014} for the precise definition.}. $\Hom_\C(x, y)$ is denoted $\M_\C(x, y)$ and is called the \textit{moduli space} from $x$ to $y$. 
        
        \item For any distinct $x, y, z \in \C$ with $|z| - |y| = k$, the composition map
        \[
            \circ: \M_\C(z, y) \times \M_\C(x, z) \rightarrow \M_\C(x, y)
        \]
        is an embedding into $\partial_k \M_\C(x, y)$, which is also a $\abrac{|x| - |y| - 2}$-map, i.e.\ 
        \[
            \circ^{-1}(\partial_i \M_\C(x, y)) = 
            \begin{cases}
                \partial_i \M_\C(z, y) \times \M_\C(x, z) & \text{for } i < k,\\
                \M_\C(z, y) \times \partial_{i - k} \M_\C(x, z) & \text{for } k > i.
            \end{cases}
        \]

        \item For any distinct $x, y \in \C$, the composition map $\circ$ induces a diffeomorphism
        \[
            \partial_i \M_\C(x, y) \isom \bigsqcup_{z:\ |z| - |y| = i} \M_\C(z, y) \times \M_\C(x, z).
        \]
    \end{enumerate}
\end{definition}

\begin{definition}
    A full subcategory $\C'$ of a flow category $\C$ is \textit{downward-closed} (resp.\ \textit{upward-closed}) if for any $x, y \in \Ob(\C)$ with $\M_\C(x, y) \neq \varnothing$, $x \in \Ob(\C')$ implies $y \in \Ob(\C')$ (resp.\ $y \in \Ob(\C')$ implies $x \in \Ob(\C')$).
\end{definition}

\begin{definition}
    A \textit{framed flow category} is a triple $(\C, \iota, \varphi)$ such that $\C$ is a flow category, $\iota$ is a \textit{neat embedding} of $\C$ into some Euclidean space with corners, and $\varphi$ is a \textit{coherent framing} for $\iota$ (see \cite{LS:2014} for the precise definitions of the terminologies). We usually make $\iota$ and $\varphi$ implicit and regard $\C$ as a framed flow category.
\end{definition}

\begin{definition}
    Given a framed flow category $\C = (\C, \iota, \varphi)$, the \textit{associated chain complex} $C_*(\C)$ of $\C$ is defined as follows: 
    \begin{enumerate}
        \item The $i$-th chain group is freely generated by the objects of grading $i$:
        \[
            C_i(\C) = \bigoplus_{|x| = i} \ZZ x.
        \]
        \item The differential $\partial$ is defined as 
        \[
            \partial x = \sum_{|y| = |x| - 1} \# \M_\C(x, y) \cdot y
        \]
        where $\# \M_\C(x, y)$ is the signed counting of the $0$-dimensional moduli space $\M_\C(x, y)$, i.e.\ for each point $p \in \M_\C(x, y)$, its sign is given by the orientation of the framing $\varphi_{x, y}(p)$ at the point $\iota_{x, y}(p)$. 
    \end{enumerate}
    The \textit{associated cochain complex} $C^*(\C)$ is the dual complex $\Hom(C_*(\C), \ZZ)$. Given a (co)chain complex $C$, we say $\C$ \textit{refines} $C$ if the associated (co)chain complex of $\C$ is isomorphic to $C$. 
\end{definition}

\begin{proposition}[{\cite[Section 3.3]{LS:2014}}] \label{prop:CJS-const}
    Given a framed flow category $\C$, there is a CW-complex $|\C|$ satisfying the following conditions: 
    \begin{enumerate}
        \item The cells of $|\C|$ correspond bijectively to the objects of $\C$ (except for the basepoint),
        \item there is an integer $\ell \in \ZZ$ such that for each object $x \in \Ob(\C)$ the corresponding cell $\sigma_x$ has $\dim(\sigma_x) = |x| + \ell$,
        \item there is an isomorphism between the reduced cellular chain complex $\tilde{C}_*(|\C|)$ of $|\C|$ and the associated chain complex $C_*(\C)$ such that the cells of $|\C|$ are mapped to the generators of $C_*(\C)$, and
        \item the formal desuspension $\Sigma^{-\ell}|\C|$ is independent, up to stable homotopy equivalence, of the choice of the framed neat embedding of $\C$ and other auxiliary choices made for the construction of $|\C|$.
    \end{enumerate}
\end{proposition}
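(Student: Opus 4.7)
The plan is to adapt the Cohen--Jones--Segal construction along the lines carried out in \cite{LS:2014}. The two key inputs are the neat embedding $\iota$, which realizes each $\M_\C(x,y)$ as a neatly embedded submanifold of a standard cube-times-Euclidean-space so that the $\abrac{n}$-manifold face structure matches the ambient cube stratification, and the coherent framing $\varphi$, which trivializes the normal bundles in a way that makes composition of morphisms correspond to concatenation of framed thickenings.

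First I would use $\varphi$ to thicken each moduli space $\M_\C(x,y)$ to a compact framed codimension-$0$ submanifold with corners $K(x,y)$, chosen inductively on the relative grading $|x|-|y|$ so that under the composition embedding one has $K(z,y) \times K(x,z) \subset \partial_k K(x,y)$ as a genuine subthickening, not merely a diffeomorphic image. The $\abrac{n}$-manifold coherence together with a careful downward choice of rapidly shrinking thickness parameters makes this achievable after a small perturbation of the embedding.

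Next I would assemble $|\C|$ cell-by-cell, working upward in grading. For each object $x$, attach a disk $D^{|x|+\ell}$ (where $\ell$ is a shift determined by the ambient Euclidean dimension of the neat embedding) along the Pontryagin--Thom collapse of the framed family $\{K(x,y)\}_{|y|<|x|}$: outside the thickenings the attaching map is sent to the basepoint, while inside each $K(x,y)$ it parametrizes the lower cell $\sigma_y$ via the framing. The subthickening compatibility built in during the previous step is exactly what lets this prescription descend from $\partial D^{|x|+\ell}$ to a well-defined continuous map into the partial complex $|\C|_{<|x|}$, giving (i) and (ii) by construction.

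For (iii), the cellular boundary of $\sigma_x$ picks up, for each $y$ with $|y|=|x|-1$, the degree of the attaching map onto $\sigma_y$, which equals the signed count $\#\M_\C(x,y)$ of the $0$-dimensional moduli space with its framing-induced orientation; this reproduces the differential of $C_*(\C)$. For (iv), any two framed neat embeddings become isotopic through such after sufficient stabilization, and the induced ambient isotopies yield cell slides that implement a stable homotopy equivalence between the desuspensions $\Sigma^{-\ell}|\C|$. The main obstacle will be the coherent thickening step: guaranteeing that every composition face appears as a genuine subthickening, rather than only up to diffeomorphism, is the technical heart of the construction and is what forces the careful downward induction on relative grading.
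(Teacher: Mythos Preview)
The paper does not give its own proof of this proposition: it is stated with a citation to \cite[Section 3.3]{LS:2014} and used as a black box, so there is no in-paper argument to compare against. Your sketch is a faithful outline of the Cohen--Jones--Segal/Lipshitz--Sarkar construction from that reference --- thicken the framed moduli spaces coherently, attach cells by Pontryagin--Thom collapse, read off the cellular differential as the signed count of $0$-dimensional moduli points, and obtain stable well-definedness from isotopy-after-stabilization of framed neat embeddings --- so it is appropriate as a summary, though of course the actual details (the precise cell model, the inductive bookkeeping of the $\abrac{n}$-manifold structure, and the proof of invariance) require the full treatment in \cite{LS:2014}.
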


\begin{definition}
    The CW-complex $|\C|$ of \Cref{prop:CJS-const} is called the \textit{realization} of the framed flow category $\C$ and the spectrum $\X(\C) := \Sigma^{-\ell}|\C|$ is called the \textit{associated spectrum} of $\C$. 
\end{definition}

\begin{remark}
    A downward-closed (resp.\ upward-closed) subcategory $\C'$ of a framed flow category $\C$ inherits the framing from $\C$, and induces the inclusion $|\C'| \hookrightarrow |\C|$ (resp.\ the projection $|\C| \rightarrowdbl |\C'|$) between the realizations.
\end{remark}

The definition of \textit{cube flow categories} follows. They will be necessary in the constructions of Khovanov homotopy type and our Bar-Natan homotopy type. We first fix some notations. Let $K(n)$ denote the $n$-dimensional unit cube $[0, 1]^n$ endowed with the standard CW-complex structure. Vertices of $K(n)$ are given the obvious partial order, that is $u \geq v$ if $u_i \geq v_i$ for each $1 \leq i \leq n$. Denote $u \geq_k v$ if $u \geq v$ and $|u - v| = k$. For vertices $u \geq_k v$, the $k$-face spanned by $u, v$ is denoted $e_{u, v}$. It is convenient to express $e_{u, v}$ as coordinates with $k$ stars
\[
    (u_1, \ldots, u_{i_1 - 1}, \star, u_{i_1 + 1}, \ldots, u_{i_k - 1}, \star, u_{i_k + 1}, \ldots, u_n)
\]
where the stars are placed at coordinates $i$ such that $u_i > v_i$. 

\begin{definition}
\label{def:sign-assignment}
    A 1-cochain $s \in C^1(K(n); \FF_2)$ is called a \textit{sign assignment} for $K(n)$ if $\delta s = 1$ (a constant map). The \textit{standard sign assignment} $s$ is defined as 
    \[
        s(e) = v_1 + \cdots + v_{i - 1} \pmod{2}
    \]
    for each edge $e = (v_1, \ldots, v_{i - 1}, \star, v_{i + 1}, \ldots, v_n)$ of $K(n)$. 
\end{definition}

\begin{definition}
    Given any sign assignment $s$ for $K(n)$, the $n$-dimensional \textit{cube complex} $C_*(n, s)$ is the chain complex freely generated over $\ZZ$ by $\{0, 1\}^n$ and the differential $\partial$ given by
    \[
        \partial u = \sum_{u >_1 v} (-1)^{s(e_{u, v})} v.
    \]
    Obviously $C_*(n, s)$ is acyclic.
\end{definition}

\begin{definition}
\label{def:cube-flow-cat}
    The \textit{$n$-dimensional cube flow category} $\Cube{n}$ is defined as follows: the object set is $\{0, 1\}^n$, and the moduli space between vertices $u >_k v$ is the $(k - 1)$-dimensional \textit{permutohedron} $\Pi_{k - 1}$. The composition map is defined to coherently so that $\Cube{n}$ is actually a flow category (see \cite[Definition 3.16]{LLS:2020} for the precise definition). 
\end{definition}

\begin{remark}
    Originally in \cite[Definition 4.1]{LS:2014}, the cube flow category $\Cube{n}$ is defined as the \textit{Morse flow category} of the Morse function
    \[
        f_n: \RR^n \rightarrow \RR, \quad
        (x_1, \ldots, x_n) \mapsto f(x_1) + \cdots + f(x_n)
    \]
    where $f: \RR \rightarrow \RR$ is a self-indexing Morse function with critical points $\{0, 1\}$.
\end{remark}

\begin{proposition}[{\cite[Proposition 4.12]{LS:2014}}] \label{prop:cube-flow-cat-framing}
    Given any sign assignment $s$ for $K(n)$, there is a framed neat embedding $(\iota, \varphi)$ of $\Cube{n}$ such that the framed flow category $(\Cube{n}, \iota, \varphi)$ refines the cube complex $C_*(n, s)$.
\end{proposition}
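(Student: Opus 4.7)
The plan is to build the framed neat embedding in two stages: first construct a canonical one whose associated chain complex realizes the standard sign assignment $s_0$ of Definition \ref{def:sign-assignment}, then modify the framing at the objects to realize the given sign assignment $s$.

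For the first stage, I would use the Morse-theoretic description of $\Cube{n}$ as the Morse flow category of $f_n(x_1, \ldots, x_n) = f(x_1) + \cdots + f(x_n)$ on $\RR^n$. Following the Cohen-Jones-Segal construction, embed each compactified moduli space $\M_\C(u, v) \isom \Pi_{|u - v| - 1}$ into a Euclidean space with corners by recording the heights at which broken gradient trajectories cross prescribed level sets of $f_n$. The product structure of the ambient $\RR^n$ induces a canonical trivialization of the normal bundles that is coherent with the composition maps, yielding a framed neat embedding $(\iota_0, \varphi_0)$. A direct computation shows that the orientation of the $0$-dimensional moduli space $\M_\C(u, v)$ for $u >_1 v$, corresponding to an edge $e_{u,v} = (v_1, \ldots, v_{i-1}, \star, v_{i+1}, \ldots, v_n)$, is $(-1)^{v_1 + \cdots + v_{i-1}}$, so the associated chain complex is isomorphic to $C_*(n, s_0)$.

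For the second stage, since $K(n)$ is contractible we have $H^1(K(n); \FF_2) = 0$, and the condition $\delta s_0 = \delta s = 1$ implies that $s - s_0$ is a coboundary: there exists $\tau \in C^0(K(n); \FF_2)$ with $s = s_0 + \delta \tau$. For each vertex $u$ with $\tau(u) = 1$, reverse the orientation of the framing at the object $u$, and propagate this change through the framings of all higher-dimensional moduli spaces via the product structure at their boundary faces. For each edge $e_{u,v}$ with $u >_1 v$, the signed count of $\M_\C(u, v)$ then acquires a factor $(-1)^{\tau(u) + \tau(v)} = (-1)^{\delta \tau(e_{u,v})}$, so the modified framed category refines $C_*(n, s_0 + \delta \tau) = C_*(n, s)$.

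The main obstacle is verifying that these local framing flips extend to a globally coherent framing in the precise $\abrac{k}$-manifold sense of \cite{LS:2014}. This reduces to checking that flipping framings at isolated objects respects the composition-compatible structure on $\partial_k \M_\C(x, y) \isom \bigsqcup_{z} \M_\C(z, y) \times \M_\C(x, z)$, which is automatic because such flips act diagonally through both endpoint factors of each product. A secondary technical point is the orientation bookkeeping needed in Stage 1 to ensure the canonical framing genuinely reproduces $s_0$ rather than some twisted variant; this is standard but notationally delicate, and accounts for most of the technical work in the original proof.
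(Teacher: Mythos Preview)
The paper does not give its own proof of this proposition: it is quoted verbatim as \cite[Proposition 4.12]{LS:2014} and used as a black box. So there is nothing in the present paper to compare your argument against.

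That said, your two-stage outline is in the spirit of the original Lipshitz--Sarkar argument. The second stage is exactly right: since $K(n)$ is contractible, any two sign assignments differ by a coboundary $\delta\tau$, and flipping the framing at each object $u$ with $\tau(u)=1$ changes the sign of every $0$-dimensional moduli space $\M(u,v)$ by $(-1)^{\tau(u)+\tau(v)}$. Your remark that such flips act diagonally on the boundary products $\M(z,y)\times\M(x,z)$ and hence extend coherently is correct. For the first stage, Lipshitz--Sarkar do not literally embed the permutohedra by recording level-set crossing heights; rather they build the neat embedding and coherent framing inductively over the grading, using obstruction theory (the relevant obstruction groups $\pi_i(O)$ and $\pi_i(V_k(\RR^N))$ vanish in the required range) to extend framings from boundary strata. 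Your Morse-theoretic picture is a reasonable heuristic, but the actual construction is combinatorial/obstruction-theoretic rather than analytic, and this is where the ``notationally delicate'' bookkeeping you allude to is made precise.
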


Here we briefly review the construction of the \textit{Khovanov flow category} $\C_\Kh(D)$ given in \cite{LS:2014}. Suppose $D$ is a link diagram with $n$ crossings. The objects of $\C_\Kh(D)$ are given by the $1X$-labeled resolution configurations of $D$. With the basic relation of \Cref{fig:khovanov-relation}, the 0-dimensional moduli spaces are given by poset $P(D)^{op}$. Higher dimensional moduli spaces are constructed inductively so that each moduli space $\M_\Kh(\x, \y)$ trivially covers $\M_{\Cube{|\x| - |\y|}}(u, v)$ where $u, v$ are the states of $\x, \y$ respectively. This yields a covering functor $\F : \C_\Kh(D) \rightarrow \Cube{n}$. Take any sign assignment $s$ for $K(n)$, and a framed neat embedding of $\Cube{n}$ relative to $s$. This is pulled back to $\C_\Kh(D)$ by $\F$. Then the resulting framed flow category refines the Khovanov complex $C_\Kh(D)$. The \textit{Khovanov spectrum} $\X_\Kh(D)$ is defined as the spectrum associated to $\C_\Kh(D)$, and is proved that its stable homotopy type is invariant under the Reidemeister moves. 

\subsection{Flow category moves} \label{subsec:morse-moves}

Finally we summarize the three \textit{flow category moves} given in \cite{JLS:2017,LOS:2018}. 
% See \Cref{fig:handle-cancel,fig:handle-slide,fig:whitney-trick} for schematic diagrams of the following three flow category moves.

% \begin{figure}[t]
%     \centering
%     \resizebox{0.6\textwidth}{!}{
%         \input{tikz-cd/flow-move-handle-cancel}
%     }
%     \caption{Handle cancellation}
%     \label{fig:handle-cancel}
%     \vspace{1em}
%     %
%     \resizebox{0.6\textwidth}{!}{
%         \input{tikz-cd/flow-move-handle-slide}
%     }
%     \caption{Handle slide}
%     \label{fig:handle-slide}
%     \vspace{1em}
%     %
%     \resizebox{0.6\textwidth}{!}{
%         \input{tikz-cd/flow-move-whitney-trick}
%     }
%     \caption{Whitney trick}
%     \label{fig:whitney-trick}
% \end{figure}

\begin{proposition}[Handle cancellation, {\cite[Theorem 2.17]{JLS:2017}}] 
\label{prop:handle-cancel}
    \begin{equation*}
        \resizebox{0.6\textwidth}{!}{
            % https://tikzcd.yichuanshen.de/#N4Igdg9gJgpgziAXAbVABwnAlgFyxMJZABgBpiBdUkANwEMAbAVxiRDoB0OACLgIyYMGMHF148QAX1LpMufIRRkAzFVqMWbPmP6DhonmKkyQGbHgJEAjKStr6zVohAAPHRwFCR77sdnmFa1IAJnsNJxAAT3dPfR8-UzkLRWRlcjDHNk5DDz1vHN9pf3lLFAAWWwzNZy4AYywAJ1qEsxKUitDqB2qQOsbmosSA0tTSVS7wrRi8g3FCtRgoAHN4IlAAMwaIAFskMhAcCCRgwc2dvepDpCtTrd3EYMujxGVb85en67f7-auX7+On0QNxMZ3uADYgQBWagMLBgCJQCA4HCLEDUAAWMDoUCQYD0AMQFQOzwA7ITiX9wYSYSSkOTQXckJC6YgGRsmUToZIKJIgA
\begin{tikzcd}
a\ \bullet\ \  \arrow[rd] \arrow[ddd] \arrow[rdd] &                                      &  & a\ \bullet\ \  \arrow[ddd] \arrow[rdd] \arrow[rd] &                                    \\
                                                  & \ \ \bullet\ x  \arrow[d] \arrow[ldd] &  &                                                   & \circ \arrow[ldd]                  \\
                                                  & \ \ \bullet\ y  \arrow[ld]            &  &                                                   & \circ \arrow[u] \arrow[ld] \\
b\ \bullet\ \                                     &                                      &  & b\ \bullet\ \                                     &                                   
\end{tikzcd}
        }
    \end{equation*}
    Let $\C$ be a framed flow category. Suppose there are objects $x, y \in \C$ such that $|x| = |y| + 1$ and $\M_\C(x, y) = \{ \pt \}$. Then there is a framed flow category $\C'$ satisfying the following conditions:
    \begin{enumerate}
        \item 
        $\Ob(\C') = \Ob(\C) \setminus \{x, y\}$.

        \item 
        For any objects $a, b \in \C'$, 
        \[
            \M_{\C'}(a, b) = (\M_\C(a, b)\ \sqcup \ \M_\C(x, b) \times \M_\C(a, y)) / \sim
        \]
        where $\sim$ identifies the subspace of $\partial \M_\C(a, b)$:
        \[
            \M_\C(x, b) \times \M_\C(a, x) \ \union\  \M_\C(y, b) \times \M_\C(a, y)
        \]
        and the subspace of $\partial(\M_\C(x, b) \times \M_\C(a, y))$:
        \[
            \M_\C(x, b) \times (\M_\C(x, y) \times \M_\C(a, x)) \ \union\ (\M_\C(y, b) \times \M_\C(x, y)) \times \M_\C(a, y).
        \]
        \item
        $|\C'|$ is homotopy equivalent to $|\C|$.
    \end{enumerate}
\end{proposition}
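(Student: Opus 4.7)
The plan is to construct $\C'$ explicitly, verify it is a framed flow category, and then exhibit the homotopy equivalence $|\C'| \htpy |\C|$ as a spatial-level analogue of a Morse-theoretic handle cancellation of the pair $(x, y)$.

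First I would check that the proposed moduli space $\M_{\C'}(a,b)$ is a well-defined $\abrac{|a|-|b|-1}$-manifold with corners. A dimension count gives $\dim(\M_\C(x,b) \times \M_\C(a,y)) = (|x|-|b|-1) + (|a|-|y|-1) = |a|-|b|-1$ using $|x| = |y|+1$, which matches $\dim \M_\C(a,b)$. Since $\M_\C(x,y) = \{\pt\}$, the two subspaces to be identified in the relation $\sim$ are canonically diffeomorphic to $\M_\C(x,b) \times \M_\C(a,x)$ and $\M_\C(y,b) \times \M_\C(a,y)$, so the quotient amounts to attaching the product $\M_\C(x,b) \times \M_\C(a,y)$ to $\M_\C(a,b)$ along two disjoint codimension-zero pieces of a common face. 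The composition maps on $\C'$ are inherited from those of $\C$ restricted to factorizations through $z \notin \{x,y\}$, together with induced compositions through the glued piece.

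Next I would verify the flow category axioms. The critical point is the boundary decomposition: after the identification, the remaining boundary of $\M_{\C'}(a,b)$ should split as $\bigsqcup_{z \in \Ob(\C')} \M_{\C'}(z,b) \times \M_{\C'}(a,z)$. This reduces to a bookkeeping exercise on the boundaries of $\M_\C(a,b)$ and of $\M_\C(x,b) \times \M_\C(a,y)$, using the axioms for $\C$ together with the hypothesis $\M_\C(x,y) = \{\pt\}$: every chain of compositions in $\C$ that passes through $x$ followed by $y$ contributes exactly one copy, which then gets absorbed into the glued piece. For the framing, I would inherit the neat embedding and coherent framing of $\C$ on the $\M_\C(a,b)$-piece, and use a small collar of the relevant boundary faces of $\M_\C(x,b) \times \M_\C(a,y)$ (canonically framed via the product structure and the composition axioms) to extend across the gluing region; coherence follows because the framings on the two identified faces agree by construction.

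Finally, for the homotopy equivalence $|\C'| \htpy |\C|$, the hypothesis $\M_\C(x,y) = \{\pt\}$ means that the attaching map of the cell $\sigma_x$ onto $\sigma_y$ in $|\C|$ has degree $\pm 1$, so $\sigma_x \union \sigma_y$ is a cancelling pair in the CW sense. Collapsing it yields a CW-complex whose cells are indexed by $\Ob(\C) \setminus \{x,y\}$ and whose new attaching data is obtained by splicing: any cell $\sigma_a$ previously attaching to $\sigma_x$ now attaches (via the cancellation) through $\sigma_y$ to whatever $\sigma_x$ attached to, and the resulting moduli data is precisely the formula for $\M_{\C'}$. The cleanest argument runs the Cohen--Jones--Segal realization in parallel on $\C$ and $\C'$, identifying the mapping cylinder of the cancellation with the handle-attachment produced by the glued product $\M_\C(x,b) \times \M_\C(a,y)$. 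I expect the main obstacle to be the framing step: the collar extensions chosen for each pair $(a,b)$ must be simultaneously compatible with the composition maps for \emph{all} pairs, so one has to choose the collars functorially across $\C$, which requires an inductive construction in $|a|-|b|$ and a careful check that the induced framings on compositions match on the nose.
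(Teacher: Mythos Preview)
The paper does not prove this proposition: it is stated as a background result, with the proof deferred to the cited reference \cite[Theorem~2.17]{JLS:2017}. So there is no ``paper's own proof'' to compare your proposal against.

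That said, your outline is a reasonable sketch of the argument as it appears in the cited source: construct $\C'$ with the indicated moduli spaces, verify the $\langle n \rangle$-manifold structure and boundary decomposition by bookkeeping, inherit and extend the framing across the glued collars, and obtain the homotopy equivalence of realizations by a CW-level cancellation of the $(\sigma_x,\sigma_y)$ pair. The point you flag as the main obstacle, namely the functorial choice of collars and framings compatible with all compositions, is indeed where most of the technical work in \cite{JLS:2017} lies, and your proposal correctly identifies that this requires an induction on relative grading rather than a pairwise construction. Since the present paper only \emph{uses} handle cancellation as a black box (e.g.\ in Lemmas~\ref{lem:cube-contract} and~\ref{lem:cancel-acyclic}), no further detail is expected here.
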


\begin{remark}
    In particular if, for every pair $(a, b)$ in $\Ob(\C) \setminus \{x, y\}$, either one of $\M_\C(x, b)$ or $\M_\C(a, y)$ is empty, then from the above formula $\M_{\C}(a, b)$ remains unchanged and $\C'$ is obtained by simply removing the two objects $x, y$.
\end{remark}

\begin{proposition}[Handle sliding, {\cite[Theorem 3.1]{LOS:2018}}]
\label{prop:handle-slide}
    \begin{equation*}
        \resizebox{0.6\textwidth}{!}{
            % https://tikzcd.yichuanshen.de/#N4Igdg9gJgpgziAXAbVABwnAlgFyxMJZABgBpiBdUkANwEMAbAVxiRDoB0OACLgIyYMGMHF148QAX1LpMufIRQBGUgCYqtRizZ8x-QcNE8xUmSAzY8BImSUb6zVohAAPPRwFCR77qdmWFIhU7agdtZwBPd09DHz9zOStFZABmcnstJ3Zog29jCWl-eWsUNJDNRzYXAHIcryNxX0KEgJLkABZScrCsqPyYvMb4i2LkzvVQzJ062PymjRgoAHN4IlAAMwAnCABbJDIQHAgkVUnK5y5GNAALOhBqBjo+GAYABUTA502sJeuceK2uxO1COSDSIAYWDAWSgdDg10W9wq4RAAAouGgdgBKAHbPaIcGgxAqZFZLiwBg4O7NQH4g5E8E9Nj8ETUsy04GHY7Es4orhLOg7HZ3B5PF7vVqKEDfX7-Gl4pAAVhB3IAbA8oTCIDgcIjqAi6FA2JBobigYhOlykOqQM8wEbEAcmRcPKyzfjlVbEAB2XlkjgCoUiiFit4fErSn5-d3WlVIX2k5kcClUmMWuOIT3OkCXBg3YOPZ5hyVsGXR+Xmy1Em12h0AWhSTqmLp2aHEV1uSML4vDUrLcvZCszGYTtbBTfOOY4mPEKepFEkQA
\begin{tikzcd}[row sep={2cm,between origins}]
a\ \bullet\ \  \arrow[d, "\alpha"'] \arrow[rd, "\beta"]         &                                    &  & a\ \bullet\ \  \arrow[rd, "\beta"] \arrow[d, "\alpha"'] \arrow[rd, "\mp \alpha", bend right=32]        &                                    \\
x\ \bullet\ \  \arrow[r, "(\pm)", dashed] \arrow[rd, "\gamma"'] & \ \bullet\ y  \arrow[d, "\delta"] &  & x'\ \bullet\ \  \arrow[r, no head, dotted] \arrow[rd, "\gamma"'] \arrow[rd, "\pm \delta"', bend left=32] & \ \bullet\ y  \arrow[d, "\delta"] \\
                                                                & \ \bullet\ b                     &  &                                                                                                      & \ \bullet\ b                    
\end{tikzcd}
        }
    \end{equation*}
    Let $\C$ be a framed flow category and $\epsilon \in \{\pm 1\}$. For any objects $x, y \in \C$ of the same grading, there is a framed flow category $\C'$ satisfying the following:
    \begin{enumerate}
        \item 
        $\Ob(C') = (\Ob(\C) \setminus \{x\}) \union \{x'\}$.
        
        \item 
        For any objects $a, b \in \C'$, 
        \begin{align*}
            \M_{\C'}(a, y) &= (-\epsilon) \M_\C(a, x) \sqcup \M_\C(a, y), \\
            \M_{\C'}(x', b) &= \M_\C(x, b) \sqcup (\epsilon)\M_\C(y, b), \\
            \M_{\C'}(a, b) &= \M_\C(a, b) \sqcup (\M_\C(y, b) \times [0, 1] \times \M_\C(a, x)).
        \end{align*}
        Here, the signs $(\pm \epsilon)$ indicate that the framings are changed accordingly. 

        \item
        $|\C'|$ is homotopy equivalent to $|\C|$.
    \end{enumerate}
\end{proposition}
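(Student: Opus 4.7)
The plan is to build $\C'$ explicitly as a framed flow category with the prescribed moduli spaces, then establish $|\C'|\simeq|\C|$ by exploiting the $[0,1]$-factor in the new moduli spaces as a geometric realization of the basis change $x\mapsto x+\epsilon y$ at the chain level. First, I would assemble $\C'$ from the stated data and verify the flow-category axioms. The key computation is that $\dim(\M_\C(y,b)\times[0,1]\times\M_\C(a,x))=(|y|-|b|-1)+1+(|a|-|x|-1)=|a|-|b|-1$ because $|x|=|y|$, so dimensions match. The nontrivial check is boundary compatibility: at $t=0$ the slab face $\M_\C(y,b)\times\M_\C(a,x)$ must match the composition $\M_{\C'}(x',b)\circ\M_{\C'}(a,x')$ through the extra summand $\epsilon\M_\C(y,b)\subset\M_{\C'}(x',b)$, and at $t=1$ it must match $\M_{\C'}(y,b)\circ\M_{\C'}(a,y)$ through the extra summand $(-\epsilon)\M_\C(a,x)\subset\M_{\C'}(a,y)$. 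The signs $\pm\epsilon$ are chosen precisely to make these identifications orientation-consistent with the boundary orientation of $[0,1]$.

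Next, I would lift to the framed setting. The unchanged moduli inherit their framed embeddings from $\C$, while the new slabs receive the product of the framings on the two factors with the standard framing of $[0,1]$, perturbed to keep the neat embedding disjoint; the reorientation signs $(\pm\epsilon)$ on the new summands of $\M_{\C'}(a,y)$ and $\M_{\C'}(x',b)$ are then forced by a local framing-matching calculation at the endpoints of the interval. For the homotopy equivalence $|\C'|\simeq|\C|$, the approach I would first try is to construct an auxiliary framed flow category $\tilde\C$ containing both $x$ and $x'$ together with an extra object $z$ of grading $|x|+1$ equipped with canceling data, arranged so that cancelling $(z,x)$ via \Cref{prop:handle-cancel} yields $\C'$ while cancelling $(z,x')$ yields $\C$; two applications of the homotopy-equivalence clause of that proposition then give $|\C|\simeq|\tilde\C|\simeq|\C'|$.

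The main obstacle will be the last step: arranging $\tilde\C$ so that both cancellations reproduce the prescribed data, including the signs $\pm\epsilon$ and, crucially, the $[0,1]$-factor that records the slide homotopy. Chain-level basis changes do not automatically lift to stable-homotopy equivalences, so the proof must organize the framed data carefully enough that the geometric mechanism is manifest; the required bookkeeping is in the same spirit as what is needed for Reidemeister invariance of $\X_\Kh$ in \cite{LS:2014}. If the auxiliary-category approach proves intractable, the fallback is to construct a direct cellular homotopy between the CJS realizations, using the $[0,1]$-factor to interpolate attaching maps explicitly, at the cost of redoing a nontrivial portion of the CJS realization argument.
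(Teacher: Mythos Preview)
The paper does not supply a proof of this proposition: it is quoted from \cite[Theorem~3.1]{LOS:2018} and used as a black box (see \Cref{subsec:morse-moves} and throughout \Cref{sec:1X-basis}). There is therefore no argument in the present paper to compare your proposal against.

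On the merits of your sketch: the verification that the data in items 1--2 assemble into a framed flow category is routine and your boundary-matching at $t\in\{0,1\}$, with the signs $\pm\epsilon$ forced by orientation consistency, is exactly the point. For item 3, reducing to \Cref{prop:handle-cancel} via an auxiliary category is the right idea and is how \cite{LOS:2018} proceeds, but your specific setup is underdetermined. The workable version adjoins to $\C$ a pair $(z,x')$ with $|z|=|x|+1$, $|x'|=|x|$, where $x'$ already carries the slid outgoing moduli $\M_{\tilde\C}(x',b)=\M_\C(x,b)\sqcup\epsilon\M_\C(y,b)$ and has $\M_{\tilde\C}(a,x')=\varnothing$ for $a\in\Ob(\C)$, while $z$ has single-point moduli to each of $x$, $x'$, $y$ with framings chosen so that the prescribed boundary of $\M_{\tilde\C}(z,b)$ has signed count zero (this forces $\M(z,x)$ and $\M(z,x')$ to carry opposite signs, and $\M(z,y)$ to carry $-\epsilon$). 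One then takes $\M_{\tilde\C}(z,b)$ to be a union of intervals realizing that boundary. Cancelling $(z,x')$ recovers $\C$ trivially since nothing in $\Ob(\C)$ maps to $x'$; cancelling $(z,x)$ leaves $x'$ as the new object and the gluing in \Cref{prop:handle-cancel} produces both the $[0,1]$-slabs in $\M_{\C'}(a,b)$ and the extra incoming moduli $\M_{\C'}(a,x')\cong\M_\C(a,x)$. The delicate part, which you correctly flag, is checking that the interval pieces $\M_\C(x,b)\times[0,1]\subset\M_{\tilde\C}(z,b)$ get absorbed by the identification $\sim$ rather than surviving as extra components; this is where the bookkeeping lives, and it is carried out in \cite{LOS:2018}.
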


\begin{proposition}[Whitney trick, {\cite[Theorem 2.8]{JLS:2017}}]
\label{prop:whitney-trick}
    \begin{equation*}
        \resizebox{0.6\textwidth}{!}{
            % https://tikzcd.yichuanshen.de/#N4Igdg9gJgpgziAXAbVABwnAlgFyxMJZABgBpiBdUkANwEMAbAVxiRDoB0OACLgIyYMGMHF148QAX1LpMufIRRkAzFVqMWbPmP6DhonmKkyQGbHgJEAjKStr6zVohAAPHRwFCR77sdnmFa1IAJnsNJxAAT3dPfR8-UzkLRWRlcjDHNk5DDz1vHN9pf3lLFAAWWwzNZzcc2PzxQpMzEpSK0OoHaqiYvINGhJbkojTVTvCtXq9+o0k1GCgAc3giUAAzACcIAFskMhAcCCRgopBNnb3qQ6QrU-PdxGCro8Q0kD4YMCgkAFplfa6EQA1CBqAw6B8GAAFJKBZwbLCLAAWOAS9yQb2uiFuJnRiAqBxeAHY7lsHgSsQA2UkXRAAVmeSBJuLJSEpjMQzPWrPxHLpNIeT0JGOoHy+GIBE2cPzRPP2WOUAuOHJx3NpDOFiHZICRMDo32ckDArDBWGNbCgEBwOAWUgokiAA
\begin{tikzcd}
a\ \bullet\ \  \arrow[rd] \arrow[ddd] \arrow[rdd, bend right=25] &                                                                                  &  & a\ \bullet\ \  \arrow[ddd] \arrow[rdd, bend right=25] \arrow[rd] &                                                       \\
                                                  & \ \ \bullet\ x  \arrow[d, "+"', bend right=15] \arrow[d, "-", bend left=15] \arrow[ldd, bend right=25] &  &                                                   & \ \bullet\ x  \arrow[ldd, bend right=25] \\
                                                  & \ \bullet\ y  \arrow[ld]                                                        &  &                                                   & \ \ \bullet\ y  \arrow[ld]                             \\
b\ \bullet\ \                                     &                                                                                  &  & b\ \bullet\ \                                     &                                                      
\end{tikzcd}
        }
    \end{equation*}
    Let $\C$ be a framed flow category. Suppose there are objects $x, y \in \C$ such that $|x| = |y| + 1$ and the $0$-dimensional moduli space $\M_\C(x, y)$ contains two points $P, Q$ with opposite framings. Then there is a framed flow category $\C'$ satisfying the following:
    \begin{enumerate}
        \item 
        $\Ob(\C') = \Ob(\C)$.
        
        \item 
        For the two objects $x, y$,
        \[
            \M_{\C'}(x, y) = \M_\C(x, y) \setminus \{ P, Q \}.
        \]
        For any objects $a, b$, 
        \begin{align*}
            \M_{\C'}(a, y) &= (\M_\C(a, y) \setminus \{ P, Q \} \times \M_\C(a, x)) / \sim, \\
            \M_{\C'}(x, b) &= (\M_\C(x, b) \setminus \M_\C(y, b) \times \{ P, Q \}) / \sim, \\
            \M_{\C'}(a, b) &= (\M_\C(a, b) \setminus \M_\C(y, b) \times \{ P, Q \} \times \M_\C(a, x)) / \sim.
        \end{align*}
        Here, the identifications $\sim$ are defined so that the corners of the moduli spaces are glued appropriately. See \cite[Definition 2.5]{JLS:2017} for the precise definition.
        
        \item
        $|\C'|$ is homotopy equivalent to $|\C|$.
    \end{enumerate}
\end{proposition}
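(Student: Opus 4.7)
My plan is to adopt the formalism of \cite{JLS:2017}: construct $\C'$ via the explicit moduli-space formulas in the statement, verify that the resulting data defines a framed flow category, and then exhibit a homotopy equivalence $|\C'| \htpy |\C|$ that realizes the cancellation of $P$ and $Q$ as an isotopy of attaching maps. As a sanity check, observe first that at the chain level the signed count $\#\M_\C(x, y)$ is unchanged when we remove the oppositely-framed pair $\{P, Q\}$, so the associated chain complex of $\C'$ (once it is well-defined) agrees with that of $\C$. This already suggests the construction is not obstructed by any chain-level discrepancy.

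Next, I would equip the prescribed sets with $\abrac{n}$-manifold structures that fit coherently with the composition maps. The guiding picture is that the framed points $P, Q$ bound an oriented arc $\gamma$ inside the relevant two-parameter moduli spaces of relative grading $2$; the Whitney trick then identifies a tubular neighborhood of $\gamma \times \M_\C(a, x)$ in $\M_\C(a, y)$ with a bicollar of $\{P, Q\} \times \M_\C(a, x)$, and analogously for $\M_\C(x, b)$ and $\M_\C(a, b)$. The three quotients $\sim$ in the statement are precisely these gluings. I would then verify that each resulting moduli space is a compact $\abrac{n}$-manifold of the expected dimension, that the composition maps embed as neat $\abrac{n}$-maps into the appropriate codimension-$k$ boundary strata, and that the boundary diffeomorphism condition holds after the gluing. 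This amounts to a bookkeeping enumeration of the corners of $\M_{\C'}(a, b)$ against the products $\M_{\C'}(z, b) \times \M_{\C'}(a, z)$.

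Framings on the new pieces are inherited from those on the $\M_\C$'s away from the gluing locus, and extend continuously across the glued regions precisely because $P$ and $Q$ had opposite framings, so the two local trivializations match after the identification. For the realization, I would argue cell by cell: the cell set is unchanged, and the attaching map of $\sigma_x$ differs from the original by an isotopy sweeping across a small disk bounded by the cancelling pair, giving a homotopy equivalence $|\C'| \htpy |\C|$. The main obstacle I expect is \emph{coherence}: ensuring that when the bicollar gluings are performed simultaneously for every pair $(a, b)$, the corner identifications on iterated moduli spaces agree on triple overlaps. I would handle this by filtering the data by the gap $|a| - |b|$ and building the gluings inductively, so that the higher-codimensional strata are forced to agree with the lower-codimensional ones by construction, in the spirit of \cite[Definition 2.5]{JLS:2017}.
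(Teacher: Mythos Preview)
This proposition is not proved in the paper at all: it is stated as a summary of \cite[Theorem~2.8]{JLS:2017}, with the precise gluing relations $\sim$ explicitly deferred to \cite[Definition~2.5]{JLS:2017}. There is therefore no ``paper's own proof'' to compare your proposal against.

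That said, your outline is a reasonable sketch of the argument one finds in the cited reference: build $\C'$ from the listed formulas, check the $\langle n\rangle$-manifold and composition axioms stratum by stratum, extend the framing across the glued region using that $P$ and $Q$ carry opposite signs, and finally argue that the realizations are homotopy equivalent because the attaching maps differ by an isotopy. Your identification of the main difficulty --- coherence of the gluings across iterated boundary strata --- is exactly right, and your proposed inductive filtration by $|a|-|b|$ is the standard way to handle it. If you were to flesh this out, the one place to be careful is the claim that ``the framed points $P,Q$ bound an oriented arc $\gamma$ inside the relevant two-parameter moduli spaces'': this is not an input but rather something one arranges by hand (the Whitney disk), and the construction of $\sim$ in \cite{JLS:2017} is precisely the combinatorial bookkeeping that makes this work without appealing to an ambient arc.
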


\begin{remark}\label{rem:extended-whitney-trick}
    In \cite[Section 4]{LOS:2018}, a more generalized \textit{extended Whitney trick} is introduced, where $x, y$ need not satisfy $|x| = |y| + 1$, and the moduli space $\M_\C(x, y)$ can be replaced by any framed cobordant manifold rel boundary. 
\end{remark}

\begin{definition}
    Two framed flow categories are \textit{move equivalent} if they can be interchanged by a finite sequence of flow category moves (and isotopies and stabilizations of the framed neat embeddings).
\end{definition}

\begin{remark}
    It is conjectured in \cite[Conjecture 6.4]{JLS:2017} that two framed flow categories determine the same stable homotopy type if and only if they are move equivalent.
    
\end{remark}

\clearpage
    \section{Basic construction} \label{sec:bar-natan-spectrum}

In this section we construct a spatial refinement of the Bar-Natan complex $C_\BN(D)$. Recall from \Cref{subsec:khovanov} that the defining Frobenius algebra for Bar-Natan homology is $A = A_{1, 0}$ whose operations are given by 
\begin{equation}
\label{eq:1X-operations-BN}
    \begin{gathered}
        m(1 \otimes 1) = 1, \quad 
        m(X \otimes 1) = m(1 \otimes X) = X, \quad
    	m(X \otimes X) = X, \\
        \Delta(1) = X \otimes 1 + 1 \otimes X - 1 \otimes 1, \quad 
    	\Delta(X) = X \otimes X.
    \end{gathered}
\end{equation}
In order to apply the construction of \cite{LS:2014} to other variants of Khovanov homology, it is necessary that the following conditions hold: (i) the non-zero coefficients appearing in the multiplication and the comultiplication of $A_{h, t}$ must be $1$, and (ii) the poset associated to the chain complex must be cubical in a certain sense (see \cite[Proposition 5.2]{LS:2014} for the precise condition). Unfortunately, Bar-Natan's theory do not satisfy the two conditions. (Neither does Lee's, see \cite[Remark 4.22]{LLS:2020}.)

However, if we put $Y = X - 1$ and use basis $\{X, Y\}$ instead of $\{1, X\}$, then the operations diagonalize as
\begin{equation}
\label{eq:XY-operations}
    \begin{gathered}
        m(X \otimes X) = X, \quad 
        m(X \otimes Y) = m(Y \otimes X) = 0, \quad
        m(Y \otimes Y) = -Y,\\
        \Delta(X) = X \otimes X, \quad 
    	\Delta(Y) = Y \otimes Y.
    \end{gathered}
\end{equation}
We see that the first condition is almost satisfied, except for the negative sign appearing in $m(Y \otimes Y) = -Y$. We will see that the second condition also holds. Thus after taking a specific framing so that the negative sign is taken care of, we will be able to apply the construction of \cite{LS:2014} to the $XY$-based Bar-Natan complex. The procedure of recovering the standard basis $\{1, X\}$ in the flow category level is discussed in the next section.

\subsection{$XY$-based Bar-Natan flow category} \label{subsec:C_BN}

\begin{figure}[t]
    \centering
    \tikzset{every picture/.style={line width=0.75pt}} %set default line width to 0.75pt        

\begin{tikzpicture}[x=0.75pt,y=0.75pt,yscale=-1,xscale=1]
%uncomment if require: \path (0,168); %set diagram left start at 0, and has height of 168

%Shape: Ellipse [id:dp2591974079389048] 
\draw   (16,27.02) .. controls (16,18.3) and (23.06,11.24) .. (31.78,11.24) .. controls (40.49,11.24) and (47.55,18.3) .. (47.55,27.02) .. controls (47.55,35.73) and (40.49,42.79) .. (31.78,42.79) .. controls (23.06,42.79) and (16,35.73) .. (16,27.02) -- cycle ;
%Straight Lines [id:da004967474608910671] 
\draw    (103.71,27.02) -- (126,27.02) ;
\draw [shift={(128,27.02)}, rotate = 180] [color={rgb, 255:red, 0; green, 0; blue, 0 }  ][line width=0.75]    (10.93,-3.29) .. controls (6.95,-1.4) and (3.31,-0.3) .. (0,0) .. controls (3.31,0.3) and (6.95,1.4) .. (10.93,3.29)   ;
%Shape: Ellipse [id:dp2906562004675536] 
\draw   (59.54,27.65) .. controls (59.54,18.93) and (66.6,11.87) .. (75.32,11.87) .. controls (84.03,11.87) and (91.09,18.93) .. (91.09,27.65) .. controls (91.09,36.36) and (84.03,43.42) .. (75.32,43.42) .. controls (66.6,43.42) and (59.54,36.36) .. (59.54,27.65) -- cycle ;
%Shape: Ellipse [id:dp914463804356133] 
\draw   (141.57,27.02) .. controls (141.57,18.3) and (148.63,11.24) .. (157.35,11.24) .. controls (166.06,11.24) and (173.12,18.3) .. (173.12,27.02) .. controls (173.12,35.73) and (166.06,42.79) .. (157.35,42.79) .. controls (148.63,42.79) and (141.57,35.73) .. (141.57,27.02) -- cycle ;
%Shape: Ellipse [id:dp688883305101576] 
\draw   (185.11,27.65) .. controls (185.11,18.93) and (192.17,11.87) .. (200.89,11.87) .. controls (209.6,11.87) and (216.66,18.93) .. (216.66,27.65) .. controls (216.66,36.36) and (209.6,43.42) .. (200.89,43.42) .. controls (192.17,43.42) and (185.11,36.36) .. (185.11,27.65) -- cycle ;
%Shape: Rectangle [id:dp5454874404954189] 
\draw  [draw opacity=0][fill={rgb, 255:red, 255; green, 255; blue, 255 }  ,fill opacity=1 ] (164.29,21.34) -- (193,21.34) -- (193,32.69) -- (164.29,32.69) -- cycle ;
%Straight Lines [id:da28508125908804327] 
\draw    (171.23,21.34) -- (187.32,21.34) ;
%Straight Lines [id:da6681712425313914] 
\draw    (171.23,32.69) -- (187.32,32.69) ;
%Straight Lines [id:da4222236578443759] 
\draw [color={rgb, 255:red, 208; green, 2; blue, 27 }  ,draw opacity=1 ][line width=1.5]    (47.55,27.02) -- (59.54,27.02) ;
%Shape: Ellipse [id:dp4237344416150588] 
\draw   (382.58,78.59) .. controls (382.58,69.88) and (389.64,62.82) .. (398.35,62.82) .. controls (407.06,62.82) and (414.13,69.88) .. (414.13,78.59) .. controls (414.13,87.31) and (407.06,94.37) .. (398.35,94.37) .. controls (389.64,94.37) and (382.58,87.31) .. (382.58,78.59) -- cycle ;
%Straight Lines [id:da28558277194079196] 
\draw    (345.33,27.78) -- (367.62,27.78) ;
\draw [shift={(369.62,27.78)}, rotate = 180] [color={rgb, 255:red, 0; green, 0; blue, 0 }  ][line width=0.75]    (10.93,-3.29) .. controls (6.95,-1.4) and (3.31,-0.3) .. (0,0) .. controls (3.31,0.3) and (6.95,1.4) .. (10.93,3.29)   ;
%Shape: Ellipse [id:dp5913917232518591] 
\draw   (426.12,79.22) .. controls (426.12,70.51) and (433.18,63.45) .. (441.89,63.45) .. controls (450.6,63.45) and (457.67,70.51) .. (457.67,79.22) .. controls (457.67,87.94) and (450.6,95) .. (441.89,95) .. controls (433.18,95) and (426.12,87.94) .. (426.12,79.22) -- cycle ;
%Shape: Ellipse [id:dp4788138424602293] 
\draw   (259.69,27.02) .. controls (259.69,18.3) and (266.76,11.24) .. (275.47,11.24) .. controls (284.18,11.24) and (291.25,18.3) .. (291.25,27.02) .. controls (291.25,35.73) and (284.18,42.79) .. (275.47,42.79) .. controls (266.76,42.79) and (259.69,35.73) .. (259.69,27.02) -- cycle ;
%Shape: Ellipse [id:dp4816397156342551] 
\draw   (303.23,27.65) .. controls (303.23,18.93) and (310.3,11.87) .. (319.01,11.87) .. controls (327.72,11.87) and (334.79,18.93) .. (334.79,27.65) .. controls (334.79,36.36) and (327.72,43.42) .. (319.01,43.42) .. controls (310.3,43.42) and (303.23,36.36) .. (303.23,27.65) -- cycle ;
%Shape: Rectangle [id:dp37524979748816534] 
\draw  [draw opacity=0][fill={rgb, 255:red, 255; green, 255; blue, 255 }  ,fill opacity=1 ] (282.41,21.34) -- (311.12,21.34) -- (311.12,32.69) -- (282.41,32.69) -- cycle ;
%Straight Lines [id:da27236839615326847] 
\draw    (289.35,21.34) -- (305.44,21.34) ;
%Straight Lines [id:da2138014304939705] 
\draw    (289.35,32.69) -- (305.44,32.69) ;
%Shape: Ellipse [id:dp6591778644123613] 
\draw   (379.44,26.7) .. controls (379.44,17.99) and (386.5,10.92) .. (395.21,10.92) .. controls (403.93,10.92) and (410.99,17.99) .. (410.99,26.7) .. controls (410.99,35.41) and (403.93,42.47) .. (395.21,42.47) .. controls (386.5,42.47) and (379.44,35.41) .. (379.44,26.7) -- cycle ;
%Shape: Ellipse [id:dp0008212416928246036] 
\draw   (422.98,27.33) .. controls (422.98,18.62) and (430.04,11.55) .. (438.75,11.55) .. controls (447.47,11.55) and (454.53,18.62) .. (454.53,27.33) .. controls (454.53,36.04) and (447.47,43.1) .. (438.75,43.1) .. controls (430.04,43.1) and (422.98,36.04) .. (422.98,27.33) -- cycle ;
%Shape: Ellipse [id:dp5404536582245868] 
\draw   (259.49,77.21) .. controls (259.49,68.5) and (266.55,61.43) .. (275.27,61.43) .. controls (283.98,61.43) and (291.04,68.5) .. (291.04,77.21) .. controls (291.04,85.92) and (283.98,92.98) .. (275.27,92.98) .. controls (266.55,92.98) and (259.49,85.92) .. (259.49,77.21) -- cycle ;
%Shape: Ellipse [id:dp17772611115348125] 
\draw   (303.03,77.84) .. controls (303.03,69.13) and (310.09,62.06) .. (318.81,62.06) .. controls (327.52,62.06) and (334.58,69.13) .. (334.58,77.84) .. controls (334.58,86.55) and (327.52,93.61) .. (318.81,93.61) .. controls (310.09,93.61) and (303.03,86.55) .. (303.03,77.84) -- cycle ;
%Shape: Rectangle [id:dp5140608166932342] 
\draw  [draw opacity=0][fill={rgb, 255:red, 255; green, 255; blue, 255 }  ,fill opacity=1 ] (282.21,71.53) -- (310.92,71.53) -- (310.92,82.89) -- (282.21,82.89) -- cycle ;
%Straight Lines [id:da6906063638364179] 
\draw    (289.15,71.53) -- (305.24,71.53) ;
%Straight Lines [id:da48152394470008564] 
\draw    (289.15,82.89) -- (305.24,82.89) ;
%Straight Lines [id:da23758819791200558] 
\draw    (346.7,78.88) -- (370.99,78.88) ;
\draw [shift={(372.99,78.88)}, rotate = 180] [color={rgb, 255:red, 0; green, 0; blue, 0 }  ][line width=0.75]    (10.93,-3.29) .. controls (6.95,-1.4) and (3.31,-0.3) .. (0,0) .. controls (3.31,0.3) and (6.95,1.4) .. (10.93,3.29)   ;
%Straight Lines [id:da37489940763922724] 
\draw [color={rgb, 255:red, 208; green, 2; blue, 27 }  ,draw opacity=1 ][line width=1.5]    (297.4,21.34) -- (297.4,32.69) ;
%Straight Lines [id:da8893450690432522] 
\draw [color={rgb, 255:red, 208; green, 2; blue, 27 }  ,draw opacity=1 ][line width=1.5]    (297.19,71.53) -- (297.19,82.89) ;
%Shape: Ellipse [id:dp5837488890836157] 
\draw   (17,78.02) .. controls (17,69.3) and (24.06,62.24) .. (32.78,62.24) .. controls (41.49,62.24) and (48.55,69.3) .. (48.55,78.02) .. controls (48.55,86.73) and (41.49,93.79) .. (32.78,93.79) .. controls (24.06,93.79) and (17,86.73) .. (17,78.02) -- cycle ;
%Straight Lines [id:da7402083967066994] 
\draw    (104.71,78.02) -- (127,78.02) ;
\draw [shift={(129,78.02)}, rotate = 180] [color={rgb, 255:red, 0; green, 0; blue, 0 }  ][line width=0.75]    (10.93,-3.29) .. controls (6.95,-1.4) and (3.31,-0.3) .. (0,0) .. controls (3.31,0.3) and (6.95,1.4) .. (10.93,3.29)   ;
%Shape: Ellipse [id:dp919151697840108] 
\draw   (60.54,78.65) .. controls (60.54,69.93) and (67.6,62.87) .. (76.32,62.87) .. controls (85.03,62.87) and (92.09,69.93) .. (92.09,78.65) .. controls (92.09,87.36) and (85.03,94.42) .. (76.32,94.42) .. controls (67.6,94.42) and (60.54,87.36) .. (60.54,78.65) -- cycle ;
%Shape: Ellipse [id:dp9350052748099035] 
\draw   (142.57,78.02) .. controls (142.57,69.3) and (149.63,62.24) .. (158.35,62.24) .. controls (167.06,62.24) and (174.12,69.3) .. (174.12,78.02) .. controls (174.12,86.73) and (167.06,93.79) .. (158.35,93.79) .. controls (149.63,93.79) and (142.57,86.73) .. (142.57,78.02) -- cycle ;
%Shape: Ellipse [id:dp9816812619599691] 
\draw   (186.11,78.65) .. controls (186.11,69.93) and (193.17,62.87) .. (201.89,62.87) .. controls (210.6,62.87) and (217.66,69.93) .. (217.66,78.65) .. controls (217.66,87.36) and (210.6,94.42) .. (201.89,94.42) .. controls (193.17,94.42) and (186.11,87.36) .. (186.11,78.65) -- cycle ;
%Shape: Rectangle [id:dp56401884883916] 
\draw  [draw opacity=0][fill={rgb, 255:red, 255; green, 255; blue, 255 }  ,fill opacity=1 ] (165.29,72.34) -- (194,72.34) -- (194,83.69) -- (165.29,83.69) -- cycle ;
%Straight Lines [id:da3798246370999079] 
\draw    (172.23,72.34) -- (188.32,72.34) ;
%Straight Lines [id:da2223473103768887] 
\draw    (172.23,83.69) -- (188.32,83.69) ;
%Straight Lines [id:da7934426852120993] 
\draw [color={rgb, 255:red, 208; green, 2; blue, 27 }  ,draw opacity=1 ][line width=1.5]    (48.55,78.02) -- (60.54,78.02) ;

% Text Node
\draw (42.81,2.61) node [anchor=north west][inner sep=0.75pt]    {$X$};
% Text Node
\draw (85.09,3.24) node [anchor=north west][inner sep=0.75pt]    {$X$};
% Text Node
\draw (212.55,3.87) node [anchor=north west][inner sep=0.75pt]    {$X$};
% Text Node
\draw (448.16,2.29) node [anchor=north west][inner sep=0.75pt]    {$X$};
% Text Node
\draw (406.25,1.66) node [anchor=north west][inner sep=0.75pt]    {$X$};
% Text Node
\draw (329.47,50.54) node [anchor=north west][inner sep=0.75pt]    {$Y$};
% Text Node
\draw (330.68,3.87) node [anchor=north west][inner sep=0.75pt]    {$X$};
% Text Node
\draw (451.43,54.82) node [anchor=north west][inner sep=0.75pt]    {$Y$};
% Text Node
\draw (409.16,54.19) node [anchor=north west][inner sep=0.75pt]    {$Y$};
% Text Node
\draw (43.81,53.61) node [anchor=north west][inner sep=0.75pt]    {$Y$};
% Text Node
\draw (86.09,54.24) node [anchor=north west][inner sep=0.75pt]    {$Y$};
% Text Node
\draw (213.55,54.87) node [anchor=north west][inner sep=0.75pt]    {$Y$};

\end{tikzpicture}
    \caption{The basic relation for $\X_{XY}$}
    \label{fig:basic-rel-XY}
\end{figure}
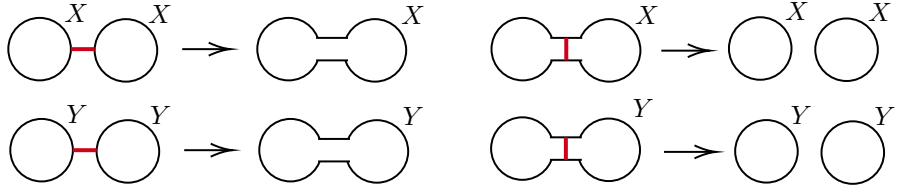

Consider $XY$-labeled resolution configurations with the basic relation defined as in \Cref{fig:basic-rel-XY}. Note that the relation comes from \eqref{eq:XY-operations}, except that that the negative sign appearing in $m(Y \otimes Y) = -Y$ has disappeared in the corresponding arrow. Due to the simplicity of the basic relation, the structure of the associated poset turns out to be extremely simple. 

\begin{proposition} \label{prop:P(Dxy)-locally-constant}
    Let $D$ be a resolution configuration with connected components $D_1, \cdots, D_l$. An $XY$-decorated resolution configuration $(D; x, y)$ is admissible if and only if (i) $y$ is constant on each $D_i$, (ii) $x$ is constant on each $s(D_i)$ and (iii) $\restr{y}{D_i} = \restr{x}{s(D_i)}$ for each $D_i$.
\end{proposition}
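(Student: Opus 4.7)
The plan is to reduce to the case where $D$ is connected, and then to exploit the fact that every basic relation in Figure~\ref{fig:basic-rel-XY} is \emph{monochromatic}: both pre-surgery circles and the post-surgery circle(s) all carry the same label $\epsilon \in \{X, Y\}$.

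First I would handle the reduction to the connected case. Each basic relation $\prec_1$ applies surgery along a single arc, and this arc is contained in one connected component of $D$; the labels on circles lying in other components are carried through unchanged. Hence any chain $(D, y) = (E_0, z_0) \prec_1 \cdots \prec_1 (E_n, z_n) = (s(D), x)$ decomposes, by restricting each $E_i$ to the circles belonging to a fixed component $D_i$, into an independent chain realizing $(D_i, \restr{y}{Z(D_i)}) \preceq (s(D_i), \restr{x}{Z(s(D_i))})$; conversely, chains produced component-wise can be interleaved arbitrarily. Thus admissibility of $(D; x, y)$ is equivalent to admissibility of each $(D_i; \restr{x}{s(D_i)}, \restr{y}{D_i})$, and it suffices to prove the proposition assuming $D$ is connected.

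For the sufficiency direction in the connected case, suppose $y \equiv \epsilon$ on $Z(D)$ and $x \equiv \epsilon$ on $Z(s(D))$. I would construct the chain by applying the arcs of $A(D)$ one at a time in any order, maintaining the invariant that the current configuration $E_i$ is labeled by the constant $\epsilon$. Each step is then either a monochromatic merge $\epsilon,\epsilon \to \epsilon$ or a monochromatic split $\epsilon \to \epsilon,\epsilon$, both of which are precisely the basic relations allowed by Figure~\ref{fig:basic-rel-XY}. After exhausting $A(D)$ we arrive at $(s(D), x)$, establishing admissibility.

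For the necessity direction, I would argue via propagation of the monochromatic constraint. View the full surgery as a cobordism $W$ from $Z(D)$ to $Z(s(D))$, obtained by attaching one 1-handle per arc; since $D$ is connected, $W$ is connected, so every pair of circles in $Z(D) \sqcup Z(s(D))$ lies in the same component of $W$. Given any chain realizing admissibility, I would show by induction on the length of the chain that whenever two circles (in either the source, the target, or some intermediate $E_i$) can be joined by a sequence of handle-attachments performed along the chain, they must share the same label: each single $\prec_1$ step forces equality on the circles it involves by monochromaticity, and the equality propagates transitively. Applied to two arbitrary circles of $Z(D)$ (or $Z(s(D))$), connectedness of $W$ gives such a joining sequence, forcing $y$ (respectively $x$) to be globally constant, with the common value necessarily shared across the cobordism.

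The main obstacle I expect is the bookkeeping in the forward direction: the ``path'' through $W$ linking two circles $C_1, C_2 \in Z(D)$ passes through intermediate circles living in different $E_i$, so one has to verify carefully that the equality of labels along a single surgery step chains together transitively across the whole sequence. Once that transitivity is made precise, the remainder is a direct reading of Figure~\ref{fig:basic-rel-XY}.
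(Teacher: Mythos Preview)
Your proof is correct and follows essentially the same approach as the paper: the monochromaticity of the basic relation in Figure~\ref{fig:basic-rel-XY} forces constant labels on connected components. The paper's proof is a terse one-liner noting only the key obstruction (an arc joining differently-labeled circles admits no valid post-surgery label), while you spell out the reduction to components, the sufficiency direction, and the propagation/cobordism argument that the paper leaves implicit.
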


\begin{proof}
    If there is an arc that connects differently labeled circles, there will be no available labeling after the merge of the two circles.
\end{proof}

\begin{proposition} \label{prop:P(Dxy)-structure}
    Let $(D; x, y)$ be an index $k$ admissible $XY$-decorated resolution configuration. The poset $P_{XY}(D; x, y)$ is isomorphic to the $k$-dimensional cube poset $\{0, 1\}^k$.
\end{proposition}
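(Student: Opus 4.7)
The plan is to exhibit a poset isomorphism $\phi \colon \{0,1\}^k \to P_{XY}(D; x, y)$ defined by $\phi(u) = (s_{B_u}(D), z_u)$, where $B_u \subseteq A(D)$ is the subset encoded by $u$ (under the correspondence $u_i = 1 \iff a_i \in B_u$) and $z_u$ is a canonically determined labeling. The underlying reason this works is that in the $XY$-basis, every basic move in \Cref{fig:basic-rel-XY} is label-preserving: merges/splits of $X$-circles (resp.\ $Y$-circles) produce $X$-circles (resp.\ $Y$-circles), and no mixed merges are permitted. Thus labels propagate deterministically through chains of basic moves.

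First I would define $z_u$. For each circle $C' \in Z(s_{B_u}(D))$, pick any circle $C \in Z(D)$ that contributes to $C'$ under the surgery along $B_u$, and set $z_u(C') := y(C)$. To see this is independent of the choice of $C$: if $C_1, C_2 \in Z(D)$ both contribute to $C'$, then they are joined by arcs of $B_u \subseteq A(D)$, hence lie in a common connected component of $D$. By \Cref{prop:P(Dxy)-locally-constant} applied to the admissible configuration $(D; x, y)$, $y$ is constant on this component, so $y(C_1) = y(C_2)$.

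Next I would verify $\phi(u) \in P_{XY}(D; x, y)$. The relation $(D, y) \preceq (s_{B_u}(D), z_u)$ is witnessed by any sequence of basic moves that performs the surgeries along $B_u$ one arc at a time; at each step the local labels agree with an arrow in \Cref{fig:basic-rel-XY} by the construction of $z_u$. Similarly, $(s_{B_u}(D), z_u) \preceq (s(D), x)$ follows by performing the remaining surgeries along $A(D) \setminus B_u$ one at a time. The terminal labeling on $s(D)$ produced in this way must equal $x$, because the admissibility of the original $(D; x, y)$ guarantees the existence of \emph{some} chain from $(D, y)$ to $(s(D), x)$, and the deterministic propagation ensures any such chain ends at the same labeling.

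Finally, order-preservation of $\phi$ is immediate: $u \leq v$ gives $B_u \subseteq B_v$ and a chain of basic moves along $B_v \setminus B_u$ realizes $\phi(u) \preceq \phi(v)$. For bijectivity, any $(E, z) \in P_{XY}(D; x, y)$ has $E = s_B(D)$ for a unique $B \subseteq A(D)$ by definition of $P_{XY}(D)$, and $z$ coincides with $z_B$ by the deterministic propagation along any chain from $(D, y)$ to $(E, z)$. The inverse is order-preserving since $s_B(D) \preceq s_{B'}(D)$ forces $B \subseteq B'$. I expect the main (mild) obstacle to be the well-definedness of $z_u$ handled in the second paragraph; after that the cubical structure drops out by unwinding definitions.
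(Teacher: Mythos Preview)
Your proof is correct and follows essentially the same approach as the paper. The paper's proof is a one-liner: for any $(E, z) \in P_{XY}(D; x, y)$, the configuration $E$ corresponds one-to-one to a subset $B \subset A(D)$ and the labeling $z$ is automatically determined by $E$; you have simply spelled out in detail why the labeling is uniquely determined (via \Cref{prop:P(Dxy)-locally-constant}) and why the resulting bijection is a poset isomorphism.
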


\begin{proof}
    For any $(E, z) \in P_{XY}(D; x, y)$, $E$ corresponds one-to-one to a subset $B \subset A(D)$ and $z$ is automatically determined by $E$.
\end{proof}

\begin{proposition}\label{prop:P(D)-decomp}
    For any resolution configuration $D$, the associated poset $P_{XY}(D)$ decomposes into a disjoint union of cube posets.
\end{proposition}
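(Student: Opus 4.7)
The plan is to show that every connected component of $P_{XY}(D)$, viewed as a poset under $\preceq$, is isomorphic to a cube poset $\{0,1\}^k$ for some $k \geq 0$. Fix any element $(E_0, z_0) \in P_{XY}(D)$, say with $E_0 = s_{B_0}(D)$. A basic move from a configuration $(E, z)$ with $E = s_B(D)$ consists of picking a single arc $a \in A(D)$, surgering it if $a \notin B$ (going upward) or un-surgering it if $a \in B$ (going downward), and propagating the $XY$-labels in the unique way prescribed by Figure~\ref{fig:basic-rel-XY}. The move is valid precisely when every merger involved joins two circles carrying the same $XY$-label (since $m$ and $\Delta$ are diagonal in the $XY$-basis), while splits automatically duplicate the parent's label. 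Call $a$ \emph{free} at $(E, z)$ if the associated basic move is valid, and \emph{stuck} otherwise.

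The core lemma is that the free/stuck status of $a \in A(D)$ is invariant under any basic move on a different arc $a' \neq a$. I would verify this by case analysis on whether $a'$ is a merger or a splitter and on the incidence of the endpoints of $a$ with the circles affected by $a'$. In each case the $XY$-diagonalization ensures that every circle touched by $a$ either survives with its label unchanged, or is replaced by one or two new circles inheriting the same label; hence the label-matching condition defining freeness at $a$ is preserved. As a corollary, the set $A^* := A^*(E_0, z_0) \subseteq A(D)$ of free arcs is constant on the connected component containing $(E_0, z_0)$.

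With the lemma in place, for each subset $T \subseteq A^*$ let $B_T := (B_0 \setminus A^*) \cup T$ and define $(E_T, z_T)$ as the result of starting from $(E_0, z_0)$ and applying, in any order, the basic moves on the arcs of the symmetric difference $B_T \triangle B_0 \subseteq A^*$. Every such move is valid by the invariance lemma, and the moves commute because the $XY$-operations are diagonal, so $(E_T, z_T)$ is well-defined with underlying resolution $s_{B_T}(D)$. The resulting map $\{0,1\}^{A^*} \to P_{XY}(D)$, $T \mapsto (E_T, z_T)$, is injective since $T = B_T \cap A^*$ can be read off from the image, and it is a poset isomorphism onto its image via $(E_T, z_T) \preceq (E_{T'}, z_{T'}) \Longleftrightarrow B_T \subseteq B_{T'} \Longleftrightarrow T \subseteq T'$. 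The image is exactly the connected component of $(E_0, z_0)$ because any basic move from an element of the image acts on a free arc (by the lemma) and so remains inside the image.

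The main obstacle will be the case analysis in the invariance lemma, because $a$ and $a'$ may share endpoint circles in non-trivial planar configurations (for instance $a$ may be a self-arc on a circle that $a'$ subsequently splits, or $a'$ may merge a circle incident to $a$ with another one). Tracking the geometry of the local surgeries is therefore delicate, but the $XY$-diagonalization saves the argument: since each basic move assigns a single label to all circles emerging from one local operation, the labels at the endpoints of $a$ depend only on data unaffected by any move on $a' \neq a$.
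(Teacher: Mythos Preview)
Your argument is correct. Both your approach and the paper's rest on the same fact---the $XY$-operations are diagonal, so labels propagate uniquely along valid moves---but organize it differently. The paper works from the minimal elements of $P_{XY}(D)$: above each minimal $(D_i,y_i)$ there is a unique maximal $(D'_i,x_i)$, obtained by surgering every arc of $D_i$ whose surgery is valid, and the interval between them is identified with a cube via the already-proven \Cref{prop:P(Dxy)-structure}. You instead isolate a local invariance lemma (free/stuck status of an arc is unchanged by any move on a different arc) and use it to parametrize the connected component of an arbitrary basepoint directly by its set $A^*$ of free arcs. Your lemma is effectively what the paper leaves implicit when it asserts that the maximal successor is unique and well-defined, so the two routes are close in content; the paper's is shorter because it delegates to \Cref{prop:P(Dxy)-structure}, while yours is self-contained and makes the combinatorics explicit. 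One small point of care: when $a\in B$, so that the basic move on $a$ is an un-surgery, the phrase ``circles touched by $a$'' must be read as the circles carrying the two strands adjacent to $a$'s surgery site (equivalently, the endpoints of the dual arc); your label-preservation argument then goes through verbatim in that case as well.
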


\begin{proof}
    Consider the set of all minimal objects $\{ (D_i, y_i) \}_i$ in $P_{XY}(D)$. For each $i$, there is a unique maximal object $(D'_i, x_i)$ that succeeds or equals $(D_i, y_i)$, which is obtained by surgering all arcs of $D_i$ that connect circles labeled the same. With $k_i = \ind(D_i) - \ind(D'_i)$, it is obvious that the subposet spanned by $(D_i, y_i) \preceq (D'_i, x_i)$ is isomorphic to $P_{XY}(D_i \setminus D'_i;\ x_i, y_i) \isom \{0, 1\}^{k_i}$.
\end{proof}

\begin{example}
    \begin{figure}[t]
        \centering
        \resizebox{\textwidth}{!}{
            \input{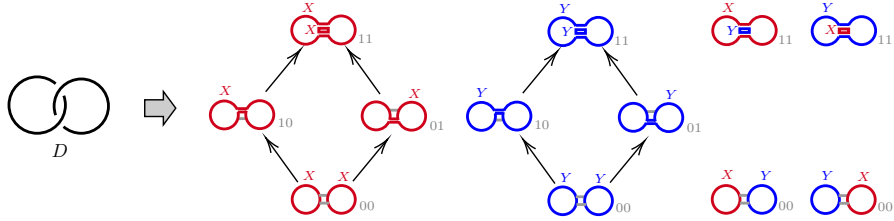}
        }
        \vspace{0.8em}
        \caption{An example of $D$ and its associated poset $P_{XY}(D)$}
        \label{fig:poset-hopf-link}
    \end{figure}
    An example of $D$ and its associated poset $P_\BN(D)$ is given in \Cref{fig:poset-hopf-link}. We see that $P_\BN(D)$ is the disjoint union of two $2$-cubes and four $0$-cubes.
\end{example}

Suppose $D$ is an index $n$ resolution configuration. Endow a grading on the objects of $P_{XY}(D)$ by 
\[
    \gr{(E, z)} = n - \ind(E),
\]
and on the objects of $\{0, 1\}^n$ by 
\[
    \gr(u) = |u|.
\]
Then the decomposition of \Cref{prop:P(D)-decomp} gives a grading preserving poset homomorphism
\[
    F: P_{XY}(D) \rightarrow \{0, 1\}^n,
    \quad 
    (D(u), z) \mapsto u
\]
which is injective on each cube subposet. We aim to construct a flow category from $P_{XY}(D)$ by fattening the arrows into moduli spaces. Unlike the case of the Khovanov flow category, the construction is done at once.

\begin{definition}\label{def:C_BN_star}
    Let $D$ be a non-empty index $n$ resolution configuration. The flow category $\bar{\C}_{XY}(D)$ is defined as follows: the object set is given by 
    \[
        \Ob(\bar{\C}_{XY}(D)) = \Ob(P_{XY}(D))
    \]
    together with the grading function defined as above. For each $\x, \y \in \bar{\C}_{XY}(D)$, the moduli space $\M_{XY}(\x, \y)$ is given by 
    \[
        \M_{XY}(\x, \y) = \M_\Cube{n}(F(\x), F(\y)).
    \]
    When $D$ is empty, define $\C_{XY}(\varnothing) = \mathbf{1}$, the discrete category with a single object. 
\end{definition}

From the above definition, there is an obvious isomorphism 
\[
    \bar{\C}_{XY}(D) \ \isom \ \bigsqcup_i \Cube{k_i}[d_i]
\]
where $[d_i]$ are appropriate grading shifts. There is also a grading preserving functor $\F$ that $F$ lifts as
\begin{equation*}
    \begin{tikzcd}
    \bar{\C}_{XY}(D) \arrow[r, "\mathcal{F}"] \arrow[d, two heads] & \Cube{n} \arrow[d, two heads] \\
    P_{XY}(D)^{\mathit{op}} \arrow[r, "F^{\mathit{op}}"]                                             & (\{0, 1\}^n)^{\mathit{op}}.            
    \end{tikzcd}
\end{equation*}
The two vertical arrows are identities on the object sets. If we restrict $\F$ to the subcategory $\Cube{k_i}[d_i]$ of $\bar{\C}_{XY}(D)$, we get inclusions
\begin{equation*}
    \begin{tikzcd}
    \Cube{k_i}[d_i] \arrow[r, hook, "\mathcal{I}_i"] \arrow[d, two heads] & \Cube{n} \arrow[d, two heads] \\
    (\{0, 1\}^{k_i}[d_i])^{\mathit{op}} \arrow[r, hook, "I_i^{\mathit{op}}"]                                             & (\{0, 1\}^n)^{\mathit{op}}.            
    \end{tikzcd}
\end{equation*}

\subsection{Framing}
\label{subsec:framing}

Next we endow $\bar{\C}_{XY}(D)$ a framing relative to a sign assignment $s$. Recall from \Cref{prop:cube-flow-cat-framing} that, given a sign assignment $s$ for the $n$-cube $K(n)$, there is a framed neat embedding $(\iota, \varphi)$ of $\Cube{n}$ so that framed flow category $(\Cube{n}, \iota, \varphi)$ refines the cube complex $C_*(n, s)$. Here, we will not simply pull $(\iota, \varphi)$ back to $\bar{\C}_{XY}(D)$ by $\F$, because we had to take care of the negative sign appearing in $m(Y \otimes Y) = -Y$.

\begin{definition}
    Let $(D; x, y)$ be an index $k$ admissible decorated resolution configuration. Define a 1-cochain $t \in C^1(K(k); \FF_2)$ as follows: for each edge $e$ between vertices $u >_1 v$, assign $t(e) = 1$ if the corresponding surgery $D(v) \prec_1 D(u)$ merges two circles labeled $Y$, or $t(e) = 0$ otherwise. The 1-cochain $t$ is called the \textit{sign adjustment} for $(D; x, y)$.
\end{definition}

\begin{lemma}
    A sign adjustment is a cocycle.
\end{lemma}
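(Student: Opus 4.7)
The plan is to verify the cocycle condition $\delta t = 0$ on each 2-face of $K(k)$. Fix a 2-face spanned by two arcs $a_p, a_q \in A(D)$ with base vertex corresponding to a subset $B \subset A(D) \setminus \{a_p, a_q\}$. By \Cref{prop:P(Dxy)-locally-constant}, the admissibility of $(D; x, y)$ forces $y$ to be constant on each connected component of $D$, so the two circles at the endpoints of any arc $a_r \in A(D)$ share a common label in $\{X, Y\}$; let $Y_r \in \{0,1\}$ equal $1$ iff that label is $Y$. For any subset $B' \subset A(D)$, let $M_r(B') \in \{0,1\}$ equal $1$ iff the two endpoints of $a_r$ lie on distinct circles of $D(B')$ (so the $a_r$-surgery at state $B'$ is a merge, not a split). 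Directly from the definition, the edge $e$ corresponding to the $a_r$-surgery at state $B'$ satisfies $t(e) = Y_r \cdot M_r(B')$.

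The first key step is a parity identity from circle counting: each surgery along $a_r$ at state $B'$ changes $|Z(D(B'))|$ by $1 - 2 M_r(B') \in \{\pm 1\}$, and equating the total change along the two paths around the 2-face yields
\[
    M_p(B) + M_p(B \cup \{a_q\}) \equiv M_q(B) + M_q(B \cup \{a_p\}) \pmod 2.
\]
Denote this common parity by $P$. The second key step is a locality observation: if $P = 1$, then the $a_q$-surgery at state $B$ must alter at least one of the circles at the endpoints of $a_p$ in $D(B)$, which forces $a_p$ and $a_q$ to share a circle of $D(B)$. Since surgeries supported in one connected component of $D$ leave circles in every other component untouched, this places $a_p$ and $a_q$ in the same connected component of $D$, and admissibility then gives $Y_p = Y_q$.

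Assembling these ingredients, the sum of $t$ over the four edges of the 2-face is
\[
    Y_p \bigl(M_p(B) + M_p(B \cup \{a_q\})\bigr) + Y_q \bigl(M_q(B) + M_q(B \cup \{a_p\})\bigr) \equiv (Y_p + Y_q)\, P \pmod 2,
\]
which vanishes trivially when $P = 0$, and also when $P = 1$ because then $Y_p \equiv Y_q \pmod 2$. The main obstacle I anticipate is giving a clean justification of the locality step: one needs to verify rigorously that surgeries supported in distinct connected components of $D$'s underlying planar graph cannot cause their supporting circles in $D(B')$ to meet. This deserves to be isolated as a short preliminary observation, proved by tracking how a single surgery acts on connected components of the circle-arc graph and then iterating over $B'$.
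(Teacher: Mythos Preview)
Your proof is correct and is essentially the paper's argument made explicit: both hinge on admissibility forcing constant labels on components together with the merge-parity coming from circle counting, and your locality step is exactly the contrapositive of the paper's ``different components $\Rightarrow$ obvious'' case. The paper simply cases on whether the two arcs lie in the same component and declares each case obvious, whereas you unpack this into the formula $\delta t \equiv (Y_p + Y_q)P$; your anticipated obstacle about locality is legitimate but, as you note, is a routine observation that surgeries in one connected component of $D$ cannot affect circles in another.
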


\begin{proof}
    Let $(D; x, y)$ be an index $k$ admissible decorated resolution configuration. We have $P_{XY}(D; x, y) \isom \{0, 1\}^k$. Any 2-face $\sigma$ of $K(k)$ corresponds to a pair of arcs $a, b$ of $D$. It suffices to prove that the total number of merges of $Y$-labeled circles that occurs along the boundary $\partial \sigma$ is even. If the two arcs $a, b$ initially belongs to different components, the claim is obvious. Otherwise, the label must be constant on the same component and again the claim is obvious.
\end{proof}

Now let $D$ be an index $n$ resolution configuration and $s$ be any sign assignment for $K(n)$. Consider the cubic decomposition 
\[
    P_{XY}(D) \ \isom \ \bigsqcup_i P_i
\]
where each $P_i$ is a cube subposet of $P_{XY}(D)$ between objects $(D(u_i), x_i) \succeq_{k_i} (D(v_i), u_i)$. For each $i$, let
\[
    I_i: K(k_i) \hookrightarrow K(n)
\]
be the inclusion such that $I_i(\bar{0}) = v_i$ and $I_i(\bar{1}) = u_i$. Put $s_i = I_i^* s$, and let $t_i$ be the sign adjustment for $(D(v_i) \setminus D(u_i);\ \restr{x_i}{}, \restr{y_i}{})$. Put $s'_i = s_i + t_i$. Then 
\[
    \delta s'_i = \delta s_i + \delta t_i = 1
\]
so $s'_i$ another sign assignment on $K(k_i)$. Let $\C_i \isom \Cube{k_i}$ be the subcategory of $\bar{\C}_{XY}(D)$ that corresponds to the subposet $P_i$. From \Cref{prop:cube-flow-cat-framing} there is a framed neat embedding $(\iota_i, \varphi_i)$ of $\C_i$ relative to $s'_i$. Thus, 
\[
    (\iota, \varphi) = \bigsqcup_i (\iota_i, \varphi_i)
\]
gives a framed neat embedding of $\bar{\C}_{XY}(D)$ after some perturbation if necessary. 

\begin{proposition} \label{prop:C_BN_XY-isom-unnormalized}
    The above obtained framed flow category $(\bar{\C}_{XY}(D), \iota, \varphi)$ refines the unnormalized Bar-Natan complex $\bar{C}_\BN(D)$ by the correspondence
    \[
        C^*(\bar{\C}_{XY}(D)) \rightarrow \bar{C}_\BN(D),
        \quad 
        \x^* \mapsto \x
    \]
    where $\x^*$ is a generator of $C^*(\bar{\C}_{XY}(D))$ dual to an object $\x$.
\end{proposition}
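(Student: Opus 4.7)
The plan is to verify the correspondence $\x^* \mapsto \x$ at three levels: generators, grading, and differential. By \Cref{def:C_BN_star} the objects of $\bar{\C}_{XY}(D)$ are exactly the $XY$-labeled resolution configurations $(D(u), z)$, which are in bijection with the $XY$-enhanced states of $D$ and so form a basis of $\bar{C}_\BN(D)$ after the change of basis $\{1, X\} \leftrightarrow \{X, Y\}$. The grading $\gr(D(u), z) = n - \ind(D(u)) = |u|$ agrees with the unnormalized homological grading $\overline{\gr}_h = |u|$, so $\x^* \mapsto \x$ is an isomorphism of graded modules.

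For the differential, I would first pin down where the non-zero contributions occur. The flow-category coboundary is $d\y^* = \sum_{|\x|=|\y|+1} \#\M_{XY}(\x, \y)\, \x^*$. By \Cref{def:C_BN_star} together with the decomposition $\bar{\C}_{XY}(D) \isom \bigsqcup_i \Cube{k_i}[d_i]$ coming from \Cref{prop:P(D)-decomp}, the moduli space $\M_{XY}(\x, \y)$ between adjacent gradings is either empty or a single point, and is non-empty exactly when $\x, \y$ lie in a common cube component $P_i$ with $\y \prec_1 \x$. By \Cref{prop:P(Dxy)-locally-constant} this is equivalent to the labels of $\y$ and $\x$ being compatible with the unique single surgery relating $F(\y)$ and $F(\x)$, i.e.\ precisely the condition under which the operation from \eqref{eq:XY-operations} produces a non-zero term landing on $\x$. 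In particular, the mixed merges $m(X \otimes Y) = 0$ correspond to empty moduli spaces, so the two differentials already agree on supports.

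Next I would match the signs. For a pair $(\x, \y)$ with $\M_{XY}(\x, \y) \neq \varnothing$, lying in a common cube component $P_i$ with $e \in K(k_i)$ the edge joining the images of $\y$ and $\x$, \Cref{prop:cube-flow-cat-framing} together with the definition $s'_i = I_i^* s + t_i$ gives
\[
    \#\M_{XY}(\x, \y) = (-1)^{s'_i(e)} = (-1)^{s(I_i(e))} \cdot (-1)^{t_i(e)}.
\]
The first factor $(-1)^{s(I_i(e))}$ is exactly the Khovanov-style edge sign used in the Bar-Natan differential on $K(n)$. The second factor $(-1)^{t_i(e)}$ equals $-1$ exactly when the surgery merges two $Y$-labeled circles, by definition of the sign adjustment, and $+1$ otherwise. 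Comparing with \eqref{eq:XY-operations}, this reproduces precisely the coefficients of the non-zero operations: the merge $m(Y \otimes Y) = -Y$ carries the extra $-1$, while $m(X \otimes X) = X$, $\Delta(X) = X \otimes X$, $\Delta(Y) = Y \otimes Y$ all carry $+1$. Hence the signed coefficients match term by term, and $\x^* \mapsto \x$ intertwines $d$ with the unnormalized Bar-Natan differential.

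The main bookkeeping obstacle will be verifying carefully that the disjointly assembled framed neat embedding $(\iota, \varphi) = \bigsqcup_i (\iota_i, \varphi_i)$, after the perturbation mentioned in the construction, genuinely realizes $\bar{\C}_{XY}(D)$ as a framed flow category whose associated chain complex splits as the block-diagonal direct sum $\bigoplus_i C_*(k_i, s'_i)$, with no spurious morphisms arising between different cube components $P_i \neq P_j$ whose images in $\{0,1\}^n$ may overlap. Once this is in place, the three matchings above identify its dual cochain complex with $\bar{C}_\BN(D)$ in the $XY$-basis, proving the refinement.
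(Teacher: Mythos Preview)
Your proposal is correct and follows essentially the same approach as the paper: identify the non-empty $0$-dimensional moduli spaces via the cube decomposition, then read off the sign of each point as $(-1)^{s'_i(e)} = (-1)^{s(I_i(e))}(-1)^{t_i(e)}$ using \Cref{prop:cube-flow-cat-framing} and the definition of the sign adjustment, and finally match this against \eqref{eq:XY-operations}. Your closing worry about spurious morphisms between distinct cube components is not actually an issue, since $\bar{\C}_{XY}(D)$ is by construction a disjoint union of cube flow categories (there are no morphisms between components), and the framed neat embedding is assembled componentwise.
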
 

\begin{proof}
    Put $C^* = C^*(\bar{\C}_{XY}(D))$. The coboundary map $\delta$ of $C^*$ is given by
    \[
        \delta \y^* 
            = \sum_{\y \prec_1 \x} \# \M(\x, \y) \cdot \x^*
    \]
    where $\# \M(\x, \y)$ is the signed counting of the 0-dimensional moduli space $\M(\x, \y)$ which in this case is a single point. Take any object $\y$, and suppose $P_i$ is the subposet that contains $\y$. Take any $\x \succ_1 \y$. From \Cref{prop:cube-flow-cat-framing} we have
    \[
        \# \M(\x, \y) = (-1)^{(s_i + t_i)(\bar{e})}
    \]
    where $\bar{e}$ is an edge of $K(k_i)$ corresponding to $\y \prec_1 \x$ in $P_i$. For $s_i$ we have 
    \[
        s_i(\bar{e}) = s(e)
    \]
    where $e = I_i(\bar{e})$ is an edge of $K(n)$ corresponding to $\y \prec_1 \x$ in $P_{XY}(D)$. For $t_i$ we have $t_i(e) = 1$ if and only if $\y \prec_1 \x$ corresponds to a merge of two circles labeled $Y$. Thus we have
    \[
        \delta \y^* = \sum_{\y \prec_1 \x} (-1)^{s(e)} (-1)^{t_i(\bar{e})} \x^*.
    \]
    Comparing equations \eqref{eq:XY-operations} and the basic relation given in \Cref{fig:basic-rel-XY}, we see that $\delta$ is identical to the differential $d$ of $\bar{C}_\BN(D)$.
\end{proof}

\begin{definition}
    Let $D$ be a link diagram with $n^-$ negative crossings. With the above constructed framed flow category $\bar{\C}_{XY}(D)$, define 
    \[
        \C_{XY}(D) = \bar{\C}_{XY}(D)[-n^-].
    \]
    $\C_{XY}(D)$ is called the \textit{$XY$-based Bar-Natan flow category} of $D$, and its associated spectrum $\X_{XY}(D)$ is called the \textit{$XY$-based Bar-Natan spectrum} of $D$.
\end{definition}

\begin{remark}
    When $D$ is empty,  $\X_{XY}(\varnothing) = \SS$, the sphere spectrum.
\end{remark}

\begin{corollary}  \label{prop:C_BN_XY-isom}
    There is an isomorphism
    \[
        \tilde{C}^*(\X_{XY}(D)) \xrightarrow{\ \isom\ } C^*_\BN(D)
    \]
    that maps the dual cells of $\X_{XY}(D)$ to the $XY$-based generators of $C^*_\BN(D)$. \qed
\end{corollary}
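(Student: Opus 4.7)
The plan is to stitch together three facts that have already been set up in the preceding material. First, by \Cref{prop:C_BN_XY-isom-unnormalized}, the associated cochain complex of the framed flow category $\bar{\C}_{XY}(D)$ is isomorphic to the unnormalized Bar-Natan complex $\bar{C}_\BN(D)$, with the generating cells $\x^*$ corresponding to the $XY$-based enhanced states $\x$. Second, by \Cref{prop:CJS-const}(3), the reduced cellular chain complex of the realization $|\C|$ of a framed flow category $\C$ is isomorphic to the associated chain complex $C_*(\C)$, and dualizing gives the cochain analogue; passing through the formal desuspension by $\ell$ used to define $\X(\C)$ simply identifies $\tilde{C}^*(\X(\C))$ with $C^*(\C)$ cell-for-cell. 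Third, the definition $\C_{XY}(D) = \bar{\C}_{XY}(D)[-n^-]$ is purely a grading shift on objects and does not change moduli spaces, framings, or the underlying chain groups as sets of generators.

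So the argument proceeds as follows. I would begin by applying \Cref{prop:CJS-const} to the framed flow category $\C_{XY}(D)$ to obtain a cell-preserving isomorphism
\[
    \tilde{C}^*(\X_{XY}(D)) \xrightarrow{\ \isom\ } C^*(\C_{XY}(D)).
\]
Next I would observe that $C^*(\C_{XY}(D))$ coincides with $C^*(\bar{\C}_{XY}(D))$ as a cochain complex, with the only difference being a global shift of the homological grading by $-n^-$; the differential and the set of dual generators $\x^*$ are unchanged. Combining this with \Cref{prop:C_BN_XY-isom-unnormalized} yields an isomorphism
\[
    C^*(\C_{XY}(D)) \xrightarrow{\ \isom\ } \bar{C}_\BN(D)[-n^-] = C^*_\BN(D),
\]
where the last equality is exactly the definition of the normalized Bar-Natan complex, whose homological grading $\gr_h(\x) = |u| - n^-$ (cf.\ \Cref{def:homol-quantum-grading}) is the unnormalized grading $|u|$ shifted by $-n^-$. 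Composing gives the desired isomorphism, and it sends each dual cell $\x^*$ of $\X_{XY}(D)$ to the corresponding $XY$-based enhanced state $\x$.

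There is essentially no obstacle beyond careful bookkeeping; everything has been arranged in the earlier statements so that this corollary is genuinely immediate. The only point worth stating explicitly in the write-up is that the grading shift on the flow category level matches the grading convention used to pass from $\bar{C}_\BN(D)$ to $C_\BN(D)$, so that the bijection between cells and $XY$-based generators is preserved verbatim.
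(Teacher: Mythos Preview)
Your proposal is correct and is precisely the intended argument: the paper marks this corollary with \qed and gives no proof text, treating it as immediate from \Cref{prop:C_BN_XY-isom-unnormalized}, \Cref{prop:CJS-const}, and the grading shift in the definition of $\C_{XY}(D)$. Your write-up simply spells out those three ingredients and the bookkeeping that makes them fit together, which is exactly right.
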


\subsection{Determining the stable homotopy type}
\label{subsec:C_BN-stable-htpy-type}

On the algebraic level, the graded module structure of $H_{\BN}(D)$ is completely known: it is freely generated by the canonical classes $\{ [\ca(D, o)] \}_o$ (\Cref{prop:ab-gen,prop:alpha-homol-gr}). We lift this structure theorem to the spacial level.

\begin{definition}
\label{def:canon-objects}
    For each orientation $o$ on the underlying unoriented link diagram of $D$, \Cref{algo:ab-coloring} yields an $XY$-labeled resolution configuration $\x_\ca(D, o)$. The objects $\{\x_\ca(D, o)\}_o$ are called the \textit{canonical objects} of $\C_{XY}(D)$. The cells $\{\sigma_\ca(D, o)\}$ corresponding to the canonical cells are called the \textit{canonical cells} of $\X_{XY}(D)$.
\end{definition}

\begin{example}
    For the diagram $D$ given in \Cref{fig:poset-hopf-link}, the four isolated $XY$-labeled diagrams placed at the right are the canonical objects. 
\end{example}

\begin{proposition}
    The framed flow category $\C_{XY}(D)$ is move equivalent to the discrete category formed by the canonical objects.
\end{proposition}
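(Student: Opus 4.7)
The plan is to reduce $\C_{XY}(D)$ to the discrete category on canonical objects by iterated handle cancellation. By the construction in Definition \ref{def:C_BN_star} combined with Proposition \ref{prop:P(D)-decomp}, the framed flow category $\bar{\C}_{XY}(D)$ is already a disjoint union $\bigsqcup_i \Cube{k_i}[d_i]$, one summand per cube component $P_i$ of $P_{XY}(D)$, with no moduli spaces between distinct summands. It therefore suffices to empty every summand with $k_i \geq 1$ and to identify the surviving objects with the canonical ones.

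For the elimination, I plan to argue by induction on $k$ that the framed cube flow category $\Cube{k}[d]$ is move equivalent to the empty category whenever $k \geq 1$. In $\Cube{k}$ the $0$-dimensional moduli space between any edge-adjacent pair $(u, u - e_j)$ is a single framed point (Proposition \ref{prop:cube-flow-cat-framing}), so Proposition \ref{prop:handle-cancel} immediately removes two objects. The base case $k = 1$ is finished by a single cancellation; for $k \geq 2$, after cancelling the pair $(\bar 1, \bar 1 - e_1)$ the formula in Proposition \ref{prop:handle-cancel} still leaves enough cube-like structure (the underlying chain complex remains acyclic and every residual edge moduli space is a single framed point) for the inductive hypothesis to apply. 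A cleaner alternative would be to realize $\Cube{k}$ as the framed Morse flow category of the self-indexing function $f_k$ on $[0,1]^k$ and invoke a classical Morse-theoretic cancellation on the contractible cube.

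After these cancellations, only the $k_i = 0$ summands survive, contributing the isolated vertices of $P_{XY}(D)$ as a discrete framed category. By the proof of Proposition \ref{prop:P(D)-decomp}, these are exactly the admissible labeled resolutions $(E, z)$ in which every arc of $E$ joins two oppositely $XY$-labeled circles. Under the Bar-Natan specialization $(h, t) = (1, 0)$, the factorization $X^2 - X = (X - 0)(X - 1)$ gives $\a = X$ and $\b = Y$, so Algorithm \ref{algo:ab-coloring} applied to any orientation $o$ of the underlying unoriented diagram produces a canonical object $\x_\ca(D, o)$ whose Seifert circles across any arc lie in oppositely colored regions and thus carry opposite $XY$-labels; hence each $\x_\ca(D, o)$ is isolated. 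Conversely, Proposition \ref{prop:P(Dxy)-locally-constant} forces any isolated $(E, z)$ to have $z$ constant on each planar-graph component of $E$, and the oppositeness condition across arcs recovers a unique orientation $o$ realizing $(E, z) = \x_\ca(D, o)$.

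The main obstacle will be the inductive step of the cube elimination: handle cancellations in Proposition \ref{prop:handle-cancel} redistribute all of the higher-dimensional moduli spaces in $\Cube{k}$, so careful bookkeeping (or a Morse-theoretic reformulation of the argument) is required to guarantee that a cancelable edge moduli space of a single framed point is always available until the cube is fully emptied. The other steps, the structural decomposition and the bijection between isolated vertices and canonical objects, are essentially combinatorial consequences of Propositions \ref{prop:P(Dxy)-locally-constant} and \ref{prop:P(D)-decomp}.
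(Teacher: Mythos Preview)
Your overall structure matches the paper exactly: decompose $\C_{XY}(D)$ into disjoint cube flow categories, cancel each nontrivial cube, and identify the surviving $0$-cubes with the canonical objects. Your treatment of the last step (isolated objects $\leftrightarrow$ canonical objects) is in fact more detailed than the paper's, which simply asserts it.

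The gap you yourself flag in the inductive elimination of $\Cube{k}$ is real. After a single cancellation of $(\bar 1,\bar 1-e_1)$ the resulting framed flow category is \emph{not} isomorphic to $\Cube{k-1}$ (it has $2^k-2$ objects, not $2^{k-1}$), so the inductive hypothesis as stated does not apply. Strengthening the hypothesis to ``any acyclic framed flow category with single-point edge moduli spaces is move equivalent to the empty category'' is a different and harder statement, and your Morse-theoretic alternative would need a precise statement and proof on the flow-category level.

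The paper sidesteps this entirely with a direct argument (its Lemma~\ref{lem:cube-contract}): pair \emph{all} objects of $\Cube{n}$ simultaneously by the last coordinate, $v=(v_1,\dots,v_{n-1},0)\leftrightarrow u=(v_1,\dots,v_{n-1},1)$, and check that cancelling any one pair $(u,v)$ leaves every other pair's moduli space $\M(u',v')$ unchanged. The verification is short: for $\M(u,v')\times\M(u',v)$ to be nonempty one needs $u>v'$ and $u'>v$, forcing $|u|=|u'|$, hence $v<_1 u>_1 v'$; but $u-v=e_n$ while $v'_n=0$ forces $u-v'=e_n$ as well, so $v=v'$, a contradiction. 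Thus all $2^{n-1}$ pairs can be cancelled in any order with no interaction, and $\Cube{n}$ collapses to the empty category in one pass. Replacing your inductive step with this pairing argument closes the gap and makes the proof complete.
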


\begin{remark}
    This is a flow category level analogue of  \cite[Remark 5.4]{Wehrli:2008}. A more detailed proof for the algebraic counterpart can be found \cite{Lewark:2009}, \cite{Turner:2020} and \cite{Sano:2020}.
\end{remark}

\begin{proof}
    Recall that $\C_{XY}(D)$ is constructed as a disjoint union of cube flow categories. From the basic relation it is obvious that there is no object that precedes or succeeds a canonical object. Furthermore, we can prove that the canonical objects are the only objects with such property. We claim that the remaining subcategories can be removed by handle cancellations, which follows from the following lemma.
\end{proof}

\begin{lemma} \label{lem:cube-contract}
    If $n > 0$, then the framed cube flow category $\Cube{n}$ is move equivalent to an empty category. 
\end{lemma}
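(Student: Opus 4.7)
The plan is to exhibit an explicit sequence of $2^{n-1}$ handle cancellations (\Cref{prop:handle-cancel}) that removes all objects of $\Cube{n}$ in pairs. For each $u \in \{0,1\}^n$ with $u_1 = 0$, pair $u$ with $v(u) := u + e_1$, where $e_1 = (1, 0, \dots, 0)$. Since $v(u) >_1 u$, the moduli space $\M_{\Cube{n}}(v(u), u) = \Pi_0$ is a single point, so the pair is a valid input to handle cancellation with $x = v(u)$ and $y = u$. I would process these $2^{n-1}$ pairs in order of strictly increasing $|u|$.

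The key step is to show that at each stage the correction prescribed by \Cref{prop:handle-cancel} vanishes, so that the cancellation simply deletes the two objects and leaves all remaining moduli spaces unchanged. By the remark following \Cref{prop:handle-cancel}, it suffices to verify that for every pair $(a, b)$ of remaining objects either $\M(v(u), b)$ or $\M(a, u)$ is empty. Non-emptiness of $\M(v(u), b)$ would force $b < v(u)$ with $b \neq u, v(u)$. If $b_1 = 0$, then $b \leq u$ and $b \neq u$, hence $|b| < |u|$, so the pair $(b, v(b))$ was handled at an earlier step and $b$ is no longer an object. If $b_1 = 1$, write $b = v(b')$ with $b' < u$ and $|b'| < |u|$; again the pair $(b', b)$ has already been processed, and $b$ is not remaining. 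Either case is a contradiction, so the correction vanishes and the claim follows.

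An immediate consequence is that the moduli spaces between remaining objects are identical to the original ones throughout the procedure; in particular, when the pair $(v(u), u)$ is reached, $\M(v(u), u)$ is still a single point, so each successive handle cancellation is legitimately applicable. After all $2^{n-1}$ cancellations the object set is empty, and by condition (3) of \Cref{prop:handle-cancel} the realization has not changed up to homotopy (which is consistent with $|\Cube{n}| \htpy \pt$ since the cube complex $C_*(n,s)$ is acyclic). The main obstacle is the bookkeeping in the previous paragraph; once the ordering of pairs by $|u|$ is fixed, the vanishing of corrections reduces to the elementary fact that every $b < v(u)$ has its partner strictly below $u$, after which the proof is automatic.
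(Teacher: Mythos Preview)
Your proof is correct and follows essentially the same approach as the paper: pair the objects of $\Cube{n}$ by one coordinate (you use the first, the paper uses the last) and show that each handle cancellation leaves the remaining moduli spaces unchanged, so all $2^{n-1}$ pairs can be removed. Your explicit ordering by $|u|$ (which should be non-decreasing rather than strictly increasing, since several $u$ share the same norm) and the stronger observation that $\M(v(u),b)=\varnothing$ for \emph{every} remaining $b$ make the induction slightly cleaner than the paper's order-free argument, which only checks that $\M(u',v')$ for other \emph{pairs} is unaffected, but the underlying idea is identical.
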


\begin{proof}
    Pair all of objects of $\Cube{n}$ by
    \[
        v = (v_1, \cdots, v_{n - 1}, 0) <_1 u = (v_1, \cdots, v_{n - 1}, 1)
    \]
    Obviously $\M(u, v) = \{ \pt \}$ for each such pair $(u, v)$. By handle cancellation (\Cref{prop:handle-cancel}), the pair $(u, v)$ can be removed from $\Cube{n}$ with the effect of modifying other relevant moduli spaces. For any other pair $(u', v')$, the effect on $\M(u', v')$ is the attaching of the product
    \[
        \M(u, v') \times \M(u', v).
    \]
    For the two multiplicands to be non-empty, we must have $u > v'$ and $u' > v$, which implies
    \[
        |u| > |v'| = |u'| - 1,\ |u'| > |v| = |u| - 1.
    \]
    Thus $|u| = |u'|$ and $|v| = |v'|$, so we have 
    \[
        v <_1 u >_1 v'.
    \]
    This forces $v = v'$, which is a contradiction. Thus the product moduli space is empty, and $\M(u', v')$ remains unchanged. Therefore all pairs can be cancelled and we will be left with an empty category.
\end{proof}

\begin{corollary} \label{cor:X_BN-structure}
    $\X_{XY}(D)$ is stably homotopy equivalent to the wedge sum of the canonical cells
    \[
        \X_{XY}(D) \htpy \bigvee_{o} \sigma_\ca(D, o)
    \]
    where $o$ runs over all orientations on $D$. \qed
\end{corollary}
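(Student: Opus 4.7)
The plan is to simply cash in the move equivalence established in the preceding proposition at the level of spectra. By the proposition, $\C_{XY}(D)$ is move equivalent to the discrete subcategory $\C_\ca(D)$ whose objects are exactly the canonical objects $\{\x_\ca(D,o)\}_o$ and whose morphism spaces are empty except for the identities. Move equivalence is designed precisely to preserve the stable homotopy type of the associated spectrum (each of \Cref{prop:handle-cancel}, \Cref{prop:handle-slide}, and \Cref{prop:whitney-trick} explicitly guarantees that $|\C|$ is homotopy equivalent to $|\C'|$), so passing to realizations and desuspending yields a stable equivalence
\[
    \X_{XY}(D) \htpy \X(\C_\ca(D)).
\]

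Next I would identify $\X(\C_\ca(D))$ with the claimed wedge. Since $\C_\ca(D)$ is discrete, all moduli spaces between distinct objects are empty, so the CW-complex $|\C_\ca(D)|$ produced by \Cref{prop:CJS-const} has no nontrivial attaching maps: it is just a wedge (over $o$) of spheres, one for each canonical object $\x_\ca(D,o)$, of dimension $|\x_\ca(D,o)| + \ell$. After the common formal desuspension by $\Sigma^{-\ell}$, each wedge summand becomes precisely the canonical cell $\sigma_\ca(D,o)$ (up to the grading shift $[-n^-]$ built into $\C_{XY}(D)$), giving
\[
    \X(\C_\ca(D)) \htpy \bigvee_o \sigma_\ca(D,o).
\]

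Combining the two equivalences proves the corollary. There is essentially no obstacle here, since all the work has been done in the preceding proposition and in \Cref{lem:cube-contract}; the only minor point to check is bookkeeping, namely that the grading shift $[-n^-]$ used to define $\C_{XY}(D)$ from $\bar{\C}_{XY}(D)$ is compatible with the dimensions assigned to the canonical cells in \Cref{def:canon-objects}. The invariance of the stable homotopy type of $\X_{XY}(D)$ under Reidemeister moves is then immediate, since the right-hand side depends only on the set of orientations on the underlying unoriented diagram, which is a link invariant.
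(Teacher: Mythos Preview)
Your proof is correct and follows exactly the approach the paper intends: the corollary is marked with a \qed\ because it is immediate from the preceding proposition together with the fact that flow category moves preserve the stable homotopy type of the realization (\Cref{prop:handle-cancel,prop:handle-slide,prop:whitney-trick}), and the realization of a discrete flow category is a wedge of spheres. One small remark: your final sentence about link invariance is slightly imprecise, since the wedge depends not only on the set of orientations but also on the dimensions of the canonical cells; the paper handles this by invoking \Cref{prop:alpha-homol-gr}, which expresses those gradings in terms of linking numbers.
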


Since the homological gradings of the canonical objects (and the canonical cells) are given by \Cref{prop:alpha-homol-gr}, we see that the stable homotopy type of $\X_{XY}(D)$ is a link invariant, in particular independent of the auxiliary choices made for the construction.
Obviously, the dual cells $\sigma^*_\ca(L, o) \in \tilde{C}^*(\X_{XY}(L, s))$ correspond to the canonical cycles $\ca(L, o) \in C_\BN(L, s)$ under the identification of \Cref{prop:C_BN_XY-isom}. Thus by applying $\tilde{H}^*$ on both sides, we immediately recover the structure theorem for Bar-Natan homology
\[
    H_\BN(D) \isom \bigoplus_o \ZZ[\ca(D, o)].
\]

\begin{corollary}
\label{cor:disj-union-and-mirror}
    For link diagrams $D, D'$, there are stable homotopy equivalences:
    \begin{enumerate}
        \setlength{\itemsep}{0pt} 
        \item $\X_{XY}(D \sqcup D') \htpy \X_{XY}(D) \smash \X_{XY}(D')$.
        \item $\X_{XY}(m(D)) \htpy \X_{XY}(D)^{\vee}$.
    \end{enumerate}
    Here $m(\cdot)$ denotes the mirror, and $(\cdot)^{\vee}$ denotes the Spanier-Whitehead dual.
\end{corollary}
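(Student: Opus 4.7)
The plan is to apply \Cref{cor:X_BN-structure} to both sides of each equivalence, reducing everything to a combinatorial comparison of canonical orientations and their homological gradings. Indeed, the structure theorem says that $\X_{XY}(D)$ is stably equivalent to the realization of a discrete framed flow category with one object per canonical orientation, graded by $\gr_h(\ca(D, o))$; by \Cref{prop:CJS-const} applied in the trivial case of empty non-identity moduli spaces, such a realization is a wedge of sphere spectra $\bigvee_o S^{\gr_h(\ca(D,o))}$. Each claimed equivalence then reduces to (i) a bijection between the indexing sets and (ii) matching of dimensions, together with a formal fact about sphere spectra.

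For part (1), orientations on $D \sqcup D'$ are in bijection with pairs $(o, o')$ of orientations on the two summands. After placing $D$ and $D'$ in disjoint discs of $\RR^2$, both the Seifert circle decomposition and the checkerboard coloring of \Cref{algo:ab-coloring} restrict summandwise, so $\x_\ca(D \sqcup D', o \sqcup o')$ decomposes as $\x_\ca(D, o) \cup \x_\ca(D', o')$. Linking numbers between components of $D$ and components of $D'$ vanish, hence \Cref{prop:alpha-homol-gr} yields the additivity
\[
    \gr_h(\ca(D \sqcup D', o \sqcup o')) = \gr_h(\ca(D, o)) + \gr_h(\ca(D', o')).
\]
The equivalence then follows from the distributivity of smash over wedges, $\bigl(\bigvee_i S^{n_i}\bigr) \smash \bigl(\bigvee_j S^{m_j}\bigr) \htpy \bigvee_{i,j} S^{n_i + m_j}$.

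For part (2), the Seifert circles, their $\alpha\beta$-labels, and the underlying unoriented diagram are unchanged under mirroring, so canonical orientations on $m(D)$ correspond bijectively to those on $D$. On the other hand, every crossing switches sign under mirroring, so each linking number $\mathit{lk}(D_i, D_j)$ negates, and \Cref{prop:alpha-homol-gr} gives $\gr_h(\ca(m(D), o)) = -\gr_h(\ca(D, o))$. Since the Spanier-Whitehead dual of a finite wedge of sphere spectra negates the dimensions, $(\bigvee_o S^{n_o})^\vee \htpy \bigvee_o S^{-n_o}$, the claim follows.

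The main technical point to confirm is the compatibility between the flow-category constructions and the spectrum-level operations, namely that the realization of a disjoint union of framed flow categories is the smash of the realizations, and that the realization of the ``opposite'' framed flow category (with reversed gradings and dualized framings) gives the Spanier-Whitehead dual. In each case this is standard after the handle cancellations of \Cref{lem:cube-contract}, since we are comparing realizations of discrete framed flow categories where the identifications are forced by \Cref{prop:CJS-const}; no residual moduli space data intervenes.
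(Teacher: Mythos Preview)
Your proposal is correct and follows essentially the same approach as the paper: both proofs invoke \Cref{cor:X_BN-structure} to reduce to wedges of spheres indexed by canonical orientations, use \Cref{prop:alpha-homol-gr} to verify additivity (respectively negation) of the homological gradings, and then appeal to the formal behavior of sphere spectra under smash product and Spanier-Whitehead duality. The paper carries out part 1 slightly more concretely by fixing explicit desuspension levels $l, l'$ for the realizations, whereas you work directly at the spectrum level, but this is a cosmetic difference.
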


\begin{proof}
    \textit{1.} Let $X, X', X''$ be realizations of the Bar-Natan flow categories corresponding to $D, D'$ and $D \sqcup D'$ such that $\X_{XY}(D) = \Sigma^{-l}X $,\ $\X_{XY}(D') = \Sigma^{-l'}X'$ and $\X_{XY}(D \sqcup D') = \Sigma^{-l - l'}X''$ for some integers $l, l' > 0$. This is possible from the construction of the spectra, since $l, l'$ can be taken arbitrarily large. From \Cref{thm:2} it suffices to prove 
    \[
        \sigma_\ca(D \sqcup D', o\sqcup o') \homeo \sigma_\ca(D, o) \smash \sigma_\ca(D', o')
    \]
    for arbitrary orientations $o, o'$ of $D, D'$ respectively. Put $\sigma_\ca = \sigma_\ca(D, o),\ \sigma'_\ca = \sigma_\ca(D', o')$ and $\sigma''_\ca = \sigma_\ca(D \sqcup D', o \sqcup o')$. From \Cref{prop:alpha-homol-gr}, we have 
    \[
        \gr_h(\sigma_\ca) + \gr_h(\sigma'_\ca) = \gr_h(\sigma''_\ca)
    \]
    hence
    \[
        \dim(\sigma_\ca) + \dim(\sigma'_\ca) = (\gr_h(\sigma_\ca) + l) + (\gr_h(\sigma'_\ca) + l') = \dim(\sigma''_\ca)
    \]
    and $\sigma_\ca \smash \sigma'_\ca \homeo \sigma''_\ca$.
    
    \medskip
    
    \textit{2.} From \Cref{prop:alpha-homol-gr}, we have 
    \[
        \gr_h(\sigma_\ca(m(D), m(o))) = -\gr_h(\sigma_\ca(D, o)).
    \]
    The standard $S$-duality map $\SS^p \smash \SS^{-p} \rightarrow \SS^0$ gives the desired map.
\end{proof}

\begin{remark}
    The corresponding properties for Khovaov homotopy type are proved in \cite{LLS:2020} by introducing alternative constructions of the spectrum.
\end{remark}
    \section{Change of basis}
\label{sec:1X-basis}

Next, we recover the standard basis $\{1, X\}$ from $\{X, Y\}$ on the framed flow category level. Let us roughly consider how this should proceed, with the help of analogies in classical Morse theory. First, since we take the cochain complex of the Bar-Natan spectrum to get the Bar-Natan complex, the label set corresponding to a basis $S$ for $A$ should actually be identified with the dual basis of $S$ for $A^*$. Let $\{X^*, Y^*\}$ be the dual of $\{X,\ Y\}$, and $\{1', X'\}$ be the dual of $\{1, X\}$. From
\[
    (X,\ Y) = (X,\ 1)
    \begin{pmatrix}
        1 &  1 \\ 
        0 & -1
    \end{pmatrix},
\]
we have 
\[
    (X', 1') = (X^*,\ Y^*)
    \begin{pmatrix}
        1 &  0 \\ 
        1 & -1
    \end{pmatrix}.
\]
The transition from $\{X^*, Y^*\}$ to $\{X',\ 1'\}$ is realized by 
\begin{align*}
    \{X^*, Y^*\}
    \xrightarrow{\text{add $Y^*$ to $X^*$}} 
    &\{X^* + Y^*, Y^*\} \\
    \xrightarrow{\text{negate $Y^*$}}
    &\{X^* + Y^*, -Y^*\} = \{X', 1'\}.
\end{align*}
On the framed flow category level, adding $Y^*$ to $X^*$ should be realized by sliding an object labeled $X$ over $Y$. We describe this process by
\[
    X \dashrightarrow Y.
\]
The basis change for $(A^{\otimes k})^* \isom (A^*)^{\otimes k}$ can be performed by repeating this process for each tensor factor. On the framed flow category level, this should be realized by a $k$-times batch of parallel handle slides. The following diagram describes this process when $k = 2$. 
\begin{equation*}
    % https://tikzcd.yichuanshen.de/#N4Igdg9gJgpgziAXAbVABwnAlgFyxMJZABgBoBGAXVJADcBDAGwFcYkQANAAgB0eI8AW3hcOIAL6l0mXPkIoyxanSat23PgKzC4XAJoSpIDNjwEi5CsoYs2iEHt78hIsZOmm5F0kpo219o6aLroG7sYyZvLIAMxWfqp2DsHaroYesuYoAEzxKrbqAOROWjqi6RGeWbE+1onsQc6poRUmmdG5vvkBnMUpZWFGbVFEACx5-kkcfU1l062RXijjXZNFJSH6C1XRAKwT9YH9roXb7UT7q4cOG81b4sowUADm8ESgAGYAThCCSGQgHAQJC5bpJAAUAGoAJQgGiMLBgJJQehwAAWTzhIEY9AARjBGAAFRZZbEwD44CrfX5ISyA4GIOLYxHI1EYqBYtb2KGw+F4gnEnbsRjkynhal-RCgoFIJkIpHsKACHCYmgY+gc+yQBXin6SgEyxB0+XI5WqkDqzXgAhsXU0xDjelIfYWmAa9jatjwlmKs0cu2Sx2GgBsardmrAzEYjG9CvsKPR5q5IB5WJx+KJJPkZIpVL1zpohoA7LHWYmOWH3YhI9GaOisLnEADk6m+RnBed7CLcwGkKGnYgS8y4yAlTgVRXXVXrTqjBLaQaGQAOUu+8fmy0em15+3kRe040++N+rGbrXb3tGumGgCclatnrTR9HJ8vK4Hd+HpvXk7PM9tlDiEAA
\begin{tikzcd}[column sep={2cm,between origins}]
X \otimes Y \arrow[r, dashed]                            & Y \otimes Y                            & X' \otimes Y                          & Y \otimes Y \arrow[l, no head, dotted]                         & X' \otimes Y                             & Y \otimes Y \arrow[l, no head, dotted]                            \\
X \otimes X \arrow[r, dashed] \arrow[u, no head, dotted] & Y \otimes X \arrow[u, no head, dotted] & X' \otimes X \arrow[u, dashed] & Y\otimes X \arrow[l, no head, dotted] \arrow[u, dashed] & X' \otimes X' \arrow[u, no head, dotted] & Y\otimes X' \arrow[l, no head, dotted] \arrow[u, no head, dotted]
\end{tikzcd}
\end{equation*}
Finally, negating $Y^*$ should be realized by reversing the orientations of the corresponding cells. The above considerations are formalized as follows.

\subsection{$1X$-based Bar-Natan flow category} \label{subsec:basis-change}

Let $D$ be an index $n$ resolution configuration. For each $u \in \{0, 1\}^n$, put $r(u) = \#Z(D(u))$ and fix a total ordering of $Z(D(u))$ as $\{Z_i\}_{i=1}^{r(u)}$. Then any $XY$-labeling $x$ of $D(u)$ can be identified with a vertex $v \in \{0, 1\}^{r(u)}$ under the correspondence
\[
    v_i = 0 \ \Leftrightarrow \ x(Z_i) = X.
\]
Thus an object $\x = (D(u), x)$ is identified with a pair of vertices $(u, v)$, and $\Ob(\bar{\C}_{XY}(D))$ is identified with the union of the vertices of the subcubes each placed at a vertex of the $n$-cube. 

Take one $u \in \{0, 1\}^n$ and focus on the $r(u)$-subcube placed at $u$. Let $\{e_i\}$ be the standard basis of $\RR^{r(u)}$. For each $1 \leq i \leq r(u)$, perform $2^{r(u) - 1}$ positive handle slides parallel to $e_i$, so that each $\x = (u, v)$ with $v_i = 0$ is slid over $\x' = (u, v + e_i)$. In total we perform $r(u) \cdot 2^{r(u) - 1}$ handle slides. We call such procedure the \textit{cubic handle slide} performed at $u$. 

By performing cubic handle slides at every $u \in \{0, 1\}^n$ (in any order), we obtain a new framed flow category, which we temporarily denote by $\bar{\C}'_{XY}(D)$. Note that the object set remains unchanged from $\bar{\C}_{XY}(D)$. However it is convenient to regard an object $\x$ of $\bar{\C}'_{XY}(D)$ placed at $(u, v)$ as an $X'Y$-labeled resolution configuration $(D(u), x)$ under the correspondence
\[
    v_i = 0 \ \Leftrightarrow \ x(Z_i) = X'.
\]
In order to describe the $0$-dimensional moduli spaces of $\bar{\C}'_{XY}(D)$, let us introduce a few notations. 

Let $\x = (u, v),\ \y = (u', v')$ be objects in the original category $\bar{\C}_{XY}(D)$. Denote $\x \rightarrow \y$ if $\M(\x, \y)$ is non-empty, and call this a \textit{vertical arrow}. Denote $\x \dashrightarrow \y$ if $u = u'$ and $v < v'$, and call this a \textit{horizontal arrow}. The \textit{length} of a vertical (resp.\ horizontal arrow) is given by $|u| - |u'|$ (resp.\ $|v| - |v'|$). We also write $\rightarrow_k$ and $\dashrightarrow_l$ so that the subscript indicates the length of the arrow. A \textit{chain} is a sequence of vertical and horizontal arrows of $\bar{\C}_{XY}(D)$.

\begin{lemma}
\label{lem:0dim-chain}
    For any pair of objects $\x, \y$ of $\bar{\C}'_{XY}(D)$ with $|\x| = |\y| + 1$, each point of the 0-dimensional moduli space $\M'(\x, \y)$ corresponds bijectively to a chain in $\bar{\C}_{XY}(D)$ of the form
    \[
        \x \dashrightarrow_{l_1} \x' \rightarrow_1 \y' \dashrightarrow_{l_2} \y.
    \]
    Moreover, the sign of such point is given by $(-1)^{l_2}$ times the sign of $\x' \rightarrow_1 \y'$ where $l_2$ is the length of $\y' \dashrightarrow \y$. 
\end{lemma}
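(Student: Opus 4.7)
My plan is to verify the bijection by iteratively applying the handle slide formula (Proposition \ref{prop:handle-slide}) over all the individual slides that comprise the cubic handle slides, processing them direction-by-direction within each vertex $u \in \{0,1\}^n$. A crucial preliminary observation will be that the third ``product'' term $\M_\C(y,b) \times [0,1] \times \M_\C(a,x)$ in Proposition \ref{prop:handle-slide} has total dimension $|a|-|b|-1$ and therefore cannot contribute to any $0$-dimensional moduli space: for $|a|-|b|=1$ this would force $\dim\M_\C(y,b)+\dim\M_\C(a,x)=-1$, which is impossible for non-empty manifolds. Consequently only the first two formulas of Proposition \ref{prop:handle-slide} will be relevant.

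I will first fix a vertex $u$ and analyze the cubic handle slide at $u$ alone. Since slides parallel to a given direction $e_i$ act on pairwise disjoint pairs $(\x_*, \x_* + e_i)$, they commute, and I will process them one direction at a time. I will then show by induction on $i$ that after completing all direction-$e_j$ slides for $j \leq i$, for any $\x_*$ at $u$-level and any $b$ at strictly lower grading,
\[
\M(\x_*, b) = \bigsqcup_{\x_{**}} \M_0(\x_{**}, b),
\]
where $\x_{**}$ ranges over objects at $u$ with $v_{\x_{**}} - v_{\x_*} \in \{0,1\}^i \times \{0\}^{r(u)-i}$. The inductive step will use the formula $\M'(\x_*, b) = \M(\x_*, b) \cup \M(\x_* + e_i, b)$ for each individual direction-$e_i$ slide, together with the disjointness that makes those slides commute. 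A dual induction, using the $(-\epsilon)$ sign factor from $\M'(a,y) = -\M(a,x) \cup \M(a,y)$, will give that after the full cubic slide at $u$,
\[
\M(a, \y_*) = \bigsqcup_{\y_{**}} (-1)^{|v_{\y_*}| - |v_{\y_{**}}|}\, \M_0(a, \y_{**})
\]
for $\y_{**}$ ranging over $v_{\y_{**}} \leq v_{\y_*}$.

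Finally, since the cubic slides at distinct $u$-vertices modify only moduli with at least one endpoint at their own $u$-level, they commute with each other, and composing them I will obtain for $\x,\y$ with $|\x| - |\y| = 1$,
\[
\M'(\x, \y) = \bigsqcup_{\x \dashrightarrow \x'}\ \bigsqcup_{\y' \dashrightarrow \y} (-1)^{l_2}\, \M_0(\x', \y').
\]
Each length-$1$ vertical moduli space $\M_0(\x', \y')$ in $\bar{\C}_{XY}(D)$ is either empty or a single framed point, being inherited from $\Cube{n}$ via the covering functor $\mathcal{F}$, so the disjoint union is indexed precisely by chains $\x \dashrightarrow_{l_1} \x' \rightarrow_1 \y' \dashrightarrow_{l_2} \y$, each carrying sign $(-1)^{l_2}$ times the sign of the vertical arrow, establishing the claim. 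The main obstacle will be nailing down the direction-by-direction induction, since the order of slides within a single cubic handle slide does matter: naively interleaving directions can produce overcounting, and the induction must confirm that completing one direction fully before moving to the next gives exactly the stated disjoint union with no spurious extra points accumulated at intermediate stages.
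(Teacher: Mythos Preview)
Your proposal is correct and follows essentially the same approach as the paper's proof, which is a two-sentence sketch: ``This follows from the formula given in \Cref{prop:handle-slide}. First, the vertical arrow $\x' \rightarrow_1 \y'$ is pushed to $\x' \rightarrow_1 \y$ and then pulled to $\x \rightarrow_1 \y$ by the cubic handle slides.'' Your write-up is a careful expansion of exactly this idea: the observation that the interval term $\M(y,b)\times[0,1]\times\M(a,x)$ has dimension $|a|-|b|-1$ and so never contributes to $0$-dimensional moduli, followed by iterated application of the first two formulas in \Cref{prop:handle-slide}.

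One small point worth tightening: your justification that cubic slides at distinct vertices commute (``modify only moduli with at least one endpoint at their own $u$-level'') does not by itself give commutativity, since $\M(\x,\y)$ with $\x$ at level $u$ and $\y$ at level $u'$ has an endpoint at \emph{each} level and is therefore modified by both. The reason they commute is that the slide at $u$ acts only on the source end (replacing $\M(\x,\cdot)$ by $\bigsqcup_{\x'\geq \x}\M(\x',\cdot)$) while the slide at $u'$ acts only on the target end; these are independent operations on a product-like structure. This is implicit in your final displayed formula, but the sentence preceding it undersells the actual reason. Otherwise the argument is sound and matches the paper's intent.
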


\begin{proof}
    This follows from the formula given in \Cref{prop:handle-slide}. First, the vertical arrow $\x' \rightarrow_1 \y'$ is pushed to $\x' \rightarrow_1 \y$ and then pulled to $\x \rightarrow_1 \y$ by the cubic handle slides.
\end{proof}

\begin{figure}[t]
    \centering
    \resizebox{0.9\textwidth}{!}{
        % https://tikzcd.yichuanshen.de/#N4Igdg9gJgpgziAXAbVABwnAlgFyxMJZABgBoBmAXVJADcBDAGwFcYkQANAAgB0eI8AW3hcOIAL6l0mXPkIoAjKQBM1Ok1btufAVmFwuATQlSQGbHgJElVGgxZtEIQ735CRYydItyiylWr2mk4uOu4Gxl5mMpbyJKQKgRqOnCbeslYo-sRJDuyRpuYZceQUucGcAOSuuvqilWnRPpnIACxldsn5NeH1jUWxRO2qnXlOHNVheiIF6YMoAKwBoxWhbtMR-TG+KKWJKykTW81xSzkH+RJqMFAA5vBEoABmAE4QgkhkIDgQSP7qYxAAAoANQAShANEYWDAKSg9DgAAsbpCQIx6AAjGCMAAK20yaJgTxwjVe7yQSm+v0QpTRMLhCORUFRQRSoIhUMx2LxJ3YjCJJKiZI+iH+PyQtOhsPY8KRKJoyPozMQYGYjEYnKxuPx8kJxJZXSc7NJbxFX3FiEpUoZcuZCpgSqQqvVFyN4NR6K1POKfIFJvJiHaVKQSzp0qcsqZqMVyudGpASKw+sQX093J1vv1ruB7qFpqQQYtqa52t5Tn5+rzAYU5upADZNemyyAXlhbojBaZhUgABw0C0NsNwgQ4eUgGPsSDSqsivvBxAATih9JlI7HE6cU7YM6Qg4tAHZl+GQFA13bxw7leACNuu-nEIf50uh6ucKPzxvr9O79XKRaa0ew5vuul6TjeHolt68x6p2zz3qGFq0qy7AALT+iKCh-tSz7IU4aE7patZIM+WJgMq5BfLhIB8GgHwEQB86PlR+E-hhRGIHOzFXOIQA
\begin{tikzcd}
                                                               &                                       & Y \arrow[dd, "-", blue] &                                                                              &                                                                     & Y \arrow[lld, no head, dotted] \arrow[dd, "-", blue] \\
X \arrow[rru, dashed] \arrow[dd, red]                        &                                       &                   & X' \arrow[dd, red] \arrow[rrd, "-", bend left=25, blue] \arrow[rrd, "+"', bend left=15, red] \arrow[rdd, "-", red] \arrow[rd, "-", red] &                                                                     &                                                \\
                                                               & X \otimes Y \arrow[r, dashed]  & Y \otimes Y       &                                                                              & X' \otimes Y \arrow[ld, no head, dotted] \arrow[r, no head, dotted] & Y \otimes Y                                    \\
X \otimes X \arrow[r, dashed] \arrow[ru, dashed] & Y \otimes X \arrow[ru, dashed] &                   & X' \otimes X' \arrow[r, no head, dotted]                                     & Y \otimes X' \arrow[ru, no head, dotted]                            &                                               
\end{tikzcd}
    }
    \caption{When $D$ is a basic merge configuration}
    \label{fig:XY-to-X1-merge}
    
    \vspace{1em}
    
    \resizebox{0.9\textwidth}{!}{
        % https://tikzcd.yichuanshen.de/#N4Igdg9gJgpgziAXAbVABwnAlgFyxMJZABgBoBGAXVJADcBDAGwFcYkQANAAgB0eI8AW3hcOIAL6l0mXPkIpypYtTpNW7bnwFZhcLgE0JUkBmx4CRRVRoMWbRCH29+QkWMnSzcogCYlK23UHJy1XPUMPExlzeRJSAGYAtXtOI09ZCxQ-HyS7dgjjUwzY+ITcoM4AcjSor0zkAFZSHJtk-JqimKJS61U8hw5K521dUWrIzu8UABYKcpSQlx03ccLoqeRZ5Vb+quGwgw71+qbtvorFkZEIlRgoAHN4IlAAMwAnCEEkMhAcCCQ-OcUgAKADUAEoQDRGFgwCkoPQ4AALO5QkCMegAIxgjAACsd5OiYC8cDV3p8kIpfv9EKV0bD4YiUVA0YEQRC0RjsXiCexGMTSZFyV9EIC-kg6TC4ewEcjUTQUfQWYgwMxGIxoVicfi6oT+STWW0HGDIUKPiKfuLEFSpYy5SyFTAlUhVeqdhUTZytTzdXyBWTzUhZtSkE16dKHLLmWjFcrXRqQMisAbED8udreQ59aT3ezTcZhd8aFbg+mfcV2G8sPckYKC4HrZaaQAOaEMmUCHDykCx9iQaVmimN4s08g29uRzvd3sOftsQciscjpAATjbEZAUCnDp7TuV4AI8-rQ9bIcQa-D8O3Mb3fcPAZPy8QADYF0gAOxP5-rq84Ls7mcDwHY8RTpK0wzLHUKwcKsazrV4GyXM93zfc8nxQkDKSbD9UNPK1PxAbEwGVABaeIfjZdg+DQL5xEocQgA
\begin{tikzcd}
                                                                          & X \otimes Y \arrow[r, dashed]  & Y \otimes Y \arrow[dd, blue] &                                                                                    & X' \otimes Y \arrow[ld, no head, dotted] \arrow[r, no head, dotted] \arrow[rdd, blue] & Y \otimes Y \arrow[ld, no head, dotted] \arrow[dd, blue] \\
X \otimes X \arrow[r, dashed] \arrow[ru, dashed] \arrow[dd, red] & Y \otimes X \arrow[ru, dashed] &                        & X' \otimes X' \arrow[r, no head, dotted] \arrow[dd, red] \arrow[rrd, "+", bend right=5, blue] \arrow[rrd, "-", bend right=20, red] & Y \otimes X' \arrow[rd, blue]                                                         &                                                    \\
                                                                          &                                       & Y                      &                                                                                    &                                                                                 & Y \arrow[lld, no head, dotted]                     \\
X \arrow[rru, dashed]                                              &                                       &                        & X'                                                                                 &                                                                                 &                                                   
\end{tikzcd}
    }
    \caption{When $D$ is a basic split configuration}
    \label{fig:XY-to-X1-split}
\end{figure}

\begin{lemma}
\label{lem:index1-label-change-basic}
    Suppose $D$ is a basic index $1$ resolution configuration, which is either a connected merge configuration or a connected split configuration. The category $\bar{\C}'_{XY}(D)$ is given as in the right hand side of \Cref{fig:XY-to-X1-merge} or of \Cref{fig:XY-to-X1-split}, depending on whether $D$ is a merge or a split configuration.
\end{lemma}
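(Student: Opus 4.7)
The plan is to apply Lemma~\ref{lem:0dim-chain} directly, enumerating for each pair $(\x, \y)$ with $|\x| = |\y| + 1$ all chains $\x \dashrightarrow_{l_1} \x' \rightarrow_1 \y' \dashrightarrow_{l_2} \y$ in $\bar{\C}_{XY}(D)$, and summing their signed contributions.

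First, I set up the input data. In the merge case, $\bar{\C}_{XY}(D)$ has six objects: the four $XY$-labelings $\{X \otimes X, X \otimes Y, Y \otimes X, Y \otimes Y\}$ at grading $1$ and the two labelings $\{X, Y\}$ at grading $0$. From \eqref{eq:XY-operations}, the only length-$1$ vertical arrows are $X \otimes X \to X$ with sign $+$ and $Y \otimes Y \to Y$ with sign $-$. The horizontal arrows are those of the $2$-cube poset at $u = 0$ and the $1$-cube at $u = 1$. The split case is set up dually, with vertical arrows $X \to X \otimes X$ (sign $+$) and $Y \to Y \otimes Y$ (sign $-$) coming from $\Delta(X) = X \otimes X$ and $\Delta(Y) = Y \otimes Y$; the cubic handle slide then occurs at the index-$1$ vertex where $r(u) = 2$.

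Next, for each of the relevant target-source pairs in each case, I enumerate all chains through the unique vertical arrow consistent with the endpoints. Since there are only two vertical arrows in $\bar{\C}_{XY}(D)$, each chain is determined by the choice of one of these two vertical arrows together with the unique matching horizontal arrows into $\x'$ and out of $\y'$; the sign of the contribution is $(-1)^{l_2}$ times the sign of the vertical arrow. Collecting contributions yields the signed count of $\M'(\x, \y)$ for each pair. I then read off the object relabelling from the definition (a coordinate $v_i = 0$ gets label $X'$, $v_i = 1$ stays $Y$) to identify the new cells with those drawn in the figures.

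Finally, I verify that the resulting structure of $\bar{\C}'_{XY}(D)$ matches the right-hand sides of \Cref{fig:XY-to-X1-merge} and \Cref{fig:XY-to-X1-split}. Certain pairs $(\x, \y)$ receive contributions from two chains of opposite sign, corresponding to the two arrows with opposite signs depicted in the figures (for example, the two bent arrows from $X'$ to $Y \otimes Y$ in the merge case): these are the two chains that route through the two distinct vertical arrows $X \otimes X \to X$ and $Y \otimes Y \to Y$ respectively, with the parities of $l_1$ and $l_2$ arranged so that the overall signs are opposite.

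\textbf{Main obstacle.} The bulk of the work is careful sign bookkeeping: tracking both the sign $-$ of the vertical arrow $Y \otimes Y \to Y$ coming from $m(Y \otimes Y) = -Y$ and the $(-1)^{l_2}$ factor from \Cref{lem:0dim-chain}, while making sure that horizontal arrows of length greater than one (e.g.\ the length-$2$ diagonal $X \otimes X \dashrightarrow_2 Y \otimes Y$ in the merge case) are not overlooked, since they are indispensable for producing the cancelling chain pairs above.
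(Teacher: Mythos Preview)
Your approach via \Cref{lem:0dim-chain} is precisely the paper's (its one-line proof cites \Cref{prop:handle-slide}, of which \Cref{lem:0dim-chain} is the $0$-dimensional consequence). But your input data has the vertical direction reversed in both cases. In the merge configuration the two circles lie at $u=0$ (grading $0$) and the single circle at $u=1$ (grading $1$); a vertical arrow $\x\to\y$ means $\M(\x,\y)\neq\varnothing$, which requires $|\x|>|\y|$, so for merge the arrows are $X\to X\otimes X$ (sign $+$) and $Y\to Y\otimes Y$ (sign $-$), the reverse of what you wrote. What you describe is actually the split picture, and there \emph{both} vertical arrows carry sign $+$: the sign adjustment of \Cref{subsec:framing} contributes a $-1$ only when two $Y$-labelled circles \emph{merge}, and $\Delta(Y)=Y\otimes Y$ has coefficient $+1$, so your claimed minus on the split $Y$-arrow is unsupported. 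Your own text is already internally inconsistent on this point---you place the $2$-cube of horizontal arrows at $u=0$ yet assign the tensor-labelings grading $1$.

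With the orientations corrected the enumeration runs exactly as you outline and reproduces the figures. For instance, the oppositely-signed pair from $X'$ to $Y\otimes Y$ in the merge figure comes from the two chains
\[
X \dashrightarrow_0 X \rightarrow_1 X\otimes X \dashrightarrow_2 Y\otimes Y
\quad\text{and}\quad
X \dashrightarrow_1 Y \rightarrow_1 Y\otimes Y \dashrightarrow_0 Y\otimes Y,
\]
with signs $(-1)^2\cdot(+)=+$ and $(-1)^0\cdot(-)=-$ respectively.
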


\begin{proof}
    Straightforward from \Cref{prop:handle-slide}.
\end{proof}

\begin{lemma} \label{lem:index1-label-change}
    Suppose $D$ is an index $1$ resolution configuration, not necessarily basic. The category $\bar{\C}'_{XY}(D)$ is given by copies of arrows depicted in the right hand side of \Cref{fig:XY-to-X1-merge} or of \Cref{fig:XY-to-X1-split}, depending on whether $D$ is a merge or a split configuration, modulo pairs of arrows having opposite signs. Here the circles that are not involved in the surgery are assumed to have identical labels before and after the surgery.
\end{lemma}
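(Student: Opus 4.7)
The plan is to compute the $0$-dimensional moduli spaces of $\bar{\C}'_{XY}(D)$ using \Cref{lem:0dim-chain}, enumerating chains $\x \dashrightarrow_{l_1} \x' \rightarrow_1 \y' \dashrightarrow_{l_2} \y$ in the original $\bar{\C}_{XY}(D)$, and then grouping the contributions into copies of the basic picture. I would decompose each object as $(a;\mu)$, where $a$ records the labels on the circles incident to the unique arc of $D$ (the \emph{basic circles}, two for a merge or one for a split) and $\mu$ records the labels on the remaining \emph{passive circles}. Because the passive circles are preserved by the surgery, the passive label set is the same at $s(D)$, so objects at the target state can analogously be written as $(b;\nu)$.

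First I would use the basic relation of \Cref{fig:basic-rel-XY} to observe that any admissible vertical arrow $\x' \rightarrow_1 \y'$ must preserve the passive labeling (so both $\x'$ and $\y'$ carry a common passive label $\mu'$) and is one of the two basic arrows on the basic circles with the signs dictated by \eqref{eq:XY-operations}. Hence, once $\x=(a;\mu)$ and $\y=(b;\nu)$ are fixed, a chain $\x \dashrightarrow \x' \rightarrow \y' \dashrightarrow \y$ is determined by the choice of intermediate passive labeling $\mu'$, which must satisfy $\mu \le \mu' \le \nu$ coordinatewise (since horizontal arrows only flip $X$ to $Y$), together with the corresponding basic-circle chain; no chain exists if $\mu \not\le \nu$. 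By \Cref{lem:0dim-chain} each chain contributes a sign $(-1)^{|\nu|-|\mu'|}$ times the sign of its vertical arrow.

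Summing over $\mu'$ for fixed $(\mu,\nu)$ with $\mu \le \nu$ gives
\[
    \sum_{\mu \le \mu' \le \nu} (-1)^{|\nu|-|\mu'|}
    \;=\; \sum_{j=0}^{k} \binom{k}{j}(-1)^{k-j}
    \;=\; \delta_{k,0},
\]
where $k=|\nu|-|\mu|$ counts the passive coordinates along which $\mu$ and $\nu$ disagree. Thus only arrows with $\mu=\nu$ survive algebraically, and on those the restriction to the basic circles reproduces exactly the picture computed in \Cref{lem:index1-label-change-basic}, i.e.\ the right-hand side of \Cref{fig:XY-to-X1-merge} or \Cref{fig:XY-to-X1-split}. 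When $\mu \ne \nu$, the contributing chains pair up by choosing any coordinate $i$ with $\mu_i < \nu_i$ and toggling it in $\mu'$; each such pair has opposite signs by the formula above. This is precisely the "pairs of arrows having opposite signs" clause of the statement, and, if desired, each such pair may be removed literally at the flow-category level via \Cref{prop:whitney-trick}.

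The main bookkeeping obstacle is matching the signs end to end: the sign of the vertical arrow in the chain already absorbs the sign adjustments arising from the handle slides performed on the basic circles (this is what \Cref{lem:index1-label-change-basic} records), while the factor $(-1)^{|\nu|-|\mu'|}$ coming from \Cref{lem:0dim-chain} accounts for the passive slides. The split case is handled by the same argument with source and target roles exchanged, the two basic split arrows replacing the two basic merge arrows, and the same cancellation of passive-label chains taking place.
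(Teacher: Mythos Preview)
Your argument is correct and matches the paper's: decompose each object into a basic part and a passive part, reduce the case $\mu=\nu$ to \Cref{lem:index1-label-change-basic}, and for $\mu\neq\nu$ pair chains by toggling one passive coordinate in the intermediate $\mu'$, observing via \Cref{lem:0dim-chain} that the two paired chains carry opposite signs. One minor imprecision: the $l_2$ in \Cref{lem:0dim-chain} is the \emph{full} length of $\y'\dashrightarrow\y$, so the displayed sign $(-1)^{|\nu|-|\mu'|}$ should also include the basic-circle contribution---which, as you note in your last paragraph, is precisely what \Cref{lem:index1-label-change-basic} records.
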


\begin{proof}
    We prove when $D$ is a merge configuration. The proof for the other case proceeds similarly. Put $D = D_0 \sqcup D_1$ where $D_0$ is basic index $1$ merge configuration and $D_1$ is an index $0$ configuration with $k$ disjoint circles. Then any object of $\bar{C}_{XY}(D)$ can be identified with a triple $(u, v, w)$ where $(u, v) \in \{0\} \times \{0, 1\}^2 \union \{1\} \times \{0, 1\}$ and $w \in \{0, 1\}^k$. Before the handle slides, there are $2^k$ positive vertical arrows
    \[
        (1, 0, w) \xrightarrow{+} (0, (0, 0), w)
    \]
    and $2^k$ negative arrows
    \[
        (1, 1, w) \xrightarrow{-} (0, (1, 1), w).
    \]
    
    Take two objects $\x, \y$ with $|\x| = |\y| + 1$. Put $\x = (1, v, w),\ \y = (0, v', w')$ where $v \in \{0, 1\}$, $v' \in \{0, 1\}^2$ and $w, w' \in \{0, 1\}^k$. We may assume that there is a chain of the form of \Cref{lem:0dim-chain}. This forces $w \leq w'$. The case when $w = w'$ is done in \Cref{lem:index1-label-change-basic} so we assume $w < w'$. Take any $w'' \in \{0, 1\}^k$ such that $w <_1 w'' \leq w'$ and put $e = w'' - w$. The chains can be paired as
    \begin{equation*}
        \begin{tikzcd}
            \x \arrow[r, dashed] & \x' \arrow[r, dashed] \arrow[d] & \x'' \arrow[d] & \\
             & \y' \arrow[r, dashed] & \y'' \arrow[r, dashed] & \y
        \end{tikzcd}
    \end{equation*}
    where $\x'' - \x' = \y'' - \y' = e$. Obviously the two vertical arrows have equal signs. Thus from the formula of \Cref{lem:0dim-chain}, the resulting two arrows of $\bar{\C}'_{XY}(D)$ will have opposite signs. 
\end{proof}

\begin{proposition}
\label{prop:C_BN_1X-isom-flow-cat}
    The framed flow category $\bar{\C}'_{XY}(D)$ refines the unnormalized Bar-Natan complex $\bar{C}_\BN(D)$ by the correspondence
    \[
        C^*(\bar{\C}'_{XY}(D)) \rightarrow \bar{C}_\BN(D),
        \quad 
        \x^* \mapsto i(\x).
    \]
    Here $i$ is a map 
    \[
        i: \Ob(\bar{\C}'_{XY}(D)) \rightarrow \bar{C}_\BN(D)
    \]
    defined by extending the correspondence
    \[
        X' \mapsto X, \quad Y \mapsto -1
    \]
    multilinearly. 
\end{proposition}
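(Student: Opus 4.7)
I want to show that the $\ZZ$-linear map $\Phi \colon C^*(\bar{\C}'_{XY}(D)) \to \bar{C}_\BN(D)$ defined on generators by $\x^* \mapsto i(\x)$ is an isomorphism of cochain complexes. The strategy is to verify the module isomorphism directly, then check that $\Phi$ intertwines the differentials by reducing to the basic merge and basic split cases already handled by Lemmas \ref{lem:0dim-chain}--\ref{lem:index1-label-change}.

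For the module isomorphism, observe that if $\x$ carries $X'$ on some circles and $Y$ on the other $q$ circles, then $i(\x) = (-1)^q$ times the $1X$-labeled state obtained by replacing each $X'$ by $X$ and each $Y$ by $1$. Hence $\Phi$ carries the basis $\{\x^*\}$ of the source bijectively, up to signs $\pm 1$, onto the standard $1X$-basis of $\bar{C}_\BN(D)$, so it is a $\ZZ$-module isomorphism respecting the homological grading.

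For the differentials, I exploit the local structure of both. By Lemma \ref{lem:0dim-chain}, every point of the $0$-dimensional moduli $\M'(\y,\x)$ corresponds to a chain $\x \dashrightarrow \x' \rightarrow \y' \dashrightarrow \y$ in which $\x' \rightarrow \y'$ is a single-arc surgery; hence $\delta'$ decomposes as a sum over the arcs of $A(D)$, and the same is clearly true of the Bar-Natan differential $d$. It therefore suffices to verify $\Phi \circ \delta' = d \circ \Phi$ one arc at a time, i.e., on an arbitrary index-$1$ resolution configuration with spectator labels held fixed. Lemma \ref{lem:index1-label-change} reduces this further to the two basic configurations of Figures \ref{fig:XY-to-X1-merge} and \ref{fig:XY-to-X1-split}, for which I would carry out a label-by-label check, transporting the signed arrow counts read off from those figures through $i(X') = X$, $i(Y) = -1$ and matching them with the coefficients of $m$ and $\Delta$ in \eqref{eq:1X-operations-BN}. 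The global cube sign $(-1)^{s(e)}$ multiplies both sides of the identity uniformly and can be pulled through.

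\textbf{Expected main obstacle.} The argument is entirely a sign-accounting exercise, and this is where the real work lies. Four sign contributions must be reconciled: the $(-1)^q$ factor intrinsic to the definition of $i$, the $(-1)^{l_2}$ factor that Lemma \ref{lem:0dim-chain} attaches to each handle-slid chain, the residual framing adjustment encoding $m(Y \otimes Y) = -Y$ (the cochain $t$ of Section \ref{subsec:framing}), and the ambient cube sign assignment $s$. The technical heart of the proof is verifying that these four signs combine, case by case across the basic merge and basic split, to reproduce on the nose the $1X$-basis coefficients of \eqref{eq:1X-operations-BN}, rather than merely up to some residual sign automorphism of $\bar{C}_\BN(D)$.
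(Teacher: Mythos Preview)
Your proposal is correct and follows essentially the same route as the paper: reduce edge-by-edge to an index-$1$ configuration via Lemma~\ref{lem:index1-label-change}, push through the map $i$, and verify that the resulting signed arrows reproduce the $1X$-basis multiplication and comultiplication of \eqref{eq:1X-operations-BN}, with the cube sign $(-1)^{s(e)}$ factored out uniformly. The paper's proof presents the outcome of the sign check as a single diagram of arrows (the image of $\partial$ under $i$) and simply observes that reversing them gives $m$ and $\Delta$, whereas you spell out more explicitly that the content is reconciling the four sign sources; but the logical structure is the same.
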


\begin{proof}
    Take any edge $u >_1 u'$ of $K(n)$. The surgery $D(u') \prec_1 D(u)$ is described by an index $1$ resolution configuration, so the $0$-dimensional moduli spaces of $\bar{\C}'_{XY}(D)$ between the two subcubes placed at $u, u'$ are described as in \Cref{lem:index1-label-change}, up to an overall sign determined by the sign assignment $s(e_{u, u'})$. Now identify the chain complex associated to $\bar{\C}'_{XY}(D)$ with its image under the chain isomorphism induced by $i$. The differential $\partial$ of $C_*(\bar{\C}'_{XY}(D))$ is given by
    \begin{equation*}
        % https://tikzcd.yichuanshen.de/#N4Igdg9gJgpgziAXAbVABwnAlgFyxMJZARgBoBmAXVJADcBDAGwFcYkRiACAHW4jwC28TsRABfUuky58hFGWLU6TVuy69+WIXE4ANcZJAZseAkTIAmJQxZtEIXTz6DhoiVJOzzpAAzWVdg5Omtp6Bh4yZig+FP62auFG0qZyyDF+NDaq9vruSZ5RyOSkipkBCXnGkakWpFZl8TkaLnBuhlUpRLWlyo0cwS1hlcleKMUZvdkOiR2jyLUTWYGOzVrCue0jhbVUDVPqzms6bkowUADm8ESgAGYAThACSAAsNDgQSDEgjPQARjCMAAKWzkIDuWHOAAscIl7o8kABWN4fRBkED-MBQJAAWnIPjycKeiCRIHeSFq6JgmJxeIJDyJAHZkUgAGw0SEwehYxBgZiMRg0DHc3H4wyEpAADmZiBZdPhqK+ZMQAE45UTiGilWzvn8AcCCqDGDAbjC9oFsSBBVThbSxfTEdLipTqYh8ZQxEA
\begin{tikzcd}[row sep={0.8cm,between origins}]
X \arrow[rd, bend right=20] \arrow[rdd, bend right] \arrow[r] & X \otimes X & X \otimes X \arrow[r]                    & X \\
                                                           & 1 \otimes X & 1 \otimes X \arrow[r]                    & 1 \\
                                                           & X \otimes 1 & X\otimes1 \arrow[ru, bend right=20]         &   \\
1 \arrow[r]                                                & 1 \otimes 1 & 1 \otimes 1 \arrow[ruu, "-", bend right] &  
\end{tikzcd}

    \end{equation*}
    If we reverse the arrows, then they are exactly the multiplication and the comultiplication of the defining Frobenius algebra of Bar-Natan complex (compare \eqref{eq:1X-operations-BN}). Thus the associated cochain complex of $\bar{\C}'_{XY}(D)$ is isomorphic to $\bar{C}_\BN(D)$.
\end{proof}

Hereinafter, we rewrite $\bar{\C}'_{XY}(D)$ to $\bar{\C}_\BN(D)$ and identify any object $\x$ of $\bar{\C}_\BN(D)$ placed at $(u, v)$ as an $1X$-labeled resolution configuration $(D(u), x)$ under the correspondence
\[
    v_i = 0 \ \Leftrightarrow \ x(Z_i) = X.
\]

\begin{definition}
    Let $D$ be a link diagram $D$ with $n^-$ negative crossings. With the above constructed framed flow category $\bar{\C}_\BN(D)$, define
    \[
        \C_\BN(D) = \bar{\C}_\BN(D)[-n^-].
    \]
    $\C_\BN(D)$ is called the \textit{($1X$-based) Bar-Natan flow category}, and its associated spectrum $\X_\BN(D)$ is called the \textit{($1X$-based) Bar-Natan spectrum} of $D$. Here the cells of $\X_\BN(D)$ are reoriented after the realization, so that the orientations are reversed for cells corresponding to objects $\x = (u, v)$ with $|v|$ odd.
\end{definition}

Thus we obtain:

\begin{theorem}
    There is an isomorphism
    \[
        \tilde{C}^*(\X_\BN(D)) \xrightarrow{\ \isom\ } C^*_\BN(D)
    \]
    that maps the dual cells of $\X_\BN(D)$ to the standard generators of $C^*_\BN(D)$. \qed
\end{theorem}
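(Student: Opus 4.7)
The plan is to assemble the theorem directly from Proposition \ref{prop:C_BN_1X-isom-flow-cat} and the CJS realization (Proposition \ref{prop:CJS-const}), correcting two cosmetic discrepancies: the homological grading normalization and a sign arising from the definition of $i$.

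First, I would invoke Proposition \ref{prop:C_BN_1X-isom-flow-cat}, which identifies $C^*(\bar{\C}_\BN(D))$ with the unnormalized complex $\bar C_\BN(D)$ via $\x^* \mapsto i(\x)$, where $i$ extends $X' \mapsto X$, $Y \mapsto -1$ multilinearly. Composing with Proposition \ref{prop:CJS-const} yields a cochain-complex isomorphism $\tilde{C}^*(|\bar{\C}_\BN(D)|) \isom \bar C_\BN(D)$ under which each cell's dual maps to $i(\x)$. The grading shift in the definition $\C_\BN(D) = \bar{\C}_\BN(D)[-n^-]$ passes through the CJS construction as a formal desuspension, translating the unnormalized homological grading $\overline{\gr}_h(\x) = |u|$ into the normalized grading $\gr_h(\x) = |u| - n^-$ of \Cref{def:homol-quantum-grading}.

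Second, I would absorb the sign by the prescribed cell reorientation. For $\x = (u, v)$ in the post-handle-slide $X'Y$-convention ($v_i = 1 \Leftrightarrow$ the label is $Y$), write $s(\x) \in \bar C_\BN(D)$ for the standard $1X$-generator obtained by substituting $X' \mapsto X$, $Y \mapsto 1$ in the labels of $\x$. Multilinearity of $i$ gives $i(\x) = (-1)^{|v|} s(\x)$. The reorientation built into the definition of $\X_\BN(D)$ reverses the orientation of $\sigma_\x$ precisely when $|v|$ is odd, replacing the dual generator $\sigma_\x^*$ by $\tilde{\sigma}_\x^* = (-1)^{|v|} \sigma_\x^*$. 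Under the isomorphism above, $\tilde{\sigma}_\x^* \mapsto (-1)^{|v|} i(\x) = s(\x)$, which is the desired standard generator. Since the reorientation is a consistent $\ZZ$-linear change of basis applied on both chains and cochains, the coboundary is preserved automatically.

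No serious obstacle is expected: all the substantive work is already done in Proposition \ref{prop:C_BN_1X-isom-flow-cat} and the CJS realization, and both the grading shift $[-n^-]$ and the cell reorientation were designed precisely to reconcile notational conventions with those of \Cref{subsec:khovanov}. What remains is the single sign computation $i(\x) = (-1)^{|v|} s(\x)$, which is immediate from the definitions, and a routine check that the grading shift aligns with the normalized homological grading.
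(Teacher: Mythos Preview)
Your proposal is correct and matches the paper's approach exactly: the paper marks this theorem with \qed and no further argument, treating it as immediate from Proposition~\ref{prop:C_BN_1X-isom-flow-cat}, the CJS realization, the grading shift $[-n^-]$, and the cell reorientation built into the definition of $\X_\BN(D)$. You have simply spelled out the sign bookkeeping $i(\x) = (-1)^{|v|} s(\x)$ and the grading normalization that the paper leaves implicit.
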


\begin{proposition}
\label{prop:XY-to-1X}
    Let
    \[
        f: \X_{XY}(D) \xrightarrow{\htpy} \X_\BN(D),
    \]
    be the composition of all of the stable homotopy equivalences corresponding to the handle slides performed on $\C_{XY}(D)$ to obtain $\C_\BN(D)$. Then the following diagram commutes:
    \begin{equation*}
        \begin{tikzcd}
            {\tilde{C}^*(\X_{XY}(D))} \arrow[rd, "\isom"'] & &
            {\tilde{C}^*(\X_\BN(D))} \arrow[ll, "f^*"'] \arrow[ld, "\isom"] \\
            & {C^*_\BN(D)} &
        \end{tikzcd}
    \end{equation*}
    Here left diagonal arrow is the isomorphism given in \Cref{prop:C_BN_XY-isom}, and the right diagonal arrow is the one given in \Cref{thm:1}.
\end{proposition}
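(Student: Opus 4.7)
The plan is to verify the commutativity of the diagram cell-by-cell by tracking the cochain-level effect of each cubic handle slide that comprises $f$. The first step is to observe that the cubic handle slides performed at different vertices $u \in \{0,1\}^n$ act independently on disjoint blocks of cells, using the cubical decomposition of $\Ob(\bar{\C}_{XY}(D))$ from \Cref{prop:P(D)-decomp}. Hence $f^*$ respects the blockwise decomposition, and it suffices to analyze a fixed vertex $u$ with $r = r(u)$ circles.

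The second step is to analyze a single handle slide at the cochain level. From the moduli space formulas of \Cref{prop:handle-slide}, a direct check shows that sliding $x$ over $y$ with $\epsilon = +1$ produces a framed flow category $\C'$ whose chain complex is related to that of $\C$ by the change of basis $x' = x + y$; that is, there is a chain isomorphism $\phi : C_*(\C') \to C_*(\C)$ sending $x' \mapsto x + y$ and fixing all other generators. The induced cochain map $f^* = (\phi^{-1})^*$ therefore sends $(x')^* \mapsto x^*$ and $y^* \mapsto y^* - x^*$. This is precisely the cochain dual of the transition $X' = X + Y$ from the $\{X, Y\}$-basis to the $\{X', Y\}$-basis described at the start of \Cref{sec:1X-basis}.

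Third, I would iterate. At a fixed $u$ with $r$ circles, the $r \cdot 2^{r-1}$ handle slides organize into $r$ batches indexed by the tensor direction $e_i$; since slides parallel to distinct $e_i$ act on distinct tensor factors, the composite is the $r$-fold tensor product of the single-factor map of Step 2. Explicitly, for $\x = (u, v)$ one obtains, before reorientation,
\[
    f^*(\x^*) \;=\; \sum_{w \leq v} (-1)^{|v| - |w|}\, (u, w)^* \quad\text{in } \tilde{C}^*(\X_{XY}(D)).
\]
Applying the $XY$-iso of \Cref{prop:C_BN_XY-isom} turns this sum into $\bigotimes_j p_j$ with $p_j = X$ when $v_j = 0$ and $p_j = Y - X = -1$ when $v_j = 1$; this is exactly $i(\x)$ as in \Cref{prop:C_BN_1X-isom-flow-cat}. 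The reorientation of cells with $|v|$ odd in the definition of $\X_\BN(D)$ contributes a sign $(-1)^{|v|}$ that absorbs the $(-1)^{|v|}$ hidden in $i(\x) = (-1)^{|v|}\, \x$, so both diagonal arrows of the triangle send the reoriented dual cell $\x^*$ to the standard $1X$-generator $\x$.

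The main obstacle is the tensor-factor claim in Step 3: one must check that the successive batches of handle slides genuinely compose to a clean tensor product, with no extraneous cross terms from the extra moduli space $\M(y, b) \times [0, 1] \times \M(a, x)$ introduced by \Cref{prop:handle-slide}. The cleanest route is to verify the case $r = 2$ by hand using the product structure of the block, confirm that slides in direction $e_i$ modify only the $i$-th tensor factor, and then propagate by induction on $r$. Once this is secured, the sign conventions (direction of $f$, choice of $\epsilon = +1$, and the reorientation of cells with $|v|$ odd) pin down the rest of the computation uniquely.
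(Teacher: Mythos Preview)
Your approach is correct and amounts to a detailed unpacking of the paper's one-line proof, which simply asserts that $f_*$ on $\tilde{C}_*$ is the change of dual bases $\{X^*,Y^*\}\to\{X',1'\}$ block-by-block over $u$, and then dualizes. Two remarks. First, the block decomposition you need in Step~1 is just $\Ob(\bar{\C}_{XY}(D))=\bigsqcup_u\{u\}\times\{0,1\}^{r(u)}$ from the beginning of \Cref{subsec:basis-change}, not the poset decomposition of \Cref{prop:P(D)-decomp}, whose cubes span several vertices $u$ at once. Second, the obstacle you flag in Step~3 is not genuine: the extra factor $\M(y,b)\times[0,1]\times\M(a,x)$ in \Cref{prop:handle-slide} modifies only moduli spaces and never the object set, so the induced map on cellular chain \emph{groups} under each slide is exactly the elementary basis change $x'\leftrightarrow x+\epsilon y$ regardless of what happens to higher moduli; since elementary basis changes in distinct tensor directions commute as linear maps, the composite is automatically the tensor product and no separate verification at $r=2$ is required.
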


\begin{proof}
    Consider the dual bases $\{X^*, Y^*\}$ of $\{X, Y\}$ and $\{X', 1'\}$ of $\{X, 1\}$. From the construction of $f$, we see that the following diagram commutes. 
    \begin{equation*}
        \begin{tikzcd}
        {\tilde{C}_*(\X_{XY}(D))} \arrow[r, "f_*"] \arrow[d, "\isom"] & {\tilde{C}_*(\X_\BN(D))} \arrow[d, "\isom"] \\
        {\bigoplus_u \ZZ\{X^*, Y^*\}^{\otimes r(u)}} \arrow[r, equal] & {\bigoplus_u \ZZ\{X', 1'\}^{\otimes r(u)}}    
        \end{tikzcd}
    \end{equation*}
    The result follows by duality.
\end{proof}

\begin{theorem}
    $\X_\BN(D)$ is stably homotopy equivalent to the wedge sum of the canonical cells
    \[
        \X_\BN(D) \htpy \bigvee_{o} \sigma_\ca(D, o)
    \]
    where $o$ runs over all orientations on $D$. In particular, the stable homotopy type of $\X_\BN(D)$ is a link invariant.
\end{theorem}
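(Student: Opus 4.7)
The plan is to bootstrap from \Cref{cor:X_BN-structure}, which already gives the wedge decomposition for the $XY$-based spectrum $\X_{XY}(D)$, and transport it along the stable homotopy equivalence from $\X_{XY}(D)$ to $\X_\BN(D)$ that was built in Section~4. Since both theorems concern the same abstract cells $\sigma_\ca(D, o)$ (one per orientation $o$ of the underlying unoriented diagram), the content is genuinely just an invariance-of-homotopy-type argument for the passage $XY \leadsto 1X$.

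First I would observe that, by construction, the framed flow category $\bar{\C}_\BN(D)$ is obtained from $\bar{\C}_{XY}(D)$ by a finite sequence of cubic handle slides (one batch at each vertex $u \in \{0,1\}^n$), followed by the uniform grading shift $[-n^-]$ and a reorientation of the cells corresponding to objects $\x = (u, v)$ with $|v|$ odd. Each individual handle slide preserves the stable homotopy type of the realization by \Cref{prop:handle-slide}; the grading shift is a formal suspension and does not affect homotopy type; and the cell reorientations change only the signs of the incidence numbers in the cellular chain complex, not the attaching maps themselves up to homotopy. In particular the composite map
\[
    f: \X_{XY}(D) \xrightarrow{\htpy} \X_\BN(D)
\]
appearing in \Cref{prop:XY-to-1X} is a stable homotopy equivalence. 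Chaining this with \Cref{cor:X_BN-structure} yields
\[
    \X_\BN(D) \htpy \X_{XY}(D) \htpy \bigvee_o \sigma_\ca(D, o),
\]
which is the desired wedge decomposition.

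For the invariance assertion, each summand $\sigma_\ca(D, o)$ is a shifted sphere whose dimension equals the homological grading of the canonical object $\x_\ca(D, o)$ (plus the fixed realization shift $\ell$ from \Cref{prop:CJS-const}). By \Cref{prop:alpha-homol-gr}, this grading is expressible as twice a sum of linking numbers between the components of the underlying link $L$, and hence depends only on the isotopy class of $L$ together with $o$. Therefore the right-hand side of the decomposition depends only on $L$, and the stable homotopy type of $\X_\BN(D)$ is a link invariant.

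I do not expect a serious obstacle here: all the substantive work has already been carried out in the cube-contraction argument of \Cref{lem:cube-contract}, in the handle-slide-based basis change of Section~4.1, and in \Cref{prop:XY-to-1X}. If anything needs care, it is verifying that the reorientation step at the very end of the construction of $\X_\BN(D)$ is indeed invisible to stable homotopy type; this is essentially the observation that reversing the orientation of a top cell of a CW-pair does not alter the underlying space, only the sign of its cellular boundary, so no homotopy-theoretic information is lost.
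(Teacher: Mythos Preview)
Your proposal is correct and matches the paper's approach exactly: the paper's proof is the single line ``Immediate from \Cref{prop:XY-to-1X} and \Cref{cor:X_BN-structure},'' and you have simply unpacked that by noting that handle slides preserve stable homotopy type (\Cref{prop:handle-slide}), that the grading shift and cell reorientations are harmless, and that the invariance of the resulting wedge of spheres follows from \Cref{prop:alpha-homol-gr}. There is nothing missing and no divergence in method.
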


\begin{proof}
    Immediate from \Cref{prop:XY-to-1X} and \Cref{cor:X_BN-structure}.
\end{proof}

Thus the following definition is justified.

\begin{definition}
    Let $L$ be a link with diagram $D$. The stable homotopy type of $\X_\BN(D)$ is denoted $\X_\BN(L)$ and is called the \textit{Bar-Natan homotopy type} of $L$. 
\end{definition}

\subsection{Quantum grading}

\begin{definition}
    Let $D$ be a link diagram with $n^\pm$ positive / negative crossings. The \textit{quantum grading} on $\C_\BN(D)$
    \[
        \gr_q: \Ob(\C_\BN(D)) \rightarrow \ZZ
    \]
    is defined as
    \[
        \gr_q(\x) = |u| + 2|v| - r(u) + n^+ - 3n^-
    \]
    for each object $\x = (u, v)$.
\end{definition}

Note that this definition coincides with the one given in \Cref{def:homol-quantum-grading} under the correspondence of \Cref{prop:C_BN_1X-isom-flow-cat}. It is natural to expect that the quantum filtration on $C_\BN(D)$ also lifts to the flow category level. Recall that the (descending) filtration on $C_\BN$ comes from the fact that the differential $d$ is quantum grading non-decreasing. The corresponding condition on $\C_\BN(D)$ should be that all moduli spaces are quantum grading non-increasing, i.e.\ $\M(\x, \y)$ is non-empty if and only if $\gr_q(\x) \geq \gr_q(\y)$. In other words, there is a sequence of downward-closed subcategories
\[
    \varnothing = F_m \C \subset \cdots \subset F_j \C \subset F_{j + 2} \C  \subset \cdots \subset F_M \C = \C_\BN(D)
\]
where $F_j \C$ contains all of objects $\gr_q \leq j$. However this is not true, as we can see in \Cref{fig:XY-to-X1-merge} or in \Cref{fig:XY-to-X1-split}, where the diagonal arrows in pair increase the quantum grading by $2$. Nonetheless, for $0$-dimensional moduli spaces, they can be eliminated by the \textit{Whitney trick} (see \Cref{prop:whitney-trick}).

\begin{proposition}
\label{prop:eliminate-0dim-moduli-spaces}
    $\C = \C_\BN(D)$ is move equivalent to a framed flow category $\C'$ such that $\Ob(\C) = \Ob(\C')$ and for every object $\x, \y$ with $|\x| = |\y| + 1$, the 0-dimensional moduli space $\M(\x, \y)$ is non-empty if and only if $\gr_q(\x) \geq \gr_q(\y)$. 
\end{proposition}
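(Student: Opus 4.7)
The plan is to kill the obstructive points in the 0-dimensional moduli spaces of $\C_\BN(D)$ by iterated applications of the Whitney trick. First I would invoke \Cref{lem:0dim-chain}, which expresses each point of a 0-dim moduli space $\M(\x, \y)$ in $\bar{\C}_\BN(D)$ as a chain $\x \dashrightarrow_{l_1} \x' \rightarrow_1 \y' \dashrightarrow_{l_2} \y$ in $\bar{\C}_{XY}(D)$, carrying sign $(-1)^{l_2}$ times the basic-relation sign of the vertical step.

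Then I would compute the quantum grading change along such a chain. Each horizontal step changes one label from $X$ to $1$ (via the identification $Y \leftrightarrow 1$) and therefore raises $\gr_q$ by exactly $2$. Each vertical step is one of the four basic relations and changes $\gr_q$ by $\Delta_v \in \{0, 2\}$, with $\Delta_v = 2$ only for the $X$-merge $X \otimes X \to X$ and the $1$-split $1 \to 1 \otimes 1$. Combining,
\[
  \gr_q(\x) - \gr_q(\y) = \Delta_v - 2(l_1 + l_2),
\]
so obstructiveness $\gr_q(\x) < \gr_q(\y)$ is equivalent to $l_1 + l_2 > \Delta_v / 2$.

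The core step is to exhibit, for each obstructive pair $(\x, \y)$, a fixed-point-free pairing of the chains realizing $\M(\x, \y)$ into pairs of opposite framings. Given an obstructive chain, I would produce its partner by either (a) flipping $\ell_{\x'}$ on the involved circle(s), whenever both $X$ and $1$ are compatible with the constraints $\ell_{\x'} \geq x(\mathrm{inv})$ and $\ell_{\x'} \leq y(\mathrm{inv})$, or otherwise (b) flipping the label $x'(C) = y'(C)$ of any \emph{free} non-involved circle $C$ (one with $x(C) = X$ and $y(C) = 1$). A brief case analysis over the involved-label types (for merges and splits separately) shows that obstructiveness forces at least one such flip to be available, and that each flip either reverses the parity of $l_2$ (case (b), and case (a) for splits) or exchanges the sign-$(+1)$ $X$-merge with the sign-$(-1)$ $1$-merge (case (a) for merges); either way the total chain sign reverses.

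Finally I would apply \Cref{prop:whitney-trick} to each such pair. A dimension check on the correction terms $\M(a, x) \times \{P, Q\} \times \M(y, b)$ in the Whitney modification formula confirms that they affect only moduli spaces of relative dimension $\geq 2$, so the iterated applications neither disturb one another nor touch non-obstructive 0-dim arrows. Iterating over all obstructive $(\x, \y)$ with $|\x| = |\y| + 1$ produces the required $\C'$.

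The main obstacle is the sign bookkeeping in the third step---one must combine $(-1)^{l_2}$, the basic-relation sign (which is $-1$ for $1 \otimes 1 \to 1$ and $+1$ for the other three), and the $|v|$-parity cell reorientation built into the definition of $\C_\BN(D)$---so one would want to organize the case analysis cleanly. A tidier substitute, should the direct combinatorics prove unwieldy, is the extended Whitney trick (\Cref{rem:extended-whitney-trick}): since $\tilde{C}^*(\C_\BN(D)) \isom C_\BN(D)$ by \Cref{prop:C_BN_1X-isom-flow-cat} and the Bar-Natan differential is quantum-grading non-decreasing, every obstructive 0-dim moduli space automatically has signed count zero, hence is framed null-cobordant rel boundary, and so can be replaced in one stroke by the empty manifold.
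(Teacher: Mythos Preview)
Your proposal is correct and follows the same essential strategy as the paper---apply Whitney tricks to cancel oppositely-signed points in the obstructive $0$-dimensional moduli spaces---but is organized rather differently. The paper's proof is a single line: it invokes \Cref{lem:index1-label-change}, which (together with \Cref{lem:index1-label-change-basic}) has already shown that the arrows in $\C_\BN(D)$ are precisely the Bar-Natan-differential arrows plus oppositely-signed pairs; the Whitney trick then removes the pairs. Your explicit pairing via cases (a) and (b) is in effect a direct reproof of those two lemmas: case (b) is exactly the coordinate-shift involution used in the proof of \Cref{lem:index1-label-change}, and case (a) recovers the cancellations visible in the basic-configuration figures of \Cref{lem:index1-label-change-basic}. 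Your closing alternative is actually the cleanest formulation and matches the paper's logic most directly: since $\tilde{C}^*(\C_\BN(D)) \cong C_\BN(D)$ by \Cref{prop:C_BN_1X-isom-flow-cat} and the Bar-Natan differential is quantum-grading non-decreasing, every obstructive $0$-dimensional moduli space has signed count zero and can therefore be emptied by ordinary Whitney tricks---no extended version is needed. One small correction: the $|v|$-parity reorientation is applied to the \emph{cells} of the realization, not to the framed flow category itself, so it does not enter the moduli-space sign bookkeeping here.
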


\begin{proof}
    From \Cref{lem:index1-label-change}, by performing Whitney tricks on pairs of arrows with opposite signs, we will be left with arrows corresponding to the differential of $C_\BN(D)$, all of which are quantum grading non-increasing.
\end{proof}

In general, even after eliminating the 0-dimensional moduli spaces, there still remains higher dimensional moduli spaces that increase the quantum grading. Here are two examples.

\begin{example}
    Consider the resolution configuration $D$ of \Cref{fig:res-conf-ladybug} (\cpageref{fig:res-conf-ladybug}) which is called the \textit{ladybug configuration}.  \Cref{fig:slide-ladybug} depicts $\C_\BN(D)$, where the thick vertical arrows correspond to the $0$-dimensional moduli spaces of $\C_{XY}(D)$ and the thin ones are those that are produced by the handle slides. Before the handle slides, there are two $1$-dimensional moduli spaces $\M(X_{(YY)}, X_{(00)}) = I$ and $\M(Y_{(YY)}, Y_{(00)}) = I$, as depicted in the figure by red and blue double arcs. The handle slides in the top and bottom levels copy the two intervals, which produces $I \sqcup I$ in $\M(X_{(11)}, Y_{(00)})$. Moreover, the handle slides in the middle level produce four more $1$-dimensional moduli spaces in $\M(X_{(11)}, Y_{(00)})$. These are depicted by the gray double arcs. 
    
    Now there are four oppositely signed pairs of $0$-dimensional vertical arrows, namely in $\M(X_{(11)}, YY_{(10)})$, $\M(X_{(11)}, YY_{(01)})$, $\M(XX_{(10)}, Y_{(00)})$ and $\M(XX_{(01)}, Y_{(00)})$. By applying Whitney tricks, the six intervals in $\M(X_{(11)}, 1_{(00)})$ concatenate together to form two disjoint intervals $I \sqcup I$. We have $\gr_q(X_{(11)}) = \gr_q(1_{(00)}) = 1$ and the four endpoint objects $X1_{(10)}, X1_{(01)}, 1X_{(10)}, 1X_{(01)}$ also lies in $\gr_q = 1$. In this case, there are no other quantum grading preserving $1$-dimensional moduli spaces.
\end{example}

\begin{remark}
    The ladybug configuration is exceptional in the construction of Khovanov homotopy type, for it is the only index $2$ basic resolution configuration whose Khovanov flow category contains $I \sqcup I$ in its $1$-dimensional moduli space. 
\end{remark}

\begin{example} \label{eg:2}
    Let $D$ be the resolution configuration given in \Cref{fig:res-conf-hopf}. $\C_\BN(D)$ is depicted in \Cref{fig:slide-hopf}. Let us focus on the moduli space between $XX_{(11)}$ and $YY_{(00)}$ where $\gr_q(XX_{(11)}) = 0 < \gr_q(YY_{(11)}) = 2$. In $\C_\BN(D)$, we see that the $1$-dimensional moduli space $\M(XX_{(11)}, YY_{(00)})$ consists of four disjoint intervals. By performing the Whitney tricks on the four pairs of oppositely signed arrows, the intervals concatenate to form a single circle $C$. Thus in this case, there remains a quantum grading increasing $1$-dimensional moduli space after the Whitney tricks.
\end{example}

There are higher dimensional version of the Whitney trick, called the \textit{extended Whitney trick} (see \cite[Section 4]{LOS:2018}), that allows us to replace a moduli space of any dimension with a framed cobordant manifold rel boundary. We may expect that \Cref{prop:eliminate-0dim-moduli-spaces} can be generalized to all dimensions by use of extended Whitney tricks, and that the quantum filtration can be defined in the flow category level.

\begin{conjecture}
    The framed flow category $\C_\BN(D)$ can be modified by a sequence of extended Whitney tricks, possibly after altering the framing, so that there are no moduli spaces that increase the quantum grading. This gives a filtration on the associated spectra $\X$ as
    \[
        \{\pt\} = F_m \X \subset \cdots \subset F_j \X \subset F_{j + 2} \X  \subset \cdots \subset F_M \X = \X.
    \]
    The filtration $\{ F_j \X \}$ on $\X$ and the quantum filtration $\{ F^j C\}$ on $C = C_\BN(D)$ correspond as 
    \[
        \tilde{C}^*(F_j \X) = C / F^{j + 2} C, \quad
        \tilde{C}^*(\X / F_j \X) = F^{j + 2} C.
    \]
\end{conjecture}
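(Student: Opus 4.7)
The plan is to apply the Whitney trick (\Cref{prop:whitney-trick}) iteratively to $\C = \C_\BN(D)$ in order to cancel exactly the $0$-dimensional moduli points that strictly raise the quantum grading. The starting observation is that \Cref{lem:index1-label-change} gives a complete local description of the $0$-dimensional arrows of $\C$: between any two subcubes placed at adjacent vertices $u >_1 u'$ of the state cube, the arrows are copies of those drawn on the right-hand side of \Cref{fig:XY-to-X1-merge} or \Cref{fig:XY-to-X1-split}, carrying the overall sign $(-1)^{s(e_{u,u'})}$ and tensored with identities on the labels of the uninvolved circles. These arrows fall into two classes: one that, under the identification of \Cref{prop:C_BN_1X-isom-flow-cat}, reproduces the Bar-Natan differential on standard generators; and a second consisting of parallel pairs of arrows with opposite framings (the red/blue $\pm$ pairs in the figures) introduced by the handle slides of \Cref{subsec:basis-change}.

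Using the explicit formula $\gr_q(\x) = |u| + 2|v| - r(u) + n^+ - 3n^-$, a direct case check on each arrow of the first class confirms $\gr_q(\x) \geq \gr_q(\y)$ along the flow direction $\x \to \y$ (this is forced in any case by $d$ being quantum-grading non-decreasing on standard generators), whereas each arrow of the second class strictly raises $\gr_q$ by $2$ from $\y$ to $\x$. In particular, for every pair of objects with $|\x| = |\y|+1$ and $\gr_q(\x) < \gr_q(\y)$, the moduli space $\M_\C(\x, \y)$ consists entirely of points organized into oppositely framed couples $\{P, Q\}$.

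I then apply the Whitney trick to each such couple $\{P, Q\} \subset \M_\C(\x, \y)$, in an arbitrary order. By the formulas in \Cref{prop:whitney-trick}, each trick preserves $\Ob$, deletes $\{P, Q\}$ from $\M(\x, \y)$, and leaves every other $0$-dimensional moduli space between objects of relative homological grading~$1$ unchanged: the correction terms for $\M(a, b)$ with $(a, b) \neq (\x, \y)$ contain factors $\M(a, \x)$ or $\M(\y, b)$, and these are empty whenever $|a| = |\x|$ or $|\y| = |b|$ unless $a = \x$ or $b = \y$. Hence the cancellations commute at the level of $0$-dimensional moduli and produce a framed flow category $\C'$ that is move-equivalent to $\C$, satisfies $\Ob(\C') = \Ob(\C)$, and admits no quantum-grading-increasing $0$-dimensional moduli space. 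Every surviving arrow realizes a matrix entry of $d$, which by the previous step must satisfy $\gr_q(\x) \geq \gr_q(\y)$.

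The delicate issue I expect to be the main obstacle is controlling the higher-dimensional moduli spaces modified by successive Whitney tricks: $\M_{\C'}(a, b)$ for $|a| - |b| \geq 2$ acquires new strata of the form $\M(\y, b) \times [0,1] \times \M(a, \x)$, and the flow-category axioms together with the framing coherence on these must continue to hold through each trick. This is exactly what \Cref{prop:whitney-trick} guarantees, so no additional bookkeeping is required for the statement at hand, which concerns only the $0$-dimensional picture.
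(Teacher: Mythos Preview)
The statement you are attempting to prove is a \emph{conjecture}, and the paper itself does not prove it. What you have written is essentially the paper's proof of \Cref{prop:eliminate-0dim-moduli-spaces}, which is the partial result supporting the conjecture: Whitney tricks cancel the oppositely-framed pairs in the $0$-dimensional moduli spaces, leaving only arrows that realize the Bar-Natan differential and hence are quantum-grading non-increasing. Your argument for that part is correct and matches the paper's.

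The gap is in your final paragraph, where you assert that ``the statement at hand \ldots\ concerns only the $0$-dimensional picture.'' It does not. The conjecture demands that \emph{all} moduli spaces --- of every dimension --- be made quantum-grading non-increasing, so that the full subcategories $F_j\C$ spanned by objects of $\gr_q \leq j$ are genuinely downward-closed and yield a CW filtration of the spectrum. The ordinary Whitney trick (\Cref{prop:whitney-trick}) only removes points from $0$-dimensional moduli spaces; it simultaneously \emph{modifies} higher-dimensional moduli spaces by gluing in product pieces, and does not by itself eliminate quantum-grading-increasing components there. Indeed the paper exhibits in \Cref{eg:2} a concrete case (the Hopf-link configuration) where, after all $0$-dimensional cancellations, a $1$-dimensional moduli space $\M(XX_{(11)},YY_{(00)})$ survives as a circle between objects with $\gr_q(XX_{(11)}) = 0 < 2 = \gr_q(YY_{(00)})$. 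Removing such closed higher-dimensional components requires the \emph{extended} Whitney trick of \Cref{rem:extended-whitney-trick}, and one must show the relevant framed bordism classes vanish (possibly after adjusting the framing) in every dimension and for every configuration --- exactly the content that remains conjectural.
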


Recall that the Khovanov complex and the Bar-Natan complex are related by $F^j C_\BN / F^{j - 2} C_\BN = C^{*, j}_\Kh$ for each $j \in 2\ZZ + |D|$. Our second conjecture is that this correspondence also lifts to the spatial level.

\begin{conjecture}
    For each $j \in 2\ZZ + |D|$, the quotient spectrum $F_j \X / F_{j - 2} \X$ is stably homotopy equivalent to the $j$-th wedge summand $\X^j_\Kh(D)$ of the Khovanov spectrum $\X_\Kh(D)$. 
\end{conjecture}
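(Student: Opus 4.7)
The approach is to reduce the claim to a comparison of framed flow categories, assuming the filtration of \Cref{conj:1} is in place. Let $\C = \C_\BN(D)$ denote the modified flow category whose associated spectrum carries the filtration $\{F_j \X\}$. Then $F_j \X / F_{j-2} \X$ is the spectrum associated to the framed flow subquotient $\gr_j \C$, whose objects are exactly the $1X$-labeled resolution configurations of quantum grading $j$ and whose moduli spaces are the quantum-grading-preserving moduli spaces of $\C$. On the chain level, the identification $\tilde{C}^*(F_j \X / F_{j-2} \X) \isom F^j C_\BN / F^{j+2} C_\BN$ provided by \Cref{conj:1} coincides with the $j$-th quantum grading summand $C^{*, j}_\Kh(D)$ of the Khovanov complex, because the quantum-grading-preserving part of the Bar-Natan differential, as read off \eqref{eq:1X-operations-BN}, is exactly the Khovanov differential. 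This gives canonical bijections between the cells of $F_j \X / F_{j-2} \X$ and those of $\X^j_\Kh(D)$, and between their $0$-dimensional moduli spaces.

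The plan is then to lift this cell-level matching to a move equivalence between $\gr_j \C$ and the full subcategory of $\C_\Kh(D)$ on objects of quantum grading $j$, which I denote $\C^j_\Kh(D)$. Both categories sit over $\Cube{n}$ via the state map $(D(u), x) \mapsto u$, and I would verify by induction on relative homological degree that they cover $\Cube{n}$ in the same way. The key structural input is that $\C_\Kh(D)$ is by construction a pullback of $\Cube{n}$ along a covering functor, so its framing is inherited from a fixed framing on $\Cube{n}$ via \Cref{prop:cube-flow-cat-framing}. If one shows that $\gr_j \C$ admits a covering functor to $\Cube{n}$ whose moduli spaces are framed cobordant rel boundary to those pulled back from the same framing on $\Cube{n}$, then the rigidity assertion in \Cref{prop:CJS-const}(4) yields the desired stable homotopy equivalence.

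The main obstacle is controlling the higher-dimensional moduli spaces that arise from the handle slides of \Cref{sec:1X-basis} after the extended Whitney tricks of \Cref{conj:1} have been applied. \Cref{eg:2} already exhibits a case in which the $1$-dimensional moduli space produced by the handle slides is a circle, while the corresponding Khovanov moduli space (a cover of a permutohedron) is a pair of intervals; reconciling such discrepancies requires producing explicit framed cobordisms rel boundary between the two. I expect this to be the substantive technical step: one must track the effect of each handle slide and each extended Whitney trick on moduli spaces of every dimension, and show that the cumulative effect on the quantum-grading-preserving part is framed cobordant rel boundary to the corresponding Khovanov moduli space. A promising inductive setup is to fix $k \geq 1$ and, assuming the equivalence has been built for relative homological degree $< k$, construct for each pair $\x, \y \in \gr_j \C$ with $|\x| - |\y| = k$ a framed cobordism rel boundary from $\M_{\gr_j \C}(\x, \y)$ to $\M_{\C^j_\Kh(D)}(\x, \y)$ compatible with the lower-degree boundary identifications already produced.
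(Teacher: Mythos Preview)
The statement you are attempting to prove is \emph{not proved in the paper}: it is stated there as Conjecture~2, explicitly left open. The paper offers no proof, only supporting evidence in the form of \Cref{prop:eliminate-0dim-moduli-spaces} and the discussion in \Cref{subsec:quantum-filt}. So there is no ``paper's own proof'' to compare against, and your proposal should be read as a strategy toward an open problem rather than as a verification of a known argument.

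That said, your outline is a reasonable reformulation of what must be shown, but it is not a proof: you explicitly flag the key step (producing framed cobordisms rel boundary between the moduli spaces of $\gr_j \C$ and those of $\C^j_\Kh(D)$, in all relative degrees) as ``the substantive technical step'' and then only describe an inductive \emph{setup} without carrying it out. The difficulty is real and not merely bookkeeping. First, \Cref{conj:1} only posits the \emph{existence} of some sequence of extended Whitney tricks; different choices could in principle yield different $\gr_j \C$, so your argument would need either to show the result is independent of those choices or to make a specific choice and analyze it. Second, even after matching cells and $0$-dimensional moduli spaces, the Khovanov flow category involves a genuine choice (the ladybug matching) in building its $1$-dimensional moduli spaces, and you give no mechanism for why the output of the handle slides plus Whitney tricks should land on a framing compatible with any such choice.

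One concrete correction: your invocation of \Cref{eg:2} is misplaced. The circle produced there lies in $\M(XX_{(11)}, YY_{(00)})$, which is a quantum-grading-\emph{increasing} moduli space ($\gr_q = 0$ to $\gr_q = 2$); under \Cref{conj:1} this component is to be \emph{eliminated}, not compared to a Khovanov moduli space (which is empty between objects of different $q$-grading). The example that actually illustrates the comparison you want is the ladybug configuration (the preceding example in the paper), where after Whitney tricks the quantum-grading-preserving moduli space $\M(X_{(11)}, 1_{(00)})$ becomes $I \sqcup I$, matching the Khovanov side at the level of underlying manifolds---but even there the paper does not verify that the framings agree, which is exactly the content of the conjecture.
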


Further considerations are given in \Cref{subsec:quantum-filt}.

\newgeometry{margin=3cm}
\begin{figure}[p]
    \centering
    \begin{subfigure}{.4\textwidth}
        \centering
        \tikzset{every picture/.style={line width=0.75pt}} %set default line width to 0.75pt        

\begin{tikzpicture}[x=0.75pt,y=0.75pt,yscale=-1,xscale=1]
%uncomment if require: \path (0,111); %set diagram left start at 0, and has height of 111

%Shape: Ellipse [id:dp6028635935520501] 
\draw  [line width=1.5]  (36,59.1) .. controls (36,46.26) and (46.41,35.85) .. (59.25,35.85) .. controls (72.09,35.85) and (82.5,46.26) .. (82.5,59.1) .. controls (82.5,71.94) and (72.09,82.35) .. (59.25,82.35) .. controls (46.41,82.35) and (36,71.94) .. (36,59.1) -- cycle ;
%Straight Lines [id:da6302214187680946] 
\draw [color={rgb, 255:red, 208; green, 2; blue, 27 }  ,draw opacity=1 ][line width=1.5]    (59.25,35.85) -- (59.25,82.35) ;
%Shape: Arc [id:dp09113766805582224] 
\draw  [draw opacity=0][line width=1.5]  (45.85,40.93) .. controls (45.25,39.35) and (44.92,37.64) .. (44.92,35.85) .. controls (44.92,27.93) and (51.33,21.52) .. (59.25,21.52) .. controls (67.17,21.52) and (73.58,27.93) .. (73.58,35.85) .. controls (73.58,36.71) and (73.51,37.55) .. (73.36,38.37) -- (59.25,35.85) -- cycle ; \draw  [color={rgb, 255:red, 208; green, 2; blue, 27 }  ,draw opacity=1 ][line width=1.5]  (45.85,40.93) .. controls (45.25,39.35) and (44.92,37.64) .. (44.92,35.85) .. controls (44.92,27.93) and (51.33,21.52) .. (59.25,21.52) .. controls (67.17,21.52) and (73.58,27.93) .. (73.58,35.85) .. controls (73.58,36.71) and (73.51,37.55) .. (73.36,38.37) ;

\end{tikzpicture}
        \caption{Ladybug configuration}
        \label{fig:res-conf-ladybug}
    \end{subfigure}
    \hspace{2em}
    \begin{subfigure}{.4\textwidth}
        \centering
        \vspace{0.5em}
        \tikzset{every picture/.style={line width=0.75pt}} %set default line width to 0.75pt        

\begin{tikzpicture}[x=0.75pt,y=0.75pt,yscale=-1,xscale=1]
%uncomment if require: \path (0,300); %set diagram left start at 0, and has height of 300

%Shape: Ellipse [id:dp9382606949033641] 
\draw  [line width=1.5]  (36,45.1) .. controls (36,32.26) and (46.41,21.85) .. (59.25,21.85) .. controls (72.09,21.85) and (82.5,32.26) .. (82.5,45.1) .. controls (82.5,57.94) and (72.09,68.35) .. (59.25,68.35) .. controls (46.41,68.35) and (36,57.94) .. (36,45.1) -- cycle ;
%Straight Lines [id:da022508203571555807] 
\draw [color={rgb, 255:red, 208; green, 2; blue, 27 }  ,draw opacity=1 ][line width=1.5]    (97.5,38.35) -- (81.25,38.35) ;
%Shape: Ellipse [id:dp8414496219363685] 
\draw  [line width=1.5]  (96,45.1) .. controls (96,32.26) and (106.41,21.85) .. (119.25,21.85) .. controls (132.09,21.85) and (142.5,32.26) .. (142.5,45.1) .. controls (142.5,57.94) and (132.09,68.35) .. (119.25,68.35) .. controls (106.41,68.35) and (96,57.94) .. (96,45.1) -- cycle ;
%Straight Lines [id:da836015894537127] 
\draw [color={rgb, 255:red, 208; green, 2; blue, 27 }  ,draw opacity=1 ][line width=1.5]    (98.5,51.35) -- (82.25,51.35) ;

\end{tikzpicture}
        \vspace{0.5em}
        \caption{Hopf link configuration}
        \label{fig:res-conf-hopf}
    \end{subfigure}
    \vspace{1em}
    \caption{Examples of basic index $1$ resolution configurations}
\end{figure}
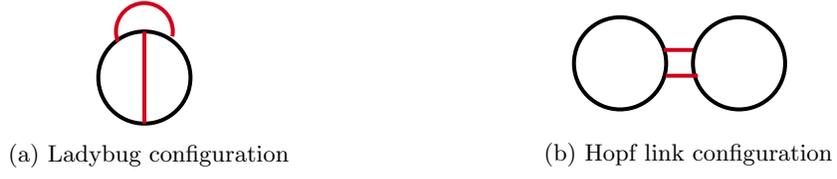
\begin{figure}[p]
    \centering
    \begin{subfigure}{0.35\textwidth}
        \resizebox{\textwidth}{!}{
        \input{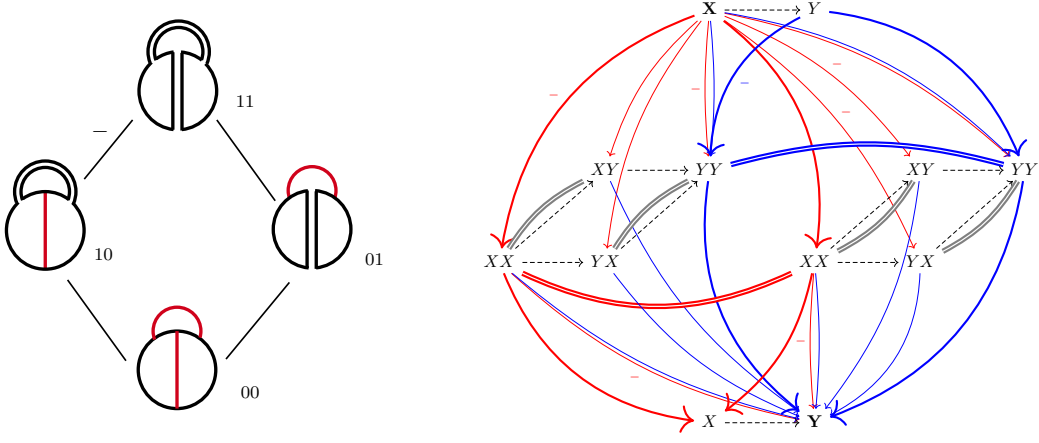}
        }
    \end{subfigure}
    \hspace{2em}
    \begin{subfigure}{0.5\textwidth}
        \resizebox{\textwidth}{!}{
        % https://tikzcd.yichuanshen.de/#N4Igdg9gJgpgziAXAbVABwnAlgFyxMJZABgBoBmAXVJADcBDAGwFcYkQANDkAX1PUy58hFAEYK1Ok1btR3PgOx4CRcQCZJDFm0SdRvfiAxLhRNaQ00tM3aP0Kjg5SOTkJV6Ts7zDxoSpQAFncpbVkfRX8XYMtQmz0DSOciAFYLTU9Ze18nUxRzFIyw3QjHEwDXUkKPYpBspLzkc2Ii+NK-ZJQ3Fpr4+0kYKABzeCJQADMAJwgAWyQADhocCCQAThpGLDAvKAgcHEGQGgALGHoodkhtxJAp2aQyEGWkcRBN691d-cOTs4vdK5sBx3OaIR7PRDmN5bHZ7A4XX7nS4EIGGEFIKEQtzQj4gL7wo4gU5IgEom7oxCvLEbGHsfE-Il-ZHXYHTUHBJ4rRAANhpuPpCMZJPAZKW9CwjGZbA29AARjBGAAFXIBN4wcY4clspC8zlIADsfNh30FxP+IpZaO1iDSesQhpxxoJiPNgK190QHIhtveToZZql7tBonBXLsRrpcP9TNJlom1pDSy5j3lYH+AFpyMRWR7QwsaKmM1mc6CHRD1iBC0hi1aPeG7Q6q4ga-Hc0m1gWYGmkJns7XQdjy53u4heyWkF6uYtK13-i3btbMVzVuPKe3ECv+w91xWm2Ot5S85TV4m7WoT0fyKvJx2ZyP5xTdeXrzvh3O+63QbbnwfTxDAhe64pIBdrciBEL6ieVJctie4PgmR6Qb+l48JQPBAA
\begin{tikzcd}[row sep=4em, column sep=large]
& & 
\mathbf{X} \arrow[llddd, bend right, red, line width=1.25, "-"'] \arrow[lddd, bend right=15, red] \arrow[ldd, bend right=15, red] \arrow[dd, bend right=5, red, "-"']  \arrow[dd, bend left=5, blue] \arrow[rddd, bend left, red, line width=1.25] \arrow[rrddd, bend left=15, red, "-"] \arrow[rrdd, bend left=15, red, "-"] \arrow[rrrdd, bend left=15, red] \arrow[rrrdd, bend left=17, blue] \arrow[r, dashed]& 
Y \arrow[rrdd, bend left, blue, line width=1.25] \arrow[ldd, bend right, blue, line width=1.25, "-"] & & \\
& & & & & \\
& 
XY \arrow[rrddd, bend right=15, blue] \arrow[r, dashed] & 
YY \arrow[rddd, bend right, blue, line width=1.25] \arrow[rrr, equal, bend left=15, blue, line width=1.25] & & 
XY \arrow[lddd, bend left=15, blue] \arrow[r, dashed]& 
YY \arrow[llddd, bend left, blue, line width=1.25] \\
XX \arrow[rrdd, bend right, red, line width=1.25] \arrow[rrrdd, bend right=15, red, "-"'] \arrow[rrrdd, bend right=13, blue] \arrow[r, dashed] \arrow[ru, dashed] \arrow[rrr, equal, bend right=25, red, line width=1.25] \arrow[ru, equal, bend left=15, gray, line width=1.25] & 
YX \arrow[rrdd, bend right=15, blue] \arrow[ru, dashed] \arrow[ru, equal, bend left=15, gray, line width=1.25]& & 
XX \arrow[ldd, bend left=20, red, line width=1.25] \arrow[dd, bend right=5, red, "-"'] \arrow[dd, bend left=5, blue] \arrow[ru, dashed] \arrow[r, dashed] \arrow[ru, equal, bend right=15, gray, line width=1.25] & 
YX \arrow[ldd, bend left, blue] \arrow[ru, dashed] \arrow[ru, equal, bend right=15, gray, line width=1.25] & \\
& & & & & \\
& & 
X \arrow[r, dashed] & 
\mathbf{Y} & & 
\end{tikzcd}
        }
    \end{subfigure}
    \vspace{1em}
    \caption{$\C_\BN$ for the ladybug configuration}
    \label{fig:slide-ladybug}
\end{figure}
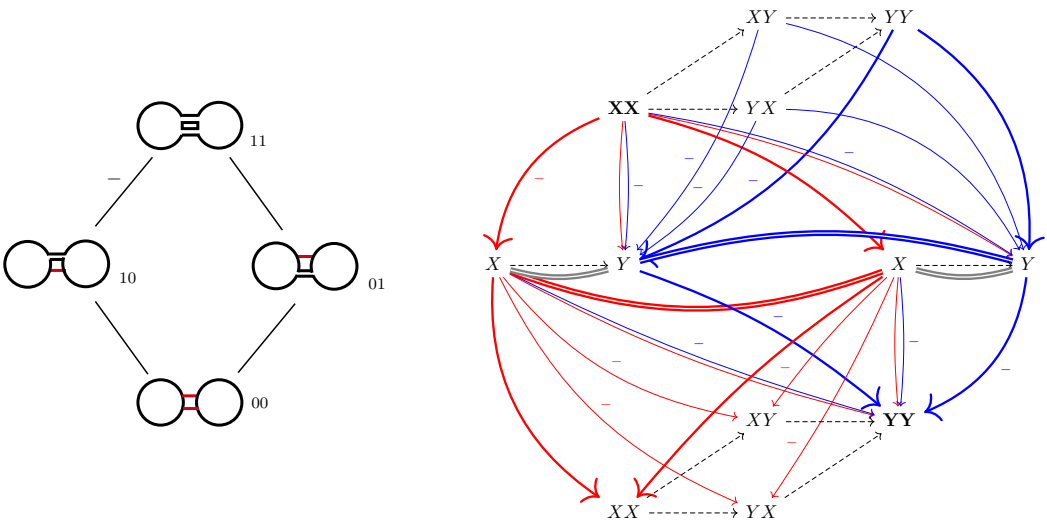
\begin{figure}[p]
    \centering
    \begin{subfigure}{0.35\textwidth}
        \resizebox{\textwidth}{!}{
        \tikzset{every picture/.style={line width=0.75pt}} %set default line width to 0.75pt        

\begin{tikzpicture}[x=0.75pt,y=0.75pt,yscale=-1,xscale=1]
%uncomment if require: \path (0,300); %set diagram left start at 0, and has height of 300

%Straight Lines [id:da016160857039429488] 
\draw    (93.64,191.14) -- (128.64,239.64) ;
%Straight Lines [id:da9662194299048226] 
\draw    (183.14,95.14) -- (218.14,143.64) ;
%Straight Lines [id:da9861517452577598] 
\draw    (226.64,192.64) -- (188.64,238.64) ;
%Shape: Ellipse [id:dp17819398713763712] 
\draw  [line width=1.5]  (122,258.46) .. controls (122,250.11) and (128.77,243.35) .. (137.11,243.35) .. controls (145.46,243.35) and (152.22,250.11) .. (152.22,258.46) .. controls (152.22,266.8) and (145.46,273.57) .. (137.11,273.57) .. controls (128.77,273.57) and (122,266.8) .. (122,258.46) -- cycle ;
%Straight Lines [id:da2514685325016097] 
\draw [color={rgb, 255:red, 208; green, 2; blue, 27 }  ,draw opacity=1 ][line width=1.5]    (161.97,254.07) -- (151.41,254.07) ;
%Shape: Ellipse [id:dp6010697183140877] 
\draw  [line width=1.5]  (160.99,258.46) .. controls (160.99,250.11) and (167.76,243.35) .. (176.1,243.35) .. controls (184.45,243.35) and (191.21,250.11) .. (191.21,258.46) .. controls (191.21,266.8) and (184.45,273.57) .. (176.1,273.57) .. controls (167.76,273.57) and (160.99,266.8) .. (160.99,258.46) -- cycle ;
%Straight Lines [id:da8826470856602098] 
\draw [color={rgb, 255:red, 208; green, 2; blue, 27 }  ,draw opacity=1 ][line width=1.5]    (162.62,262.52) -- (152.06,262.52) ;

%Straight Lines [id:da26971481568059574] 
\draw [color={rgb, 255:red, 0; green, 0; blue, 0 }  ,draw opacity=1 ][line width=1.5]    (73.47,159.57) -- (62.91,159.57) ;
%Straight Lines [id:da2604600232340919] 
\draw [color={rgb, 255:red, 208; green, 2; blue, 27 }  ,draw opacity=1 ][line width=1.5]    (74.12,170.52) -- (63.56,170.52) ;
%Straight Lines [id:da9102761676044929] 
\draw [color={rgb, 255:red, 208; green, 2; blue, 27 }  ,draw opacity=1 ][line width=1.5]    (238.47,161.24) -- (227.91,161.24) ;
%Straight Lines [id:da7516591985735432] 
\draw [color={rgb, 255:red, 0; green, 0; blue, 0 }  ,draw opacity=1 ][line width=1.5]    (72.59,163.2) -- (63.25,163.2) ;
%Shape: Arc [id:dp6310864005372699] 
\draw  [draw opacity=0][line width=1.5]  (63.51,163.24) .. controls (63.74,164.28) and (63.85,165.35) .. (63.85,166.46) .. controls (63.85,174.88) and (57.03,181.7) .. (48.61,181.7) .. controls (40.19,181.7) and (33.37,174.88) .. (33.37,166.46) .. controls (33.37,158.04) and (40.19,151.21) .. (48.61,151.21) .. controls (54.92,151.21) and (60.34,155.05) .. (62.65,160.52) -- (48.61,166.46) -- cycle ; \draw  [line width=1.5]  (63.51,163.24) .. controls (63.74,164.28) and (63.85,165.35) .. (63.85,166.46) .. controls (63.85,174.88) and (57.03,181.7) .. (48.61,181.7) .. controls (40.19,181.7) and (33.37,174.88) .. (33.37,166.46) .. controls (33.37,158.04) and (40.19,151.21) .. (48.61,151.21) .. controls (54.92,151.21) and (60.34,155.05) .. (62.65,160.52) ;
%Shape: Arc [id:dp3793999299304077] 
\draw  [draw opacity=0][line width=1.5]  (72.59,161.88) .. controls (72.2,163.23) and (71.99,164.65) .. (71.99,166.13) .. controls (71.99,174.55) and (78.81,181.37) .. (87.23,181.37) .. controls (95.65,181.37) and (102.48,174.55) .. (102.48,166.13) .. controls (102.48,157.71) and (95.65,150.89) .. (87.23,150.89) .. controls (80.92,150.89) and (75.51,154.72) .. (73.19,160.19) -- (87.23,166.13) -- cycle ; \draw  [line width=1.5]  (72.59,161.88) .. controls (72.2,163.23) and (71.99,164.65) .. (71.99,166.13) .. controls (71.99,174.55) and (78.81,181.37) .. (87.23,181.37) .. controls (95.65,181.37) and (102.48,174.55) .. (102.48,166.13) .. controls (102.48,157.71) and (95.65,150.89) .. (87.23,150.89) .. controls (80.92,150.89) and (75.51,154.72) .. (73.19,160.19) ;
%Straight Lines [id:da11251113916165312] 
\draw [color={rgb, 255:red, 0; green, 0; blue, 0 }  ,draw opacity=1 ][line width=1.5]    (238.8,174.35) -- (228.24,174.35) ;
%Straight Lines [id:da08797659938238711] 
\draw [color={rgb, 255:red, 0; green, 0; blue, 0 }  ,draw opacity=1 ][line width=1.5]    (237.92,170.72) -- (228.59,170.72) ;
%Shape: Arc [id:dp14004846347188415] 
\draw  [draw opacity=0][line width=1.5]  (228.59,171.72) .. controls (228.98,170.37) and (229.19,168.94) .. (229.19,167.46) .. controls (229.19,159.04) and (222.36,152.22) .. (213.94,152.22) .. controls (205.52,152.22) and (198.7,159.04) .. (198.7,167.46) .. controls (198.7,175.88) and (205.52,182.71) .. (213.94,182.71) .. controls (220.25,182.71) and (225.67,178.87) .. (227.99,173.4) -- (213.94,167.46) -- cycle ; \draw  [line width=1.5]  (228.59,171.72) .. controls (228.98,170.37) and (229.19,168.94) .. (229.19,167.46) .. controls (229.19,159.04) and (222.36,152.22) .. (213.94,152.22) .. controls (205.52,152.22) and (198.7,159.04) .. (198.7,167.46) .. controls (198.7,175.88) and (205.52,182.71) .. (213.94,182.71) .. controls (220.25,182.71) and (225.67,178.87) .. (227.99,173.4) ;
%Shape: Arc [id:dp8578878111133449] 
\draw  [draw opacity=0][line width=1.5]  (237.92,172.05) .. controls (237.53,170.7) and (237.32,169.27) .. (237.32,167.79) .. controls (237.32,159.37) and (244.15,152.55) .. (252.57,152.55) .. controls (260.99,152.55) and (267.81,159.37) .. (267.81,167.79) .. controls (267.81,176.21) and (260.99,183.03) .. (252.57,183.03) .. controls (246.25,183.03) and (240.84,179.2) .. (238.52,173.73) -- (252.57,167.79) -- cycle ; \draw  [line width=1.5]  (237.92,172.05) .. controls (237.53,170.7) and (237.32,169.27) .. (237.32,167.79) .. controls (237.32,159.37) and (244.15,152.55) .. (252.57,152.55) .. controls (260.99,152.55) and (267.81,159.37) .. (267.81,167.79) .. controls (267.81,176.21) and (260.99,183.03) .. (252.57,183.03) .. controls (246.25,183.03) and (240.84,179.2) .. (238.52,173.73) ;
%Straight Lines [id:da413464631462399] 
\draw [color={rgb, 255:red, 0; green, 0; blue, 0 }  ,draw opacity=1 ][line width=1.5]    (162.14,67.24) -- (151.57,67.24) ;
%Shape: Arc [id:dp17233340931691343] 
\draw  [draw opacity=0][line width=1.5]  (151.21,80.32) .. controls (148.84,85.65) and (143.49,89.37) .. (137.28,89.37) .. controls (128.86,89.37) and (122.03,82.54) .. (122.03,74.12) .. controls (122.03,65.71) and (128.86,58.88) .. (137.28,58.88) .. controls (143.59,58.88) and (149,62.72) .. (151.32,68.18) -- (137.28,74.12) -- cycle ; \draw  [line width=1.5]  (151.21,80.32) .. controls (148.84,85.65) and (143.49,89.37) .. (137.28,89.37) .. controls (128.86,89.37) and (122.03,82.54) .. (122.03,74.12) .. controls (122.03,65.71) and (128.86,58.88) .. (137.28,58.88) .. controls (143.59,58.88) and (149,62.72) .. (151.32,68.18) ;
%Shape: Arc [id:dp19328485146990282] 
\draw  [draw opacity=0][line width=1.5]  (162.1,80.29) .. controls (164.54,85.46) and (169.8,89.04) .. (175.9,89.04) .. controls (184.32,89.04) and (191.14,82.22) .. (191.14,73.8) .. controls (191.14,65.38) and (184.32,58.55) .. (175.9,58.55) .. controls (169.59,58.55) and (164.17,62.39) .. (161.86,67.86) -- (175.9,73.8) -- cycle ; \draw  [line width=1.5]  (162.1,80.29) .. controls (164.54,85.46) and (169.8,89.04) .. (175.9,89.04) .. controls (184.32,89.04) and (191.14,82.22) .. (191.14,73.8) .. controls (191.14,65.38) and (184.32,58.55) .. (175.9,58.55) .. controls (169.59,58.55) and (164.17,62.39) .. (161.86,67.86) ;
%Straight Lines [id:da7553811913912893] 
\draw [color={rgb, 255:red, 0; green, 0; blue, 0 }  ,draw opacity=1 ][line width=1.5]    (163.1,80.29) -- (151.21,80.32) ;
%Shape: Rectangle [id:dp09383132811031103] 
\draw  [line width=1.5]  (151.48,71.99) -- (161.86,71.99) -- (161.86,76.05) -- (151.48,76.05) -- cycle ;
%Straight Lines [id:da6185359389154192] 
\draw    (131.64,95.14) -- (93.64,141.14) ;

% Text Node
\draw (196,253.71) node [anchor=north west][inner sep=0.75pt]  [font=\footnotesize]  {$00$};
% Text Node
\draw (108,170.71) node [anchor=north west][inner sep=0.75pt]  [font=\footnotesize]  {$10$};
% Text Node
\draw (274,174.21) node [anchor=north west][inner sep=0.75pt]  [font=\footnotesize]  {$01$};
% Text Node
\draw (195,79.21) node [anchor=north west][inner sep=0.75pt]  [font=\footnotesize]  {$11$};
% Text Node
\draw (99.5,103.55) node [anchor=north west][inner sep=0.75pt]    {$-$};

\end{tikzpicture}
        }
    \end{subfigure}
    \hspace{2em}
    \begin{subfigure}{0.5\textwidth}
        \resizebox{\textwidth}{!}{
        % https://tikzcd.yichuanshen.de/#N4Igdg9gJgpgziAXAbVABwnAlgFyxMJZARgBoA2AXVJADcBDAGwFcYkQANDkAX1PUy58hFACYK1Ok1bti3PgOx4CRcQFZJDFm0SdivfiAxLhRAMykNNLTN3F9Co4OUiSpYpuk7O8w8aEqYu6e2rK+igGu4gAMIbZ6BhEu5qSx1l6yDn7OpijRpGZx3uFOJoFuhemhdomlkSmVUtWctf7JKAAsBUWyvJIwUADm8ESgAGYAThAAtkj5IDgQSGQgjFhg3lAQODgDIDQAFjD0UOyQG7WTM3M0i0jiq+ub27unh8enuudsjleziA87ogLI8Lrotjs9u8TmcCD9DH9lrclsCaGswSAIa99iAjjCvnDLlN-l0FijyGinuwsVDcR9YRdbvQsIwGWw0fQAEYwRgABRygVWMDGOCJ10QFLJSAA7JSMTS3nT8eBCb9iUg1MiZXLnpDFXjPiqLmrxaSgZrQbrsdDDd8xf8ABxaxAATh11JetINbPtSCdUsQ825YE+AFozPN0c9mJzGOylZ8wMxGIwmSyfRzuXyBSIhSKcXADlh84GTY7nStg2GI1V4qHfYh-UCHlWkOH5jZvPWy37nSDW8Doj3G32aAP27Wuw2zSj-VGPTG4zjvYgkym06yCYyQOOa006w3ZQH7GOYCGkHv5+CIIv4yu16mFszN0b4eN1YgTwGQVfMTfY3e9Krsmj44M+GY7menx7p27DdgiH5HkCbqQeeg7utet7LkBD4bhBsG6PB77iihQI-lSmEAdh+K4U+6ZbvGA4wRkhHThWkYUX+WE2kgtFgfRr44kxQ4IeKxDzECQZQReHHyv+S48cB650S+drDuJFYrL+CrUbaqqif8GkBi20nofuU7qRJKKVqZzHNERICIp+VkXqeaERpZo6odBHYsSADlOTOSAobuInESSmluT5w4WshUUyZOcENrF1k2e5YWOR+kpxd5CXmUlw7Zal8WDsOQWfmlPmJaxZXOiFtm+fZDZNtZUloROCYQdp9CFnsw6kcVlrUj1Rz6kBgklR5Bm9gGKHdb1irCYpgmeceKwrhNQ3giNtKhTwlA8EAA
\begin{tikzcd}[row sep=3.5em, column sep=huge]
    & & 
    XY \arrow[r, dashed] \arrow[lddd, "-"', pos=0.6, bend left=12, blue] \arrow[rrddd, bend left, blue] & 
    YY \arrow[rddd, line width=1.25, bend left, blue] \arrow[llddd, "-"', pos=0.6, line width=1.25, bend left=20, blue] & \\
    & 
    \mathbf{XX} \arrow[ru, dashed] \arrow[r, dashed] \arrow[ldd, "-", line width=1.25, bend right, red] \arrow[rrdd, line width=1.25, bend left=15, red] \arrow[dd, bend right=5, red] \arrow[dd, "-", bend left=5, blue] \arrow[rrrdd, bend left=10, red] \arrow[rrrdd, "-", bend left=12, blue] & 
    YX \arrow[ru, dashed] \arrow[ldd, "-"', bend left=15, blue] \arrow[rrdd, bend left, blue] & & \\
    & & & & \\
    X \arrow[r, dashed] \arrow[rddd, line width=1.25, bend right, red] \arrow[rrddd, "-", bend right=20, red] \arrow[rrdd, "-", bend right=20, red] \arrow[rrrdd, bend right=7, red] \arrow[rrrdd, "-", bend right=5, blue] \arrow[rrr, equal, bend right=20, line width=1.25, red] \arrow[r, equal, bend right=15, line width=1.25, gray] & 
    Y \arrow[rrdd, "-", blue, line width=1.25, bend left=10] \arrow[rrr, equal, bend left=15, line width=1.25, blue] & & 
    X \arrow[llddd, line width=1.25, bend right=8, red] \arrow[r, dashed] \arrow[ldd, "-"', pos=0.8, bend right=5, red] \arrow[lddd, "-"', pos=0.8, bend left=5, red] \arrow[dd, bend right=5, red] \arrow[dd, "-", bend left=5, blue] \arrow[r, equal, bend right=15, gray, line width=1.25] & 
    Y \arrow[ldd, "-", line width=1.25, bend left, blue] \\
    & & & & \\
    & & 
    XY \arrow[r, dashed] & 
    \mathbf{YY} & \\
    & 
    XX \arrow[r, dashed] \arrow[ru, dashed] & 
    YX \arrow[ru, dashed] & & 
\end{tikzcd}
        }
    \end{subfigure}
    \vspace{1em}
    \caption{$\C_\BN$ for the Hopf link configuration}
    \label{fig:slide-hopf}
\end{figure}
\restoregeometry
    \section{Cobordism maps} \label{sec:cobordism}

A \textit{link cobordism} $S$ is a smooth, compact, oriented surface properly embedded in $\RR^3 \times [0, 1]$ with boundary links $\partial S = -L_0 \sqcup L_1$ in $\RR^3 \times \{0, 1\}$. We say $S$ is \textit{generic} if there are only finitely many $t \in [0, 1]$ such that $S \cap (\RR^3 \times \{t\})$ fails to be a link at one singular point. Under the projection $p: \RR^3 \rightarrow \RR^2$, a generic link cobordism can be represented by a finite sequence of local moves (i.e.\ Reidemeister moves and Morse moves) between the two boundary link diagrams. In this sense, we refer to \textit{generic cobordism between link diagrams}.

\begin{proposition}
\label{prop:cobordism}
    Let $S$ be a generic cobordism between link diagrams $D, D'$. There is a morphism between the corresponding spectra
    \[
        f_S: \X_\BN(D) \rightarrow \X_\BN(D')
    \]
    such that its induced map on Bar-Natan homology coincides with the standard cobordism map
    \[
        F_{\bar{S}}: H_\BN(D') \rightarrow H_\BN(D)
    \]
    associated to the reversed cobordism $\bar{S}: D' \rightarrow D$.
\end{proposition}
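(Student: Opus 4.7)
The plan is to decompose the generic cobordism $S$ into a finite sequence of elementary moves (Reidemeister moves and Morse moves: births, deaths, saddles), define a spectrum-level morphism for each elementary move so that its induced map on cohomology realizes the corresponding elementary component of $F_{\bar{S}}$, and take $f_S$ to be the composition. Since the proposition only demands existence, no independence from the choice of decomposition is needed.

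For each Reidemeister move $D \rightsquigarrow D'$, \Cref{thm:2} provides a stable homotopy equivalence $\X_\BN(D) \htpy \X_\BN(D')$. Both spectra split as wedges of canonical cells indexed by orientations of the underlying link, and since the standard Reidemeister chain maps for Bar-Natan homology send canonical cycles to canonical cycles up to sign, one can choose a stable equivalence realizing the standard chain map on cochains. For each Morse move attaching or removing an unknot, \Cref{cor:disj-union-and-mirror} gives $\X_\BN(D \sqcup U) \htpy \X_\BN(D) \smash (\SS \vee \SS)$, and the elementary map is taken as smash with a chosen inclusion $\SS \hookrightarrow \SS \vee \SS$ or projection $\SS \vee \SS \to \SS$ whose induced cohomology map implements the Frobenius unit $\iota$ or counit $\epsilon$ on the relevant tensor factor.

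The saddle move is the main step. Given a saddle $S \colon D \to D'$, insert a single crossing between $D$ and $D'$ to form an auxiliary diagram $\tilde{D}$ whose two resolutions at the new crossing recover $D$ and $D'$ (the sign of the new crossing is chosen so that the induced chain map in the cube realizes the $F_{\bar{S}}$ component in the desired direction). The resolution cube of $\tilde{D}$ splits along the facet of the new crossing into two parallel copies; on the flow-category level, one of $\C_\BN(D), \C_\BN(D')$ embeds as a downward-closed subcategory of $\C_\BN(\tilde{D})$ while the other sits as the complementary upward-closed subcategory. Invoking the remark following \Cref{prop:CJS-const} produces a cofiber sequence of spectra, and the saddle map is defined as the associated Puppe connecting map (the formal suspension being absorbed into the grading shift built into $\C_\BN$). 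By construction, the induced map on cellular cochains is the Frobenius multiplication $m$ or comultiplication $\Delta$ tensored with the identity on the unaffected circles, as dictated by \Cref{prop:C_BN_1X-isom-flow-cat} and \eqref{eq:1X-operations-BN}.

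Finally, applying $\tilde{H}^*$ and using the identification $\tilde{C}^*(\X_\BN(D)) \isom C_\BN(D)$ from \Cref{thm:1}, each elementary spectrum map induces the corresponding elementary Bar-Natan cochain map, so the composition induces $F_{\bar{S}}$ on cohomology as required. The main obstacle is the saddle step: one must verify that the connecting map of the cofiber sequence indeed implements $m$ or $\Delta$ on the targeted tensor factor with the correct sign. This reduces to inspecting the arrows of $\C_\BN(\tilde{D})$ crossing the facet of the cube coming from the new crossing, and the compatibility is immediate from the construction of $\C_\BN$ given in \Cref{subsec:basis-change} together with \eqref{eq:1X-operations-BN}.
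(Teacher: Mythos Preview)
Your overall strategy---decompose $S$ into elementary pieces, build a spectrum map for each, and compose---matches the paper's, and your treatment of the saddle via the Puppe map of the cofiber sequence is exactly what the paper does. However, there is a genuine gap in your cup/cap step. You invoke \Cref{cor:disj-union-and-mirror} to split $\X_\BN(D \sqcup U) \htpy \X_\BN(D) \smash (\SS \vee \SS)$, but that equivalence is built from the canonical-cell decomposition (labels $X$ and $Y$), not the $1X$-cells. In the $XY$-basis one computes $\epsilon(X) = \epsilon(Y) = 1$ and $\iota(1) = X - Y$, so neither $\epsilon$ nor $\iota$ is a coordinate projection or inclusion there; an inclusion $\SS \hookrightarrow \SS \vee \SS$ into one wedge summand will \emph{not} induce $\iota$ or $\epsilon$ on cohomology. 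The paper sidesteps this by working directly with the $1X$-cell structure: since no arc of $D \sqcup U$ touches $U$, the flow category $\C_\BN(D \sqcup U)$ splits on the nose as $\C_+ \sqcup \C_-$ according to whether $U$ carries label $1$ or $X$, giving a \emph{cellular} decomposition $\X(D \sqcup U) = \X_+ \vee \X_-$ in which the inclusion of $\X_-$ and the projection to $\X_+$ literally realize $\epsilon$ and $\iota$ on cochains.

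Your Reidemeister step is not wrong but takes a different route from the paper. You argue abstractly: both spectra are wedges of spheres by \Cref{thm:2}, the standard Reidemeister isomorphism permutes canonical classes up to sign, hence lift that signed permutation to a map of wedges. This suffices for the bare proposition, which only requires agreement on homology---your phrase ``realizing the standard chain map on cochains'' overclaims, since an abstract map between wedges of spheres cannot see the many non-canonical $1X$-generators. The paper instead constructs the Reidemeister maps explicitly by handle cancellations (\Cref{lem:cancel-acyclic}), exhibiting $\C_\BN(D)$ inside $\C_\BN(D')$ after cancelling an acyclic downward- or upward-closed subcategory, and then checks that the resulting inclusion and quotient of realizations induce Bar-Natan's chain homotopy equivalence $G$ on cochains. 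The explicit construction is more work but produces cellular maps that visibly respect the conjectural quantum filtration (see the remark following \Cref{prop:reidemeister-moves}) and feed into the later duality and canonical-class arguments; your abstract equivalence would not carry that extra structure.
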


\begin{remark}
    In \cite{LS:2014_rasmussen} the cobordism map is defined in the opposite direction, which is $f_{\bar{S}}: \X_\BN(D') \rightarrow \X_\BN(D)$ in our notation. 
\end{remark}

\subsection{Reidemsiter moves}

\begin{proposition} \label{prop:reidemeister-moves}
    Suppose $D$ and $D'$ are two link diagrams related by one of the Reidemeister moves. Then there is a stable homotopy equivalence
    \[
        f: \X_\BN(D) \rightarrow \X_\BN(D')
    \]
    such that the induced isomorphism on Bar-Natan homology coincides with the isomorphism given in \cite[Section 4]{Bar-Natan:2005} for the reversed move.
\end{proposition}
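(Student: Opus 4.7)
The plan is to use Theorem~\ref{thm:2}, which exhibits $\X_\BN(D)$ as a wedge of canonical cells $\bigvee_o \sigma_\ca(D, o)$. For each Reidemeister move $D \leftrightarrow D'$, I would first establish a natural bijection between orientations on the underlying unoriented diagrams, and check that corresponding canonical cells agree in homological grading. For R1, the added or deleted Seifert circle carries an orientation and $XY$-label uniquely determined by \Cref{algo:ab-coloring}, and the dimension shift from the extra circle is cancelled by the change in $n^-$. For R2 the Seifert circles are unchanged up to planar isotopy, and for R3 the sets of Seifert circles match after the move. In all cases \Cref{prop:alpha-homol-gr} combined with the writhe change gives the grading match, so the bijection lifts to a wedge of identity sphere maps
\[
    f: \X_\BN(D) \xrightarrow{\htpy} \X_\BN(D').
\]

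To identify $f^*$ with Bar-Natan's isomorphism $F_{\bar S}: H_\BN(D') \to H_\BN(D)$, I would use that $H_\BN$ is freely generated as a module by canonical classes (\Cref{prop:ab-gen}), so any $\ZZ$-linear automorphism is determined by its restriction to $\{[\ca(D, o)]\}_o$. By construction $f^*$ sends $[\ca(D', o)]$ to $[\ca(D, o)]$. The same is true of $F_{\bar S}$: passing to the $XY$-basis diagonalizes the operations as in \eqref{eq:XY-operations}, and the chain-level Reidemeister maps of Bar-Natan's cobordism-theoretic formulation preserve the $\a\b$-colored cycles by direct verification move by move, the non-canonical summands being absorbed by the cube-cancellations of \Cref{lem:cube-contract}.

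The main obstacle is careful sign bookkeeping through the basis change of \Cref{sec:1X-basis} and \Cref{prop:XY-to-1X}: the framings propagated by the cubic handle slides can introduce signs on canonical cells, and these must be reconciled with the normalizations implicit in Bar-Natan's isomorphism. This reduces to an explicit case analysis of R1$_\pm$, both variants of R2, and R3, where in each case one only has to track how the planar coloring of \Cref{algo:ab-coloring} reassigns $\a\b$-labels on the locally affected circles before and after the move. Apart from this sign verification, the global structure of the argument is essentially combinatorial, bypassing the intricate flow-category surgeries that were needed in the Khovanov setting of \cite{LS:2014}.
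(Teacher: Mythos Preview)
Your approach is correct for the proposition as literally stated, and it is genuinely different from the paper's. The paper builds $f$ at the flow category level: for each Reidemeister move it identifies an upward- or downward-closed acyclic subcategory of $\C_\BN(D')$ (via \Cref{lem:cancel-acyclic}), cancels it by handle cancellations to land on $\C_\BN(D)$, and then checks case by case that the resulting inclusion/quotient maps on cochains agree with Bar-Natan's explicit chain maps $G$. Your argument instead leverages \Cref{thm:2} to replace both spectra by wedges of spheres, matches the canonical cells combinatorially, and pins down $f^*$ on $H_\BN$ by freeness on canonical classes (\Cref{prop:ab-gen}) together with the classical fact that Bar-Natan's maps preserve canonical classes up to sign. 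The sign bookkeeping you flag is real but finite and can be absorbed into the choice of degree $\pm 1$ on each sphere summand.

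What you lose, and what the paper's more laborious route buys, is cell-level control. The paper's $f$ is realised as a composite of an inclusion of a subcomplex and a quotient map, both coming from flow-category moves whose cancelling pairs satisfy $\gr_q(\x)=\gr_q(\y)$; this is exactly what the remark following \Cref{prop:reidemeister-moves} needs to argue that, conditional on \Cref{conj:1}, $f$ respects the quantum filtration. It is also what makes \Cref{rem:cobordism-map-filtered} and the later definition of $\bar{s}$ go through. Your abstract wedge-of-spheres map factors through the contraction of \Cref{thm:2}, which destroys the cell structure and any putative filtration, so while it proves the proposition it would not support the rest of the paper's program. If your only goal is the stated proposition, your argument is the cleaner one; if you need the map for the filtration-sensitive applications in Sections~5--8, you need the paper's construction.
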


The proof basically follows \cite[Section 6]{LS:2014}. The following \Cref{lem:cancel-acyclic} will play an essential role, which is a flow category level analogue of the cancellation principle given in \cite[pp.\ 1028-1029]{LS:2014}. First we give necessary terminologies.

\begin{definition}
    A \textit{leaf} of a resolution configuration $D$ is a circle in $Z(D)$ that intersects with only one arc at a single point. A \textit{coleaf} of $D$ is an arc in $A(D)$ that joins two points on a single circle, and at least one of the two circular arcs between the two endpoints do not intersect any other arc. 
\end{definition}

\begin{lemma} \label{lem:cancel-acyclic}
    Let $D$ be an index $n$ resolution configuration. Suppose $\{0, 1\}^n$ splits as $\{0, 1\}^k \times \{0, 1\}^{n - k}$ and there are vertices $u, v \in \{0, 1\}^k$ with $u >_1 v$ such that either one of the following holds:
    \begin{itemize}
        \setlength{\itemsep}{0pt} 
        \item $D(v, \bar{0})$ contains a leaf that intersects the arc represented by $u >_1 v$, or 
        \item $D(v, \bar{0})$ contains a coleaf represented by $u >_1 v$.
    \end{itemize}
    Let $U$ be the circle that is merged into or splitted off by the surgery $(v, \bar{0}) \rightarrow (u, \bar{0})$. Let $A, B$ be sets of objects in $\C = \C_\BN(D)$ defined as
    \begin{itemize}
        \setlength{\itemsep}{0pt} 
        \item $A = \{\ (D(u, w), x)\ \}$,\ $B = \{\ (D(v, w), y);\ y(U) = 1\ \}$,\ or 
        \item $A = \{\ (D(u, w), x);\ x(U) = X\ \},\ B = \{\ (D(v, w), y)\ \}$
    \end{itemize}
    accordingly. Then there is a complete pairing between $A$ and $B$ such that each pair $(\x, \y) \in A \times B$ satisfies $\M(\x, \y) = \{\pt\}$, and all such pairs can be removed one-by-one by handle cancellations. Moreover, if the full subcategory $\C_1$ of $\C$ spanned by $A, B$ is downward- (resp.\ upward-) closed, then the cancelled category $\C_2$ is merely the complement of $\C_1$ in $\C$ and is upward- (resp.\ downward-) closed in $\C$.
\end{lemma}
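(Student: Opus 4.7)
The plan is to exhibit a bijection between $A$ and $B$ directly from the local leaf/coleaf structure, verify each paired moduli space is a single point, and then show that the handle cancellations can be iterated because distinct pairs do not interfere.

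\textbf{Bijection.} In the leaf case, the arc underlying $u >_1 v$ joins the leaf $U$ to a unique other circle $V$, so the surgery $(v, \bar{0}) \to (u, \bar{0})$ is a merge governed by the multiplication $m$. Since $y(U) = 1$, the identities $m(1 \otimes 1) = 1$ and $m(1 \otimes X) = X$ force the merged label to equal $y(V)$, yielding a unique $\x \in A$ with $x(U \# V) = y(V)$ and all other labels inherited from $\y$. In the coleaf case, the surgery splits a circle $W$ into $U$ and $V$; among the three terms of $\Delta(1) = X \otimes 1 + 1 \otimes X - 1 \otimes 1$ exactly one has $x(U) = X$, while $\Delta(X) = X \otimes X$ contributes a single term with $x(U) = X$. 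Hence each $\y \in B$ pairs with the unique $\x \in A$ satisfying $x(U) = X$, $x(V) = y(W)$, and matching labels elsewhere; cardinalities match in both cases from the merge/split count.

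\textbf{Moduli spaces and cancellation.} The equality $\M(\x, \y) = \{\pt\}$ for each pair follows from the identification of the associated cochain complex of $\C_\BN(D)$ with $\bar{C}_\BN(D)$ given in \Cref{prop:C_BN_1X-isom-flow-cat}: each non-zero coefficient $\pm 1$ in the Bar-Natan differential at a leaf or coleaf arc is realized by a single framed point. To iterate \Cref{prop:handle-cancel} I would verify that distinct pairs do not interfere. For pairs $(\x_i, \y_i)$ with $\x_i$ at $(u, w_i)$ and $\y_i$ at $(v, w_i)$, the product $\M(\x_1, \y_2) \times \M(\x_2, \y_1)$ attached to $\M(\x_2, \y_2)$ by the cancellation of $(\x_1, \y_1)$ has factor dimensions $|w_1| - |w_2|$ and $|w_2| - |w_1|$, forcing $|w_1| = |w_2|$ for non-vanishing; at equal $w$-level a direct inspection of the Bar-Natan differential restricted to the distinguished arc shows $\M(\x_1, \y_2) = \emptyset$ unless $(\x_1, \y_1) = (\x_2, \y_2)$. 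Thus all cancellations can be carried out, each preserving the singleton property for the remaining pairs.

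\textbf{Closed case.} If $\C_1$ is downward-closed in $\C$, then $\M(\x, b) = \emptyset$ for every $\x \in A$ and $b \notin \C_1$ by definition, so the hypothesis of the remark following \Cref{prop:handle-cancel} holds at every step; each cancellation leaves every other moduli space unchanged. Hence $\C_2$ is literally the set-theoretic complement $\C \setminus \C_1$, which is upward-closed as the complement of a downward-closed subcategory. The upward-closed case is symmetric, using instead $\M(a, \y) = \emptyset$ for $\y \in B$, $a \notin \C_1$.

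The main obstacle I anticipate is the non-interference analysis, which must combine the grading of the auxiliary cube $\{0, 1\}^{n-k}$ with the explicit algebraic form of the Bar-Natan differential at the distinguished leaf or coleaf arc; both ingredients are needed to rule out spurious product terms in the handle-cancellation formula and thereby justify the iterative application of \Cref{prop:handle-cancel}.
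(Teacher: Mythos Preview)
Your overall approach matches the paper's---explicit pairing, singleton moduli, non-interference, closed case---but two steps need repair. First, $\M(\x,\y) = \{\pt\}$ does not follow from \Cref{prop:C_BN_1X-isom-flow-cat}: that result identifies the associated cochain complex and hence only controls the \emph{signed count} $\#\M(\x,\y)$, whereas $\C_\BN$ is built from $\C_{XY}$ by handle slides and its $0$-dimensional moduli spaces generically contain extra oppositely-framed pairs (cf.\ \Cref{lem:index1-label-change}, \Cref{prop:eliminate-0dim-moduli-spaces}). You must instead invoke \Cref{lem:0dim-chain} and verify that the chain $\x \dashrightarrow \x' \to_1 \y' \dashrightarrow \y$ in $\C_{XY}$ is unique for your specific pairs; this holds because horizontal arrows only raise the label $v$-coordinate and the single $XY$-vertical arrow forces a common label on the circles touching the arc, which together pin down $\x'$ and $\y'$.

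Second, in the non-interference step your claim ``$\M(\x_1, \y_2) = \varnothing$'' is false: in the leaf case with $\x_1$ carrying merged label $X$ and $\y_2$ having both $U$ and $V$ labeled $1$, the space $\M(\x_1,\y_2)$ contains two points. What suffices is that the \emph{product} $\M(\x_1,\y_2) \times \M(\x_2,\y_1)$ vanishes. From $|w_1| = |w_2|$ first deduce $w_1 = w_2$: the chains of \Cref{lem:0dim-chain} never raise cube state, so nonemptiness of both factors forces $w_1 \geq w_2$ and $w_2 \geq w_1$. Then at $w_1 = w_2$ with $x_1 \neq x_2$ differing on some circle $Z$, the factor whose source has $Z$-label $1$ and target has $Z$-label $X$ admits no chain, again by the direction of horizontal arrows. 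This is the ``argument similar to \Cref{lem:cube-contract}'' that the paper invokes but does not spell out.
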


\begin{proof}
    We prove the statement for the first case. Note that for any $w \in \{0, 1\}^{n - k}$, $Z(D(v, w))$ can be identified with $Z(D(u, w)) \sqcup U$. We formally write $x \otimes x'$ for the labeling of $Z(D(v, w))$, where $x$ is the labeling of $Z(D(u, w))$ and $x'$ is the labeling of $U$. Pair each $\x = (D(u, w), x) \in A$ with $\y = (D(v, w), x \otimes 1) \in B$. Then it is obvious that $\M_{\C}(\x, \y) = \{ \pt \}$. By an argument similar to the proof of \Cref{lem:cube-contract}, one can prove that the cancellation of $(\x, \y)$ does not affect the moduli space $\M_{\C}(\x', \y')$ of any other pair $(\x', \y')$. Thus all pairs can be cancelled one-by-one. For the latter statement, suppose $\C_1$ is downward-closed. Take any pair of objects $(\z, \w)$ both not in $\C_1$. Then for any object $\x$ in $\C_1$, we have by definition $\M_{\C}(\x, \z) = \varnothing$, so the moduli space $\M_{\C}(\z, \w)$ remains unchanged by the cancellation of $\C_1$. Thus $\C_2$ is the complement of $\C_1$ in $\C$, and is obviously upward-closed.
\end{proof}

\begin{proof}[Proof of \Cref{prop:reidemeister-moves}] 
    The general strategy is as follows. Suppose \Cref{lem:cancel-acyclic} can be applied to $\C(D')$ twice, first by cancelling an upward-closed subcategory, then by cancelling a downward-closed subcategory. Moreover suppose that the resulting category is identical to $\C(D)$. Describe this process as 
    \[
        \C(D')
            \ \xhookleftarrow{d.c.}\ 
        \C_1
            \ \xhookleftarrow{u.c.}\ 
        \C_2 = \C(D).
    \]  
    Then on the spectra we obtain homotopy equivalences
    \[
        \X(D')
            \ \xhookleftarrow{\ i \ }\ 
        \X_1
            \ \xrightarrowdbl{\ p \ }\ 
        \X_2 = \X(D)
    \]
    where $i$ is the inclusion map, and $p$ is the quotient map. We define the stable homotopy equivalence  
    \[
        f: \X(D) \rightarrow \X(D')
    \]
    by the composition of $i$ and the homotopy inverse $q$ of $p$. Passing to cochain complexes, we obtain a sequence of quasi-isomorphisms
    \[
        C(D')
            \ \xrightarrowdbl{\ i^* \ }\ 
        C_1
            \ \xhookleftarrow{\ p^* \ }\ 
        C_2 = C(D).
    \]
    Now, \cite{Bar-Natan:2005} gives for each Reidemeister move $D \rightarrow D'$ an explicit chain homotopy equivalence as
    \begin{equation*}
        \begin{tikzcd}
            C(D) \arrow[r, "F", shift left] & C(D'). \arrow[l, "G", shift left]
        \end{tikzcd}
    \end{equation*}
    We will prove the following diagram commutes
    \begin{equation} \label{eq:RM-proof-commutativity}
        % https://tikzcd.yichuanshen.de/#N4Igdg9gJgpgziAXAbVABwnAlgFyxMJZABgBpiBdUkANwEMAbAVxiRAGEAKAGQHIBKEAF9S6TLnyEUARnJVajFmy59+AfWABjNdKHDRIDNjwEiAJjnV6zVog48BG7Wb0ixxyedLT51pXZVBIXkYKABzeCJQADMAJwgAWyQLEBwIJABmagYsMFsQKAgmACMGVmoACxg6KDZIPJBqHDosBjqCVjcQOMSkMlT0xFkQKpq2HAB3CFGoBCtFfKwAPQAqfRj4pMQUtKRh5ta2CogIAGtGhRs2NFX17s2+psGsy-8QAHEL4pgwWsQAWgyxC6PS2+2e8yudgAOtDinRYsB3npqN9fkhAcCKEIgA
\begin{tikzcd}
C(D') \arrow[r, "i^*"] \arrow[rrd, "G", bend right=15] & C_1 \arrow[rd, "\bar{G}"] & C_2 \arrow[d, equal] \arrow[l, "p^*"'] \\
                                                               &                                               & C(D)                                                             
\end{tikzcd}
    \end{equation}
    where $\bar{G}$ is a map induced from $G$. Then we have 
    \[
        f^* = (iq)^* = (p^*)^{-1} i^* = G_*
    \]
    as desired.

\subsubsection*{R-I}
    \begin{figure}[t]
        \centering
        \begin{subfigure}{0.45\textwidth}
            \centering
            \resizebox{\textwidth}{!}{
                \tikzset{every picture/.style={line width=0.75pt}} %set default line width to 0.75pt        

\begin{tikzpicture}[x=0.75pt,y=0.75pt,yscale=-1,xscale=1]
%uncomment if require: \path (0,118); %set diagram left start at 0, and has height of 118

%Shape: Circle [id:dp4063380307036877] 
\draw  [dash pattern={on 0.84pt off 2.51pt}] (29,68.3) .. controls (29,49.91) and (43.91,35) .. (62.3,35) .. controls (80.68,35) and (95.59,49.91) .. (95.59,68.3) .. controls (95.59,86.68) and (80.68,101.59) .. (62.3,101.59) .. controls (43.91,101.59) and (29,86.68) .. (29,68.3) -- cycle ;
%Straight Lines [id:da3241542069898937] 
\draw    (115.57,69.63) -- (152.95,69.63) ;
\draw [shift={(154.95,69.63)}, rotate = 180] [color={rgb, 255:red, 0; green, 0; blue, 0 }  ][line width=0.75]    (10.93,-3.29) .. controls (6.95,-1.4) and (3.31,-0.3) .. (0,0) .. controls (3.31,0.3) and (6.95,1.4) .. (10.93,3.29)   ;
%Shape: Ellipse [id:dp14534565581624848] 
\draw  [dash pattern={on 0.84pt off 2.51pt}] (174.17,68.3) .. controls (174.17,49.91) and (189.08,35) .. (207.47,35) .. controls (225.85,35) and (240.76,49.91) .. (240.76,68.3) .. controls (240.76,86.68) and (225.85,101.59) .. (207.47,101.59) .. controls (189.08,101.59) and (174.17,86.68) .. (174.17,68.3) -- cycle ;
%Shape: Arc [id:dp4782046896326969] 
\draw  [draw opacity=0] (42.93,42.27) .. controls (52.76,47.19) and (59.44,57.13) .. (59.35,68.55) .. controls (59.25,80.1) and (52.24,90.03) .. (42.15,94.72) -- (29,68.3) -- cycle ; \draw   (42.93,42.27) .. controls (52.76,47.19) and (59.44,57.13) .. (59.35,68.55) .. controls (59.25,80.1) and (52.24,90.03) .. (42.15,94.72) ;
%Curve Lines [id:da807190976512507] 
\draw    (187.25,43.84) .. controls (197.61,71.68) and (231.23,98.95) .. (230.18,67.25) ;
%Shape: Ellipse [id:dp7900814672000109] 
\draw  [draw opacity=0][fill={rgb, 255:red, 255; green, 255; blue, 255 }  ,fill opacity=1 ] (203.1,62.31) .. controls (207.07,62.26) and (210.34,65.44) .. (210.39,69.42) .. controls (210.44,73.39) and (207.26,76.66) .. (203.29,76.71) .. controls (199.31,76.76) and (196.04,73.58) .. (195.99,69.6) .. controls (195.94,65.63) and (199.12,62.36) .. (203.1,62.31) -- cycle ;
%Shape: Ellipse [id:dp9400875629475653] 
\draw  [draw opacity=0][fill={rgb, 255:red, 0; green, 0; blue, 0 }  ,fill opacity=1 ] (202.95,67.05) .. controls (204.22,67.03) and (205.26,68.05) .. (205.28,69.32) .. controls (205.29,70.59) and (204.28,71.64) .. (203.01,71.65) .. controls (201.74,71.67) and (200.69,70.65) .. (200.68,69.38) .. controls (200.66,68.11) and (201.68,67.07) .. (202.95,67.05) -- cycle ;

%Curve Lines [id:da4262227618437533] 
\draw    (230.18,67.25) .. controls (229.13,35.55) and (198.87,66.33) .. (189.25,94.43) ;

\end{tikzpicture}
            }
            \caption{R-I}
            \label{fig:RM1}
        \end{subfigure}
        \hspace{1em}
        \begin{subfigure}{0.45\textwidth}
            \centering
            \resizebox{\textwidth}{!}{
            \tikzset{every picture/.style={line width=0.75pt}} %set default line width to 0.75pt        

\begin{tikzpicture}[x=0.75pt,y=0.75pt,yscale=-1,xscale=1]
%uncomment if require: \path (0,118); %set diagram left start at 0, and has height of 118

%Shape: Circle [id:dp015984321610160168] 
\draw  [dash pattern={on 0.84pt off 2.51pt}] (29,68.3) .. controls (29,49.91) and (43.91,35) .. (62.3,35) .. controls (80.68,35) and (95.59,49.91) .. (95.59,68.3) .. controls (95.59,86.68) and (80.68,101.59) .. (62.3,101.59) .. controls (43.91,101.59) and (29,86.68) .. (29,68.3) -- cycle ;
%Straight Lines [id:da9026399965731786] 
\draw    (115.57,69.63) -- (152.95,69.63) ;
\draw [shift={(154.95,69.63)}, rotate = 180] [color={rgb, 255:red, 0; green, 0; blue, 0 }  ][line width=0.75]    (10.93,-3.29) .. controls (6.95,-1.4) and (3.31,-0.3) .. (0,0) .. controls (3.31,0.3) and (6.95,1.4) .. (10.93,3.29)   ;
%Shape: Ellipse [id:dp22085373836462663] 
\draw  [dash pattern={on 0.84pt off 2.51pt}] (174.17,68.3) .. controls (174.17,49.91) and (189.08,35) .. (207.47,35) .. controls (225.85,35) and (240.76,49.91) .. (240.76,68.3) .. controls (240.76,86.68) and (225.85,101.59) .. (207.47,101.59) .. controls (189.08,101.59) and (174.17,86.68) .. (174.17,68.3) -- cycle ;
%Shape: Arc [id:dp33472454445985234] 
\draw  [draw opacity=0] (38.14,88.86) .. controls (43.44,82.97) and (52.3,79.12) .. (62.34,79.15) .. controls (72.54,79.17) and (81.51,83.17) .. (86.74,89.24) -- (62.3,101.59) -- cycle ; \draw   (38.14,88.86) .. controls (43.44,82.97) and (52.3,79.12) .. (62.34,79.15) .. controls (72.54,79.17) and (81.51,83.17) .. (86.74,89.24) ;
%Shape: Arc [id:dp6565401203597421] 
\draw  [draw opacity=0] (86.75,47.72) .. controls (81.52,53.97) and (72.56,58.11) .. (62.38,58.14) .. controls (52.03,58.18) and (42.93,53.98) .. (37.69,47.61) -- (62.3,35) -- cycle ; \draw   (86.75,47.72) .. controls (81.52,53.97) and (72.56,58.11) .. (62.38,58.14) .. controls (52.03,58.18) and (42.93,53.98) .. (37.69,47.61) ;
%Shape: Arc [id:dp45323588905170564] 
\draw  [draw opacity=0] (235.72,48.64) .. controls (231.17,61.93) and (220.62,71.2) .. (208.43,71.07) .. controls (196.13,70.95) and (185.69,61.29) .. (181.49,47.71) -- (208.81,34.18) -- cycle ; \draw   (235.72,48.64) .. controls (231.17,61.93) and (220.62,71.2) .. (208.43,71.07) .. controls (196.13,70.95) and (185.69,61.29) .. (181.49,47.71) ;
%Shape: Ellipse [id:dp6131148918253709] 
\draw  [draw opacity=0][fill={rgb, 255:red, 255; green, 255; blue, 255 }  ,fill opacity=1 ] (230.58,66.06) .. controls (230.55,70.04) and (227.3,73.24) .. (223.32,73.2) .. controls (219.35,73.17) and (216.15,69.92) .. (216.18,65.95) .. controls (216.21,61.97) and (219.46,58.77) .. (223.44,58.8) .. controls (227.42,58.84) and (230.61,62.09) .. (230.58,66.06) -- cycle ;
%Shape: Ellipse [id:dp5902422711360495] 
\draw  [draw opacity=0][fill={rgb, 255:red, 0; green, 0; blue, 0 }  ,fill opacity=1 ] (225.84,65.81) .. controls (225.83,67.08) and (224.8,68.1) .. (223.53,68.09) .. controls (222.25,68.08) and (221.23,67.05) .. (221.24,65.78) .. controls (221.25,64.5) and (222.29,63.48) .. (223.56,63.49) .. controls (224.83,63.5) and (225.85,64.54) .. (225.84,65.81) -- cycle ;

%Shape: Ellipse [id:dp904820359985559] 
\draw  [draw opacity=0][fill={rgb, 255:red, 255; green, 255; blue, 255 }  ,fill opacity=1 ] (199.59,64.81) .. controls (199.56,68.79) and (196.31,71.99) .. (192.33,71.96) .. controls (188.36,71.92) and (185.16,68.67) .. (185.19,64.7) .. controls (185.22,60.72) and (188.47,57.52) .. (192.45,57.56) .. controls (196.42,57.59) and (199.62,60.84) .. (199.59,64.81) -- cycle ;
%Shape: Ellipse [id:dp6322236387962005] 
\draw  [draw opacity=0][fill={rgb, 255:red, 0; green, 0; blue, 0 }  ,fill opacity=1 ] (194.85,64.56) .. controls (194.84,65.83) and (193.81,66.86) .. (192.53,66.85) .. controls (191.26,66.84) and (190.24,65.8) .. (190.25,64.53) .. controls (190.26,63.26) and (191.3,62.23) .. (192.57,62.24) .. controls (193.84,62.25) and (194.86,63.29) .. (194.85,64.56) -- cycle ;

%Shape: Arc [id:dp5732266758013345] 
\draw  [draw opacity=0] (179.77,84.77) .. controls (184.03,69.45) and (194.77,58.59) .. (207.3,58.65) .. controls (219.94,58.7) and (230.66,69.85) .. (234.7,85.42) -- (207.11,99.27) -- cycle ; \draw   (179.77,84.77) .. controls (184.03,69.45) and (194.77,58.59) .. (207.3,58.65) .. controls (219.94,58.7) and (230.66,69.85) .. (234.7,85.42) ;

\end{tikzpicture}
            }
            \caption{R-II}
            \label{fig:RM2}
        \end{subfigure}
        \caption{R-I and R-II moves}
    \end{figure}
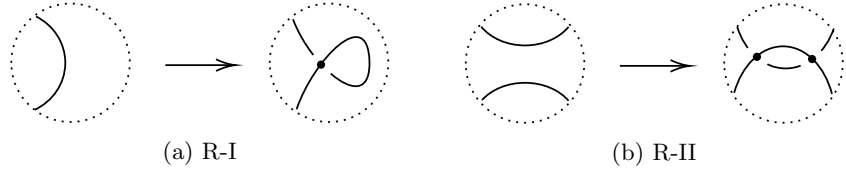

    Suppose $D$, $D'$ are related as in \Cref{fig:RM1}. Order the crossings of $D, D'$ so that $A(D') = \{a\} \cup A(D)$, where $a$ is the arc corresponding to the added crossing. Let $U$ be the leaf of $D'(0, \bar{0})$. Note that $Z(D'(0, u)) = Z(D(u)) \sqcup U$ and $Z(D'(1, u)) = Z(D(u))$ for any $u \in \{0, 1\}^n$. Consider pairs of objects $(\x, \y)$ in $\C(D')$ of the form
    \[
        \x = (D'(1, u), x) \ \succ_1 \ \y = (D'(0, u), x \otimes 1).
    \]
    From \Cref{lem:cancel-acyclic}, all such pairs can be cancelled, and the resulting flow category $\C_1$ is a downward-closed subcategory of $\C(D')$. Moreover $\C_1$ is identical to $\C(D)$ under the correspondence of objects
    \[
         (D'(0, u), x \otimes X)
         \ \leftrightarrow \ 
         (D(u), x).
    \]
    Thus we have a homotopy equivalence
    \[
        i: \X(D)
            \ \xhookrightarrow{\ \ \ }\ 
        \X(D').
    \]
    The induced map on the cochain complexes maps $x \otimes X \in C^*(D')$ to $x \in C^*(D)$ and other generators to $0$. This is exactly the chain homotopy equivalence $G$ for the R-I move given in \cite{Bar-Natan:2005}. Thus the commutativity of diagram \eqref{eq:RM-proof-commutativity} (with $C_1 = C_2$) is proved.
    
    \subsubsection*{R-II}
    
    Suppose $D$, $D'$ are related as in \Cref{fig:RM2}.  Order the crossings of $D, D'$ so that $A(D') = \{a, b\} \cup A(D)$, where $a, b$ are the arcs corresponding to the added crossings, aligned in this order. Let $U$ be the circle that will be splitted from $D'$ by the $(0, 1)$-resolution on $(a, b)$. Note that $Z(D'(01u)) = D(u) \sqcup U$ for any $u \in \{0, 1\}^n$. Using \Cref{lem:cancel-acyclic}, cancel all pairs $(\x, \y)$ of objects in $\C(D')$ of the form
    \[
        \x = (D'(11u), x) \ \succ_1 \ \y = (D'(01u), x \otimes 1),
    \]
    and then all pairs of the form
    \[
        \x' = (D'(01u), x \otimes X) \ \succ_1 \ \y' = (D'(00u), x).
    \]
    Let $\C_1$ and $\C_2$ be the result of the first and the second cancellations respectively. $\C_1$ is a downward-closed subcategory of $\C(D')$, and $\C_2$ is an upward-closed subcategory of $\C_1$. Moreover $\C_2$ is identical to $\C(D)$ under the correspondence
    \[
        (D'(10u), x)
            \ \leftrightarrow \ 
        (D(u), x).
    \]
    From the explicit description of the chain homotopy equivalence $G$ for the R-II move given in \cite{Bar-Natan:2005}, we see that $G$ annihilates the objects cancelled in the first cancellation, thus induces $\bar{G}$. Moreover $G$ restricts to identity on $C(D)$. Thus the commutativity of \eqref{eq:RM-proof-commutativity} is proved.
    
    \subsubsection*{R-III}
    
    \begin{figure}[t]
    \centering
        \input{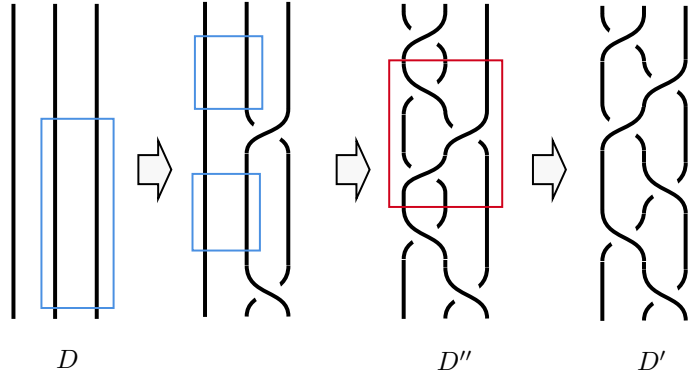}
        \caption{Generating a braid-like R-III move}
        \label{fig:R3-bR3}
    \end{figure}
    
    Suppose $D$ and $D'$ are related by a braid-like R-III move, as depicted in \Cref{fig:R3-bR3}. One can check that the procedure of cancellation given in \cite[Proposition 6.4]{LS:2014} can be applied verbatim to our case. This gives homotopy equivalences
    \[
        \X(D')
            \ \xhookleftarrow{\ i \ }\ 
        \X_1
            \ \xrightarrowdbl{\ p \ }\ 
        \X_2 = \X(D).
    \]
    Here, $\X_1$ is the result of cancelling the top half upward-closed subcategory, and $\X_2$ is the result of cancelling the bottom half downward-closed subcategory of \cite[Figure 6.4]{LS:2014}. Now consider the sequence of ordinary Reidemeister moves that gives the braid-like R-III move:
    \[
        D \xrightarrow{\text{(R-II)}^3}
        D'' \xrightarrow{\text{R-III}} 
        D'.
    \]
    Let $G$ be the composition of the chain homotopy equivalences corresponding to the moves $D' \rightarrow D'' \rightarrow D$. From the explicit description of $G$, one can manually check the commutativity of the diagram \eqref{eq:RM-proof-commutativity}.
\end{proof}

\begin{remark}
    Note that the cancelling pairs $(\x, \y)$ of \Cref{lem:cancel-acyclic} have $\gr_q(\x) = \gr_q(\y)$. Thus if \Cref{conj:1} is true, the chain homotopy equivalences of \Cref{prop:reidemeister-moves} should restrict to each subspectrum of the filtration, implying that $\X_\BN(L)$ is an invariant as a filtered CW-spectrum.
\end{remark}

\subsection{Morse moves}

\begin{proposition} \label{prop:morse-move}
    Suppose $D$ and $D'$ are two link diagrams related by one of the Morse moves. Then there is a morphism 
    \[
        f: \X_\BN(D) \rightarrow \X_\BN(D')
    \]
    such that the induced homomorphism on Bar-Natan homology coincides with the standard homomorphism for the reversed move.
\end{proposition}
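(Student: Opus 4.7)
The plan is to treat each of the three Morse moves—birth, death, and saddle—separately, building on the disjoint-union formula of \Cref{cor:disj-union-and-mirror} and the wedge-sum decomposition of \Cref{thm:2}.

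For a birth, $D' = D \sqcup U$ where $U$ is a small unknot drawn in an empty region of the plane. \Cref{cor:disj-union-and-mirror} gives $\X_\BN(D') \htpy \X_\BN(D) \smash \X_\BN(U)$, and \Cref{thm:2} applied to $U$ yields $\X_\BN(U) \htpy \sigma_\ca(U, o_+) \vee \sigma_\ca(U, o_-) \htpy \SS \vee \SS$, one summand for each orientation of $U$. The map $f_S$ is then defined as $\id_{\X_\BN(D)} \smash \iota$, where $\iota: \SS \to \X_\BN(U)$ is the inclusion of the wedge summand corresponding to the canonical class that, under the identification of \Cref{prop:C_BN_1X-isom-flow-cat}, represents the unit $1 \in A_{1,0}$. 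The death cobordism is handled dually, by smashing $\id_{\X_\BN(D)}$ with the collapse of $\X_\BN(U)$ onto one wedge summand. In both cases, verifying the induced cohomology map amounts to tracing through the $XY$-to-$1X$ basis change of Section 4 applied to a single circle, which is already explicit.

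For a saddle between $D$ and $D'$, the idea is to introduce an auxiliary diagram $D''$ obtained from $D$ by inserting a single positive crossing at the saddle site so that the $0$-resolution of the new crossing gives $D$ and the $1$-resolution gives $D'$. The flow category $\C_\BN(D'')$ then splits (as an $\NN$-graded set of objects) into two layers, namely $\C_\BN(D)$ and $\C_\BN(D')[1]$, connected by index-$1$ moduli spaces which, by \Cref{prop:C_BN_1X-isom-flow-cat} applied to the index-$1$ sub-resolution configuration, realize the merge or split operation of the Bar-Natan Frobenius algebra. The lower layer is downward-closed and the upper layer is upward-closed, yielding an inclusion $\X_\BN(D') \hookrightarrow \X_\BN(D'')$ and a projection $\X_\BN(D'') \twoheadrightarrow \X_\BN(D)$; the saddle map $f_S: \X_\BN(D) \to \X_\BN(D')$ is then defined as the (formally desuspended) connecting map in the Puppe sequence
\[
    \X_\BN(D') \hookrightarrow \X_\BN(D'') \twoheadrightarrow \X_\BN(D) \xrightarrow{\ f_S\ } \Sigma \X_\BN(D').
\]
That $f_S^*$ recovers the standard saddle map on $C_\BN$ is immediate from \Cref{prop:C_BN_1X-isom-flow-cat}: the induced connecting map on cochain complexes is exactly the component of the Bar-Natan differential $d$ on $C_\BN(D'')$ that goes from the $0$-layer to the $1$-layer of the new crossing, which by construction equals the algebraic merge/split map—i.e., the cobordism map of $\bar{S}$.

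The main obstacle is the saddle step, where one must verify that the spectrum-level map $f_S$ is, up to homotopy, independent of the auxiliary choices—most notably, which crossing site is used to build $D''$, the sign conventions inherited from the chosen sign assignment $s$, and the bookkeeping produced by the cubic handle slides of \Cref{subsec:basis-change} acting on circles \emph{not} involved in the saddle. Independence from $s$ and from the embedding is built in by \Cref{prop:CJS-const}; independence from the placement of the augmenting crossing reduces, via \Cref{prop:reidemeister-moves}, to an isotopy of $D''$. The remaining local check is essentially the content of \Cref{lem:index1-label-change}, which ensures that on circles outside the saddle site the basis-change handle slides act trivially (after Whitney cancellation of oppositely signed pairs), so that the merge/split component of $d$ is unambiguous at the flow-category level.
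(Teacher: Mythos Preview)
Your saddle argument via the Puppe map of the cofibration associated to an auxiliary crossing is exactly what the paper does; the extra paragraph on independence of choices is unnecessary here, since the proposition only asks for \emph{some} map with the stated effect on homology, and well-definedness under isotopy of $S$ is deferred to a separate conjecture at the end of the section.

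For the cup and cap moves the paper takes a more direct and concrete route than you do. Rather than invoking \Cref{cor:disj-union-and-mirror} and \Cref{thm:2}, it observes that the flow category $\C_\BN(D \sqcup U)$ decomposes according to the $1X$-label on $U$ into two subcategories $\C_\pm$, each isomorphic to $\C_\BN(D)$, so that $\X_\BN(D') = \X_+ \vee \X_-$ at the CW level; the cup map is then the inclusion of $\X_- \cong \X_\BN(D)$ (objects with $U$ labeled $X$) and the cap map is the projection onto $\X_+$ (objects with $U$ labeled $1$). Because these are cellular, the induced cochain maps are visibly the counit and unit. Your route has a gap here: the equivalence in \Cref{cor:disj-union-and-mirror} is only a stable homotopy equivalence, established abstractly by matching wedges of spheres after \Cref{thm:2}, so it does not by itself single out a map whose effect on cochains can be read off---``tracing through the basis change on a single circle'' does not address this. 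There is also a concrete slip: no canonical class of $U$ represents the unit $1 \in A_{1,0}$, since the canonical classes are $X$ and $Y = X - 1$. If you meant the cell carrying the $1X$-label $1$, then including that summand induces the functional $1 \mapsto 1,\ X \mapsto 0$ on $A$, which is not the counit $\epsilon$; the correct cell to include for the cup is the one labeled $X$, in agreement with the paper's $\X_-$.
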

    
\begin{proof} 
    The proof of \cite[Section 3.3]{LS:2014_rasmussen} works verbatim. Suppose $D, D'$ are related by a cup move, so that $D' = D \sqcup U$ with a circle $U$. Then $\C(D')$ is the disjoint union of two subcategories $\C_\pm$ where $\C_+$ (resp.\ $\C_-$) consists of objects with $U$ labeled $1$ (resp.\ $X$). Both subcategories are obviously isomorphic to $\C(D)$. The associated spectra is given as $\X(D') = \X_+ \vee \X_-$. The cup map is given by the inclusion
    \[
        f_\cup:\ \X(D) \isom \X_- \hookrightarrow \X(D'),
    \]
    and the cap map is given by the projection
    \[
        f_\cap:\ \X(D') \rightarrowdbl \X_+ \isom \X(D).
    \]
    Next, suppose $D$ and $D'$ are related by a saddle move. Then there is a diagram $D''$ such that $D, D'$ are obtained by a $1$-, $0$-resolution of a crossing of $D''$ respectively. We may regard $\C(D')$ as a downward-closed subcategory of $\C(D'')$, and $\C(D)[1]$ as the complement of $\C(D')$ in $\C(D'')$,
    \[
        \C(D')
            \ \xhookrightarrow{d.c.}\ 
        \C(D'')
            \ \xhookleftarrow{u.c.}\ 
        \C(D)[1].
    \]
    Thus on the associated spectra we have a cofibration sequence 
    \[
        \X(D') \hookrightarrow
        \X(D'') \rightarrowdbl
        \Sigma\X(D).
    \]
    The saddle map $f$ is defined as the Puppe map
    \[
        f: \Sigma \X(D) \rightarrow \Sigma \X(D').
    \]
    It is straightforward to check that these maps induce the standard homomorphisms on Bar-Natan homology.
\end{proof}

\begin{remark}
\label{rem:cobordism-map-filtered}
    Again if \Cref{conj:1} is true, the above constructed maps should be filtered maps of degree $-\chi(S)$, i.e.\ for each $j$, $f_S$ restricts to
    \[
        f_S: F^j \X(L) \rightarrow F^{j - \chi(S)} \X(L').
    \]
\end{remark}

\begin{proof}[Proof of \Cref{prop:cobordism}]
    Decompose $S$ into elementary cobordisms $S_l \circ \cdots \circ S_1$, each of which corresponds to a Reidemeister move or a Morse move. Define 
    \[
        f_S = f_{S_l} \circ \cdots \circ f_{S_1}.
    \]
    The claimed property is obvious from \Cref{prop:reidemeister-moves,prop:morse-move}.
\end{proof}

\begin{remark}
\label{rem:cobordism-degree}
    Under the assumption of \Cref{prop:cobordism}, for each pair of canonical cells $\sigma$ of $\X_\BN(D)$ and $\sigma'$ of $\X_\BN(D')$, consider the composition
    \[
        \SS^k \xhookrightarrow{i_\sigma} \X_\BN(D) \xrightarrow{f_S} \X_\BN(D') \xrightarrowdbl{p'_\sigma} \SS^{k'}
    \]
    where $k, k'$ are the (stable) dimensions of $\sigma, \sigma'$, and $i_\sigma, p'_\sigma$ are the obvious inclusion, projection maps for $\sigma, \sigma'$ respectively. This map gives an element of the stable homotopy group $\pi_{k - k'}(\SS)$, which is possibly non-trivial when $k \geq k'$. For pairs with $k > k'$, this might capture some information of $S$ that cannot be seen in the homology groups. The case $k = k'$ is considered later in \Cref{prop:canon-coh-cobordism}. 
\end{remark}

Recently Lawson, Lipshitz and Sarkar \cite{LLS:2021-functoriality} proved the functoriality of Khovanov homotopy type, i.e.\ for a cobordism $S$ between links $L, L'$, there is a map
\[
    f_S: \X^j_\Kh(L) \rightarrow \X^{j - \chi(S)}_\Kh(L')
\]
well-defined up to sign, and is functorial with respect to composition of cobordisms. The well-definedness follows by proving that the map defined on the level of diagrams is invariant under isotopies of $S$ rel boundary.

\begin{conjecture}
    For a cobordism $S$ between links $L, L'$, there is a map
    \[
        f_S: \X_\BN(L) \rightarrow \X_\BN(L')
    \]
    well-defined up to sign, and is functorial with respect to composition of cobordisms. Moreover if \Cref{conj:1} is true, $f_S$ is a filtered map of degree $-\chi(S)$.
\end{conjecture}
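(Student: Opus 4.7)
The plan is to decompose the generic cobordism $S$ into a finite sequence of elementary local cobordisms, construct a spectrum-level map for each piece, and compose. Since $S$ is generic, it can be represented as a finite sequence of local moves between link diagrams, each move being either a Reidemeister move or a Morse move (birth, death, or saddle). So it suffices to construct $f_{S_i}: \X_\BN(D_i) \to \X_\BN(D_{i+1})$ for each elementary $S_i$ and check that the induced homology map matches the standard one; the composition $f_S := f_{S_l} \circ \cdots \circ f_{S_1}$ will then do the job.

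For the Reidemeister moves, the strategy is to mimic the handle-cancellation argument that \cite{LS:2014} uses for the Khovanov flow category, adapted to $\C_\BN(D)$. The key tool will be a lemma (analogous to the leaf/coleaf cancellation principle of \cite[pp.\ 1028--1029]{LS:2014}) stating that when a resolution configuration contains a leaf or coleaf, a block of pairs $(\x, \y)$ with $\M(\x, \y) = \{\pt\}$ can be handle-cancelled simultaneously, without affecting other moduli spaces, and that the cancelled subcategory is either upward- or downward-closed. Applying this in the standard patterns for R-I, R-II and R-III yields a zigzag
\[
    \X_\BN(D') \hookleftarrow \X_1 \twoheadrightarrow \X_\BN(D)
\]
of stable homotopy equivalences, and defining $f$ as the composition of the inclusion with a homotopy inverse of the projection gives the Reidemeister map. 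To match Bar-Natan's explicit chain homotopy equivalences $G: C_\BN(D') \to C_\BN(D)$ from \cite{Bar-Natan:2005}, one inspects the induced maps on cochain complexes and verifies cell-by-cell that $G$ annihilates precisely the generators killed by the projection and restricts to the identity on the surviving ones; the braid-like R-III case is reduced to a composition of R-II moves followed by a combinatorial R-III as in \cite[Proposition 6.4]{LS:2014}.

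For the Morse moves, the construction is more transparent. A cup move adds a disjoint unknot $U$, so $\C_\BN(D \sqcup U)$ splits as $\C_+ \sqcup \C_-$ according to whether $U$ is labeled $1$ or $X$; both summands are isomorphic to $\C_\BN(D)$, so $\X_\BN(D \sqcup U) \simeq \X_\BN(D) \vee \X_\BN(D)$, and the cup (resp.\ cap) map is the inclusion into the $X$-summand (resp.\ projection onto the $1$-summand). For a saddle move, take the diagram $D''$ having the relevant crossing; then $\C_\BN(D')$ and $\C_\BN(D)[1]$ appear as a downward-closed subcategory and its complementary upward-closed subcategory of $\C_\BN(D'')$, which yields a cofibration sequence $\X_\BN(D') \hookrightarrow \X_\BN(D'') \twoheadrightarrow \Sigma \X_\BN(D)$ whose Puppe map serves as the saddle map.

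The hardest step will be verifying that the Reidemeister-induced maps reproduce Bar-Natan's chain homotopy equivalences on homology: one must carefully identify which objects survive each successive cancellation, track the dual identifications, and match the explicit formulas of \cite[Section 4]{Bar-Natan:2005}, especially for R-III where two sequential cancellations must be threaded together. The Morse part is routine once the cup/cap/saddle maps are set up, because the induced maps are literally the algebraic inclusion/projection/connecting maps for the short exact sequence of complexes coming from the $0$/$1$ resolution. Finally, composing the elementary maps along the decomposition of $S$ and invoking functoriality of the induced map on cohomology delivers the claim.
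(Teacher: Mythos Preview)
The statement you were given is a \emph{conjecture}, and the paper does not prove it. What you have written is essentially the proof of a different, weaker statement that the paper does prove (Proposition~\ref{prop:cobordism}): for a \emph{generic} cobordism $S$ between \emph{link diagrams} $D,D'$, presented as a specific sequence of elementary moves, there is a map $f_S:\X_\BN(D)\to\X_\BN(D')$ inducing the standard Bar-Natan cobordism map on cohomology. Your sketch matches the paper's argument for that proposition almost exactly (decompose into Reidemeister and Morse moves, handle each via the leaf/coleaf cancellation lemma or the cup/cap/saddle constructions, compose).

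But the conjecture asserts strictly more: (i) the map is \emph{well-defined up to sign} as a map between $\X_\BN(L)$ and $\X_\BN(L')$, i.e.\ independent of the chosen diagram and of the decomposition of $S$ into elementary cobordisms; (ii) the assignment $S\mapsto f_S$ is \emph{functorial}; and (iii) assuming Conjecture~\ref{conj:1}, $f_S$ respects the quantum filtration. Your sketch does not touch (i) or (ii) at all. Well-definedness requires proving invariance of $f_S$ under isotopies of $S$ rel boundary, equivalently under Carter--Saito movie moves; the paper explicitly flags this as open and points to \cite{LLS:2021-functoriality} for the analogous (and lengthy) argument in the Khovanov case. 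Functoriality is likewise not automatic from your construction. For (iii) you would need the filtration of Conjecture~\ref{conj:1} to exist and then verify that each elementary map respects it with the correct degree shift; the paper only remarks informally (Remark~\ref{rem:cobordism-map-filtered}) that this \emph{should} hold. In short, you have reproduced the paper's proof of the existence proposition, not a proof of the conjecture; the missing ingredient---invariance under movie moves---is the entire content of the conjecture beyond what is already established.
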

    \section{Mirror and duality}

So far we have considered the cellular \textit{cochain} complex of $\X_\BN$. Here we give dual statements by considering the cellular \textit{chain} complex of $\X_\BN$. Let us first recall how a link diagram $D$ and its mirror $m(D)$ are related in Bar-Natan homology. 

Let $A = (A, m, \iota, \Delta, \epsilon)$ be a Frobenius algebra over a commutative ring $R$. The \textit{dual Frobenius algebra} of $A$ is defined by $A^* = (A^*, \Delta^*, \epsilon^*, m^*, \iota^*)$ where $A^* = \Hom_R(A, R)$ and other maps are the dual maps. The \textit{twisting} of $A$ by an invertible element $\theta \in R$ is another Frobenius algebra $A_\theta = (A, m, \iota, \Delta_\theta, \epsilon_\theta)$ whose algebra structure is the same as $A$ but the coalgebra structure is twisted as
\[
    \Delta_\theta(x) = \Delta(\theta^{-1}x),
    \quad 
    \epsilon_\theta(x) = \epsilon(\theta x).
\]
The defining Frobenius algebra $A_{1, 0} = \ZZ[X]/(X^2 - X)$ for Bar-Natan homology is \textit{almost self-dual}, i.e.\ $A_{1, 0}$ is isomorphic to $A_{-1, 0}^*$ by the correspondence
\[
    1 \leftrightarrow X^*,\quad
    X \leftrightarrow 1^*
\]
where $\{1^*, X^*\}$ is the dual of the basis $\{1, X\}$ for $A_{-1, 0}$. Moreover there is an isomorphism from $A_{-1, 0}$ to $A_{1, 0; -1}$ (the (-1)-twisting of $A_{1, 0}$), induced from the underlying algebra isomorphism:
\[
    \ZZ[X]/(X^2 + X) \rightarrow \ZZ[X]/(X^2 - X), \quad 
    X \mapsto -X.
\]

Frobenius algebra isomorphisms and twistings induce chain isomorphisms on the complexes (\cite[Proposition 3]{Khovanov:2004}). Thus we get a sequence of isomorphisms
\[
    C_{1, 0}(m(D)) 
    \isom (C_{-1, 0}(D))^*
    \isom (C_{1, 0; -1}(D))^*
    \isom (C_{1, 0}(D))^*
\]
where the first isomorphism is the canonical identification (see \cite[Section 7.3]{Khovanov:2000}), the second induced from the Frobenius algebra isomorphism, and the third induced from the twisting. The composition 
\[
    \Phi: C_\BN(m(D)) \rightarrow (C_\BN(D))^*
\]
is explicitly given by
\[
    ({m(D)}(u), x) \ \longmapsto \ \epsilon(u) (D(\bar{1} - u), \varphi(x))^*
\]
where $\epsilon(u) \in \{ \pm 1 \}$ are signs determined by the twisting, and $\varphi$ is a map defined by $\varphi(1) = -X$ and $\varphi(X) = 1$.

\begin{proposition} \label{prop:homology-mirror}
    There is a canonical isomorphism
    \[
        \tilde{C}_*(\X_\BN(D)) \isom C^{-*}_\BN(m(D)).
    \]
\end{proposition}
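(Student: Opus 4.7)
The approach is to combine the cellular cochain identification from Section 4 with the algebraic mirror isomorphism $\Phi: C_\BN(m(D)) \isom (C_\BN(D))^*$ recalled just above the proposition statement. Since $\tilde{C}_*(\X_\BN(D))$ is the $\ZZ$-linear dual of $\tilde{C}^*(\X_\BN(D))$, the theorem of Section 4 (the isomorphism $\tilde{C}^*(\X_\BN(D)) \isom C^*_\BN(D)$) dualizes to a canonical chain isomorphism
\[
    \tilde{C}_*(\X_\BN(D)) \isom (C^*_\BN(D))^*
\]
that sends each cell $\x$ to the functional dual to the corresponding standard generator, intertwining the cellular boundary $\partial$ with the transpose of the Bar-Natan coboundary. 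Composing with the transpose of $\Phi^{-1}: C^*_\BN(D) \isom C_\BN(m(D))$ lands canonically in $(C_\BN(m(D)))^* = C^{-*}_\BN(m(D))$.

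Explicitly, this composition sends a cell $\x = (D(u), x)$ of $\X_\BN(D)$ to the generator $\epsilon(u) \cdot (m(D)(\bar{1} - u), \varphi(x))$ of $C^{-*}_\BN(m(D))$, where $\epsilon(u) \in \{\pm 1\}$ and $\varphi$ (with $\varphi(1) = -X$, $\varphi(X) = 1$) are the sign and labeling maps of $\Phi$. The underlying bijection of generators is immediate: mirroring swaps $0$- and $1$-resolutions so $m(D)(\bar{1} - u) = D(u)$ as crossingless diagrams, and $\varphi$ is a bijection of $\{1, X\}$-labelings. The chain map property then follows formally from $\Phi$ being a chain isomorphism combined with the Section 4 identification.

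The main obstacle is a bookkeeping one: aligning grading and sign conventions so that the dualized differential on $(C^*_\BN(D))^*$ matches the transposed Bar-Natan coboundary on $C^{-*}_\BN(m(D))$ via the transpose of $\Phi$, and verifying that the cell orientations chosen in Section 4 for $\X_\BN(D)$ (in particular the sign flips for objects $(u, v)$ with $|v|$ odd) are compatible with the twisting signs $\epsilon(u)$. Once these identifications are made, the statement is a formal consequence of the two input isomorphisms, and no further spatial input (beyond Section 4) is needed.
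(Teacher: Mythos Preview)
Your approach is essentially the paper's: dualize the Section~4 identification $\tilde{C}^*(\X_\BN(D)) \isom C_\BN(D)$ to get $\tilde{C}_*(\X_\BN(D)) \isom (C_\BN(D))^*$, then compose with $\Phi^{-1}: (C_\BN(D))^* \to C_\BN(m(D))$. The paper's proof is exactly this two-line composition, followed by the remark that the homological grading is reversed.

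There is, however, a small slip in your write-up. You invoke ``the transpose of $\Phi^{-1}$'' and say the composite lands in $(C_\BN(m(D)))^*$, then identify $(C_\BN(m(D)))^* = C^{-*}_\BN(m(D))$. Neither is right: you should apply $\Phi^{-1}$ itself (not its transpose), which already takes $(C_\BN(D))^*$ to $C_\BN(m(D))$; and the superscript $-*$ in the target denotes grading reversal, not a further dualization. Your explicit formula for the composite on cells is correct and is exactly what $\Phi^{-1}$ does, so this is a notational tangle rather than a substantive gap. The sign/orientation bookkeeping you flag in the last paragraph is not carried out in the paper either --- it simply asserts the grading check --- so your concern there is reasonable but goes beyond what the paper provides.
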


\begin{proof}
    The isomorphism is given by the composition
    \[
        \tilde{C}_*(\X_\BN(D)) 
            \xrightarrow{{}^{**}}
        (\tilde{C}^*(\X_\BN(D)))^*
            =
        (C^*_\BN(D))^*
            \xrightarrow{{\Phi}^{-1}}
        C^*_\BN(m(D)).
    \]
    One can also check that the homological grading is reversed.
\end{proof}

\begin{proposition} \label{prop:cobordism-mirror}
    Let $S$ be a generic link cobordism between link diagrams $D, D'$. Let
    \[
        f_S: \X_\BN(D) \rightarrow \X_\BN(D')
    \]
    be the cobordism map given in \Cref{prop:cobordism}. The induced map on reduced homology
    \[
        (f_S)_*: H_\BN(m(D)) \rightarrow H_\BN(m(D'))
    \]
    coincides (up to sign) with the cobordism map on Bar-Natan homology induced from the mirrored cobordism $m(S): m(D) \rightarrow m(D')$.
\end{proposition}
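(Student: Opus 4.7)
The plan is to decompose $S$ into a composition of elementary cobordisms $S = S_\ell \circ \cdots \circ S_1$ as in the proof of \Cref{prop:cobordism}. Since $f_S = f_{S_\ell} \circ \cdots \circ f_{S_1}$ and the standard cobordism map for $m(S)$ is likewise a composition of maps for each $m(S_i)$, and since the duality isomorphism $\Psi_D\colon \tilde{C}_*(\X_\BN(D)) \isom C^{-*}_\BN(m(D))$ of \Cref{prop:homology-mirror} is natural at the chain level, it suffices to verify the claim for each elementary cobordism separately: the three Reidemeister moves and the cup, cap, and saddle moves.

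For the Morse moves I would use the explicit models from the proof of \Cref{prop:morse-move}. For a cup $D' = D \sqcup U$, the map $f_\cup$ is the inclusion $\X_\BN(D) \isom \X_- \hookrightarrow \X_\BN(D')$ where $\X_-$ is the wedge summand on which $U$ carries the label $X$. Under $\Psi$, the label $X$ corresponds (up to the sign from the twisting) to the dual label $1^*$, so the induced map on $C^*_\BN(m(D'))$ adjoins a $1$-label on $m(U)$, matching the standard unit-insertion cup map $F_{m(\cup)}$. The cap move is handled dually. For a saddle, the cofibration sequence $\X_\BN(D') \hookrightarrow \X_\BN(D'') \rightarrowdbl \Sigma \X_\BN(D)$ dualizes, via $\Psi$, to the cofibration sequence associated to the mirrored saddle between $m(D)$, $m(D'')$, $m(D')$, and the corresponding Puppe map is identified with the saddle map for $m(S)$.

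For each Reidemeister move, the proof of \Cref{prop:reidemeister-moves} expresses $f$ as a zig-zag $\X_\BN(D') \xhookleftarrow{\ i\ } \X_1 \xrightarrowdbl{p} \X_\BN(D)$ obtained by applying \Cref{lem:cancel-acyclic} successively to a downward-closed subcategory and then to an upward-closed subcategory. Mirroring a diagram complements the resolution states via $u \mapsto \bar{1} - u$, thereby exchanging leaves with coleaves and downward-closed with upward-closed subcategories. Consequently the mirrored zig-zag for $m(D) \to m(D')$ is obtained by formally dualising the original zig-zag, and the induced map on generators is exactly the explicit Bar-Natan chain homotopy equivalence of \cite{Bar-Natan:2005} for the mirrored move, as already checked in the diagram \eqref{eq:RM-proof-commutativity}.

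The main obstacle will be the sign bookkeeping through the twisting that defines $\Phi$, namely the state-dependent sign $\epsilon(u)$ coming from $A_{1,0} \isom A_{1,0;-1}$ together with the sign in $\varphi(1) = -X$, $\varphi(X) = 1$. For each elementary piece one has to verify that these match the signs appearing in the standard cobordism maps for $m(S)$, which is precisely why the statement is only asserted up to sign. A cleaner route would be to upgrade $\Psi_D$ to a spectrum-level Spanier--Whitehead duality $\X_\BN(m(D)) \htpy \X_\BN(D)^{\vee}$, extending \Cref{cor:disj-union-and-mirror} from $\X_{XY}$ to $\X_\BN$, and deduce $f_S^{\vee} \htpy f_{m(S)}$ on the nose; but the chain-level argument above suffices for the version of the statement claimed here.
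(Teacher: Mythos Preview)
Your proposal is correct and follows essentially the same route as the paper: reduce to elementary cobordisms, handle the Morse moves directly from their explicit description in \Cref{prop:morse-move}, and for Reidemeister moves pass the zig-zag of \Cref{prop:reidemeister-moves} to reduced cellular chains via the isomorphism of \Cref{prop:homology-mirror} and compare with Bar-Natan's explicit $G$ for the mirrored move. The one imprecision is your appeal to diagram~\eqref{eq:RM-proof-commutativity}: that diagram was verified on \emph{cochains} for the original move $D'\to D$, whereas here one needs the analogous diagram on \emph{chains} for the mirrored move $m(D')\to m(D)$, which the paper writes down and checks separately---though your structural observation that mirroring exchanges leaves with coleaves and downward- with upward-closed subcategories is precisely what makes that second verification parallel to the first.
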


\begin{proof}
    First consider the Reidemeister moves. In the proof of \Cref{prop:reidemeister-moves}, for each Reidemeister move $D \rightarrow D'$ we considered a sequence of homotopy equivalences
    \[
        \X(D')
            \ \xhookleftarrow{\ i \ }\ 
        \X_1
            \ \xrightarrowdbl{\ p \ }\ 
        \X_2 = \X(D)
    \]
    and defined $f_S = iq$ where $q$ is the homotopy inverse of $p$. Passing to reduced cellular chain complexes, we get 
    \[
        C(m(D'))
            \ \xhookleftarrow{\ i_* \ }\ 
        C_1
            \ \xrightarrowdbl{\ p_* \ }\ 
        C_2 \isom C(m(D)).
    \]
    Consider the standard chain homotopy equivalences
    \begin{equation*}
        \begin{tikzcd}
            C(m(D)) \arrow[r, "F", shift left] & C(m(D')). \arrow[l, "G", shift left]
        \end{tikzcd}
    \end{equation*}
    associated to the move $m(D) \rightarrow m(D')$. Consider the following diagram
    \begin{equation*}
        % https://tikzcd.yichuanshen.de/#N4Igdg9gJgpgziAXAbVABwnAlgFyxMJZABgBpiBdUkANwEMAbAVxiRAGEAKAW04BkA5AEohIAL6l0mXPkIoAjOSq1GLNl16CRAfWABjbfLHjJIDNjwEiAJiXV6zVog49+woboPXjEqRdk2pPLKDmrOGvwi4sowUADm8ESgAGYAThDcSLYgOBBIAMzUDFhgTiBQEEwARgys1AAWMHRQbJClINQ4dFgMrQSsviBpGUiKOXmIZCCNzUhgTAwM9qplWNoAVB053b3O9RAQANaMOFsMdFUwDAAK0pZyIKlYcfWng8OZiGO5WZ07fe1lo42GgNmcLldbv4rM5aslTg0mi1nDgAO4QGZQBDvdKfKY-RCFFTA5wAcS2lzAyIAtPliDiRl9OhMiaEygAddmpeA4VLAUnGaiUml0sQUMRAA
\begin{tikzcd}
C(m(D')) \arrow[rrd, "G", bend right=15] & C_1 \arrow[l, "i_*"', hook'] \arrow[r, "p_*", two heads] \arrow[rd, "G|"] & C_2 \arrow[d, equal] \\
                                      &                                                                                                         & C(m(D))                                      
\end{tikzcd}
    \end{equation*}
    With the explicit isomorphism given in \Cref{prop:homology-mirror}, one can prove as in the proof of \Cref{prop:reidemeister-moves} that the above diagram commutes up to sign. The proof for the Morse moves is straightforward. 
\end{proof}

Finally we give another conjecture regarding duality for the conjectured quantum filtration. On the algebraic level, mirroring also respects the quantum filtration. That is, with the descending filtration $\{F^jC(D)\}$ of $C_\BN(D)$ and the cofiltration $\{ F_jC(m(D)) = C(m(D)) / F^{j + 2}C(m(D))\}$ of $C_\BN(m(D))$, there is a canonical isomorphism
\[
    F^j C(D) \isom (F_{-j} C(m(D)))^*
\]
as asserted in the proof of \cite[Proposition 3.10]{Rasmussen:2010}. If \Cref{conj:1} is true, we may also expect that this lifts to the spatial level. 

\begin{conjecture}
    Assuming \Cref{conj:1} is true, the $S$-duality map of \Cref{cor:disj-union-and-mirror}
    \[
        \mu: \X_\BN(D) \smash \X_\BN(m(D)) \rightarrow \SS
    \]
    also induces an $S$-duality map
    \[
        \mu: F_j \X_\BN(D) \smash F^{-j}\X_\BN(m(D)) \rightarrow \SS
    \]
    for each $j \in 2\ZZ + |D|$. Here, $\{F^j\X\}$ denotes the cofiltration $\{\X / F_{j - 2}\X\}$. 
\end{conjecture}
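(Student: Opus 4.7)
The plan is to take the unfiltered $S$-duality $\mu : \X_\BN(D) \smash \X_\BN(m(D)) \to \SS$ provided by \Cref{cor:disj-union-and-mirror} and, assuming \Cref{conj:1}, show that it respects the filtrations on both factors, hence descends to an $S$-duality at each filtration level.

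First I would restrict $\mu$ along the inclusion $F_j\X_\BN(D) \hookrightarrow \X_\BN(D)$ to obtain $\bar{\mu}: F_j\X_\BN(D) \smash \X_\BN(m(D)) \to \SS$, and then show that this factors through the cofiltration quotient $F^{-j}\X_\BN(m(D)) = \X_\BN(m(D))/F_{-j-2}\X_\BN(m(D))$. Equivalently, one needs a nullhomotopy of the restriction of $\mu$ to $F_j\X_\BN(D) \smash F_{-j-2}\X_\BN(m(D))$. The key input is the splitting $\X_\BN(L) \htpy \bigvee_o \sigma_\ca(L,o)$ from \Cref{thm:2} together with the fact that mirroring negates the quantum grading on canonical cells: the duality pairing $\mu$ is non-trivial precisely on mirror-paired canonical cells $\sigma_\ca(D,o)$ and $\sigma_\ca(m(D), m(o))$, which have opposite quantum gradings. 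Hence any pair of canonical cells supported in $F_j\X_\BN(D)$ and $F_{-j-2}\X_\BN(m(D))$ has quantum gradings summing to at most $-2$, ruling out any mirror match and forcing the pairing to vanish on all such pairs.

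Having obtained $\mu_j : F_j\X_\BN(D) \smash F^{-j}\X_\BN(m(D)) \to \SS$, I would verify it is an $S$-duality by comparing its induced map on reduced cellular (co)chains with Rasmussen's algebraic duality recalled in the proof of \cite[Proposition 3.10]{Rasmussen:2010}. Using \Cref{conj:1} one obtains $\tilde{C}^*(F^{-j}\X_\BN(m(D))) \isom F^{-j}C_\BN(m(D))$ directly, while dualising the analogous identification $\tilde{C}^*(F_j\X_\BN(D)) \isom C_\BN(D)/F^{j+2}C_\BN(D)$ and invoking the perfect pairing $C_\BN(D) \otimes C_\BN(m(D)) \to \ZZ$ yields $\tilde{C}_*(F_j\X_\BN(D)) \isom F^{-j}C_\BN(m(D))$. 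The map $\mu_j$ realises this identification up to sign, and since $S$-duality of finite CW-spectra is detected on reduced (co)homology, this completes the argument.

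The hard part will be making the factorization step rigorous, because the filtrations in \Cref{conj:1} depend on choices of extended Whitney tricks and framings, and there is no a priori guarantee that these can be made compatibly for $D$ and $m(D)$. One natural route is to establish a flow-category level mirror symmetry, namely a framed equivalence between $\C_\BN(m(D))$ and a suitable dualisation of $\C_\BN(D)$, and to show that any extended Whitney trick realising one step of the filtration on $\C_\BN(D)$ corresponds under this mirror equivalence to a step on $\C_\BN(m(D))$. With such a mirror-symmetric choice of filtrations, $\mu$ visibly respects both filtrations, and the factorization then reduces to the quantum-grading vanishing argument above.
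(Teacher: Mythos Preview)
The statement you are addressing is a \emph{conjecture} in the paper, not a theorem; the paper offers no proof and explicitly leaves it open. So there is nothing in the paper to compare your proposal against, and your write-up should be read as a proposed strategy rather than a verified argument.

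On the substance: your key step is to use the wedge splitting $\X_\BN(D)\htpy\bigvee_o\sigma_\ca(D,o)$ from \Cref{thm:2} to argue that $\mu$ vanishes on $F_j\X_\BN(D)\smash F_{-j-2}\X_\BN(m(D))$ because the quantum gradings of any canonical cell in the first factor and any canonical cell in the second factor cannot sum to zero. This is a genuine gap. The filtration $F_j\X_\BN(D)$ of \Cref{conj:1} is a CW subspectrum built from the $1X$-labeled cells with $\gr_q\leq j$; it is \emph{not} a wedge of canonical cells, and the stable homotopy equivalence of \Cref{thm:2} has no reason to respect it. Indeed, the canonical cells are specific $XY$-labeled objects, each of which, expressed in the $1X$-basis, is a linear combination of cells spread across many quantum gradings. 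So knowing the $S$-duality pairs canonical cells with their mirrors tells you nothing about how $\mu$ behaves on the subspectra $F_j\X$. The nullhomotopy you need is a statement about the actual cell structure and attaching maps that make up $F_j\X$ and $F_{-j-2}\X(m(D))$, not about the abstract homotopy type.

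You do flag, correctly, that the real difficulty is compatibility of the (conjectural) filtrations on $D$ and $m(D)$. But your proposed fix, a mirror-symmetric choice of extended Whitney tricks at the flow-category level, is itself at the level of a conjecture: the paper does not establish any duality between $\C_\BN(D)$ and $\C_\BN(m(D))$ beyond the homotopy-type statement in \Cref{cor:disj-union-and-mirror}, and even granting such a mirror equivalence one would still need to know that the Whitney-trick procedure of \Cref{conj:1} can be carried out equivariantly. In short, your proposal identifies the right obstruction but does not overcome it; the statement remains a conjecture.
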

    \section{Canonical cohomotopy classes}

The canonical classes of Bar-Natan homology play an essential role in proving the important properties of the $s$-invariant. We define the cohomotopical counterpart in the (stable) cohomology group of $\X_\BN$, by tracing the arguments \cite[Section 5]{LNS:2015}. In the following, cohomotopy classes are denoted by brackets $[\cdot]$ and cohomology classes are denoted by double brackets $\llbracket \cdot \rrbracket$. 

\begin{definition} \label{def:coho-alpha}
    Let $D$ be a link diagram. Each canonical cell $\sigma_\ca(D, o)$ of (stable) dimension $k$, let 
    \[
        p_\ca(D, o): \X_\BN(D) \rightarrowdbl \SS^k
    \]
    be the umkehr map of $\sigma_\ca(D, o)$, i.e.\ $p_\ca(D, o)$ maps $\sigma_\ca(D, o)$ homeomorphically onto $\SS^k$ and collapses other cells to the basepoint. We call the (stable) cohomotopy class
    \[
        [p_\ca(D, o)] \in \pi^k(\X_\BN(D))
    \]
    the \textit{canonical cohomotopy class} of $D$ for $o$.
\end{definition}

Recall that there is a dual Hurewicz map for a CW-complex $X$
\[
    h': \pi^k(X) \rightarrow H^k(X)
\]
for each $k$, defined on the level of cells (see \cite[Section V.11]{Bredon:1993}). The \textit{Hopf classification theorem} asserts that it is an isomorphism when $k = \dim(X)$.

\begin{proposition} 
\label{prop:coHur-ca}
    The co-Hurewicz map
    \[
        h': \pi^k(\X_\BN(D)) \rightarrow H^k_\BN(D)
    \]
    sends the canonical cohomotopy class $[p_\ca(D, o)]$ to the canonical homology class $\llbracket \ca(D, o) \rrbracket$.
\end{proposition}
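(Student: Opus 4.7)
The plan is to compute the image of $[p_\ca(D, o)]$ directly from the definition of the co-Hurewicz map $h': \pi^k(X) \to H^k(X)$, which sends the class of $f: X \to \SS^k$ to the pullback $f^*\llbracket \iota_k \rrbracket$ of the canonical generator $\iota_k \in H^k(\SS^k)$. Since the statement is essentially a definition-chase, I expect the proof to reduce to identifying what this pullback does on cellular cochains.

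First, I would recall from Section~\ref{sec:1X-basis} that the (stable) dimension of the canonical cell $\sigma_\ca(D, o)$ equals its homological grading, and that under the isomorphism $\tilde{C}^*(\X_\BN(D)) \isom C^*_\BN(D)$ of \Cref{thm:1}, the cellular cochain $\sigma_\ca^*(D, o)$ dual to the canonical cell corresponds to the canonical cycle $\ca(D, o)$. This identification is already established on the $XY$-side by \Cref{prop:C_BN_XY-isom} right after the definition of the canonical objects, and transports to the $1X$-side through the commutative diagram of \Cref{prop:XY-to-1X}.

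Next, I would apply $p_\ca(D, o)^*$ to $\iota_k$ on the level of cellular cochains. Because $p_\ca(D, o)$ sends $\sigma_\ca(D, o)$ homeomorphically onto $\SS^k$ and collapses every other cell to the basepoint, the pullback cochain evaluates to $\pm 1$ on $\sigma_\ca(D, o)$ and to $0$ on every other $k$-cell, hence equals $\pm \sigma_\ca^*(D, o)$. Passing to cohomology and using the identification above then yields $\pm \llbracket \ca(D, o) \rrbracket$.

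The only subtlety is the orientation convention --- checking that $\iota_k$ pulls back to $+\sigma_\ca^*(D, o)$ rather than its negative. Since by \Cref{thm:2} the canonical cell sits inside $\X_\BN(D)$ as a wedge summand stably equivalent to $\SS^k$, and the umkehr map is by construction the projection onto this summand, fixing the generator of $H^k(\SS^k)$ to be the orientation class of $\sigma_\ca(D, o)$ makes the sign $+1$. I expect no real obstacle beyond keeping this convention straight.
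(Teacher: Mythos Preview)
Your proposal is correct and follows essentially the same approach as the paper: both compute the cellular cochain representing $h'([p_\ca])$ by evaluating it cell-by-cell and identify the result with $\sigma_\ca^*$, which is already known to represent $\llbracket \ca(D,o) \rrbracket$. The only cosmetic difference is that the paper phrases the cell-by-cell computation via the degree of the composite $D^n \to X^n \xrightarrow{p_\ca} S^n$ (Bredon's cellular description of $h'$), whereas you phrase it as the pullback $p_\ca^*\llbracket\iota_k\rrbracket$ on cellular cochains---these are the same calculation.
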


\begin{proof}
    Recall from \Cref{subsec:C_BN-stable-htpy-type} that the canonical class $\llbracket \ca \rrbracket$ is represented by the dual $\sigma_\ca^*$ of the canonical cell $\sigma_\ca$. For any cell $\sigma$ with attaching map $\varphi$, consider the following diagram:
    \begin{equation*}
        % https://tikzcd.yichuanshen.de/#N4Igdg9gJgpgziAXAbVABwnAlgFyxMJZABgBpiBdUkANwEMAbAVxiRABEA9QgX1PUy58hFAEZyVWoxZsAGtxB8B2PASLjRk+s1aIQAZQVKQGFcKJlN1bTL2HekmFADm8IqABmAJwgBbJGQgOBBI4lI6bAA6kfReaAAWWIr8IN5+odTBSABM1tK6JgD60YwJdMmePv6IgVmIAMzU8TB0UGw4AO4Qza0IxmnVjUEhiLkgDFhgBVB0cM1tPBQ8QA
        \begin{tikzcd}
        D^n \arrow[r, "\varphi"] \arrow[d, "/\partial"', two heads] & X^n \arrow[d, "p_\alpha"] \\
        S^n \arrow[r, dashed]                         & S^n                      
        \end{tikzcd}
    \end{equation*}
    The degree of the dashed arrow is $1$ if $\sigma = \sigma_\ca$ and $0$ otherwise. Thus $h'(p_\ca)$ is exactly $\sigma_\ca^*$.
\end{proof}

\begin{corollary}
    If $D$ is a knot diagram, then $\pi^0(\X_\BN(D))$ is freely generated by the two canonical cohomotopy classes
    \[
        \pi^0(\X_\BN(D)) \isom \ZZ\{[p_\ca], [p_\cb]\}
    \]
\end{corollary}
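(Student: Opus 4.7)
The plan is to reduce the problem immediately to the structure theorem \Cref{thm:2} and then compute $\pi^0$ of a wedge of sphere spectra. First I would use \Cref{prop:alpha-homol-gr}: since a knot has a single component ($l=1$), the sum defining $\gr_h(\ca(D, o))$ is empty for every orientation $o$, so both canonical classes sit in homological grading zero. Since the underlying unoriented diagram of $D$ admits $2^{|D|} = 2$ orientations, there are exactly two canonical cells $\sigma_\ca$ and $\sigma_\cb$, and after the overall grading shift defining $\X_\BN(D)$, each of them becomes a $0$-dimensional stable cell, i.e.\ stably equivalent to the sphere spectrum $\SS$.

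Next I would apply \Cref{thm:2}, which gives a stable homotopy equivalence
\[
\X_\BN(D) \ \htpy\ \sigma_\ca \vee \sigma_\cb \ \htpy\ \SS \vee \SS.
\]
Because stable cohomotopy takes wedges to direct sums, passing to $\pi^0$ yields
\[
\pi^0(\X_\BN(D)) \ \isom\ \pi^0(\SS) \oplus \pi^0(\SS) \ \isom\ \ZZ \oplus \ZZ,
\]
with a canonical basis given by the two projections to the wedge summands followed by the identification with $\SS^0$.

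The remaining step, and the only one that requires any care, is to identify these two projections with the canonical cohomotopy classes $[p_\ca]$ and $[p_\cb]$ from \Cref{def:coho-alpha}. By definition, $p_\ca$ is the umkehr map that sends $\sigma_\ca$ homeomorphically onto $\SS^0$ and collapses all other cells to the basepoint. In the proof of \Cref{cor:X_BN-structure} the wedge decomposition is obtained by running the handle cancellations of \Cref{lem:cube-contract} on each cube subcategory of $\C_\BN(D)$; the canonical cells are precisely the objects that remain (since by construction no canonical object is the source or target of a basic relation). Therefore the summand corresponding to $\sigma_\ca$ in the decomposition is exactly the image of $\sigma_\ca$ in $\X_\BN(D)$, and the collapse map $p_\ca$ coincides with the projection onto that summand. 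The same holds for $\sigma_\cb$, so $\{[p_\ca], [p_\cb]\}$ is a free $\ZZ$-basis for $\pi^0(\X_\BN(D))$, as claimed. The hardest part is really just bookkeeping: making sure this identification of the umkehr maps with the wedge projections is consistent with the orientations on the cells inherited from \Cref{prop:C_BN_1X-isom-flow-cat}.
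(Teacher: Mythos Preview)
Your argument is correct, but the paper takes a different and somewhat cleaner route. Instead of computing $\pi^0(\SS\vee\SS)$ directly and then identifying the wedge projections with the umkehr maps, the paper observes that since both canonical cells sit in stable dimension $0$ the spectrum is stably $0$-dimensional, so the co-Hurewicz map $h':\pi^0(\X_\BN(D))\to H^0_\BN(D)$ is an isomorphism by the Hopf classification theorem; then \Cref{prop:coHur-ca} (already proved) gives $h'([p_\ca])=\llbracket\ca\rrbracket$ and $h'([p_\cb])=\llbracket\cb\rrbracket$, and \Cref{prop:ab-gen} says these freely generate $H^0_\BN(D)$.

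The advantage of the paper's route is that the delicate identification step is already packaged in \Cref{prop:coHur-ca}, whose proof is a one-line degree computation on the level of cells. Your route is more hands-on, but your final paragraph needs a small correction: the handle cancellations of \Cref{lem:cube-contract} and the proof of \Cref{cor:X_BN-structure} are carried out on $\C_{XY}(D)$, not on $\C_\BN(D)$; the canonical cells are isolated in $\C_{XY}$ but not obviously so after the handle slides producing $\C_\BN$. So to match the umkehr map $p_\ca$ on $\X_\BN(D)$ with the wedge projection, you must also pass through the homotopy equivalence $f:\X_{XY}(D)\to\X_\BN(D)$ of \Cref{prop:XY-to-1X} and check that $p_\ca\circ f$ agrees with the collapse map on $\X_{XY}(D)$. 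This is true (it follows from the commutative diagram in \Cref{prop:XY-to-1X} on cellular chains, together with the fact that the cells in question are top-dimensional), but it is slightly more bookkeeping than you indicate; the paper's use of $h'$ sidesteps it entirely.
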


\begin{proof}
    The two canonical cells are concentrated in dimension $0$, so $h'$ is an isomorphism in grading $0$. 
\end{proof}

The following is a special case of the problem considered in \Cref{rem:cobordism-degree}.

\begin{proposition} 
\label{prop:canon-coh-cobordism}
    Let $S$ be a generic connected cobordism between knot diagrams $D, D'$. Let
    \[
        f_S: \X_\BN(D) \rightarrow \X_\BN(D')
    \]
    be the cobordism map of \Cref{prop:cobordism}. Let $\sigma_\ca, \sigma'_\ca$ be the canonical cells of $\X_\BN(D), \X_\BN(D')$ corresponding to the given orientations of $D, D'$, and $\sigma_\cb, \sigma'_\cb$ be the one corresponding to the reversed orientations respectively. For any $\sigma \in \{\sigma_\ca, \sigma_\cb\}$ and $\sigma' \in \{\sigma'_\ca, \sigma'_\cb\}$, consider the following composition
    \[
        g: \SS 
            \xhookrightarrow{i_\sigma} 
        \X_\BN(D) 
            \xrightarrow{\ f_S\ } 
        \X_\BN(D') 
            \xrightarrowdbl{p'_\sigma} 
        \SS
    \]
    where $i_\sigma, p'_\sigma$ are the obvious inclusion and the projection maps. The composition $g$ has degree $\pm 1$ if either $(\sigma, \sigma') = (\sigma_\ca, \sigma'_\ca)$ or $(\sigma_\cb, \sigma'_\cb)$, and degree $0$ otherwise.
\end{proposition}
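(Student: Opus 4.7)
The plan is to reduce the statement to an algebraic computation in Bar-Natan cohomology via the co-Hurewicz map of \Cref{prop:coHur-ca}, after which the result follows from a standard tracking argument in the diagonalized Frobenius algebra.

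First, since $D$ and $D'$ are knots, \Cref{prop:alpha-homol-gr} places both canonical cells $\sigma_\ca, \sigma_\cb$ of $\X_\BN(D)$ and of $\X_\BN(D')$ in (stable) dimension $0$. By \Cref{cor:X_BN-structure} each spectrum is stably equivalent to $\SS \vee \SS$, and the co-Hurewicz map
\[
    h': \pi^0(\X_\BN(D)) \xrightarrow{\isom} H^0_\BN(D)
\]
is an isomorphism of rank-$2$ free abelian groups sending $[p_\ca], [p_\cb]$ to $\llbracket \ca \rrbracket, \llbracket \cb \rrbracket$. By construction of the umkehr maps, $p_\ca \circ i_\ca = p_\cb \circ i_\cb = \id_\SS$ while $p_\ca \circ i_\cb = p_\cb \circ i_\ca = 0$. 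Hence, if we expand $f_S^*([p'_{\sigma'}]) = \lambda\, [p_\ca(D)] + \mu\, [p_\cb(D)]$, the degree of $g$ equals $\lambda$ when $\sigma = \sigma_\ca$ and $\mu$ when $\sigma = \sigma_\cb$.

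Next, I would use the naturality of $h'$ together with \Cref{prop:cobordism} to identify $\lambda, \mu$ with the coefficients of $F_{\bar S}(\llbracket \sigma' \rrbracket)$ in the basis $\{\llbracket \ca(D) \rrbracket, \llbracket \cb(D) \rrbracket\}$, where $F_{\bar S}: H_\BN(D') \to H_\BN(D)$ is the standard Bar-Natan cobordism map associated to $\bar S$. The remaining task is purely algebraic: for a connected cobordism $\bar S$ between knots, show that $F_{\bar S}(\llbracket \ca(D') \rrbracket) = \pm \llbracket \ca(D) \rrbracket$ and $F_{\bar S}(\llbracket \cb(D') \rrbracket) = \pm \llbracket \cb(D) \rrbracket$, with no off-diagonal contribution. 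I would prove this by choosing a decomposition of $\bar S$ into elementary cobordisms (births, saddles, deaths) and tracking the action on $\ca$/$\cb$-labels via \eqref{eq:ab-operations}, the birth map $1 \mapsto \ca - \cb$, and the counit $\epsilon(\ca) = \epsilon(\cb) = 1$ (specializing $c = 1$ for Bar-Natan). Since multiplication kills mixed tensors ($m(\ca \otimes \cb) = 0$) and comultiplication is diagonal, the pure-$\ca$ and pure-$\cb$ subspaces are preserved throughout any sequence of moves, so no off-diagonal contribution can arise; the $\pm 1$ signs collect from the formula $m(\cb \otimes \cb) = -\cb$.

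The main obstacle I anticipate is handling intermediate multi-component stages cleanly. After a birth one obtains a sum of pure-$\ca$ and pure-$\cb$ tensor states via $1 \mapsto \ca - \cb$, and one must argue that starting from $\ca(D')$ the only contribution surviving on the knot diagram $D$ is $\pm \ca(D)$, with the cross term in $\cb(D)$ vanishing whenever $\bar S$ is connected. This is essentially Rasmussen's calculation \cite{Rasmussen:2010} for Lee homology specialized to $c = 1$; the same manipulation works verbatim for Bar-Natan's theory and yields the claimed degree count.
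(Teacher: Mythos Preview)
Your argument is correct and follows the same route as the paper: reduce the degree computation to Bar-Natan cohomology via the co-Hurewicz isomorphism in degree $0$, then invoke the known behavior of the cobordism map $F_{\bar S}$ on canonical classes. The only difference is that where you sketch the Rasmussen-style tracking of $\ca/\cb$-labels through elementary cobordisms, the paper simply cites \cite[Proposition~3.4]{Sano:2020-b} (with $c=1$) for that algebraic step.
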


\begin{proof}
    From the naturality of the co-Hurewicz map, we obtain a commutative diagram:
    \begin{equation*}
        % https://tikzcd.yichuanshen.de/#N4Igdg9gJgpgziAXAbVABwnAlgFyxMJZARgBoAGAXVJADcBDAGwFcYkQAdDtLAPQGsAFFwAaAfS4AhAHKCAMgHIAlEpABfUuky58hFACYK1Ok1bsuPAcI7ipsxSvWaQGbHgJEyxYwxZtEIAASAhIcMvLKTlpuukSG3jS+ZgHB-KHhDlEu2u56yADMRomm-pwcAFrlWa46HiiFCSZ+5hVVGtG1eeRFTcllldU5sSjdjUmlXANqxjBQAObwRKAAZgBOEAC2SABsNDgQSADsNIxYYKVQEMwARoxsNAAWMPRQ7JDnIHv0WIxvBGztEBrTZIAAsewOiAArCczhcrrd7iAni8-h8vj80QDnMCtohwSB9kgyL1SmhQgBjegKXgAKk+IEY9GuMEYAAUhnUQKssHMHjgsrikDDCZDCozmayOTEuTy+QLis0AuSuFSafTAULECSiYhuhKWezOXpubz+QzxuxlnTBes8eLdYZSVa6QACAC8roA4mJ6SdJUaZSa5ebNXakE7dcdnQEuNg5ht6JT6Db-YbpZ12CGBWGQXqITtFX047zE8nUwapcas2aczjw-nRRGi6UHgpbXmdWKW+w2+pKGogA
        \begin{tikzcd}
            \ZZ \arrow[d, equal] & \pi^0(\X_\BN(D)) \arrow[l, "i_\sigma*"'] \arrow[d, "h'"] & \pi^0(\X_\BN(D')) \arrow[l, "{f_S}^*"'] \arrow[d, "h'"] & \ZZ \arrow[d, equal] \arrow[l, "p_{\sigma'}^*"'] \\
            \ZZ                                & H^0_\BN(D) \arrow[l, "i_\sigma*"']                       & H^0_\BN(D') \arrow[l, "{f_S}^*"']                 & \ZZ \arrow[l, "p_{\sigma'}^*"']                               
        \end{tikzcd}
    \end{equation*}
    The top row is exactly $\deg(g)$. From \Cref{prop:cobordism,prop:coHur-ca} the bottom row is given by the pairing $\abrac{ {f_{\bar{S}}}_*\llbracket \ca' \rrbracket, \llbracket \ca \rrbracket}$. 
    The claimed result follows from \cite[Proposition 3.4]{Sano:2020-b} with $c = 1$. 
\end{proof}

\begin{remark}
    \Cref{prop:canon-coh-cobordism} can be generalized as in the form of \cite[Proposition 4.1]{Rasmussen:2010}, so that $S$ is a link cobordism, not necessarily connected but without closed components.
\end{remark}

Thus we obtain a refinement of \cite[Proposition 2.4]{LS:2014_rasmussen}:

\begin{corollary}
    If $S$ is a generic connected cobordism between knot diagrams $D, D'$, the cobordism map 
    \[
        f_S: \X_\BN(D) \rightarrow \X_\BN(D')
    \]
    is a stable homotopy equivalence. \qed
\end{corollary}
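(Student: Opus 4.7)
The goal is to show $f_S$ is invertible in the stable homotopy category. Since $D, D'$ are knot diagrams (one component each), Proposition \ref{prop:alpha-homol-gr} forces every canonical class to sit in homological grading $0$, so by Theorem \ref{thm:2} (restated in Section 3.3) both spectra split as
\[
    \X_\BN(D) \htpy \sigma_\ca \vee \sigma_\cb \htpy \SS \vee \SS, \qquad
    \X_\BN(D') \htpy \sigma'_\ca \vee \sigma'_\cb \htpy \SS \vee \SS.
\]
The first step is simply to fix these identifications, along with the standard inclusions $i_\sigma$ and projections $p'_{\sigma'}$ that realize the wedge decomposition.

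Under these identifications, the morphism $f_S: \SS \vee \SS \to \SS \vee \SS$ is determined, in the stable homotopy category, by the $2 \times 2$ integer matrix of degrees
\[
    M(f_S) \;=\;
    \begin{pmatrix}
        \deg(p'_\ca f_S i_\ca) & \deg(p'_\ca f_S i_\cb) \\
        \deg(p'_\cb f_S i_\ca) & \deg(p'_\cb f_S i_\cb)
    \end{pmatrix}.
\]
The heart of the proof is to read off these four entries from Proposition \ref{prop:canon-coh-cobordism}: the diagonal entries are $\pm 1$ (the $(\ca, \ca)$ and $(\cb, \cb)$ cases), and the off-diagonal entries vanish (the $(\ca, \cb)$ and $(\cb, \ca)$ cases).

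Therefore $M(f_S)$ is diagonal with $\pm 1$ diagonal entries, hence $\det M(f_S) = \pm 1$ and $M(f_S) \in GL_2(\ZZ)$. Since a self-map of $\SS \vee \SS$ in the stable homotopy category is an equivalence if and only if its matrix of degrees is invertible over $\ZZ$, this exhibits $f_S$ as a stable homotopy equivalence. No real obstacle is expected: the entire argument is a one-line consequence of the structure theorem (Theorem \ref{thm:2}) together with Proposition \ref{prop:canon-coh-cobordism}; the only thing to verify carefully is that the identifications used to form the matrix $M(f_S)$ are the same ones implicit in Proposition \ref{prop:canon-coh-cobordism}, which is immediate from Definition \ref{def:coho-alpha}.
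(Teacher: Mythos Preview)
Your argument is correct and is precisely the one the paper has in mind: the corollary is marked \qed because it follows immediately from Proposition~\ref{prop:canon-coh-cobordism} together with the wedge decomposition $\X_\BN(D)\htpy\SS\vee\SS$ for knots, exactly as you have spelled out via the $2\times 2$ degree matrix.
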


\begin{corollary} 
\label{cor:canon-invariant}
    For a knot diagram $D$, the canonical cohomotopy classes $[p_\ca], [p_\cb]$ are invariant (up to sign) under connected cobordisms. \qed
\end{corollary}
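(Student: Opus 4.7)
The plan is to exploit the fact that for a knot, both canonical cells live in stable dimension $0$, so that $\X_\BN(D)$ is stably equivalent to a wedge of two spheres, and then to read off the action of $f_S^*$ on $\pi^0$ entirely from \Cref{prop:canon-coh-cobordism}.

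First I would note that by \Cref{prop:alpha-homol-gr} applied with $l = 1$, both canonical cells $\sigma_\ca, \sigma_\cb$ of $\X_\BN(D)$ sit in homological grading $0$, so by \Cref{thm:2} there is a stable equivalence
\[
    \X_\BN(D) \htpy \sigma_\ca \vee \sigma_\cb \htpy \SS \vee \SS,
\]
and similarly for $D'$. In particular $\pi^0(\X_\BN(D)) \isom \ZZ\{[p_\ca], [p_\cb]\}$, and by construction the cohomotopy class $[p_\sigma]$ is dual to the inclusion class $[i_\sigma] \in \pi_0(\X_\BN(D))$ under the pairing given by composition and taking degree; that is, the degree of $p_\sigma \circ i_{\sigma'}$ equals $\delta_{\sigma, \sigma'}$.

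Next, since $f_S$ is a stable homotopy equivalence between the wedges of two spheres, $f_S^*$ is represented in the basis $\{[p'_\ca], [p'_\cb]\}$ of $\pi^0(\X_\BN(D'))$ and $\{[p_\ca], [p_\cb]\}$ of $\pi^0(\X_\BN(D))$ by an integer matrix $M$. Write
\[
    f_S^*[p'_\ca] = a\,[p_\ca] + b\,[p_\cb], \qquad
    f_S^*[p'_\cb] = c\,[p_\ca] + d\,[p_\cb].
\]
Pairing with the inclusion classes $[i_{\sigma_\ca}], [i_{\sigma_\cb}]$ identifies each entry of $M$ with the degree of the composition
\[
    \SS \xhookrightarrow{i_\sigma} \X_\BN(D) \xrightarrow{f_S} \X_\BN(D') \xrightarrowdbl{p'_{\sigma'}} \SS
\]
for the appropriate pair $(\sigma, \sigma')$. \Cref{prop:canon-coh-cobordism} then computes these four degrees directly: the diagonal entries $a, d$ equal $\pm 1$, while the off-diagonal entries $b, c$ vanish.

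Therefore $f_S^*[p'_\ca] = \pm[p_\ca]$ and $f_S^*[p'_\cb] = \pm[p_\cb]$, which is the claimed invariance up to sign. There is essentially no hard step here, since \Cref{prop:canon-coh-cobordism} does all of the substantive work; the only thing to be careful about is the correct identification of $\{[p_\ca], [p_\cb]\}$ as the basis dual to $\{[i_{\sigma_\ca}], [i_{\sigma_\cb}]\}$, which is immediate from \Cref{def:coho-alpha}.
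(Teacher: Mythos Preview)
Your proof is correct and follows the same approach the paper intends: the corollary is marked with \qed because it is meant to be immediate from \Cref{prop:canon-coh-cobordism}, and your argument simply spells out why --- writing $f_S^*$ in the basis $\{[p_\ca],[p_\cb]\}$ and reading off each matrix entry as one of the degrees computed there. One minor remark: you need not invoke that $f_S$ is a stable homotopy equivalence to justify the matrix computation; the entries of the matrix of $f_S^*$ on $\pi^0$ are the degrees of the four compositions regardless, so \Cref{prop:canon-coh-cobordism} alone already gives the conclusion.
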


\begin{remark}
    In \cite{Sano:2020-b}, we proved that the sign ambiguity of the cobordism maps on Khovanov homology and its variants can be fixed uniformly by adjusting the signs of the elementary cobordism maps. By applying the same adjustment for the cobordism map of \Cref{prop:cobordism} (which is possible by suspending $f$ and reflecting the added coordinate), the map $g$ will have degree $1$ for $(\sigma, \sigma') = (\sigma_\ca, \sigma'_\ca)$. Also for $(\sigma, \sigma') = (\sigma_\cb, \sigma'_\cb)$ the degree can be described explicitly as $(-1)^j$ with
    \[
        j = \frac{\delta w(D, D') - \delta r(D, D') - \chi(S)}{2},
    \]
    where $\delta w(D, D')$ is difference of the writhes, $\delta r(D, D')$ is the difference of the number of Seifert circles, and $\chi(S)$ is the Euler number of $S$. In particular, if we restrict to transverse knot diagrams and transverse Markov moves, we always have $j = 0$. Thus the canonical cohomotopy classes are also invariants (without sign indeterminacy) of transverse knots. 
\end{remark}

\begin{remark}
    Dually, we can define the \textit{canonical homotopy classes} 
    \[
        [i_\ca(D, o)] \in \pi_k(\X_\BN(D))
    \]
    from the inclusion maps
    \[
        i_\ca(D, o): \SS^k \hookrightarrow \X_\BN(D)
    \]
    corresponding to the canonical cells $\sigma_\ca(D, o)$ of $D$. Results dual to \Cref{prop:coHur-ca} and \Cref{cor:canon-invariant} can be proved similarly. Moreover, one can prove that the canonical cohomotopy classes and the canonical homotopy classes are dual under the $S$-duality map
    \[
        \mu: \X_\BN(D) \smash \X_\BN(m(D)) \rightarrow \SS
    \]
    of \Cref{cor:disj-union-and-mirror}. 
\end{remark}

    \section{Future prospects} \label{sec:future}

\subsection{Higher dimensional moduli spaces}
\label{subsec:quantum-filt}

To prove \Cref{conj:1} it would be necessary to analyze how the higher dimensional moduli spaces are produced by the handle slides. The following lemma generalizes \Cref{lem:0dim-chain}. 

\begin{lemma}
\label{lem:k-dim-chain}
    Let $D$ be a resolution configuration. For any pair of objects $\x, \y$ of $\bar{\C}_\BN(D)$ with $|\x| = |\y| + k$, the $(k - 1)$-dimensional moduli space $\M_\BN(\x, \y)$ is given by the following: consider all chains in $\bar{\C}_{XY}(D)$ of the form
    \[
        \x \dashrightarrow 
        \x_1 \rightarrow \y_1
        \dashrightarrow 
        \x_2 \rightarrow \y_2
        \dashrightarrow \cdots \dashrightarrow 
        \x_m \rightarrow \y_m
        \dashrightarrow \y.
    \]
    For each such chain, take $2^{(l_1 - 1) + (l_2 - 1) + \cdots + (l_{m - 1} - 1)}$ copies of
    \[
        \M(\x_m, \y_m) \times I \times \cdots \times I \times \M(\x_2, \y_2) \times I \times \M(\x_1, \y_1).
    \]
    Here, $l_i\ (1 \leq i \leq m - 1)$ is the length of the horizontal arrow $\y_i \dashrightarrow \x_{i + 1}$. Then $\M_\BN(\x, \y)$ is the disjoint union of all such manifolds. 
\end{lemma}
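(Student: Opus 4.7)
The plan is to prove this by induction on the total number of unit handle slides used in the construction of $\bar{\C}_\BN(D)$ from $\bar{\C}_{XY}(D)$, extending the argument underlying \Cref{lem:0dim-chain}. Fix an ordering of the unit slides that make up the cubic handle slides of \Cref{subsec:basis-change}, and write $\bar{\C}_N$ for the framed flow category after the first $N$ slides, so that $\bar{\C}_0 = \bar{\C}_{XY}(D)$ and the fully slid category equals $\bar{\C}_\BN(D)$. The $(N+1)$-st slide is a unit horizontal move $z \dashrightarrow_1 z'$ in the $v$-direction, and \Cref{prop:handle-slide} gives
\[
    \M_{N+1}(\x, \y) = \M_N(\x, \y)\ \sqcup\ \bigl(\M_N(z', \y) \times I \times \M_N(\x, z)\bigr),
\]
together with analogous modifications of $\M_{N+1}(\x, z')$ and $\M_{N+1}(z, \y)$.

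Inductively assume the formula of the lemma for $\M_N(\x, \y)$, restricted to chains whose internal horizontal arrows factor through only those unit slides already performed. Expanding the two factors $\M_N(z', \y)$ and $\M_N(\x, z)$ via the induction hypothesis and inserting the new interval through the middle yields chains from $\x$ to $\y$ in which the unit slide $z \dashrightarrow_1 z'$ contributes one extra $I$ factor. Depending on whether the adjacent sub-chains end in a vertical arrow or in a horizontal arrow, this extra $I$ either serves as a brand-new internal horizontal arrow, or fuses with an adjacent horizontal arrow on one or both sides to enlarge its length while still leaving exactly one $I$ factor per internal horizontal arrow in the final description. A dimension count confirms the output: the asserted product has dimension $\sum_{i=1}^m(k_i - 1) + (m-1) = k - 1$, where $k_i = |\x_i| - |\y_i|$ satisfies $\sum_i k_i = k$, consistent with $\M_\BN(\x, \y)$ being $(k-1)$-dimensional.

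The main obstacle will be verifying the multiplicity $2^{(l_1 - 1) + \cdots + (l_{m-1} - 1)}$. Heuristically, each internal horizontal arrow of length $l$ is realized by the slide process in $2^{l-1}$ distinct ways, corresponding to the different parenthesizations of the unit sub-slides that compose it as we iterate \Cref{prop:handle-slide}. One natural strategy is first to prove that the iterated slide formula is independent of the chosen slide ordering, up to natural diffeomorphism of moduli spaces, by checking that disjoint pairs of slides commute and analyzing how adjacent slides within a common cubic handle slide interact. Then, in a particularly symmetric ordering that performs the slides of each cubic handle slide as a group, one can read off the count of distinct product components per chain directly from the combinatorics of the ambient sub-cube $\{0, 1\}^l$ and verify the identity yielding $2^{l-1}$. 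The outer horizontal arrows $\x \dashrightarrow \x_1$ and $\y_m \dashrightarrow \y$ do not contribute to this multiplicity because they are flanked on only one side by a vertical arrow, so their sub-slides get fully absorbed into the descriptions of $\M(\x_1, \y_1)$ and $\M(\x_m, \y_m)$ respectively.
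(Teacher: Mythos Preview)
Your approach is essentially the paper's: both proceed by iterating the handle-slide formula of \Cref{prop:handle-slide}, and the paper's proof is comparably sketchy (it treats only the case of a single internal horizontal arrow and points to \Cref{fig:handle-slide-high-dim} for $l=2$). Your dimension count, your reduction of the outer horizontal arrows, and your identification of the multiplicity as the crux are all correct.

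However, your heuristic for the multiplicity is wrong. The number of ``parenthesizations'' of $l$ unit sub-slides is a Catalan number, not $2^{l-1}$, so that cannot be the source. The correct mechanism, which is exactly what the paper describes and what your fallback ``symmetric ordering plus cube combinatorics'' strategy would eventually uncover, is the following. The internal horizontal arrow $\y_i \dashrightarrow_{l} \x_{i+1}$ spans an $l$-dimensional subcube of the label cube at the relevant state. Perform the cubic handle slide direction by direction. Through the first $l-1$ coordinate directions, the incoming piece $\M(\x_i, \y_i)$ (attached at the $\bar 0$ corner) and the outgoing piece $\M(\x_{i+1}, \y_{i+1})$ (attached at the $\bar 1$ corner) are each copied along the subcube but never land at the same vertex. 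After those $l-1$ batches there are $2^{l-1}$ pairs of copies sitting across the remaining coordinate hyperplane. The final batch of $2^{l-1}$ parallel unit slides in the last direction connects each pair, and each connection contributes one copy of the product with the inserted interval. That is the origin of the factor $2^{l-1}$, and it multiplies over the $m-1$ internal horizontal arrows. Once you adopt this direction-by-direction picture, the order-independence detour you propose is unnecessary.
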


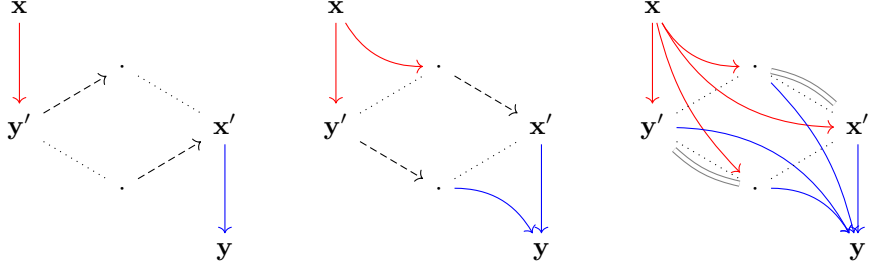
\begin{figure}
    \centering
    \resizebox{\textwidth}{!}{
        % https://tikzcd.yichuanshen.de/#N4Igdg9gJgpgziAXAbVABwnAlgFyxMJZABgBpiBdUkANwEMAbAVxiRAA8QBfU9TXfIRRkATFVqMWbdgHJuvEBmx4CREaTHV6zVohABPOTz7LBa0gBZx2qXv3yTA1SgCMpF9cm6QAHR8BjKAgcB0V+FSFkNwBmTx02P0Dg0KUnSOjyONsOFPCzFAzNCXi9WVzTZ2QLdyzvRKCQ4zCKyIBWDVq2Q3K0omrYrS8EgIaeiKJ2q0GSgzH85AA2TOnszibU8ZQloptvMvW8yoB2GpW6keSDlqIADg6zrqMFDfmTgeLs+svnw8i7qY+3nsXHEMCgAHN4ERQAAzABOEAAtkgyCAcBAkC4mvCkUh1GiMYhotiEcjEG4CUhqiAGFgwN4oHQ4AALMGhHFkinopDtGl0hlM1lQdmkqnUbmIfG0+lsRkstkk3GIXkSqX82WChUKDlIJaUxBHRVkk76u58mV6Bo4NnUVl0YV6SAyo1IM0SgCc1GlAvlwpdBvFhJcqO9Gt9IqVwcDSE95oZwWtwttMHtbCdrH9sYlLix2tF5Px2eJecjGX1LmpoctCZtIDtDvABAzJc51OzvKrICttfraabEc5Zezes73aTdZTDfTA8xevbXvV1ZwiZAydTjv7-pcKqDhpbuujiDNACMYGAHQBaaLELeo7MU0-nqnureFoPUx+X69bue76ifpBv33ckhz-EAAKJG9gJcN9MV5CCrxDRcQDgCBaT9aDYPJPUEKA2F8wrQ8XBNCC8JAHUQKIkdkKCJhjwYVh-zPL9UV7DcZXFOgsAYPtnWgtsg15NjGw4uNZQgOiGNXNEuJ49jGPA5ikEQkEuCAA
\begin{tikzcd}[row sep={0.8cm,between origins}]
\x \arrow[dd, red]                             &                          &               & \x \arrow[dd, red] \arrow[rd, bend right, red]               &                                                            &               & \x \arrow[dd, red] \arrow[rd, bend right, red] \arrow[rrdd, bend right, red] \arrow[rddd, bend right=15, red]                                          &                                                                                                       &               \\
                                         & \cdot \arrow[rd, no head, dotted] &               &                                                   & \cdot \arrow[rd, dashed]                                   &               &                                                                                                                                & \cdot \arrow[rd, no head, dotted] \arrow[rddd, bend left=15, blue] \arrow[rd, equal, bend left=15, gray] &               \\
\y' \arrow[ru, dashed] \arrow[rd, no head, dotted] &                          & \x'  \arrow[dd, blue] & \y' \arrow[ru, no head, dotted] \arrow[rd, dashed] &                                                            & \x'  \arrow[dd, blue] & \y' \arrow[ru, no head, dotted] \arrow[rd, no head, dotted] \arrow[rrdd, bend left=30, blue] \arrow[rd, equal, bend right=15, gray] &                                                                                                       & \x'  \arrow[dd, blue] \\
                                         & \cdot \arrow[ru, dashed] &               &                                                   & \cdot \arrow[ru, no head, dotted] \arrow[rd, bend left, blue] &               &                                                                                                                                & \cdot \arrow[ru, no head, dotted] \arrow[rd, bend left, blue]                                               &               \\
                                         &                          & \y              &                                                   &                                                            & \y              &                                                                                                                                &                                                                                                       & \y             
\end{tikzcd}
    }
    \caption{Production of intervals}
    \label{fig:handle-slide-high-dim}
\end{figure}

\begin{proof}
    Again this follow from \Cref{prop:handle-slide}. Take one chain of the above form. The handle slides at the two ends $\x \dashrightarrow \x_1$ and $\y_m \dashrightarrow \y$ merely produces a copy of the intermediate moduli space, so we assume $\x = \x_1$ and $\y_m = \y$. For simplicity we also consider the case $m = 1$ where the chain is given by  
    \[
        \x \rightarrow \y'
        \dashrightarrow_l 
        \x' \rightarrow \y.
    \]
    Note that horizontal paths from $\y'$ to $\x'$ form an $l$-dimensional subcube. Up to the $(l - 1)$-th step of the parallel handle slides for the relevant directions, the moduli spaces $\M(\x, \y')$ and $\M(\x', \y)$ are copied along the vertices, while they do not meet at any of the vertices. Finally at the $l$-th step, the $2^{l - 1}$ pairs each connect to produce a moduli space
    \[
        \M(\x', \y) \times I \times \M(\x, \y').
    \]
    \Cref{fig:handle-slide-high-dim} depicts the case for $l = 2$. 
\end{proof}

The following observation might be useful in the induction step of eliminating higher dimensional moduli spaces.

\begin{proposition}
\label{prop:q-dim-increase-closed}
    Let $\C$ be a flow category equipped with a function 
    \[
        q: \Ob(\C) \rightarrow \ZZ.
    \]
    Suppose there exists some $k > 0$ such that for any object $x, y$ in $\C$ with $|x| - |y| \leq k$, the moduli space $\M(x, y)$ is non-empty if and only if $q(x) \geq q(y)$. Then for any object $x, y$ in $\C$ with $|x| - |y| = k + 1$ and $q(x) < q(y)$, the moduli space $\M(x, y)$ is a closed $k$-dimensional manifold. 
\end{proposition}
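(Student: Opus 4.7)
The plan is to show that every boundary stratum of $\M(x, y)$ is forced to be empty, which together with compactness from the flow category axioms gives the claim. First I would recall that $\M(x, y)$ is by assumption a compact $\abrac{k}$-manifold of dimension $|x| - |y| - 1 = k$, and that its corner structure is governed by the composition axiom: for each $1 \leq i \leq k$, we have
\[
    \partial_i \M(x, y) \isom \bigsqcup_{z:\ |z| - |y| = i} \M(z, y) \times \M(x, z).
\]
So to prove $\M(x,y)$ is closed it suffices to verify that each summand on the right is empty.

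Next I would fix $i$ with $1 \leq i \leq k$ and an intermediate $z$ with $|z| - |y| = i$, so that $|x| - |z| = k + 1 - i$. Both differences $|z|-|y|$ and $|x|-|z|$ lie in $\{1, \ldots, k\}$, so the inductive-style hypothesis applies to both pairs $(z, y)$ and $(x, z)$. Thus $\M(z, y)$ is non-empty iff $q(z) \geq q(y)$, and $\M(x, z)$ is non-empty iff $q(x) \geq q(z)$. If both were non-empty simultaneously, then chaining the inequalities would give $q(x) \geq q(z) \geq q(y)$, hence $q(x) \geq q(y)$, contradicting the assumption $q(x) < q(y)$. Therefore at least one factor is empty, so $\M(z, y) \times \M(x, z) = \varnothing$.

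Taking the disjoint union over all such $z$ yields $\partial_i \M(x, y) = \varnothing$ for every $1 \leq i \leq k$, so the full boundary $\partial \M(x, y) = \bigcup_i \partial_i \M(x, y)$ is empty. Combined with compactness and the dimension count, $\M(x, y)$ is a closed $k$-manifold, as claimed. There is no real obstacle here; the argument is essentially a one-line transitivity of the $q$-inequality combined with the corner decomposition, and the only point to double-check is that the ranges $1 \leq |z|-|y| \leq k$ and $1 \leq |x|-|z| \leq k$ are exactly what the hypothesis covers, which they are since $i$ ranges over $\{1, \ldots, k\}$ and $|x|-|y| = k+1$.
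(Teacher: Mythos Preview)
Your proof is correct and follows essentially the same approach as the paper's: both use the boundary decomposition of $\M(x,y)$ to produce an intermediate object $z$ with $q(x) \geq q(z) \geq q(y)$, contradicting $q(x) < q(y)$. Your version is simply more explicit about verifying that the grading differences $|z|-|y|$ and $|x|-|z|$ fall within the range covered by the hypothesis.
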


\begin{proof}
    Take any $x, y$ in $\C$ with $|x| - |y| = k + 1$ and $q(x) < q(y)$. If $\partial \M(x, y)$ is non-empty, then there must be an object $z$ between $x$ and $y$ such that 
    \[
        \varnothing \neq \M(z, y) \times \M(x, z) \hookrightarrow \partial \M(x, y).
    \]
    However from the hypothesis, we must have $q(x) \geq q(z) \geq q(y)$ which is a contradiction.
\end{proof}

One possible strategy for proving \Cref{conj:1} is to perform extended Whitney tricks inductively on $\C = \C_\BN(D)$  
\[
    \C \rightarrow \C^{(0)} \rightarrow \C^{(1)} \rightarrow \C^{(2)} \rightarrow \cdots
\]
so that $\C^{(k)}$ contains no quantum grading increasing moduli spaces of dimension $\leq k$ and the $(k + 1)$-dimensional ones (closed manifolds) are framed null-cobordant. The initial framing on $\C_{XY}(D)$ must be given coherently, or maybe isotoped during the process, so that the induction actually works. 

\subsection{Steenrod squares}

In \cite{LS:2014_steenrod}, Lipshitz and Sarkar gave an explicit algorithm to comute the first and second Steenrod squares of the Khovanov homotopy type. Seed implemented the algorithm and showed by direct computations that Khovanov homotopy type is a strictly stronger invariant than the Khovanov homology \cite{Seed:2012}. Lately Lobb, Orson and Sch{\"u}tz \cite{LOS:2017-algorithm} gave an algorithm to compute the second Steenrod square on the spectrum obtained from a general framed flow category. 

Here we show that the latter algorithm can be applied to our category $\C_\BN$. Although we know from \Cref{thm:2} that all Steenrod squares are trivial on $\X_\BN$, they might act non-trivially on a subspectrum (or a quotient spectrum) of $\X_\BN$. In particular if \Cref{conj:1} is true, then we might get non-trivial results by observing how the operations act on the filtration.

\begin{definition}
\label{def:frame-assign}
    Let $s$ be a sign assignment on the $n$-cube $K(n)$. A $2$-cochain $f \in C^2(K(n); \FF_2)$ is called a \textit{frame assignment compatible with} $s$ if for any 3-face $u >_3 v$ of $K(n)$,
    \[
        \delta f(e_{u, v}) = \sum_{v <_1 w <_2 u} s(e_{w, v}).
    \]
    We call such pair $(s, f)$ a \textit{frame assignment pair} on $K(n)$.
\end{definition}

In \cite{LS:2014_steenrod}, together with the \textit{standard sign assignment} $s$, the \textit{standard frame assignment} $f$ is defined as
\[
    f(e) = (v_1 + \cdots + v_{i - 1})(v_{i + 1} + \cdots + v_{j - 1}) \pmod{2}
\]
for each $2$-face $e = (v_1, \ldots, v_{i - 1}, \star, v_{i + 1}, \ldots, v_{j - 1}, \star, v_{j + 1}, \ldots, v_n)$. It is proved in \cite[Lemma 2.1]{LS:2014_steenrod} that this $(s, f)$ is a framing assignment pair in the sense of \Cref{def:frame-assign}. Moreover, it is proved in \cite[Lemma 3.5]{LS:2014_steenrod} that the $n$-dimensional cube flow category can be framed so that the $0$-dimensional moduli spaces are framed according to $s$, and the $1$-dimensional moduli spaces are framed according to $f$. In fact, the proof only uses the compatibility condition of \Cref{def:frame-assign}, so $(s, f)$ need not be standard ones. 

\begin{proposition}
\label{prop:frame-assign-exist}
    For any sign assignment $s$ on the $n$-cube $K(n)$, there exists a frame assignment $f$ compatible with $s$. 
\end{proposition}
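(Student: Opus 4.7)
The plan is to reformulate the existence of $f$ as a cocycle-to-coboundary question on the cube. Define the 3-cochain $g_s \in C^3(K(n);\FF_2)$ by
\[
g_s(e_{u,v}) = \sum_{v <_1 w <_2 u} s(e_{w,v})
\]
for every 3-face $e_{u,v}$ with $u >_3 v$. A frame assignment $f$ compatible with $s$ is precisely a 2-cochain with $\delta f = g_s$, so the proposition is equivalent to the vanishing of the class $[g_s] \in H^3(K(n);\FF_2)$. Since $K(n) = [0,1]^n$ is contractible, $H^3(K(n);\FF_2) = 0$, and it is enough to verify that $g_s$ is a cocycle, i.e.\ $\delta g_s = 0$.

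To check $\delta g_s = 0$ on a 4-face $e_{u,v}$, let $I$ denote the set of four coordinates in which $u$ and $v$ differ, and split
\[
\delta g_s(e_{u,v}) = \sum_{i \in I} g_s(e_{u - e_i, v}) + \sum_{i \in I} g_s(e_{u, v + e_i}) \pmod 2.
\]
Expanding the definition of $g_s$, the first sum reduces modulo $2$ to $\sum_{j \in I} s(e_{v + e_j, v})$, since each index $j$ occurs in $|I| - 1 = 3$ summands. The second sum becomes $\sum_{i \neq j} s(e_{v+e_i+e_j, v+e_i})$, a sum of $s$-values on the two ``far'' edges of each 2-face $e_{v+e_i+e_j, v}$ with $i \neq j$.

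The key step is to apply the hypothesis $\delta s = 1$ to each such 2-face: the four boundary edges sum to $1 \pmod 2$, so the two far edges contribute $1 + s(e_{v+e_i,v}) + s(e_{v+e_j,v})$ to the second sum. Summing over the $\binom{4}{2} = 6$ unordered pairs, the constants contribute $6 \equiv 0$, and the near-edge terms add up to $\sum_{j \in I} s(e_{v+e_j,v})$ modulo $2$ because each $j \in I$ lies in $3$ pairs. Adding this to the first sum gives $\delta g_s(e_{u,v}) = 2 \sum_j s(e_{v+e_j,v}) \equiv 0 \pmod 2$, as required.

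The main obstacle is the parity accounting in the final step: everything cancels only because $|I|-1 = 3$ and $\binom{|I|}{2} = 6$ have exactly the right parities, so one must be careful to track the multiplicity with which each near-edge $s(e_{v+e_j,v})$ appears after the 2-face substitutions. Once $\delta g_s = 0$ is verified, contractibility of $K(n)$ produces the required primitive $f \in C^2(K(n);\FF_2)$ with $\delta f = g_s$, completing the proof.
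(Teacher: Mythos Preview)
Your proof is correct and matches the paper's first observation: the compatibility condition $\delta f = g_s$ has a solution because $g_s$ is a $3$-cocycle on the contractible cube $K(n)$. The paper states this in one line without verifying $\delta g_s = 0$; you have supplied that verification explicitly, and your parity bookkeeping on the $4$-face is accurate.

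Where you and the paper diverge is in the second, constructive argument the paper adds. Rather than checking $\delta g_s = 0$ directly, the paper leverages the already-known standard pair $(s_0, f_0)$ from \cite{LS:2014_steenrod}: given an arbitrary sign assignment $s$, the difference $t = s - s_0$ is a $1$-cocycle, hence $t = \delta b$ for some $0$-cochain $b$; setting $g(e_{u,w}) = b(w)$ on $2$-faces and $f = f_0 + g$ then gives the desired frame assignment. This route avoids the $4$-face computation entirely by reducing to the standard case, and it produces an explicit formula for $f$. Your direct cocycle check is more self-contained (it does not appeal to the existence of $(s_0,f_0)$), but it is non-constructive in that it only guarantees a primitive via acyclicity.
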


\begin{proof}
    This follows immediately from the fact that the right hand side of the compatibility condition is a cocycle, and that $K(n)$ is acyclic. Here we give another constructive proof. Take any sign assignment $s$, and let $s_0, f_0$ be the standard sign assignment and the standard frame assignment on $K(n)$ respectively. Put $t = s - s_0$, then $t$ is a $1$-cocycle on $K(n)$. Since $K(n)$ is acyclic, there exists a $0$-cochain $b$ such that $\delta b = t$ ($b$ can be obtained directly by integrating $t$ along the edges). Define a $2$-cochain $g$ by $g(e_{u, w}) = b(w)$ for each $u >_2 w$. Then for any $u >_3 v$,
    \[
        \delta g(e_{u, v}) 
        = 3 b(v) + \sum_{v <_1 w <_2 u} b(w)
        = \sum_{v <_1 w <_2 u} \delta b(e_{w, v}).
    \]
    With $f = f_0 + g$, we have
    \[
        \delta f(e_{u, v}) 
        = \sum_{v <_1 w <_2 u} (s_0(e_{w, v}) + \delta b(e_{w, v}))
        = \sum_{v <_1 w <_2 u} s(e_{w, v})
    \]
    as desired. 
\end{proof}

The algorithm of \cite{LOS:2017-algorithm} only requires that a flow category $\C$ is equipped with a sign assignment
\[
    s: \Mor^{0}(\C) \rightarrow \FF_2
\]
and a frame assignment
\[
    f: \Mor^{1}(\C) \rightarrow \FF_2
\]
satisfying the compatibility condition of \cite[Definition 3.3]{LOS:2017-algorithm}. Here $\Mor^{0}, \Mor^{1}$ denotes the set of all components of the $0$-, $1$-dimensional moduli spaces of $\C$ respectively. One can easily check that any frame assignment pair $(s, f)$ on $K(n)$ satisfies the required condition for the category $\C = \Cube{n}$. 

Now, recall from \Cref{subsec:framing} that the $XY$-based Bar-Natan flow category $\C_{XY}(D)$ is constructed as disjoint union of cube flow categories. From \Cref{prop:frame-assign-exist} we can take a frame assignment pair for each subcube and frame each of them so that the $0$-, $1$-dimensional moduli spaces are framed according to $(s, f)$. \cite[Section 4]{LOS:2017-algorithm} describes the effect on the frame assignment pair by handle slides, so the frame assignment pair for $\C_\BN(D) = \C_\BN(D)$ and any of its downward- or upward-closed subcategory can be made explicit. Thus we summarize:

\begin{proposition}
    For any subspectrum (resp.\ quotient spectrum) $\X'$ of $\X = \X_\BN(L)$ obtained from a downward (resp.\ upward) closed subcategory of $\C_\BN(L)$, there is an algorithm that computes the first and the second Steenrod squares
    \[
        \Sq^i: \tilde{H}^*(\X'; \FF_2) \rightarrow \tilde{H}^{*+i}(\X'; \FF_2),\quad (i = 1, 2).
    \]
\end{proposition}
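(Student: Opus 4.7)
The plan is to assemble the required data of a frame assignment pair $(s, f)$ on the framed flow category $\C_\BN(L)$ (in the sense of \cite[Definition 3.3]{LOS:2017-algorithm}), and then invoke the Lobb--Orson--Sch\"utz algorithm. First I would begin with the $XY$-based category $\C_{XY}(D)$, which by construction decomposes as a disjoint union of cube flow categories $\Cube{k_i}[d_i]$. Choose a sign assignment $s_i$ on each cube $K(k_i)$ compatible with the sign adjustment as in \Cref{subsec:framing}, and by \Cref{prop:frame-assign-exist} choose a frame assignment $f_i$ compatible with $s_i$. Reframe each cube component so that its $0$- and $1$-dimensional moduli spaces are framed according to $(s_i, f_i)$; this is possible by the proof of \cite[Lemma 3.5]{LS:2014_steenrod}, which as noted only uses the cocycle condition of \Cref{def:frame-assign}, not the standardness of the pair. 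The disjoint union yields a global frame assignment pair on $\C_{XY}(D)$ satisfying the LOS compatibility condition.

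Next I would track the effect of the cubic handle slides performed in \Cref{subsec:basis-change} to pass from $\C_{XY}(D)$ to $\C_\BN(D)$. Each handle slide modifies the $0$- and $1$-dimensional moduli spaces in a way described by \Cref{prop:handle-slide} and \Cref{lem:k-dim-chain}; the corresponding change of the sign and frame assignment is described explicitly in \cite[Section 4]{LOS:2017-algorithm}. By applying these rules inductively to the finite sequence of cubic handle slides, one obtains a frame assignment pair $(s, f)$ on $\C_\BN(D)$ which, again by the compatibility of the rewriting rules, satisfies the condition required to run the LOS algorithm. After the final grading shift by $-n^-$ this gives a frame assignment pair on $\C_\BN(L)$.

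With $(s, f)$ in hand on $\C_\BN(L)$, the restriction to a downward-closed (resp.\ upward-closed) subcategory $\C' \subset \C_\BN(L)$ is immediate: $s$ and $f$ restrict to maps on $\Mor^0(\C')$ and $\Mor^1(\C')$, and the compatibility condition is inherited because it is an equation local to $3$-fold composition chains, all of which lie in $\C'$ by closure. The associated inclusion $|\C'| \hookrightarrow |\C_\BN(L)|$ (resp.\ projection $|\C_\BN(L)| \rightarrowdbl |\C'|$) realizes the subspectrum (resp.\ quotient spectrum) $\X'$, and applying the LOS algorithm to $(\C', s, f)$ outputs the matrices representing $\Sq^1$ and $\Sq^2$ on $\tilde{H}^*(\X'; \FF_2)$.

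The main obstacle is the bookkeeping for step two: although each individual handle slide has a local effect on $(s, f)$, the construction of \Cref{subsec:basis-change} performs $r(u) \cdot 2^{r(u)-1}$ slides at each vertex $u \in \{0,1\}^n$ in a prescribed order, and one must verify that the cumulative update of the frame assignment remains a well-defined $2$-cochain satisfying the global compatibility of \cite[Definition 3.3]{LOS:2017-algorithm} at every intermediate stage, not merely at the start and end. This amounts to checking that the inductive rewriting rules of \cite[Section 4]{LOS:2017-algorithm} compose coherently along parallel slides, which I expect to follow from the cubical structure of the slide pattern but will require careful verification, and is where I would spend most of the technical effort.
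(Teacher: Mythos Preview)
Your proposal is correct and follows essentially the same route as the paper: start with a frame assignment pair on each cube component of $\C_{XY}(D)$ via \Cref{prop:frame-assign-exist}, then transport it through the handle slides using \cite[Section 4]{LOS:2017-algorithm}, and finally restrict to the given closed subcategory. The one place you over-worry is the ``main obstacle'' paragraph: the results of \cite[Section 4]{LOS:2017-algorithm} already show that a \emph{single} handle slide transforms one compatible frame assignment pair into another compatible one, so compatibility at every intermediate stage is automatic by induction and requires no further verification of coherence of the rewriting rules.
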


\subsection{Cohomotopical refinement of the $s$-invariant}

Finally we consider the possibility of cohomotopically refining the $s$-invariant. First we review the definition of $s$ given in \cite{Rasmussen:2010,MTV:2007,LS:2014_rasmussen}. The quantum filtration $\{F^j H\}$ on $H = H_\BN$ is induced from the quantum filtration $\{F^j C\}$ of $C = C_\BN$ as
\[
    F^j H = \Ima(i_*: H(F^j C) \rightarrow H(C))
\]
and the quantum grading on $H_\BN$ is defined as 
\[
    \gr_q(z) = \max\set{ j | z \in F^j H }.
\]
Let $K$ be a knot and $F$ be a field. Put
\begin{align*}
    s_{\min}(K; F) &= \min \{\ \gr_q(z) \mid z \in H_\BN(K; F) \ \}, \\ 
    s_{\max}(K; F) &= \max \{\ \gr_q(z) \mid z \in H_\BN(K; F) \ \}.
\end{align*}
Then the \textit{$s$-invariant} over $F$ is defined as
\[
    s(K; F) = \frac{s_{\min}(K; F) + s_{\max}(K; F)}{2}.
\]
Although Rasmussen's original definition uses Lee homology (originally over $\QQ$), it follows from \cite[Proposition 3.1]{MTV:2007} that when $\fchar{F} \neq 2$ the invariant can be equally defined using Bar-Natan homology. Moreover \cite[Proposition 2.6]{LS:2014_rasmussen} asserts that Bar-Natan homology allows to extend the definition of $s$ to fields $F$ of $\fchar{F} = 2$. It is known that 
\[
    s_{\max}(K; F) - s_{\min}(K; F) = 2
\]
and that the canonical classes $[\ca], [\cb] \in H_\BN(K; F)$ satisfy
\[
    \gr_q[\ca] = \gr_q[\cb] = s_{\min}(K; F).
\]
Thus we may alternatively define
\[
    s(K; F) = \gr_q[\ca] + 1 = \gr_q[\cb] + 1.
\]

Now, \textit{with the assumption that \Cref{conj:1} is true}, we define an analogous invariant $\bar{s}(K)$ using the canonical cohomotopy class of $K$. Suppose we have the quantum filtration
\[
    \{\pt\} \subset \cdots \subset F_{j - 2} \X \subset F_j \subset \cdots \subset \X.
\]
Consider the cofiltration
\[
    \X \rightarrow \cdots \rightarrow F^j \X \rightarrow F^{j + 2} \rightarrow \cdots \rightarrow \{\pt\}
\]
where $F^j \X = \X / F_{j - 2} \X$, and the arrows are the obvious projections. Then a descending filtration on $\pi^*(\X)$ is defined as 
\[
    F^j \pi^*(\X) = \Ima(p^*: \pi^*(F^j\X) \rightarrow \pi^*(\X))
\]
and the quantum grading function as
\[
    \gr_q(z) = \max \{\ j \mid z \in F^j \pi^*(\X) \ \}.
\]

\begin{definition}
\label{def:s-bar}
    (Assuming \Cref{conj:1} is true.) Define
    \[
        \bar{s}(K) = \gr_q[p_\ca] + 1
    \]
    where $[p_\ca]$ is the canonical cohomotopy class of $K$. 
\end{definition}

Equivalently, consider the following diagram in the homotopy category of CW-spectra
\begin{equation}
\label{eq:ca-factor-problem}
    \begin{tikzcd}
    \X \arrow[rr, "p_\ca"] \arrow[rd, "p"] & & \SS \\
    & F^j \X \arrow[ru, "f", dashed] & 
    \end{tikzcd}
\end{equation}
Then $\bar{s}(K)$ is given by the maximal $j + 1$ such that the above diagram has a solution $f$. We show that many of the important properties of $s$ also hold for $\bar{s}$. 

\begin{lemma}
\label{lem:bar-s-ineq}
    (Assuming \Cref{conj:1} is true.) For any field $F$, 
    \[
        \bar{s}(K) \leq s(K; F).
    \] 
\end{lemma}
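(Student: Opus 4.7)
The plan is to exploit the naturality of the co-Hurewicz map together with Conjecture~\ref{conj:1} to transport the cohomotopy-level filtration statement down to the cohomology level.

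First, I would fix the setup. By Conjecture~\ref{conj:1}, the cofiltration $F^j \X = \X / F_{j-2}\X$ satisfies $\tilde{C}^*(F^j \X) \isom F^j C_\BN(K)$, and the canonical projection $p: \X \rightarrowdbl F^j \X$ induces on reduced cochains the inclusion $F^j C_\BN \hookrightarrow C_\BN$. In particular, applying $H^0(-;F)$ and identifying $H^0(\X; F) \isom H_\BN(K; F)$ via \Cref{prop:C_BN_1X-isom}, the image of $p^*: H^0(F^j\X; F) \to H^0(\X; F)$ is exactly $F^j H_\BN(K; F)$. So the filtration on $H_\BN(K; F)$ defined spatially agrees with the algebraic one used in the definition of $s(K; F)$.

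Next I would unwind the definition. Suppose $\gr_q[p_\ca] = j$, so there is a map $f: F^j \X \to \SS$ with $f \circ p \simeq p_\ca$ in the stable homotopy category. Apply the co-Hurewicz map $h': \pi^0(-) \to H^0(-; F)$, which is natural in the spectrum, together with \Cref{prop:coHur-ca} (and reduction of coefficients from $\ZZ$ to $F$). Naturality gives $\llbracket \ca \rrbracket = h'([p_\ca]) = h'([f \circ p]) = p^*\,h'([f])$, so $\llbracket \ca \rrbracket$ lies in $\Ima(p^*) = F^j H_\BN(K; F)$. Therefore $\gr_q\llbracket \ca \rrbracket \geq j$, and taking the maximum over all such $j$ yields $\gr_q\llbracket \ca \rrbracket \geq \gr_q[p_\ca]$. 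Adding $1$ to both sides and using the formula $s(K; F) = \gr_q\llbracket \ca \rrbracket + 1$ recalled just above the lemma, we conclude $s(K; F) \geq \bar{s}(K)$.

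The main obstacle is really bookkeeping rather than substance: one must check that the co-Hurewicz map is compatible both with the cofiltration (naturality in the spectrum, which is automatic) and with the coefficient change $\ZZ \to F$, and that the identification $\tilde{C}^*(F^j \X) \isom F^j C_\BN$ provided by Conjecture~\ref{conj:1} does intertwine the spatial projection $p^*$ with the algebraic inclusion $i_*$ used to define $F^j H_\BN(K; F)$. Once these identifications are in place, the argument is just the pullback diagram above.
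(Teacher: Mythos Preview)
Your proposal is correct and is exactly the argument the paper has in mind: the paper's proof is the single line ``Immediate from \Cref{prop:coHur-ca}'', and your write-up simply unpacks that line by invoking the naturality of the co-Hurewicz map with respect to the cofiltration and the identification of filtrations coming from \Cref{conj:1}. There is no substantive difference in approach.
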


\begin{proof}
    Immediate from \Cref{prop:coHur-ca}.
\end{proof}

\begin{lemma}
\label{lem:s-bar-positive}
    (Assuming \Cref{conj:1} is true.) Suppose $K$ is a positive knot. Take a positive diagram $D$ representing $K$, and let $S$ be the Seifert surface of $K$ obtained by applying Seifert's algorithm to $D$. Then
    \[
        \bar{s}(K) = 2g(S)
    \]
    where $g(S)$ is the genus of $S$.
\end{lemma}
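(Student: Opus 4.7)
The plan is to prove the two inequalities $\bar{s}(K) \leq 2g(S)$ and $\bar{s}(K) \geq 2g(S)$ separately, adapting Rasmussen's proof of the Milnor conjecture to the cohomotopy setting.

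For the upper bound, I will apply \Cref{lem:bar-s-ineq} with $F = \QQ$, which gives $\bar{s}(K) \leq s(K;\QQ)$. For a positive knot, Seifert's algorithm on a positive diagram produces a Seifert surface realizing the slice genus, so $g(S) = g(K) = g_4(K)$ (Bennequin's inequality together with the Milnor conjecture). Combined with Rasmussen's theorem $s(K;\QQ) = 2g_4(K)$, this yields $\bar{s}(K) \leq 2g(S)$, and this part is essentially routine.

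For the lower bound, the plan is to run the cohomotopy analogue of Rasmussen's direct argument using the Seifert surface as a cobordism. Removing a small open disk from $S$ produces a connected cobordism $\Sigma: U \to K$ with $\chi(\Sigma) = -2g(S)$. By \Cref{cor:canon-invariant}, the cobordism map intertwines canonical cohomotopy classes up to sign, and by the conjectural filtered functoriality of \Cref{rem:cobordism-map-filtered} it shifts the quantum filtration by $-\chi(\Sigma) = 2g(S)$. Since $\X_\BN(U) \htpy \SS \vee \SS$ (two canonical cells at $\gr_q = \pm 1$), the class $[p_\ca(U)]$ sits at quantum grading $-1$, and tracking the filtered shift through cobordism invariance should force $\gr_q[p_\ca(K)] \geq -1 + 2g(S) = 2g(S) - 1$, which is the bound $\bar{s}(K) \geq 2g(S)$.

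The hard part will be making this filtered estimate go in the correct direction. Applying the conjectured filtered-map property to $f_\Sigma^*\colon \pi^0(\X_\BN(K)) \to \pi^0(\X_\BN(U))$ directly only reproduces the upper bound $\gr_q[p_\ca(K)] \leq 2g(S) - 1$; the reversed cobordism $\bar{\Sigma}: K \to U$ similarly gives only the weak lower bound $\gr_q[p_\ca(K)] \geq -2g(S) - 1$. To extract the tight bound, I plan to invoke the Spanier--Whitehead duality of \Cref{cor:disj-union-and-mirror} together with its conjectural filtered refinement, translating the statement about cohomotopy on $K$ into one about the canonical \emph{homotopy} class on the mirror $m(K)$. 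On the homotopy side, the cobordism map $(f_{\Sigma'})_*: \pi_0(\X_\BN(U)) \to \pi_0(\X_\BN(m(K)))$ pushes the canonical sphere of the unknot into the ascending filtration level $-1 + 2g(S) = 2g(S) - 1$, which does give a tight placement of the canonical cell; dualising back should then yield the desired cohomotopy lower bound for $K$. The careful bookkeeping of filtered degrees in each direction and the verification of the duality-plus-filtration compatibility is the step I expect to consume the most effort.
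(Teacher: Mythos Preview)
Your upper bound is fine and matches the paper. The lower bound, however, is where you go astray: you are reaching for cobordism maps, filtered functoriality, and a conjectural filtered refinement of Spanier--Whitehead duality, when none of that is needed---and indeed the filtered duality you invoke is a \emph{separate} conjecture in the paper (Conjecture~6.2), not granted by the hypothesis ``assuming \Cref{conj:1}.'' So as written your argument would not establish the lemma under its stated hypotheses.

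The key point you are missing is much more elementary and is the one Rasmussen already isolated: for a \emph{positive} diagram $D$ with $n$ crossings and $r$ Seifert circles, every generator of $C_\BN(D)$ has quantum grading at least $n - r = 2g(S) - 1$. (All crossings are positive, so $n^- = 0$; the minimum is attained at the all-zero resolution with every circle labeled $X$.) Under \Cref{conj:1} this says the cofiltration satisfies $F^{2g(S)-1}\X = \X$, so the factorization problem in diagram~\eqref{eq:ca-factor-problem} is solved trivially by $f = p_\ca$ at level $j = 2g(S) - 1$. Hence $\gr_q[p_\ca] \geq 2g(S) - 1$ and $\bar{s}(K) \geq 2g(S)$, with no cobordism or duality argument required. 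This is exactly the paper's proof: one sentence citing Rasmussen's grading computation, one sentence invoking \Cref{lem:bar-s-ineq}.
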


\begin{proof}
    As argued in \cite[Section 5.2]{Rasmussen:2010}, the lowest $q$-grading of $\C_\BN(D)$ is given by $2g(S) - 1$. Thus $2g(S) \leq \bar{s}(K)$. The opposite inequality follows from \Cref{lem:bar-s-ineq} and the similar result for the ordinary $s$-invariant $s(K) = 2g(S)$.
\end{proof}

\begin{corollary}
\label{cor:s-bar-unknot}
    (Assuming \Cref{conj:1} is true.) $\bar{s}(U) = 0$ for the unknot $U$.
\qed
\end{corollary}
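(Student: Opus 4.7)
The plan is to apply Lemma~\ref{lem:s-bar-positive} directly. Represent the unknot $U$ by the crossingless circle diagram $D_0$, which is vacuously positive since it contains no crossings at all, and in particular no negative ones. Seifert's algorithm applied to $D_0$ yields a single embedded disk as the Seifert surface $S$, so $g(S) = 0$. The lemma then gives $\bar{s}(U) = 2g(S) = 0$.

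I do not foresee any real obstacle here; the only matter to verify is that Lemma~\ref{lem:s-bar-positive} genuinely covers the degenerate crossingless case. This is immediate: its proof relies on the lowest quantum grading of $\C_\BN(D)$ being $2g(S) - 1$, and for $D_0$ the complex $C_\BN(U) = A = \ZZ[X]/(X^2 - X)$ is concentrated in homological grading zero with standard generators $1, X$ of quantum gradings $\pm 1$, so the minimum is indeed $-1 = 2 \cdot 0 - 1$.

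As a sanity check that bypasses Lemma~\ref{lem:s-bar-positive} entirely, one can compute directly on $\X := \X_\BN(U)$. The spectrum has two cells, both in homological grading $0$, with quantum gradings $+1$ (from $1$) and $-1$ (from $X$). Since $\a = X$ as elements of $A$, the canonical cohomotopy class $[p_\ca]$ is represented by the umkehr map of the $X$-cell. Under the conjectural filtration one has $F_{-1}\X \htpy \SS^0$ (the $X$-cell), hence $F^{1}\X = \X / F_{-1}\X \htpy \SS^0$ is the $1$-cell. Any map $\X \to \SS^0$ factoring through $F^{1}\X$ must annihilate the $X$-cell, which $p_\ca$ does not. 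Therefore $[p_\ca] \in F^{-1}\pi^0(\X) \setminus F^{1}\pi^0(\X)$, giving $\gr_q[p_\ca] = -1$ and $\bar{s}(U) = 0$.
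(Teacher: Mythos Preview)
Your proposal is correct and follows exactly the intended route: the paper places this corollary immediately after Lemma~\ref{lem:s-bar-positive} with only a \qed, so the proof is meant to be the direct application of that lemma to the crossingless diagram, precisely as you do. Your additional verification that the lemma's hypothesis is satisfied in the degenerate case, and the independent sanity check via the explicit two-cell spectrum, are both sound and go beyond what the paper records.
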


\begin{proposition}
\label{prop:bar-s-difference}
    (Assuming \Cref{conj:1} is true.) Let $S$ be a connected cobordism between knots $K, K'$. Then
    \[
        |\bar{s}(K) - \bar{s}(K')| \leq 2g(S). 
    \]
\end{proposition}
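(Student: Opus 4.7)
The plan is to combine two ingredients already available in the paper: the filtered nature of the cobordism map $f_S$ (conditional on \Cref{conj:1}, as recorded in \Cref{rem:cobordism-map-filtered}) and the invariance (up to sign) of canonical cohomotopy classes under connected cobordisms (\Cref{cor:canon-invariant}). The key observation is that, since $S$ is connected and $\partial S = -K \sqcup K'$, we have $\chi(S) = -2g(S)$, so \Cref{rem:cobordism-map-filtered} promotes $f_S$ to a family of compatible maps $f_S : F^j \X_\BN(K) \to F^{j+2g(S)} \X_\BN(K')$ of cofiltered spectra.

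First I would take a representative cobordism diagram for $S$, let $j = \bar{s}(K')-1$, and use the definition of $\bar{s}(K')$ to pick a factorisation of $p_\ca(K')$ through $F^{j}\X_\BN(K')$, i.e.\ a map $g : F^{j}\X_\BN(K') \to \SS$ with $p^*[g] = [p_\ca(K')]$ in $\pi^0(\X_\BN(K'))$. Next, by \Cref{rem:cobordism-map-filtered}, $f_S$ sits in the commutative square
\begin{equation*}
\begin{tikzcd}
\X_\BN(K) \arrow[d, "p"'] \arrow[r, "f_S"] & \X_\BN(K') \arrow[d, "p"] \\
F^{j - 2g(S)} \X_\BN(K) \arrow[r, "\bar{f}_S"'] & F^{j} \X_\BN(K')
\end{tikzcd}
\end{equation*}
so that the composition $g \circ \bar{f}_S \circ p$ gives an element of $F^{j - 2g(S)}\pi^0(\X_\BN(K))$. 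On the other hand, by \Cref{cor:canon-invariant} applied to the connected cobordism $S$, this composition equals $f_S^*[p_\ca(K')] = \pm[p_\ca(K)]$. Therefore $[p_\ca(K)] \in F^{j - 2g(S)}\pi^0(\X_\BN(K))$, which by definition of the quantum grading gives
\[
\bar{s}(K) - 1 = \gr_q[p_\ca(K)] \geq j - 2g(S) = \bar{s}(K') - 1 - 2g(S),
\]
hence $\bar{s}(K') - \bar{s}(K) \leq 2g(S)$. Running the same argument with the reversed cobordism $\bar{S} : K' \to K$ (which is also connected and of the same genus) yields the opposite inequality, proving $|\bar{s}(K) - \bar{s}(K')| \leq 2g(S)$.

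The main obstacle is of course not in this deduction but in the inputs: \Cref{rem:cobordism-map-filtered} only promises that $f_S$ is filtered \emph{assuming} \Cref{conj:1}, and implicit in that promise is the fact that the elementary cobordism maps constructed in \Cref{prop:reidemeister-moves,prop:morse-move} can each be realised as filtered maps of the expected degree. For the Reidemeister moves this should follow from the remark after \Cref{prop:reidemeister-moves} that all cancelling pairs have equal quantum grading; for the Morse moves one needs to check that the cofibration sequence used to define the saddle map respects the (conjectural) quantum filtration, and that the cup/cap inclusions/projections have the correct filtration degree coming from the quantum gradings of the added $1$ and $X$ labels. Assuming this bookkeeping can be carried out, the proof outlined above is essentially a cohomotopical transcription of Rasmussen's original slice-genus argument.
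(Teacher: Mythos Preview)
Your proposal is correct and follows essentially the same approach as the paper: combine the (conjectural) filtered nature of $f_S$ from \Cref{rem:cobordism-map-filtered} with the invariance of the canonical cohomotopy class under connected cobordisms (the paper cites \Cref{prop:canon-coh-cobordism} directly, you cite its immediate corollary \Cref{cor:canon-invariant}) to obtain one inequality, then reverse $S$ for the other. Your write-up is in fact a bit more explicit than the paper's diagram chase, and your closing paragraph correctly identifies that all the substantive content lies in the conditional inputs rather than in this deduction itself.
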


\begin{proof}
    Let $[p_\ca], [p'_\ca]$ be the canonical cohomotopy classes of $K, K'$ respectively.  Consider the following diagram
    \begin{equation*}
        % https://tikzcd.yichuanshen.de/#N4Igdg9gJgpgziAXAbVABwnAlgFyxMJZABgBpiBdUkANwEMAbAVxiRAB12ANAfU4CEAcgAoA0gEoQAX1LpMufIRQAmclVqMWbTgGUd02SAzY8BIgGZSARnX1mrRB3Z6Dck4qJXrtzQ6e8BEVEAckkZNwUzFDIbajstRwAxAD0AKwACTgD2ITEww2NIpWQvZR97NhTgVOD0gFpM9gBjAAssYR1xKUbs3JCw9RgoAHN4IlAAMwAnCABbJC8QHAgkVRAGLDA-KAgmACMGVmoWmDooNkgtkGocOiwGC4JWcJBpuaQyJZXERfi-ND4zTorleM3miEsX1WcV8bDQwUBTWBLze4M+yyQkL+bAmPH0KLBHxu3wALAT3hDiUgAKzk8Ekqk-agbK6OKB0OAnc50mmMtYs7Ycrkg1FIBlQxDUmEVRy4-EUKRAA
        \begin{tikzcd}
        \X_\BN(K) \arrow[rr, "p_\ca"] \arrow[rd, "f_S"] \arrow[d] &                                                 & \SS \arrow[rd, equal] &     \\
        F^j \X_\BN(K) \arrow[rru, dashed] \arrow[rd, "f_S"]       & \X_\BN(K') \arrow[rr, "p'_\ca"] \arrow[d]       &                                     & \SS \\
                                                                  & F^{j' - \chi(S)} \X_\BN(K') \arrow[rru, dashed] &                                     &    
        \end{tikzcd}
    \end{equation*}
    The top square commutes from \Cref{prop:canon-coh-cobordism} , and the left square commutes from \Cref{rem:cobordism-map-filtered}. With $\chi(S) = -2g(S)$ we have $\bar{s}(K) \geq \bar{s}(K') + 2g(S)$. By reversing $S$ we get $\bar{s}(K') \geq \bar{s}(K) + 2g(S)$.
\end{proof}

\begin{corollary}
\label{cor:bar-s-bound}
    (Assuming \Cref{conj:1} is true.)
    \begin{enumerate}
        \item $\bar{s}$ is a knot concordance invariant.
        \item $|\bar{s}(K)| \leq 2g_4(K)$. 
        \item If $K$ is a positive knot, $\bar{s}(K) = 2g(K) = 2g_4(K)$.
    \end{enumerate}
    Here $g_4(K)$ denotes the (smooth) slice genus of $K$.
\end{corollary}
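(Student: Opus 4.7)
The plan is to derive all three properties as direct consequences of \Cref{prop:bar-s-difference} together with the baseline computations \Cref{cor:s-bar-unknot} and \Cref{lem:s-bar-positive}. No further input from \Cref{conj:1} beyond what is already used in those results is needed, since at this point $\bar{s}$ is well-defined and behaves functorially under the cobordism maps $f_S$.

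For property 1, I would argue that if $K$ and $K'$ are smoothly concordant, then by definition there exists a smoothly embedded annulus $S \subset S^3 \times [0,1]$ with $\partial S = -K \sqcup K'$. Such $S$ is a connected cobordism of genus zero, so \Cref{prop:bar-s-difference} yields $|\bar{s}(K) - \bar{s}(K')| \leq 0$, hence $\bar{s}(K) = \bar{s}(K')$. For property 2, I would take a smooth properly embedded surface $\Sigma \subset B^4$ of genus $g_4(K)$ with $\partial \Sigma = K$, and push it to produce a connected cobordism $S$ from $K$ to the unknot $U$ with $g(S) = g_4(K)$. Then \Cref{prop:bar-s-difference} combined with $\bar{s}(U) = 0$ (\Cref{cor:s-bar-unknot}) gives $|\bar{s}(K)| = |\bar{s}(K) - \bar{s}(U)| \leq 2g_4(K)$.

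For property 3, the key is to sandwich $\bar{s}(K)$ between two quantities that coincide for positive knots. Let $D$ be a positive diagram of $K$ and let $S$ be the Seifert surface produced by Seifert's algorithm applied to $D$. On one hand, \Cref{lem:s-bar-positive} states that $\bar{s}(K) = 2g(S)$. On the other hand, property 2 gives $\bar{s}(K) \leq 2g_4(K)$, and trivially $g_4(K) \leq g(K) \leq g(S)$, where the last inequality holds because $S$ is a Seifert surface for $K$. Chaining these, I obtain
\[
    2g(S) = \bar{s}(K) \leq 2g_4(K) \leq 2g(K) \leq 2g(S),
\]
which forces all inequalities to be equalities, establishing $\bar{s}(K) = 2g(K) = 2g_4(K)$.

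There is essentially no obstacle here; the work has already been done in \Cref{lem:s-bar-positive} and \Cref{prop:bar-s-difference}. The only subtlety to flag is that the proof of \Cref{lem:s-bar-positive} invokes the ordinary $s$-invariant inequality $\bar{s}(K) \leq s(K; F) \leq 2g(S)$ (via \Cref{lem:bar-s-ineq}) to obtain the matching upper bound for the positive-knot computation, so the logical path does not circularly depend on the Milnor conjecture itself; once property 3 is established, applying it to torus knots $T_{p,q}$ recovers $g_4(T_{p,q}) = (p-1)(q-1)/2$, giving the Milnor conjecture as advertised.
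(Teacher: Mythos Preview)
Your proposal is correct and follows essentially the same route as the paper's own proof: claims 1 and 2 from \Cref{prop:bar-s-difference} together with \Cref{cor:s-bar-unknot}, and claim 3 from the chain $2g(S) = \bar{s}(K) \leq 2g_4(K) \leq 2g(K) \leq 2g(S)$ using \Cref{lem:s-bar-positive} and claim 2. Your additional remarks on the logical non-circularity are accurate but not needed for the argument itself.
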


\begin{proof}
    Claims 1 and 2 are immediate from \Cref{prop:bar-s-difference} and \Cref{cor:s-bar-unknot}. For claim 3, from \Cref{lem:s-bar-positive} and claim 2 we have 
    \[
        \bar{s}(K) \leq 2g_4(K) \leq 2g(K) \leq 2g(S) = \bar{s}(K). 
    \]
\end{proof}

Thus again, as a corollary, the Milnor conjecture can be reproved using $\bar{s}$.

\begin{corollary}[The Milnor conjecture \cite{Milnor:1968,KM:1993,Rasmussen:2010} ]
     The slice genus and unknotting number of the $(p, q)$ torus knot are both equal to $(p - 1)(q - 1)/2$. \qed
\end{corollary}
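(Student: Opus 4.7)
The plan is to deduce both equalities from part 3 of the previous corollary applied to a standard positive diagram of the torus knot, together with the classical inequality $g_4(K)\le u(K)$ and the classical upper bound on the unknotting number by an explicit crossing-change sequence.

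First I would fix the standard positive braid presentation $T(p,q)=\widehat{(\sigma_1\sigma_2\cdots\sigma_{p-1})^q}$, which is a positive diagram $D$ with $p$ strands and $q(p-1)$ positive crossings. Applying Seifert's algorithm to $D$ produces a Seifert surface $S$ whose Seifert circles are exactly the $p$ strands, so
\[
\chi(S)=p-q(p-1),\qquad g(S)=\frac{1-\chi(S)}{2}=\frac{(p-1)(q-1)}{2}.
\]
Since $T(p,q)$ is positive and $D$ is obtained by Seifert's algorithm from a positive diagram, $g(T(p,q))=g(S)=(p-1)(q-1)/2$. Part 3 of the preceding corollary then yields
\[
\bar{s}(T(p,q))=2g(T(p,q))=2g_4(T(p,q))=(p-1)(q-1),
\]
and in particular $g_4(T(p,q))=(p-1)(q-1)/2$, which settles the slice genus claim.

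For the unknotting number, one direction is the standard observation that a single crossing change can be realized by a genus-one cobordism: resolving the crossing into a two-component link and then changing it back realizes a saddle cobordism, so $g_4(K)\le u(K)$ for every knot. Combined with the slice genus computation above, this gives $u(T(p,q))\ge (p-1)(q-1)/2$. The reverse inequality is the classical explicit construction of an unknotting sequence: in the braid $(\sigma_1\cdots\sigma_{p-1})^q$ one changes $(p-1)(q-1)/2$ specific crossings so that the resulting braid is conjugate to the trivial braid, which I would simply cite (this is the construction that Milnor's conjecture asks to be optimal and is not in question). Combining both inequalities gives $u(T(p,q))=(p-1)(q-1)/2$.

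The only nontrivial content of this plan, beyond the statement of the preceding corollary, is the Seifert genus computation and the existence of an unknotting sequence of the claimed length; both are entirely classical. The genuine obstacle, of course, is not in the corollary itself but in its hypothesis: the whole argument is conditional on Conjecture 1, so the true difficulty lies in constructing the quantum filtration on $\X_\BN$ by eliminating the higher-dimensional quantum-grading-increasing moduli spaces via extended Whitney tricks, as discussed in Section \ref{subsec:quantum-filt}.
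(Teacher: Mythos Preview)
Your proposal is correct and follows exactly the approach the paper intends: the paper's proof is just \qed, deferring to the classical Rasmussen argument, and you have spelled out precisely that argument---compute the Seifert genus of the standard positive braid diagram, apply part~3 of the preceding corollary to obtain $g_4$, then sandwich the unknotting number between $g_4$ and the length of the classical crossing-change sequence. Your remark that everything is conditional on \Cref{conj:1} is also in line with the paper's framing.
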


\begin{remark}
    We cannot expect that $\bar{s}$ is a homomorphism from the knot concordance group, in particular that $\bar{s}(m(K)) = -\bar{s}(K)$ holds. If it should, then it implies that all $s(- ; F)$ are equal to $\bar{s}$, from
    \[
        \bar{s}(K) \leq s(K; F) = -s(m(K); F) \leq -\bar{s}(m(K)) = \bar{s}(K).
    \]
    However it is known that $s(-; \QQ) \neq s(-; \FF_2)$, see \cite[Remark 6.1]{LS:2014_rasmussen}. 
\end{remark}

The above arguments also suggest that $s$ can be generalized using any general cohomology theory $h^*$. By applying $h^*$ to \eqref{eq:ca-factor-problem}, we get
\begin{equation*}
    \begin{tikzcd}
    h^*(\X) & & h^*(\SS) \arrow[ll, "p_\ca^*"']  \arrow[ld, "f^*"', dashed]  \\
    & h^*(F^j \X) \arrow[lu, "p^*"'] & 
    \end{tikzcd}
\end{equation*}
We may define $s(K; h^*)$ as the maximal $j + 1$ such that the above diagram can be solved, with $f^*$ replaced by an arbitrary map in the corresponding category. In particular with $h^* = \tilde{H}^*(-; F)$ we get $s(K; F)$. We can expect that $s(K; h^*)$ gives a more tractable invariant than $\bar{s}$, and that it gives a better lower bound for the slice genus or detect non-sliceness that $s$ fails to detect. 

% \begin{remark}
%     In \cite{LS:2014_rasmussen}, Lipshitz and Sarkar gives a refinement of the $s$-invariant using the Khovanov homotopy type. Take any stable cohomology operation $\theta$ of degree $n$. Given a knot $K$, consider the following diagram
%     \[
%         H^{-n, j}_\Kh
%             \xrightarrow{\theta}
%         H^{0, j}_\Kh
%             \leftarrow
%         H^0(F^j C_\BN)
%             \rightarrow
%         H^0_\BN.
%     \]
%     If \Cref{conj:2} is also true, then the diagram should fit into
%     %
%     \begin{equation*}
%         \begin{tikzcd}
%         H^{-n}(F^j C_\BN) \arrow[r, "\theta"] \arrow[d] & H^0(F^j C_\BN) \arrow[r, "i_*"] \arrow[d] & H^0_\BN \\
%         {H^{-n, j}_\Kh} \arrow[r, "\theta"]             & {H^{0, j}_\Kh} &        
%         \end{tikzcd}
%     \end{equation*}
%     %
%     where the vertical arrows are induced from the inclusion $\X^j_\Kh \hookrightarrow F^j \X_\BN$. (TODO)
% \end{remark}

    \printbibliography

\end{document}